\documentclass{amsart}

\usepackage{amsfonts,mathrsfs}
\usepackage{amsmath,amssymb,amsthm, amscd, bm}
\usepackage{tikz}
\usetikzlibrary{arrows,calc,matrix,topaths,positioning,scopes,shapes,decorations,decorations.markings} 
\usepackage{dsfont}
\usepackage{epsfig}
\usepackage{tikz-cd}
\usepackage{hyperref}
\usepackage{fullpage}
\usepackage[foot]{amsaddr}
\usepackage{adjustbox}
\usepackage{enumitem}

\usepackage[utf8x]{inputenc}

\makeatletter

\newcommand{\sslash}{\mathbin{/\mkern-6mu/}}

\newcommand{\PS}{\operatorname{PS}}

\newcommand{\eqabchar}{\mathcal{X}^{\sigma}_{\mathbb{C}^*}(\hat{\mathbf{\Sigma}})}

\newcommand{\CF}{\mathcal{Z}_{\omega}(\mathbf{\Sigma}, \Delta)}

\newcommand{\na}{\mathcal{NA}}
\newcommand{\Tr}{\operatorname{Tr}}

\newcommand{\St}{\operatorname{St}}

\newcommand{\SB}{\operatorname{SB}}

\newcommand{\id}{id}

\newcommand{\Hom}{\operatorname{Hom}}

\newcommand{\SL}{\operatorname{SL}}
\newcommand{\PSL}{\operatorname{PSL}}

\newcommand{\End}{\operatorname{End}}

\newcommand{\tr}{\operatorname{Tr}}
\newcommand{\Mod}{\operatorname{Mod}}

\newcommand{\quotient}[2]{{\raisebox{.2em}{$#1$}\left/\raisebox{-.2em}{$#2$}\right.}}



%
\AtEndDocument{
		\bigskip{\footnotesize%
				\textsc{Universidade de S\~ao Paulo - Instituto de Ci\^encias Matem\'aticas e de Computa\c{c}\~ao.  
					Avenida Trabalhador S\~ao-Carlense, 400 - Centro CEP: 13566-590 - S\~ao Carlos - SP Brazil} \par  
				\textit{E-mail address:} \email{julien.korinman@gmail.com} \\
		\par	\textit{E-mail address:} \email{math@quesney.org}\\
			}}

\pagestyle{plain}

\begin{document}

\def\co{\colon\thinspace}
\newcommand{\ot}{\otimes} 
\newcommand{\ov}[1]{\overline{#1}}
\newcommand{\un}[1]{\underline{#1}}
\newcommand{\wid}[1]{\widetilde{#1}}
\newcommand{\ca}[1]{\mathcal{#1}}
\newcommand{\cat}[1]{\textbf{#1}}
\newcommand{\ie}{\emph{i.e.} }
\newcommand{\alex}[1]{{\color{blue}{\texttt{#1}}}}
\newcommand{\alexx}[1]{{\color{green!50!black}{\texttt{#1}}}}
\newcommand{\bsout}[1]{\sout{\color{blue}{#1}}}
\newcommand{\bxsout}[1]{\xout{\color{blue}{#1}}}
\newcommand{\alnote}[1]{\marginnote{\color{blue}{\texttt{\small{#1}}}}}
\newcommand{\D}{\mathbb{D}}
\newcommand{\R}{\mathbb{R}}
\newcommand{\C}{\mathbb{C}}
\newcommand{\Z}{\mathbb{Z}}
\newcommand{\T}{\mathbb{T}}
\newcommand{\Hex}{\hat{\mathbb{T}}}
\newcommand{\Ring}{\mathcal{R}}
\newcommand{\eqSkpre}{\mathcal{S}_{\omega}^{\mathbb{C}^*,\sigma}}
\newcommand{\nab}{\mathbf{NA}}
\newcommand{\ea}{\mathfrak{a}}
\newcommand{\eb}{\mathfrak{b}}
\newcommand{\ec}{\mathfrak{c}}

\newcommand{\heightexch}[3]{
	\begin{tikzpicture}[baseline=-0.4ex,scale=0.5, >=stealth]
	\draw [fill=gray!60,gray!45] (-.7,-.75)  rectangle (.4,.75)   ;
	\draw[#1] (0.4,-0.75) to (.4,.75);
	\draw[line width=1.2] (0.4,-0.3) to (-.7,-.3);
	\draw[line width=1.2] (0.4,0.3) to (-.7,.3);
	\draw (0.65,0.3) node {\scriptsize{$#2$}}; 
	\draw (0.65,-0.3) node {\scriptsize{$#3$}}; 
	\end{tikzpicture}
}
\newcommand{\heightcurve}{
\begin{tikzpicture}[baseline=-0.4ex,scale=0.5]
\draw [fill=gray!20,gray!45] (-.7,-.75)  rectangle (.4,.75)   ;
\draw[-] (0.4,-0.75) to (.4,.75);
\draw[line width=1.2] (-.7,-0.3) to (-.4,-.3);
\draw[line width=1.2] (-.7,0.3) to (-.4,.3);
\draw[line width=1.15] (-.4,0) ++(-90:.3) arc (-90:90:.3);
\end{tikzpicture}
}
\newcommand{\heightexchor}[3]{
\begin{tikzpicture}[>=stealth,thick, arrow/.style={->},baseline=-0.4ex,scale=0.5]
\draw [fill=gray!60,gray!45] (-.7,-.75)  rectangle (.4,.75)   ;
\draw[line width=.7,#1] (0.4,-0.75) to (.4,.75);
\draw[line width=1.1,#2] (0.4,-0.3) to (-.7,-.3);
\draw[line width=1.1,#3] (0.4,0.3) to (-.7,.3);
\end{tikzpicture}
}

\theoremstyle{plain}
\newtheorem{theorem}{Theorem}[section]
\newtheorem{proposition}[theorem]{Proposition}
\newtheorem{corollary}[theorem]{Corollary}
\newtheorem{lemma}[theorem]{Lemma}
\theoremstyle{definition}
\newtheorem{notation}[theorem]{Notations}
\newtheorem{convention}[theorem]{Convention}
\newtheorem{problem}[theorem]{Problem}
\newtheorem{definition}[theorem]{Definition}
\theoremstyle{remark}
\newtheorem{remark}[theorem]{Remark}
\newtheorem{conjecture}[theorem]{Conjecture}
\newtheorem{example}[theorem]{Example}
\newtheorem{strategy}[theorem]{Strategy}
\newtheorem{question}[theorem]{Question}


\title{The quantum trace as a quantum non-abelianization map}

\author{Julien Korinman \quad and \quad Alexandre Quesney}


%
%
%
%
%
\date{}
\maketitle

\begin{abstract} 
We prove that the balanced Chekhov-Fock algebra of a punctured triangulated surface is isomorphic to a skein algebra which is a deformation of the algebra of regular functions of some abelian character variety. We first deduce from this observation a classification of the irreducible representations of the balanced Chekhov-Fock algebra at odd roots of unity, which generalizes to open surfaces the classification of Bonahon, Liu and Wong. We  re-interpret Bonahon and Wong's quantum trace map as a non-commutative deformation of some regular morphism between this abelian character variety and the $\mathrm{SL}_2$-character variety. This algebraic morphism shares many resemblance with the non-abelianization map of Gaiotto, Moore, Hollands and Neitzke. When the punctured surface is closed, we prove that this algebraic non-abelianization map induces a birational morphism between a smooth torus and the relative $\SL_2$ character variety.
\vspace{2mm}
\\ \textbf{Keywords}: Skein algebras, Quantum Teichm\"uller space, Character varieties, Spectral network.
\\ \textbf{Mathematics Subject Classification 2000}: 14D20, 57M25, 57R56.
\end{abstract}


\section{Introduction}

\par \textbf{Skein algebras, quantum Teichm\"uller spaces and  character varieties}
\vspace{3mm}
\par  A \textit{punctured surface} is a pair $\mathbf{\Sigma}=(\Sigma,\mathcal{P})$, where $\Sigma$ is a compact oriented surface and $\mathcal{P}$ is a (possibly empty) finite subset of $\Sigma$ which intersects non-trivially each boundary component.  We write $\Sigma_{\mathcal{P}}:= \Sigma \setminus \mathcal{P}$.
  The set $\partial \Sigma\setminus \mathcal{P}$ consists of a disjoint union of open arcs which we call \textit{boundary arcs}.
   \\ \textbf{Warning:} In this paper, the punctured surface $\mathbf{\Sigma}$ will be called open if the surface $\Sigma$ has non empty boundary and closed otherwise. This convention differs from the traditional one, where some authors refer to open surface a punctured surface $\mathbf{\Sigma}=(\Sigma, \mathcal{P})$ with $\Sigma$ closed and $\mathcal{P}\neq \emptyset$ (in which case $\Sigma_{\mathcal{P}}$ is not closed).
   
\vspace{2mm}\par   In this paper, we will consider different related objects associated to punctured surfaces, namely the Kauffman-bracket skein algebras, the balanced Chekhov-Fock algebras and the character varieties. Let us briefly introduce them.

\vspace{2mm}

	The \textit{Kauffman-bracket skein algebra} was introduced by Turaev (\cite{Tu88}) for closed punctured surfaces; it was recently generalized to open surfaces by L\^e in \cite{LeStatedSkein}, following Bonahon-Wong \cite{BonahonWongqTrace},  under the name of \textit{stated skein algebra}. 
	For a commutative unital ring $\mathcal{R}$, an invertible element $\omega\in \mathcal{R}^{\times}$ and a punctured surface $\mathbf{\Sigma}$, the Kauffman-bracket skein algebra  $\mathcal{S}^{\SL_2}_{\omega}(\mathbf{\Sigma})$ is the free  $\mathcal{R}$--module generated by isotopy classes of some stated framed links in a cylinder over $\mathbf{\Sigma}$, modulo some local skein relations derived from $\SL_2(\C)$. 
	This algebra has deep relations with the $\SL_2(\C)$ character varieties and the Witten-Reshetikhin-Turaev Topological Quantum Field Theories (TQFT) defined in \cite{Wi2,RT}. 
	
	 When the punctures of $\mathbf{\Sigma}$ can be joined to form a triangulation  $\Delta$,  one can consider the  \textit{balanced Chekhov-Fock algebra} $\CF$. 
	It was introduced by Bonahon and Wong in \cite{BonahonWongqTrace} as a refinement of (an exponential version of) the quantum Teichm\"uller space defined by Chekhov and Fock in \cite{ChekhovFock} (see \cite{Kashaev98} for a related independent construction).  
	A key tool in the construction of representations of the skein algebra in \cite{BonahonWong2,BonahonWong3} is the  \textit{quantum trace map}. It is morphism of algebras 
	$\tr_{\omega}: \mathcal{S}^{\SL_2}_{\omega}(\mathbf{\Sigma}) \rightarrow \mathcal{Z}_{\omega}(\mathbf{\Sigma}, \Delta)$ 
	introduced by Bonahon and Wong in \cite{BonahonWongqTrace}. 
	\\
	
	Let $G$ be the group $\mathbb{C}^*$ or $\SL_2(\mathbb{C})$. 
	The \textit{character variety} $\mathcal{X}_G(\mathbf{\Sigma})$ of a punctured surface $\mathbf{\Sigma}$ is an affine Poisson variety. It was first introduced by Culler and Shalen in \cite{CullerShalenCharVar} for closed surfaces and generalized to open ones in \cite{KojuTriangularCharVar}.  
	When $G=\mathbb{C}^*$, the character variety is a smooth torus: indeed, its algebra of regular functions is the group algebra $\mathbb{C}[\mathrm{H}_1(\Sigma_{\mathcal{P}}, \partial \Sigma_{\mathcal{P}};\mathbb{Z})]$ so $\mathcal{X}_{\mathbb{C}^*}(\mathbf{\Sigma}) \cong (\mathbb{C}^*)^n$, where $n$ is the rank of the free abelian group $\mathrm{H}_1(\Sigma_{\mathcal{P}}, \partial \Sigma_{\mathcal{P}}; \mathbb{Z})$ (see Lemma \ref{lemma_free}).
	When $G=\SL_2(\mathbb{C})$ and the surface is closed, the character variety is singular and its smooth part $\mathcal{X}^0_G(\Sigma)$ is a smooth manifold. 
	\vspace{2mm} \par
	For closed punctured surfaces $\mathbf{\Sigma}$, the character variety is closely related to the moduli space $\mathcal{M}_G(\mathbf{\Sigma})$ of classes of flat connections on a trivial $G$ bundle over $\mathbf{\Sigma}$, modulo gauge equivalences. 
	More precisely, there is a map $p : \mathcal{M}_G(\mathbf{\Sigma}) \rightarrow \mathcal{X}_G(\mathbf{\Sigma})$ which is a bijection when $G=\mathbb{C}^*$. When $G=\SL_2(\mathbb{C})$, the map $p$ is surjective and, writing  $\mathcal{M}_{\SL_2}^0(\mathbf{\Sigma}):= p^{-1}\left( \mathcal{X}^0_{\SL_2}(\mathbf{\Sigma}) \right)$, the restriction $p : \mathcal{M}_{\SL_2}^0(\mathbf{\Sigma}) \rightarrow \mathcal{X}^0_{\SL_2}(\mathbf{\Sigma})$ is a bijection (see  \cite{LabourieCharVar, MarcheCours09, MarcheCharVarSkein} for surveys). 
	
	\vspace{2mm} \par
	The Kauffman-bracket skein algebras and the $\SL_2(\mathbb{C})$ character varieties are related as follows. When $\mathcal{R}=\mathbb{C}$ and $\omega= +1$, the Kauffman-bracket skein algebra $\mathcal{S}^{\SL_2}_{+1}(\mathbf{\Sigma})$ has a natural Poisson bracket arising from deformation quantization (see Section $3.1$).  
	
	For a closed punctured surface and a spin structure $S$ on it,  there exists an  isomorphism of Poisson algebras 
	$\Psi^S : \mathcal{S}^{\SL_2}_{+1}(\mathbf{\Sigma}) \xrightarrow{\cong} \mathbb{C}[\mathcal{X}_{\SL_2}(\mathbf{\Sigma})]$ from the skein algebra at $+1$ to the Poisson algebra of regular functions of the character variety (see  \cite{Bullock,PS00,ChaMa,Barett,Turaev91}). 
	For (not necessarily closed) triangulated punctured surfaces, there is an isomorphism of Poisson algebras  $\Psi^{(\mathfrak{o}, S)} : \mathcal{S}^{\SL_2}_{+1}(\mathbf{\Sigma}) \xrightarrow{\cong} \mathbb{C}[\mathcal{X}_{\SL_2}(\mathbf{\Sigma})]$ which depends on the choice of an orientation $\mathfrak{o}$ of its boundary arcs and a relative spin structure $S$ (see  \cite{KojuQuesneyClassicalShadows}). Note that when the punctured surface is open, the stated skein algebra at $A=-1$  is not commutative; this explains our choice of parameter $+1$ rather the more traditional one $-1$.
	
	\vspace{2mm}
	\par In TQFT,  skein algebras appear through their non-trivial finite dimensional representations. Skein algebras admit such representations if and only if the parameter $\omega$ is a root of unity. One motivation for the construction of the quantum trace map is that the representation category of the balanced Chekhov-Fock algebra is easier to study than the representation category of skein algebras. In \cite{BonahonLiu, BonahonWong2} it is shown that the balanced Chekhov-Fock algebras at root of unity of closed surfaces are semi-simple and their simple modules are classified. 
	For an irreducible representation $r': \mathcal{Z}_{\omega}(\mathbf{\Sigma}, \Delta) \rightarrow \End(V)$ of the balanced Chekhov-Fock algebra, one obtains a representation $r$ of skein algebras by composition:
	$$ r: \mathcal{S}_{\omega}^{\SL_2}(\mathbf{\Sigma}) \xrightarrow{\Tr_{\omega}} \mathcal{Z}_{\omega}(\mathbf{\Sigma}, \Delta) \xrightarrow{r'} \End(V). $$
	Such a representation is called  \textit{quantum Teichm\"uller representation}. 
	When $\omega$ is a root of unity of odd order $N>1$, there exists an injective morphism 
	$$j_{\mathbf{\Sigma}} : \mathcal{S}_{+1}^{\SL_2}(\mathbf{\Sigma}) \hookrightarrow  \mathcal{S}^{\SL_2}_{\omega}(\mathbf{\Sigma}) $$
	whose image lies in the center of the skein algebra $\mathcal{S}^{\SL_2}_{\omega}(\mathbf{\Sigma})$. 
	This was proved in \cite{BonahonWong1} for closed surfaces and generalized in \cite{KojuQuesneyClassicalShadows} for open surfaces as well.
	A quantum Teichm\"uller representation $r$ sends an element of the image of $j_{\mathbf{\Sigma}}$ to a scalar operator, hence it induces a character on the commutative algebra $\mathcal{S}^{\SL_2}_{+1}(\mathbf{\Sigma}) \xrightarrow[\cong]{\Psi} \mathbb{C}[\mathcal{X}_{\SL_2}(\mathbf{\Sigma})]$ and thus a point $[\rho]\in \mathcal{X}_{\SL_2}(\mathbf{\Sigma})$. 
	The latter is called the \textit{non abelian classical shadow} of $r$ and it only depends on the isomorphism class of $r$. 
	It was proved in \cite{FrohmanKaniaLe_UnicityRep} that, when $\Sigma$ is closed, "generic" irreducible representations of $\mathcal{S}^{\SL_2}_{\omega}(\mathbf{\Sigma})$ are quantum Teichm\"uller representations.
	
	\vspace{2mm}
	\par To a triangulated punctured surface $(\mathbf{\Sigma}, \Delta)$ one can associate a $2$-fold covering  $\hat{\mathbf{\Sigma}}$ with one branching point per triangle (see Section $2.6$). Let us denote by $\sigma$ its covering involution. 
	In  \cite{GMN12, GMN13, GMN14, HollandsNeitzke}, the authors considered the moduli space $\mathcal{M}_{\mathbb{C}^*}^{\sigma}(\hat{\mathbf{\Sigma}})\subset \mathcal{M}_{\mathbb{C}^*}(\hat{\mathbf{\Sigma}})$ of gauge classes of $\mathbb{C}^*$ flat connections whose holonomy along a curve $\gamma$ is the inverse of the holonomy along $\sigma(\gamma)$. They also considered a moduli space $\mathcal{M}_{\SL_2}^{fr}(\mathbf{\Sigma})$ of gauge classes of $\SL_2$ flat connections equipped with some additional decoration (called a framing). The main result in \cite{HollandsNeitzke}, inspired by \cite{GMN12, GMN13, GMN14} and the classical work of Hitchin in \cite{HitchinSelfDuality},  is the construction of a bijection $\mathcal{NA}^{HN} : \mathcal{M}_{\mathbb{C}^*}^{\sigma}(\hat{\mathbf{\Sigma}}) \xrightarrow{\cong} \mathcal{M}_{\SL_2}^{fr}(\mathbf{\Sigma})$, called \textit{non-abelianization map}, between the two moduli spaces. 
	\vspace{2mm}
	\par Since the Kauffman-bracket skein algebra $\mathcal{S}_{\omega}^{\SL_2}(\mathbf{\Sigma})$ is a deformation of the Poisson algebra of regular functions of the character variety $\mathcal{X}_{\SL_2}(\mathbf{\Sigma})$ which is closely related to the moduli space of flat connections $\mathcal{M}_{\SL_2}(\mathbf{\Sigma})$, it is natural to conjecture that the balanced Chekhov-Fock algebra $\mathcal{Z}_{\omega}(\mathbf{\Sigma}, \Delta)$ is a deformation of some character variety $\mathcal{X}_{\mathbb{C}^*}^{\sigma}(\hat{\mathbf{\Sigma}})$ itself related to the moduli space $\mathcal{M}_{\mathbb{C}^*}^{\sigma}(\hat{\mathbf{\Sigma}})$. It is also natural to expect that the quantum trace is a deformation of some algebraic non-abelianization map $\mathcal{NA} : \mathcal{X}_{\mathbb{C}^*}^{\sigma}(\hat{\mathbf{\Sigma}}) \rightarrow \mathcal{X}_{\SL_2}(\mathbf{\Sigma})$  related to the construction in \cite{GMN13, HollandsNeitzke}. The purpose of this paper is to provide such a construction and study some consequences.
\vspace{2mm}
\par The authors were recently informed by Allegretti and Kim that the relation between the quantum trace and the non-abelianization map was first emphasized by Gabella in \cite{Gabella_QNonAb} (see also \cite{KyuMiri_QNonAb}) though our treatment and resulting theorems are quite different.

\vspace{5mm}
\par \textbf{Main results of the paper}
\vspace{3mm}

	Let $\hat{\mathbf{\Sigma}}$ be the $2$--fold branched covering associated to a punctured triangulated surface $\mathbf{\Sigma}$. 
	We show that the balanced Chekhov-Fock algebra $\CF$ can be interpreted as an equivariant $\C^*$ skein algebra of $\hat{\mathbf{\Sigma}}$, that we denote by   $\mathcal{S}_{\omega}^{\mathbb{C}^*, \sigma}(\mathbf{\hat{\Sigma}})$. 
	On the other hand, we introduce the  \emph{equivariant $\C^*$ character variety} $\mathcal{X}_{\mathbb{C}^*}^{\sigma}(\hat{\mathbf{\Sigma}})$. It is a Poisson affine variety; its  set of closed points is in bijection with the moduli space 
	$\mathcal{M}_{\mathbb{C}^*}^{\sigma}(\hat{\mathbf{\Sigma}})$
	 of \cite{HollandsNeitzke}. 
	We show that   $\mathcal{S}_{\omega}^{\mathbb{C}^*, \sigma}(\mathbf{\hat{\Sigma}})$ provides a deformation of 
	$\mathcal{X}_{\mathbb{C}^*}^{\sigma}(\mathbf{\hat{\Sigma}})$. 
	The relation with the Chekhov-Fock algebra is the following. 
	\begin{theorem}\label{theorem1}
		For each leaf labeling $\ell$ of $\hat{\mathbf{\Sigma}}$ (see Section $2.2$), there is an isomorphism of algebras: $$\Phi_{\ell} : \mathcal{Z}_{\omega}(\mathbf{\Sigma}, \Delta) \xrightarrow{\cong} \mathcal{S}_{\omega}^{\mathbb{C}^*, \sigma}(\mathbf{\hat{\Sigma}}).$$
	\end{theorem}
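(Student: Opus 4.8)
The plan is to exhibit both algebras as quantum tori over $\mathcal{R}$ and to produce the isomorphism by matching their underlying lattices together with the antisymmetric forms that record the $\omega$--commutation relations. Recall from Section 2 that $\mathcal{Z}_{\omega}(\mathbf{\Sigma}, \Delta)$ is the \emph{balanced} subalgebra of the Chekhov--Fock quantum torus $\mathcal{T}_{\omega}^{\Delta}$: the latter is generated by invertible elements $Z_e$ indexed by the edges $e$ of $\Delta$, with $Z_e Z_f = \omega^{2 Q_{ef}} Z_f Z_e$ where $Q_{ef}$ is the signed number of corners of $\Delta$ delimited by $e$ and $f$, and $\mathcal{Z}_{\omega}(\mathbf{\Sigma},\Delta)$ is spanned by the monomials $\prod_e Z_e^{k_e}$ whose exponent vector $(k_e)$ is balanced (for every triangle the sum of the three exponents along its sides, counted with multiplicity, is even). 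Equivalently, $\mathcal{Z}_{\omega}(\mathbf{\Sigma},\Delta)$ is the quantum torus attached to the full-rank sublattice $\Lambda^{\mathrm{bal}}_{\Delta} \subset \mathbb{Z}^{E(\Delta)}$ of balanced vectors, equipped with the form $2Q$.

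On the skein side I would first record that $\mathcal{S}_{\omega}^{\mathbb{C}^*}(\hat{\mathbf{\Sigma}})$ is itself a quantum torus: the $\mathbb{C}^*$--skein relations push every stated framed link in $\hat{\mathbf{\Sigma}} \times I$ to a scalar multiple of a crossingless, height-monotone one, two such represent the same class if and only if they are homologous, and resolving a crossing multiplies by a power of $\omega$ equal to the algebraic intersection number; hence $\mathcal{S}_{\omega}^{\mathbb{C}^*}(\hat{\mathbf{\Sigma}})$ is the quantum torus of a homology lattice $H$ of $\hat{\mathbf{\Sigma}}$ (relative to its punctures) with its intersection form. Imposing $\sigma$--equivariance, i.e. the condition that the $\mathbb{C}^*$--holonomy be reversed under $\sigma$, cuts this down to the quantum subtorus supported on the $\sigma$--anti-invariant part $H^{-}$. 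The leaf labeling $\ell$ of Section 2.2 is exactly the data needed to associate to each edge $e$ of $\Delta$ a well-defined $\sigma$--anti-invariant \emph{edge cycle} $\hat{\gamma}_e \in H^{-}$, namely the lift of a small loop transverse to $e$ running once through each of the two triangles adjacent to $e$, with sheets and orientations prescribed by $\ell$. I then set $\Phi_{\ell}(Z_e) := [\hat{\gamma}_e]$ and extend multiplicatively.

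It remains to check two things: (i) that $(k_e) \mapsto \sum_e k_e \hat{\gamma}_e$ restricts to a lattice isomorphism $\Lambda^{\mathrm{bal}}_{\Delta} \xrightarrow{\cong} H^{-}$, and (ii) that the intersection form satisfies $\langle \hat{\gamma}_e, \hat{\gamma}_f \rangle = 2 Q_{ef}$. Granting these, both algebras become free $\mathcal{R}$--modules on identified lattices carrying identified forms, so $\Phi_{\ell}$ is a well-defined algebra isomorphism. For (ii) one argues locally: $\hat{\gamma}_e$ and $\hat{\gamma}_f$ meet only near a vertex of $\Delta$ common to the triangles involved, and the intersection points are counted triangle by triangle, using that over each triangle the cover is the branched double cover of a disk, each corner of $\Delta$ contributing two intersection points upstairs. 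For (i) the branch points are essential: a $\mathbb{Z}$--combination of the $\hat{\gamma}_e$ lands in $H^{-}$ precisely when the exponent vector is balanced, the parity constraint at each triangle being forced by the local structure of the cover around its branch point. I expect (i), namely the precise $\mathbb{Z}/2$--level matching of "balanced" with "anti-invariant" — so that one hits the balanced subalgebra on the nose, neither a proper subalgebra nor all of $\mathcal{T}_{\omega}^{\Delta}$ — together with the sign bookkeeping in the intersection count of (ii), to be the main obstacles; the dependence on $\ell$ is only a matter of fixing these signs coherently, and different leaf labelings yield isomorphic constructions.
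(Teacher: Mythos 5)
Your overall strategy --- present both algebras as quantum tori on lattices equipped with skew forms and match the lattices and the forms --- is also the paper's strategy; in particular your identification of $\mathcal{S}^{\mathbb{C}^*}_{\omega^{1/2}}(\hat{\mathbf{\Sigma}})$ with the quantum torus on relative first homology with the relative intersection form is exactly the content of the isomorphism $h$ of Section 2.4. The gap is in how you set up the lattice map. You declare $\Phi_\ell(Z_e):=[\hat\gamma_e]$ for a $\sigma$--anti-invariant class $\hat\gamma_e\in H^-$ and extend multiplicatively, and then assert in (i) that $\sum_e k_e\hat\gamma_e$ lies in $H^-$ precisely when $(k_e)$ is balanced. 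These two statements are incompatible: if every $\hat\gamma_e$ lies in $H^-$, then so does every integral combination, balanced or not. Worse, no integral anti-invariant classes $\hat\gamma_e$ with the required property exist. Test it on a single triangle $\mathbb{T}$: the lattice $\mathrm{H}_1^{\sigma}(\hat{\mathbb{T}}_{\hat{\mathcal{P}}\cup\hat{B}},\partial;\mathbb{Z})$ is free of rank $3$ with basis the anti-invariant lifts of the three corner branches $\mathfrak{c}_1,\mathfrak{c}_2,\mathfrak{c}_3$ of the dual train track, and the balanced monomial $Z_{e_1}^{2}$ corresponds to the vector $-\mathfrak{c}_1+\mathfrak{c}_2+\mathfrak{c}_3$, which is not divisible by $2$; hence there is no $\hat\gamma_{e_1}\in H^-$ with $2\hat\gamma_{e_1}=\Phi_\ell(Z_{e_1}^2)$, and your ``small loop transverse to $e$'', which crosses $e$ twice, represents $\Phi_\ell(Z_e^2)$ rather than a square root of it. So the multiplicative extension either is not defined on generators of $\mathcal{Z}_{\omega}(\mathbf{\Sigma},\Delta)$ (the $Z_e$ themselves are not balanced) or lands in a proper finite-index sublattice of $H^-$.

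The repair is to index the generators by the \emph{corners} of $\Delta$ rather than by its edges, which is what the paper does: a balanced monomial $[\prod_eZ_e^{k_e}]$ determines the integer $(k_a+k_b-k_c)/2$ on the corner between $e_a$ and $e_b$ opposite $e_c$ (balancedness is exactly the integrality of these corner weights), these weights satisfy the switch condition of the dual train track, and the leaf labeling orients the lifted train track so that the resulting weight system defines an anti-invariant relative cycle; this yields the chain of bijections $K_\Delta\cong\mathcal{W}(\tau_\Delta,\mathbb{Z})\cong\mathcal{W}^{\sigma}(\hat\tau_\Delta,\mathbb{Z})\cong \mathrm{H}_1^{\sigma}$, a lattice isomorphism on the nose. (Equivalently, your $\hat\gamma_e$ can be taken in $\tfrac{1}{2}H^-$, but then you must verify that the balanced combinations are exactly the integral ones, which is the corner-weight computation in disguise.) Your step (ii) then survives, but the form comparison is done most safely as in the paper: the embeddings $i^{\Delta}$ into tensor products over triangles are algebra maps on both sides and are compatible with $\Phi_\ell$, so everything reduces to the single relation $z_{\mathfrak{c}_i}z_{\mathfrak{c}_{i+1}}=\omega^{2}z_{\mathfrak{c}_{i+1}}z_{\mathfrak{c}_i}$ for lifted corner arcs in one hexagon, where the half-integer boundary contributions to the relative intersection form enter; a global signed count as you propose would also have to resolve the normalization issue in your claim $\langle\hat\gamma_e,\hat\gamma_f\rangle=2Q_{ef}$, which against the commutation $Z_eZ_f=\omega^{2Q_{ef}}Z_fZ_e$ already looks off by a factor of $2$.
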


In Section $2.8$, we show that if $\omega$ is a root of unity of odd order $N>1$, then the $\mathbb{C}^*$ skein algebra $\mathcal{S}_{\omega}^{\mathbb{C}^*, \sigma}(\mathbf{\hat{\Sigma}})$ is semi-simple;  we  provide tools to classify its irreducible representations and compute their dimensions. 
Using Theorem \ref{theorem1}, we deduce a classification of the irreducible representations of the balanced Chekhov-Fock algebras. More precisely, we show that there is a central inclusion  $\mathcal{O}[\mathcal{X}_{\mathbb{C}^*}^{\sigma}(\mathbf{\hat{\Sigma}})] \hookrightarrow  \mathcal{Z}_{\omega}(\mathbf{\Sigma}, \Delta)$. Therefore, each irreducible representation of the balanced Chekhov-Fock algebra induces a point in $\mathcal{X}_{\mathbb{C}^*}^{\sigma}(\mathbf{\hat{\Sigma}})$; we call it its \textit{abelian classical shadow}. 
To each inner puncture $p$ of $\mathbf{\Sigma}$ we associate some curves $\hat{\gamma}_p$ in $\mathbf{\hat{\Sigma}}$ and to each boundary component $\partial$ of $\mathbf{\Sigma}$ we associate some arcs $\hat{\alpha}_{\partial}$ in $\mathbf{\hat{\Sigma}}$. The following theorem was established by Bonahon and Wong in \cite{BonahonWong2} for closed punctured surfaces.
\begin{theorem}\label{theorem2}
	If $\omega$ is a root of unity of odd order $N>1$, the balanced Chekhov-Fock algebra $\mathcal{Z}_{\omega}(\mathbf{\Sigma}, \Delta)$ is semi-simple. When $\Sigma$ is connected, of genus $g$, and setting $s:= |\mathcal{P}|$ the number of punctures and $n_{\partial}$ the number of boundary components, its simple modules all have dimension $N^{3g-3+s+n_{\partial}}$, and are classified, up to isomorphism, by:
	\begin{enumerate}
		\item an abelian classical shadow $[\rho^{ab}]\in \mathcal{X}_{\mathbb{C}^*}^{\sigma}(\mathbf{\hat{\Sigma}})$; 
		\item for each inner puncture $p$, a $N$-{th} root $h_p$ of the holonomy of $[\rho^{ab}]$ around $\hat{\gamma}_p$ (puncture invariant); 
		\item for each boundary component $\partial$, a $N$-{th} root $h_{\partial}$ of the holonomy of $[\rho^{ab}]$ along $\hat{\alpha}_{\partial}$ (boundary invariant).
	\end{enumerate}
\end{theorem}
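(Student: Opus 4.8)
The plan is to transport the whole statement through the isomorphism $\Phi_{\ell}$ of Theorem~\ref{theorem1} and to argue instead with the equivariant $\C^*$ skein algebra $\eqSkpre(\hat{\mathbf{\Sigma}})$, which by construction is a twisted group algebra (a quantum torus) of a lattice $\Lambda$ --- the balanced sublattice attached to the triangulation --- equipped with a skew form $B$ induced by the intersection pairing on $\hat{\mathbf{\Sigma}}$. Since $\omega$ has odd order $N$, the tools developed in Section~$2.8$ show that such an algebra is an Azumaya algebra over its center $Z$; in particular it is a finitely generated projective $Z$-module of constant rank $d^2$, hence it is semisimple after base change to any closed point of $\Specmax(Z)$, and every simple module has dimension exactly $d$, the PI-degree. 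Pulling back along $\Phi_{\ell}$ yields the semisimplicity of $\CF$ and the fact that all its simple modules share the same dimension.

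For an Azumaya algebra over an affine center $Z$, the isomorphism classes of simple modules are in bijection with $\Specmax(Z)$, the bijection sending a maximal ideal $\mathfrak{m}$ to the unique simple module of the matrix algebra $A/\mathfrak{m}A$. So the classification reduces to describing $\Specmax(Z)$ for $Z = Z(\CF)$. The central inclusion $\mathbb{C}[\mathcal{X}^{\sigma}_{\mathbb{C}^*}(\hat{\mathbf{\Sigma}})] \hookrightarrow \CF$ recalled before the statement --- which at the level of twisted group algebras is a Frobenius-type map induced by the classical shadow at $\omega = 1$ --- gives a morphism $\pi : \Specmax(Z) \to \mathcal{X}^{\sigma}_{\mathbb{C}^*}(\hat{\mathbf{\Sigma}})$, and by definition $\pi(\mathfrak{m})$ is the abelian classical shadow $[\rho^{ab}]$ of the corresponding simple module. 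It remains to compute the fibers of $\pi$.

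The key point is lattice-theoretic: the sublattice $\Lambda_0 := \{ x \in \Lambda \mid B(x,-) \equiv 0 \pmod N \}$ strictly contains $N\Lambda$, and the image of $\Lambda_0$ in $\Lambda / N\Lambda$ is the radical of the reduced form $\bar B$. Using the topology of the branched covering $\hat{\mathbf{\Sigma}} \to \mathbf{\Sigma}$ one identifies this radical with the free submodule generated by the classes of the peripheral curves $\hat{\gamma}_p$, one for each inner puncture $p$, and of the boundary arcs $\hat{\alpha}_{\partial}$, one for each boundary component $\partial$. For each such class $c$ the group-like element $Z_c$ is already central, and its $N$-th power $Z_c^N$ lies in $\mathbb{C}[\mathcal{X}^{\sigma}_{\mathbb{C}^*}(\hat{\mathbf{\Sigma}})]$, where it equals the holonomy of $[\rho^{ab}]$ around $\hat{\gamma}_p$ (resp. the monodromy of $[\rho^{ab}]$ along $\hat{\alpha}_{\partial}$). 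Consequently, once $[\rho^{ab}]$ is fixed, a point of the fiber of $\pi$ over it is exactly a choice, independently for each inner puncture $p$, of an $N$-th root $h_p$ of that holonomy, and for each boundary component $\partial$, of an $N$-th root $h_{\partial}$ of that monodromy --- and these are the only constraints. This identifies the fiber with the data in items (2) and (3); transporting back through $\Phi_{\ell}$ merely relabels this data, and since the center is an isomorphism invariant, isomorphic simple modules produce the same triple, which completes the classification.

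The main obstacle is the third step: identifying the mod-$N$ radical of the intersection form on $\hat{\mathbf{\Sigma}}$, restricted to the balanced lattice, precisely with the span of the peripheral curves $\hat{\gamma}_p$ and the boundary arcs $\hat{\alpha}_{\partial}$, and verifying that $Z_c^N$ coincides with the expected holonomy inside $\mathbb{C}[\mathcal{X}^{\sigma}_{\mathbb{C}^*}(\hat{\mathbf{\Sigma}})]$. This demands a careful analysis of the quantum-torus presentation of $\eqSkpre(\hat{\mathbf{\Sigma}})$ attached to the triangulation, with additional care near the boundary because of the stated-skein relations there and because the balancedness condition cuts out a sublattice of non-maximal rank.
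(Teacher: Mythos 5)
Your overall strategy --- transporting through $\Phi_{\ell}$, viewing $\eqSkpre(\hat{\mathbf{\Sigma}})$ as a quantum torus over the lattice $\Lambda=\mathrm{H}_1^{\sigma}(\hat{\Sigma}_{\hat{\mathcal{P}}\cup\hat{B}},\partial\hat{\Sigma}_{\hat{\mathcal{P}}\cup\hat{B}};\Z)$, invoking the Azumaya property at an odd root of unity, and reading the classification off $\Specmax$ of the center --- is a legitimate and genuinely different route from the paper's. The paper does not argue globally: it decomposes the involutive surface into the basic pieces $\mathbb{S},\mathbb{E}_1,\mathbb{E}_2,\mathbb{P}_n$ (Lemma \ref{lemma_decomp_invol_surface}), proves a Morita equivalence with a tensor product of elementary algebras (Propositions \ref{propgluinginvolutive} and \ref{propdecomposition}, via the lattice-index-two Lemma \ref{lemma_preliminary}), classifies the simple modules of each elementary factor by hand (Lemma \ref{lemmairrepelementary}), and then tracks the central generators $Z_i^N$, $H_p$, $H_{\partial}$ back through $\Theta_{\ell}$. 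Your approach, if completed, would be shorter and would make the role of the mod-$N$ radical of the intersection form conceptually transparent; the paper's approach buys the explicit dimension formula $N^{\hat g-g+d_{\partial}}$ and the Morita-equivalence statement reused later for local representations.

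The genuine gap is the one you yourself flag as ``the main obstacle'': the identification of $\Lambda_0:=\{x\in\Lambda \mid B(x,\cdot)\equiv 0 \pmod N\}$ with $N\Lambda+\langle \hat{\gamma}_p,\hat{\alpha}_{\partial}\rangle$, together with the assertion that $\Lambda_0/N\Lambda$ is \emph{freely} generated over $\Z/N\Z$ by these classes (no extra radical elements, no relations among them, each of exact order $N$). This is not a routine verification to be deferred --- it \emph{is} the content of items (2) and (3) of the theorem, and it is precisely what the paper's entire decomposition machinery exists to establish. The difficulties are real: the relative intersection form takes half-integer values near the boundary, the $\hat{\alpha}_{\partial}$ are unions of arcs rather than curves, the peripheral classes on the base surface satisfy a homological relation whose fate in the anti-invariant part of the covering must be checked (the branch-point loops, which do not contribute invariants, enter that relation), and independence mod $N\Lambda$ must be verified for all $N$ simultaneously, i.e.\ over $\Z$. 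Without this computation your argument establishes semisimplicity and equidimensionality of the simple modules but not the classification. A secondary, smaller caveat: ``Azumaya over an affine center'' does not by itself give semisimplicity of the algebra in the naive sense (already $\C[x^{\pm1}]$ has non-semisimple finite-dimensional modules), so you should state, as the paper implicitly does, in which sense modules decompose; this does not affect the classification of \emph{simple} modules by $\Specmax(Z)$, which is correct.
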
 

\par 
Note that, by a theorem of De Concini and Procesi, the balanced Chekhov-Fock algebras at roots of unity (not necessary odd) are Azumaya of constant rank, hence they are semi-simple, their simple modules all have the same dimension and the set of their isomorphism classes is in bijection with the set of characters over the center of $\mathcal{Z}_{\omega}(\mathbf{\Sigma}, \Delta)$. With Theorem \ref{theorem2}, however, we provide a characterization of this center and compute the dimension of the simple modules. It will follow from  Proposition \ref{propdecomposition} which exhibits a tensor decomposition of  $\mathcal{Z}_{\omega}(\mathbf{\Sigma}, \Delta)$ into simpler quantum tori. We will not use the De Concini-Procesi theorem since the simple modules of those elementary algebras can be easily explicited.
\par 
An  important class of representations of the balanced Chekhov-Fock algebras are the local representations defined in \cite{BonahonBaiLiuLocalRep}. They are classified, up to isomorphism, by a classical shadow $[\rho^{ab}]\in \mathcal{X}_{\mathbb{C}^*}^{\sigma}(\mathbf{\hat{\Sigma}})$ and a complex number $h_C \in \mathbb{C}^*$ named its \textit{central charge}. The following corollary was proven by Toulisse in \cite{Toulisse16} when $\Sigma$ is a closed surface.

\begin{corollary}\label{corollary1}
Let $\omega$ be a root of unity of odd order $N>1$ and consider $W$ a local representation of the balanced Chekhov-Fock algebra $\mathcal{Z}_{\omega}(\mathbf{\Sigma}, \Delta)$ with classical shadow $[\rho^{ab}]$ and central charge $h_C$ of an arbitrary punctured surface $\mathbf{\Sigma}$. The set of isomorphism classes of  simple submodules of $W$ is the set of isomorphism classes of irreducible representations with classical shadow $[\rho^{ab}]$ and whose product of every boundary and inner puncture invariants is equal to $h_C$. Moreover, each simple summand arises with multiplicity $N^g$, where $g$ denotes the genus of $\Sigma$.
\end{corollary}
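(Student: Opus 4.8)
The plan is to decompose $W$ as a module over the centre of $\mathcal{Z}_{\omega}(\mathbf{\Sigma},\Delta)$, using the semi-simplicity from Theorem \ref{theorem2}. Recall that a local representation is, by construction, a tensor product $W=\bigotimes_{t}W_{t}$ indexed by the triangles $t$ of $\Delta$, each factor $W_{t}$ being the $N$-dimensional irreducible representation of the balanced Chekhov--Fock algebra of the triangle; hence $\dim W=N^{F}$, where $F$ denotes the number of triangles of $\Delta$. Moreover $\mathcal{Z}_{\omega}(\mathbf{\Sigma},\Delta)$ embeds into the tensor product over $t$ of these triangle algebras, compatibly with this description of $W$. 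By Theorem \ref{theorem2} the algebra is semi-simple, its simple modules $V_{[\rho^{ab}],(h_{p}),(h_{\partial})}$ all have the same dimension $N^{d}$ with $d$ given by Theorem \ref{prop_classif_irrep}, so decomposing $W$ amounts to determining which central characters occur and with what multiplicity.

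First I would identify which simple modules appear. Write $s$ for the number of inner punctures of $\mathbf{\Sigma}$ and $b$ for the number of its boundary components. The central inclusion $\mathbb{C}[\mathcal{X}_{\mathbb{C}^{*}}^{\sigma}(\hat{\mathbf{\Sigma}})]\hookrightarrow \mathcal{Z}_{\omega}(\mathbf{\Sigma},\Delta)$ acts on $W$ through the character defining the classical shadow of $W$; since this shadow is $[\rho^{ab}]$, every simple submodule of $W$ has classical shadow $[\rho^{ab}]$. Next, each puncture element $H_{p}$ and each boundary element $H_{\partial}$ is central and acts on $V_{[\rho^{ab}],(h_{p}),(h_{\partial})}$ by $h_{p}$, resp. $h_{\partial}$, while $H_{p}^{N}$, resp. $H_{\partial}^{N}$, lies in the image of $\mathbb{C}[\mathcal{X}_{\mathbb{C}^{*}}^{\sigma}(\hat{\mathbf{\Sigma}})]$. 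By definition of the central charge, the central element $\prod_{p}H_{p}\cdot\prod_{\partial}H_{\partial}$ acts on $W$ by the scalar $h_{C}$, so a simple module with shadow $[\rho^{ab}]$ can occur in $W$ only if $\prod_{p}h_{p}\cdot\prod_{\partial}h_{\partial}=h_{C}$; this establishes the first assertion of the corollary, the converse inclusion being a consequence of the count below.

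It remains to show that every such simple module occurs with a common multiplicity $m$, and that $m=N^{g}$. For the common multiplicity, I would use the automorphisms of $\mathcal{Z}_{\omega}(\mathbf{\Sigma},\Delta)$ that rescale each edge variable by an $N$-th root of unity; these arise by restriction of the analogous rescalings on the ambient tensor product of triangle algebras, which can be carried out triangle by triangle and hence independently on each edge. Such an automorphism fixes every $X_{e}^{N}$, hence fixes the classical shadow, and multiplies each $H_{p}$ and $H_{\partial}$ by a root of unity; the subgroup fixing $\prod_{p}H_{p}\prod_{\partial}H_{\partial}$ acts transitively on the set of tuples with $\prod_{p}h_{p}\prod_{\partial}h_{\partial}=h_{C}$, because the incidence map from edge rescalings to $(\mu_{N})^{b+s}$ (the rescalings of the $s$ puncture and $b$ boundary invariants) is, modulo $N$, onto the subgroup of tuples of product $1$ --- here the oddness of $N$ is used. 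Since $W$ depends, up to isomorphism, only on $[\rho^{ab}]$ and $h_{C}$, precomposing by such an automorphism yields an isomorphic representation, so these automorphisms permute the simple submodules of $W$ transitively, and the multiplicity $m$ is independent of $(h_{p}),(h_{\partial})$. (Alternatively, one may apply the double centralizer theorem to the action on the irreducible module $W$ over the tensor product of triangle algebras, identifying the centralizer of $\mathcal{Z}_{\omega}(\mathbf{\Sigma},\Delta)$ as a quantum torus all of whose irreducible modules have dimension $N^{g}$.)

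Finally, a dimension count. With the classical shadow fixed, the number of admissible tuples $(h_{p}),(h_{\partial})$ with $\prod_{p}h_{p}\prod_{\partial}h_{\partial}=h_{C}$ is $N^{b+s-1}$, so
\[
N^{F}=\dim W=m\cdot N^{b+s-1}\cdot N^{d},
\]
whence $m=N^{\,F-(b+s-1)-d}$; one then checks $F-(b+s-1)-d=g$ from the explicit value of $d$ in Theorem \ref{prop_classif_irrep} together with the Euler-characteristic relation for an ideal triangulation of $\Sigma$ (for a closed surface of genus $g$ with $n$ punctures this reads $(4g-4+2n)-(n-1)-(3g-3+n)=g$, recovering Toulisse's theorem). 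I expect the main obstacle to be the transitivity claim of the third paragraph --- equivalently, the surjectivity modulo $N$ of the incidence map between edge rescalings and the puncture/boundary invariants, i.e. the statement that the $(b+s)\times(b+s)$ minors of the corresponding integer incidence matrix have a power of $2$ as their greatest common divisor --- together with making the open-surface bookkeeping (the value of $d$ and the count $N^{b+s-1}$) precise in the degenerate cases.
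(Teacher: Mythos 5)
Your overall skeleton agrees with the paper's: first cut down the possible simple summands by evaluating the central character (shadow $[\rho^{ab}]$ forced by the central inclusion, and the product constraint forced by $H_c=\prod_p H_p\cdot\prod_\partial H_\partial$), then show all admissible simple modules occur with a common multiplicity, then extract $N^g$ by comparing $\dim W=N^{|F(\Delta)|}$ with the dimension formula of Theorem \ref{prop_classif_irrep} and the count of admissible central characters. The first and last steps are fine. The problem is the middle step, and you have flagged it yourself: your equal-multiplicity argument rests on the claim that the group of edge rescalings $Z_e\mapsto\zeta_e Z_e$ fixing $H_c$ acts transitively on the tuples $(h_p),(h_\partial)$ with fixed product, i.e.\ on the surjectivity modulo $N$ of the integer incidence matrix $(k_p(e))$. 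You do not prove this, and it is not obvious (it amounts to the assertion that the maximal minors of that matrix have $2$-power gcd for an arbitrary ideal triangulation); moreover your argument additionally invokes Toulisse's classification of local representations by shadow and central charge to conclude $W\cong W\circ\phi$, a result the paper only quotes (and whose validity for open surfaces is exactly the sort of thing this corollary is meant to extend). As written, the proposal therefore has a genuine gap at its central step.

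The paper avoids this entirely with a local, representation-theoretic argument: for each inner puncture $p$ (resp.\ boundary component $\partial$) it exhibits an explicit element $X$ supported in a \emph{single} triangle algebra $\mathcal{Z}_{\omega}(\mathbb{T}_0)$ --- namely $X=[Z_{e_1}Z_{e_2}]$ or $Z_{e_1}^2$ for suitably chosen edges --- satisfying $X\cdot i^{\Delta}(H_p)=\omega^{\pm 2}\,i^{\Delta}(H_p)\cdot X$ (and similarly for $H_\partial$, with exponent $\pm 2$, $\pm 4$ or $\pm 8$). The invertible operator $\overline{r}(X)$ then carries the $\lambda$-eigenspace of $r(H_p)$ isomorphically onto the $\lambda\omega^{\pm2}$-eigenspace; since $\omega^{2}$ has order $N$ ($N$ odd), all $N$ eigenspaces of $r(H_p)$ are equidimensional. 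This is exactly the "transitivity" you need, but realized by inner automorphisms of the ambient tensor product of triangle algebras, so no global surjectivity over $\mathbb{Z}/N\mathbb{Z}$ and no appeal to the classification of local representations is required. If you replace your rescaling-automorphism paragraph by this eigenspace-shifting argument (checking the degenerate cases where $H_p$ or $H_\partial$ equals $H_c$ separately, as the paper does), the rest of your proof, including the dimension count yielding $m=N^{g}$, goes through.
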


We define the (algebraic) non-abelianization map $\mathcal{NA} :  \mathcal{X}_{\mathbb{C}^*}^{\sigma}(\mathbf{\hat{\Sigma}}) \rightarrow \mathcal{X}_{\SL_2}(\mathbf{\Sigma})$ as the regular morphism associated to the algebra morphism 
$$ \mathcal{NA}^* : \mathcal{O}[\mathcal{X}_{\SL_2}(\mathbf{\Sigma})] \xrightarrow[\cong]{\Psi} \mathcal{S}_{+1}^{\SL_2}(\mathbf{\Sigma}) \xrightarrow{\tr_{+1}} \mathcal{Z}_{+1}(\mathbf{\Sigma}, \Delta) \xrightarrow[\cong]{\Phi_{\ell}} \mathcal{S}_{+1}^{\mathbb{C}^*, \sigma}(\mathbf{\hat{\Sigma}}) \xrightarrow[\cong]{} \mathcal{O}[\mathcal{X}_{\mathbb{C}^*}^{\sigma}(\mathbf{\hat{\Sigma}})]. $$
 One motivation for the study of $\mathcal{NA}$ is the following. When $\omega$ is a root of unity of odd order, a simple module  for $\CF$ has both an abelian classical shadow $[\rho^{ab}]\in \mathcal{X}_{\mathbb{C}^*}^{\sigma}(\mathbf{\hat{\Sigma}})$ and a non-abelian classical shadow $[\rho]\in \mathcal{X}_{\SL_2}(\mathbf{\Sigma})$ which are related by $\mathcal{NA}( [\rho^{ab}])=[\rho]$.

A common feature between our non-abelianization map and the construction in \cite{GMN13, HollandsNeitzke} is the following. For each inner edge $e$ of the triangulation $\Delta$, we define an oriented closed curve $\gamma_e$ in $\mathbf{\hat{\Sigma}}$ such that if  $\mathcal{NA}( [\rho^{ab}])=[\rho]$, then the holonomy of $\rho^{ab}$ along $\gamma_e$ is equal to the shear-bend coordinate of $[\rho]$ associated to $e$ (see Proposition \ref{prop:sb1}). Hence the non-abelianization can be thought of as a "balanced" shear bend parametrization. 

\vspace{2mm}

 Suppose that $\mathbf{\Sigma}$ is closed with at least one puncture; let $\mathcal{P}$ be the set of punctures.  
 For each  $p\in \mathcal{P}$, consider a small curve $\gamma_p$ of $\mathbf{\Sigma}$ around $p$. 
 Let $c: \mathcal{P} \rightarrow \mathbb{C}$ be a map and call \textit{relative} character variety the sub-variety $\mathcal{X}_{\SL_2}(\mathbf{\Sigma}, c) \subset \mathcal{X}_{\SL_2}(\mathbf{\Sigma})$ of classes of representations $\rho$ such that $\tr(\rho(\gamma_{p}))=c(p)$ for all puncture $p$. 
 Each puncture $p$ lifts to two punctures $\hat{p}_1$ and $\hat{p}_2$ in the double branched covering $\mathbf{\hat{\Sigma}}$. To each map  $\hat{c} : \mathcal{\hat{P}} \rightarrow \mathbb{C}^*$ such that $c(p)= \hat{c}(\hat{p}_1) + \hat{c}(\hat{p}_2)$ for any $p\in \mathcal{P}$ one can consider the  relative equivariant character variety $\mathcal{X}_{\mathbb{C}^*}^{\sigma}(\hat{\mathbf{\Sigma}}, \hat{c})$. 
 The non-abelianization map induces, by restriction, a regular morphism $\mathcal{NA}  : \mathcal{X}_{\mathbb{C}^*}^{\sigma}(\mathbf{\hat{\Sigma}}, \hat{c}) \rightarrow \mathcal{X}_{\SL_2}(\mathbf{\Sigma}, c)$.

\begin{theorem}\label{theorem3}
If $c(p)\neq \pm 2$ for every $p\in \mathcal{P}$, then the colored non-abelianization map $\mathcal{NA}  : \mathcal{X}_{\mathbb{C}^*}^{\sigma}(\mathbf{\hat{\Sigma}}, \hat{c}) \rightarrow \mathcal{X}_{\SL_2}(\mathbf{\Sigma}, c)$ is a birational symplectomorphism.
\end{theorem}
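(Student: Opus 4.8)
The plan is to establish the birational symplectomorphism by a dimension-count together with a generic-injectivity argument, using the shear-bend coordinates that were already related to the holonomies of $\rho^{ab}$ along the curves $\gamma_e$ in Proposition \ref{prop:sb1}. First I would recall that, for a closed surface $\Sigma$ of genus $g$ with $n=|\mathcal{P}|$ punctures and the fixed boundary data $c$ (with $c(p)\neq\pm 2$), the relative character variety $\mathcal{X}_{\SL_2}(\mathbf{\Sigma},c)$ is a symplectic affine variety of dimension $6g-6+2n$, and that on the locus where the small loops $\rho(\gamma_p)$ are all diagonalizable with eigenvalues $\neq\pm 1$ — which is exactly the condition $c(p)\neq\pm 2$ — the shear-bend (Thurston–Fock) coordinates associated to the inner edges of $\Delta$ give a rational parametrization; these are logarithmic Darboux coordinates for the Atiyah–Bott–Goldman symplectic form, the pairing being the intersection form on edges. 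On the abelian side, $\mathcal{X}^{\sigma}_{\mathbb{C}^*}(\hat{\mathbf{\Sigma}},\hat{c})$ is (an open subvariety of) a quotient of $H^1$ of the covering with coefficients in $\mathbb{C}^*$, cut out by the equivariance condition and the relative condition $\hat c$, hence also a smooth affine variety; I would compute its dimension via Euler-characteristic/covering-space bookkeeping for $\hat{\mathbf{\Sigma}}$ (one branch point per triangle) and check it equals $6g-6+2n$ as well — this matching is forced by Theorem \ref{theorem1} together with the fact that $\Phi_\ell$ and $\tr_{+1}$ and $\Psi$ are all birational/injective at the classical level, but I would also want the direct topological computation as a sanity check.

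The core of the argument is then: $\mathcal{NA}^*$ is, by construction, the composite $\Psi$, $\tr_{+1}$, $\Phi_\ell$ and the final identification with $\mathbb{C}[\mathcal{X}^{\sigma}_{\mathbb{C}^*}(\hat{\mathbf{\Sigma}})]$; $\Psi$ and $\Phi_\ell$ are isomorphisms, so the only thing to analyze is $\tr_{+1}$ at $\omega=+1$. I would use Proposition \ref{prop:sb1} to identify, under $\mathcal{NA}$, the shear-bend coordinate $x_e$ of $[\rho]$ with the holonomy of $\rho^{ab}$ along $\gamma_e$. Since the $x_e$ (over inner edges) form a rational coordinate system on a Zariski-dense open set of $\mathcal{X}_{\SL_2}(\mathbf{\Sigma},c)$ — this is classical Fock–Thurston theory once $c(p)\neq\pm 2$ — and since the classes $[\gamma_e]$ of these lifted edge-curves generate (after imposing $\sigma$-equivariance and the relative constraint $\hat c$) the coordinate ring of a dense open subset of $\mathcal{X}^{\sigma}_{\mathbb{C}^*}(\hat{\mathbf{\Sigma}},\hat c)$, the map $\mathcal{NA}^*$ becomes, in these coordinates, an isomorphism of localized coordinate rings. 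That gives birationality. For the symplectic statement I would invoke that $\tr_{+1}$ is a Poisson morphism (this is essentially the $\omega\to 1$ limit of Bonahon–Wong, recorded in the references \cite{KojuQuesneyClassicalShadows, BonahonWongqTrace}; the Chekhov–Fock Poisson structure on the $x_e$ is the intersection form), that $\Psi$ and $\Phi_\ell$ are Poisson isomorphisms, and that the shear-bend coordinates are log-Darboux on both sides with the \emph{same} intersection matrix — so the generically defined $\mathcal{NA}$ intertwines the two symplectic forms, i.e.\ is a birational symplectomorphism.

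Concretely the steps, in order, would be: (1) fix notation for $\mathbf{\hat{\Sigma}}$, the curves $\gamma_e$, and write down the two relative character varieties as affine varieties with their Poisson/symplectic structures; (2) compute $\dim \mathcal{X}^{\sigma}_{\mathbb{C}^*}(\hat{\mathbf{\Sigma}},\hat c) = 6g-6+2n = \dim \mathcal{X}_{\SL_2}(\mathbf{\Sigma},c)$; (3) recall that for $c(p)\neq\pm 2$ the shear-bend coordinates $\{x_e\}_{e\ \mathrm{inner}}$ are a birational chart on $\mathcal{X}_{\SL_2}(\mathbf{\Sigma},c)$, with Goldman bracket $\{x_e,x_{e'}\}=\langle e,e'\rangle\, x_e x_{e'}$; (4) use Proposition \ref{prop:sb1} to show $\mathcal{NA}^*(x_e)=[\gamma_e]$ and that $\{[\gamma_e]\}$ is a birational chart on $\mathcal{X}^{\sigma}_{\mathbb{C}^*}(\hat{\mathbf{\Sigma}},\hat c)$ with the same bracket; (5) conclude $\mathcal{NA}$ is birational and, being the restriction of a Poisson morphism between symplectic varieties of equal dimension that is a local iso on a dense open, is a birational symplectomorphism. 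The main obstacle I expect is step (4): proving that the $\sigma$-equivariant classes $[\gamma_e]$ really do cut out a dense open \emph{and} that the relative constraint $\hat c$ interacts correctly — one must check that imposing $\hat c(\hat p_1)+\hat c(\hat p_2)=c(p)$ downstairs matches the peripheral relations among the $x_e$, and that no extra equivariance relation collapses the chart; this is where the hypothesis $c(p)\neq\pm 2$ (equivalently $\hat c(\hat p_i)\neq \pm1$, non-parabolic peripheral holonomy) is genuinely used, since at parabolic values the shear-bend chart degenerates on the non-abelian side and the count fails.
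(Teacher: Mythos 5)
Your overall architecture (identify $\mathcal{NA}^*$ on shear-bend coordinates via Proposition \ref{prop:sb1}, get birationality from matching charts, get the symplectic statement from the Poisson property of $\tr_{+1}$ plus equal dimensions) is close in spirit to the paper, but step (3)--(4) of your plan contains a genuine gap that the paper has to work around with an argument you do not supply. The shear-bend coordinates $\{x_e\}$ are \emph{not} a birational chart on $\mathcal{X}_{\SL_2}(\mathbf{\Sigma},c)$: the parametrization $\SB$ of Bonahon--Liu takes values in the \emph{enhanced $\PSL_2$} moduli space, so the functions $x_e$ pulled back to the $\SL_2$ relative character variety have generic fibers equal to orbits of the free action of $\mathrm{H}^1(\Sigma;\mathbb{Z}/2\mathbb{Z})$ (sign-twisting by characters $\pi_1\to\{\pm1\}$), i.e.\ the chart map $\mathcal{X}_{\SL_2}(\mathbf{\Sigma},c)\dashrightarrow(\mathbb{C}^*)^{\mathcal{E}(\Delta)}$ has degree $2^{2g}$, not $1$. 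Symmetrically, on the abelian side the subalgebra $\mathbb{C}[\gamma_e^{\pm1}]$ is exactly the kernel of the map $f_2$ to $\mathbb{C}[\mathrm{H}_1(\Sigma_{\mathcal{P}};\mathbb{Z}/2\mathbb{Z})]$, i.e.\ the invariants of a free $\mathrm{H}^1(\Sigma_{\mathcal{P}};\mathbb{Z}/2\mathbb{Z})$-action, so the classes $[\gamma_e]$ also fail to separate points on $\mathcal{X}^{\sigma}_{\mathbb{C}^*}(\hat{\mathbf{\Sigma}},\hat c)$ by the same degree. You flag the worry that ``extra equivariance relations collapse the chart,'' but they genuinely do collapse it, on both sides, and by the same finite group; so your claimed ``isomorphism of localized coordinate rings'' is in fact only an isomorphism between the two rings of invariants, which by itself does not yield injectivity of $\mathcal{NA}$.

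The missing ingredient is precisely the paper's Section on the $\mathrm{H}^1(\Sigma_{\mathcal{P}};\mathbb{Z}/2\mathbb{Z})$-actions: one defines free actions $\nabla_1$, $\nabla_2$ on $\mathcal{X}_{\SL_2}(\mathbf{\Sigma})$ and $\mathcal{X}^{\sigma}_{\mathbb{C}^*}(\hat{\mathbf{\Sigma}})$, checks (using that every admissible lift $\hat\gamma$ projects to $\gamma$) that $\mathcal{NA}$ is equivariant, restricts to the subgroup $\mathrm{H}^1(\Sigma;\mathbb{Z}/2\mathbb{Z})$ preserving the relative conditions, and then observes that an equivariant surjection between two free actions of the same finite group is injective if and only if the induced map on quotients is. The quotient map is identified with $\SB$ via Proposition \ref{prop:sb1} (Corollary \ref{prop_sb}), whose injectivity is Bonahon--Liu's theorem; injectivity of $\mathcal{NA}$ on $\mathcal{U}_1$ then lifts back through the free actions. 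Your dimension count and the Poisson/symplectic part are fine and match the paper (which also deduces the symplectic statement from $\nab_{+1,\ell}$ being the mod-$\hbar$ reduction of an algebra morphism), but without the descent-through-free-actions step the injectivity claim, and hence birationality, is not established.
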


\par Note that the geometric non-abelianization map $\mathcal{NA}^{HN} : \mathcal{M}_{\mathbb{C}^*}^{\sigma}(\hat{\mathbf{\Sigma}}) \xrightarrow{\cong} \mathcal{M}_{\SL_2}^{fr}(\mathbf{\Sigma})$ in \cite{GMN13, HollandsNeitzke} is a bijection with value in the moduli space of framed $\SL_2$ flat connections. Here the framing refers to a generalization of Penner's notion of enhancement of the Teichm\"uller space and were already considered by various authors including the ones in \cite{BonahonLiu, BonahonWong3, FockGoncharovClusterVariety}. There is a natural map ${forget} : \mathcal{M}_{\SL_2}^{fr}(\mathbf{\Sigma}) \rightarrow \mathcal{M}_{\SL_2}(\mathbf{\Sigma})$ which consists in forgetting the framing, so by composition we get a map $\underline{\mathcal{NA}}^{HN}= {forget} \circ \mathcal{NA}^{HN} : \mathcal{M}_{\mathbb{C}^*}^{\sigma}(\hat{\mathbf{\Sigma}}) \rightarrow \mathcal{M}_{\SL_2}(\mathbf{\Sigma})$  which can be compared to our algebraic non-abelianization map. 
The forget map ${forget} : \mathcal{M}_{\SL_2}^{fr}(\mathbf{\Sigma}) \rightarrow \mathcal{M}_{\SL_2}(\mathbf{\Sigma})$ is neither injective nor surjective, so is the map $\underline{\mathcal{NA}}^{HN}$. Said differently, not every $\SL_2$-flat connection admit a framing and, when it does, the framing is not unique and it is not known how far is $\underline{\mathcal{NA}}^{HN}$ from being bijective. Concerning the image of ${forget}$ (and thus of $\underline{\mathcal{NA}}^{HN}$), Banahon-Liu (\cite{BonahonLiu}) and Bonahon-Wong (\cite{BonahonWong3}) gave some partial results that are summarized in Section 3. Theorem \ref{theorem3} gives a different kind of answer in the algebraic context: there exist open Zariski dense subsets $\mathcal{U} \subset \mathcal{X}_{\mathbb{C}^*}^{\sigma}(\mathbf{\hat{\Sigma}}, \hat{c})$ and $\mathcal{V} \subset \mathcal{X}_{\SL_2}(\mathbf{\Sigma}, c)$ such that $\mathcal{NA} : \mathcal{U} \rightarrow \mathcal{V}$ is an isomorphism.

\vspace{2mm}
\par We conclude the introduction with two comments. First, the variety $\mathcal{X}_{\mathbb{C}^*}^{\sigma}(\mathbf{\hat{\Sigma}}, \hat{c})$ is a smooth torus. The authors believe (but were not able to prove) that the colored non-abelianization map in Theorem \ref{theorem3} is a resolution of singularities (so is proper). Such a resolution would be a powerful tool to compute the E-polynomial of  $\mathcal{X}_{\SL_2}(\mathbf{\Sigma}, c)$ (\textit{i.e.} the Hodge numbers of the variety). The computation of these E-polynomials is only known when $\mathcal{P}$ is empty or has cardinality $1$ or $2$ (see \cite{LogaresMunoz, MartinezMunoz}). A second possible outcome of this paper is the following. The balanced Chekhov-Fock algebra, and thus the quantum trace map, only exist when $\mathcal{P}$ is non-empty. However given a closed surface $\Sigma$ without puncture, Hollands and Neitzke considered in \cite{HollandsNeitzke} a non-abelianization map associated to a pants decomposition $P$.  In Section $4$, we formulate a conjecture concerning the existence of a quantum non-abelianization map that would replace the quantum trace for unpunctured surfaces.

\vspace{5mm}
\par \textbf{Organisation of the paper}
\vspace{3mm}
\par 
In Section 2, we define the involutive punctured surfaces and show that they decompose into basic involutive punctured surfaces. We recall the definition of the $2$-fold branched covering associated to a triangulated punctured surface; it is the main example of involutive punctured surfaces that we consider.    
	 We then introduce the equivariant $\mathbb{C}^*$ skein algebras and equivariant $\mathbb{C}^*$ character varieties and prove Theorem \ref{theorem1}. 
We then investigate how the $\mathbb{C}^*$ skein algebra behaves under the decomposition of involutive punctured surfaces; we use this to prove Theorem \ref{theorem2}. 
We then prove Corollary \ref{corollary1}. 
\\
In Section 3, after a brief review  on the  stated skein algebra and the quantum trace map, we define the non-abelianization map and relate it with the shear-bend coordinates. 
The proof of Theorem \ref{theorem3} will follow from this relation.

 \begin{notation}  	We reserve the notation $q:=\omega^{-4}$. 
 \end{notation}

 \section{Equivariant $\mathbb{C}^*$ skein algebras and character varieties}

 \subsection{Involutive punctured surfaces}
 
 \begin{definition}
 A \textit{punctured surface} is a pair $\mathbf{\Sigma}=(\Sigma,\mathcal{P})$ where $\Sigma$ is a compact oriented surface (possibly with boundary) and $\mathcal{P}\subset \Sigma$ is a finite subset which intersects non-trivially each boundary component. The elements of $\mathcal{P}$ are called \textit{punctures}. We write $\Sigma_{\mathcal{P}}:=\Sigma \setminus \mathcal{P}$. A \emph{boundary arc} is a connected component of $\partial\Sigma_{\mathcal{P}}$. 
  An \emph{isomorphism} $f: (\Sigma,\mathcal{P}) \cong (\Sigma',\mathcal{P}')$  of punctured surfaces is an orientation-preserving homeomorphism of the underlying surfaces such that $f(\mathcal{P})= \mathcal{P}'$.
\end{definition}

 \begin{definition}\label{def_involutive_surface}
	 An \textit{involutive punctured surface} is a triple $(\Sigma, \mathcal{P}, \sigma)$ where $(\Sigma, \mathcal{P})$ is a punctured surface and $\sigma : \Sigma \rightarrow \Sigma$ is an 
  orientation-preserving homeomorphism such that: 
	\begin{enumerate}
		\item $\sigma^2=\id$ and $\sigma(\mathcal{P})=\mathcal{P}$. 
		\item The set $B$ of fixed-points of $\sigma$ is finite, disjoint from $\mathcal{P}$, lies in the interior of $\Sigma$ and intersects non-trivially each connected component of $\Sigma$.  Moreover, each fixed-point $b$ is contained in a small disc disjoint of $\mathcal{P}$, that is globally preserved by $\sigma$ and that intersects the set of fixed-point only at $b$.
		\item If $\partial$ is a boundary component of $\Sigma$ such that $\sigma(\partial)=\partial$,  then: 
		\begin{enumerate}
			\item there exists a simple closed curve $\gamma_{\partial}$ parallel to $\partial$ preserved by the involution, and
			\item $\partial$ has $2(2k+1)$ punctures, for a $k\geq 0$. 
		\end{enumerate}
		\item If $\partial$ is a boundary component of $\Sigma$ which is not stable under $\sigma$,  then it contains a (non null) even number  of punctures.
	\end{enumerate}

An \emph{isomorphism} of involutive punctured surfaces is an isomorphism of punctured surfaces that is equivariant for the involutions. 

\end{definition}

\begin{remark}\label{ex1}
	Every involutive punctured surface is a $2$--fold branched covering. 
	Indeed, let $(\Sigma, \mathcal{P}, \sigma)$  be an involutive punctured surface, and let $B$ be the set of fixed-points of $\sigma$.    
	By identifying the points $x$ and $\sigma(x)$, one obtains a $2$--fold covering $\pi\co \Sigma \to \Sigma/{\sigma}$ branched at the points $\pi(B)$. 
	Note that such a covering is characterized by an obstruction in $\mathrm{H}^1((\Sigma / {\sigma})\setminus \pi(B); \mathbb{Z}/2\mathbb{Z})$. 

\end{remark}

\begin{definition}
	  The \textit{combinatorial data} of an involutive punctured surface $(\Sigma,\mathcal{P}, \sigma)$ is the collection $(g,n_{\partial}, \{s_i \}_i, n_b, \mathring{s})$ consisting  of: the genus $g$ of $\Sigma$; the number $n_{\partial}$ of its boundary components; for each component $\partial_i$, the number $s_i$ of punctures it contains; the number $n_b$  of fixed points of the involution and the number $\mathring{s}$ which is  half of the number of punctures in the interior of $\Sigma$.
\end{definition}

\begin{lemma}\label{lemmatrucmachin} Any two connected involutive punctured surfaces have the same combinatorial data if and only if they are isomorphic.
\end{lemma}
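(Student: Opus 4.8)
The statement is a classification-by-invariants result: connected involutive punctured surfaces are determined up to isomorphism by the combinatorial data $(g, n_\partial, \{s_i\}_i, n_b, \mathring{s})$. The "only if" direction is immediate, since an isomorphism of involutive punctured surfaces is in particular an orientation-preserving homeomorphism commuting with the involution, hence preserves the genus, the boundary components and their punctures, the set of fixed points, and the interior punctures (which it permutes in $\sigma$-orbits of size $2$). The substance is the "if" direction, and the plan is to realize any two such surfaces as isomorphic by building an explicit equivariant homeomorphism from a normal form.

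\emph{Step 1: pass to the quotient.} By Remark \ref{ex1}, an involutive punctured surface $(\Sigma, \mathcal{P}, \sigma)$ is the same data as a $2$-fold branched covering $\pi \colon \Sigma \to \Sigma/\sigma$ with branch locus $\pi(B)$, together with the classifying class in $\mathrm{H}^1((\Sigma/\sigma)\setminus \pi(B); \mathbb{Z}/2\mathbb{Z})$. So it suffices to show that the combinatorial data of $(\Sigma,\mathcal{P},\sigma)$ determines, up to homeomorphism of the base respecting all the structure, the triple $\bigl(\Sigma/\sigma,\ \pi(B)\cup(\text{images of punctures}),\ [\text{obstruction class}]\bigr)$. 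Concretely, I would first compute the topological type of the quotient $\Sigma/\sigma$ in terms of $(g,n_\partial,\{s_i\},n_b,\mathring s)$ via the Riemann–Hurwitz formula $\chi(\Sigma) = 2\chi(\Sigma/\sigma) - n_b$, keeping careful track of how the two kinds of boundary components (the $\sigma$-stable ones, which by axiom (3) carry $2(2k+1)$ punctures and descend to a single boundary arc picture with a half-integer number of punctures, versus the freely-swapped pairs) contribute to the boundary of $\Sigma/\sigma$. The images in the quotient of the interior punctures ($\mathring s$ of them), the branch points ($n_b$ of them), and the boundary punctures form a finite marked set, and the combinatorial data fixes its cardinality on each boundary component and in the interior.

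\emph{Step 2: reconstruct the covering and the obstruction.} The covering $\Sigma \to \Sigma/\sigma$ is classified by a homomorphism $\mathrm{H}_1((\Sigma/\sigma)\setminus \pi(B);\mathbb{Z}/2) \to \mathbb{Z}/2$, equivalently by which boundary/puncture loops and which handle generators are "odd." Here I would argue that axioms (3) and (4) pin this class down uniquely up to the mapping class group of the marked quotient surface: axiom (3b) forces the monodromy around a $\sigma$-stable boundary component to be trivial (even number of branch-type crossings) while giving $2(2k+1)$ boundary punctures there; axiom (4) forces the monodromy around an unstable boundary component to be nontrivial; the $n_b$ branch points each contribute nontrivial local monodromy; and any two cohomology classes with the same values on this canonical generating set are carried to one another by a homeomorphism of the marked quotient, by the change-of-coordinates principle for surfaces. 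This is the step I expect to be the main obstacle — one must verify that the mapping class group of the quotient (with its marked points and boundary structure) acts transitively on the set of obstruction classes compatible with the prescribed combinatorial data, i.e. that there is genuinely no further invariant hiding in how the "odd handles" sit relative to the marked points. The cleanest route is a normal-form / handle-decomposition argument: put the quotient surface in standard position and move all odd generators to a standard location by an ambient homeomorphism fixing the marked set setwise with the prescribed partition.

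\emph{Step 3: lift the homeomorphism.} Given two involutive punctured surfaces with equal combinatorial data, Steps 1–2 produce a homeomorphism between their quotients carrying marked set to marked set (respecting the partition of marked points into branch points / interior-puncture images / boundary-puncture images on matching boundary components) and carrying obstruction class to obstruction class. By the standard theory of branched covers this homeomorphism lifts to an orientation-preserving homeomorphism $\Sigma \to \Sigma'$ conjugating $\sigma$ to $\sigma'$ and sending $\mathcal{P}$ to $\mathcal{P}'$; after possibly composing with a deck transformation we may also match the chosen parallel curves $\gamma_\partial$ of axiom (3a). This is precisely an isomorphism of involutive punctured surfaces, completing the "if" direction. (One should check the small discs around fixed points in axiom (2) cause no trouble: they are determined up to equivariant isotopy, so the lift can be arranged to respect them.)
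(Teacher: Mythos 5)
Your proposal follows essentially the same route as the paper's proof: pass to the quotient, build an orientation-preserving homeomorphism of the marked quotient surfaces matching branch points, interior punctures, and boundary punctures, use the mapping class group of the marked quotient to carry one obstruction class in $\mathrm{H}^1$ of the complement of the branch locus to the other, and then lift the resulting homeomorphism to an equivariant isomorphism. The transitivity of the mapping-class-group action on the admissible obstruction classes, which you rightly flag as the crux, is exactly the step the paper also dispatches by declaring it well-known (and your restriction to classes with the prescribed peripheral values is, if anything, the more accurate formulation of that fact).
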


\begin{proof} 
One implication is tautological. For the other one, let  $({\Sigma}_1, \mathcal{P}_1, \sigma_1)$ and $({\Sigma}_2, \mathcal{P}_2, \sigma_2)$ be two involutive punctured surfaces with the same combinatorial data. 
		There exists  a homeomorphism $\phi$ between $S_1:=\Sigma_1/{\sigma_1}$ and $S_2:=\Sigma_2/{\sigma_2}$ which: is orientation-preserving; sends the branched points to the branched points; sends the inner punctures to the inner punctures; and, sends a boundary component with $s_i$ punctures to a boundary component with $s_i$ punctures by preserving the punctures. 
For $i=1,2$, denote by  $o_i \in \mathrm{H}^1(S_i \setminus \pi_i(B_i); \mathbb{Z}/2\mathbb{Z})$ their obstructions as in Remark \ref{ex1}. 
For each branched point $b$, let $[\gamma_b]$ in $ \in \mathrm{H}_1(S_1 \setminus \pi_1(B_1); \mathbb{Z}/2\mathbb{Z})$ be the class of a simple closed curve encircling $b$. 
By definition of the coverings, both $o_1$ and $\phi^* o_2$ send the classes $[\gamma_b]$ to $1\in \mathbb{Z}/2\mathbb{Z}=\{0,1\}$. 
In particular both classes are not null since the set of branched points is non-empty. For each inner puncture $p$ and for each boundary component $\partial$, choose simple closed curves $\gamma_p$ encircling $p$ and $\gamma_{\partial}$ parallel to $\partial$ respectively and note that both $o_1$ and $\phi^* o_2$ send the classes $[\gamma_p]$ and $[\gamma_{\partial}]$ to $0\in \mathbb{Z}/2\mathbb{Z}=\{0,1\}$. Let $H \subset \mathrm{H}^1(S_1\setminus \pi_1(B_1); \mathbb{Z}/2\mathbb{Z})$ be the subset of classes sending the $\gamma_b$'s to $1$ and the $\gamma_p$'s and $\gamma_{\partial}$ to $0$.
The mapping class group $G$ of mapping classes of $S_1$ that  preserve the branched points and the punctures and that are equal to the identity on the boundary acts on  $H$. 
\vspace{2mm} \par  \textbf{Claim}:  the group $G$ acts transitively on $H$.

Indeed, suppose that $S_1$ has genus $g$ and write $\left(\cdot, \cdot \right)$ the intersection form modulo $2$. Let $\{\alpha_1, \beta_1, \ldots, \alpha_g, \beta_g, \gamma_1, \ldots \gamma_n\}$ be a basis of $\mathrm{H}_1(S_1\setminus \pi_1(B_1); \mathbb{Z}/2\mathbb{Z})$ where the $\gamma_i$ are homology classes of simple closed curves encircling the branched points, inner punctures and boundary components and $\alpha_i, \beta_i$ are such that $(\alpha_i, \alpha_j)= (\beta_i, \beta_j)=0, (\alpha_i,\beta_j)=\delta_{ij}$. Write $\mathrm{H}_1(S_1\setminus \pi_1(B_1); \mathbb{Z}/2\mathbb{Z})=X\oplus Y$ where $X$ is the $\mathbb{Z}/2\mathbb{Z}$-subspace generated by the $\alpha_i,\beta_i$ and $Y$ is generated by the $\gamma_j$. The group $G$ preserves both $X$ and $Y$,  acts as the identity on $Y$ and we need to show that its action on $X\setminus \{0\}$ is transitive. The $\mathbb{Z}/2\mathbb{Z}$ vector space $X$ with the intersection form is a $2g$-dimensional symplectic space and mapping classes preserves the intersection form so $\rho$ factors as $\rho : G \xrightarrow{\rho_1} \mathrm{Sp}_{2g}(\mathbb{Z}/2\mathbb{Z}) \xrightarrow{ \rho_2} \mathrm{GL}(X)$. The map $\rho_1$ is surjective (this follows from \cite[Theorem $6.4$]{FM}) and $\rho_2$ is the standard representation of the symplectic group $\mathrm{Sp}_{2g}(\mathbb{Z}/2\mathbb{Z})$ on $(\mathbb{Z}/2\mathbb{Z})^{2g}$ which acts transitively on the set of non zero vectors. Indeed, given $v, w \in (\mathbb{Z}/2\mathbb{Z})^{2g} \setminus \{0\}$, using the symplectic Gram-Schmidt process we can extend both $v$ and $w$ to a symplectic bases $b_v=(v, v_2, \ldots, v_{2g})$ and $b_w=(w, w_2, \ldots, w_{2g})$ and find $A \in \mathrm{Sp}_{2g}(\mathbb{Z}/2\mathbb{Z})$ sending $b_v$ to $b_w$ so sending $v$ to $w$. This proves the claim.

In particular, there exists an orientation-preserving homeomorphism $\psi\co S_1 \rightarrow S_1$ which preserves: the branched points, the punctures,  which is equal to the identity on the boundary  and which is such that $\psi^*\phi^*o_2=o_1$. The homeomorphism $\phi\circ \psi\co S_1\rightarrow S_2$ lifts to an isomorphism $ \Sigma_1\rightarrow \Sigma_2$ of punctured surfaces commuting with the involutions, hence gives an isomorphism between the involutive punctured surfaces.

\end{proof}

Let $\D$ be the unit disc in $\R^2$ endowed with the central symmetry as involution. 

\begin{definition}\label{def: sewing VEE}
	Let $(\mathbf{\Sigma}, \sigma)$ be an involutive punctured surface with set of fixed-points $B$ of cardinality $\geq 2$.   
	For $b_1,b_2\in B$, let $\veebar_{b_1,b_2}(\mathbf{\Sigma})$  be the following involutive punctured surface  
	with set of fixed-points $\veebar_{b_1,b_2}(B)=B\setminus \{b_1,b_2\}$ and set of punctures $\mathcal{P}$.  
	For $i=1,2$, choose an embedding $\phi_i\co \D\to \Sigma_{\mathcal{P}}$ that is equivariant for the involutions and that intersects $B$ only at $\phi_i(0)=b_i$; also we require that $\phi_1(\D)\cap \phi_2(\D)=\emptyset$. 
	In particular, one has  $\sigma(\partial \phi_i(\D))=\partial \phi_i(\D)$. 
	The surface $\veebar_{b_1,b_2}(\mathbf{\Sigma})$ is given by sewing  $\Sigma\setminus \left( \mathring{\phi_1(\D)} \sqcup  \mathring{\phi_2(\D)}\right)$ along the maps ${\phi_1}_{|\partial \D}$ and ${\phi_2}_{|\partial\D}$. 
\end{definition}

Let us consider the following $5$ \emph{basic} involutive punctured surfaces.  Figure \ref{figelementarycob} represents them.
\begin{enumerate}
	\item $\mathbb{S}= (\mathbb{S}^2, \{p, p'\}, \sigma)$, where $\mathbb{S}^2$  is the unit sphere of $ \mathbb{R}^3$, the punctures are  $p=(0,0,1)$ and $p'=(0,0,-1)$, and the involution is $\sigma((x,y,z))=(x,-y,-z)$. 
	\item $\mathbb{E}_1 = (\Sigma_1, \emptyset, \sigma)$, where $\Sigma_1= S^1\times S^1$ is the torus, there are no puncture, and $\sigma$ is the elliptic involution. 
	\item $\mathbb{E}_2 = (\Sigma_2, \emptyset, \sigma)$, where $\Sigma_2= \Sigma_1 \# \Sigma_1$ is a genus two surface, there are no puncture, and $\sigma$ is the involution exchanging the two copies of $\Sigma_1$. 
	\item For an odd integer $n\geq 1$, denote by $\mathbb{P}_n=(\D, \{p_1, \ldots, p_{2n} \}, \sigma)$, where $\D$ is the unit disc of $\mathbb{R}^2$,   the punctures are  $p_k = (\cos(\frac{\pi k}{n}),-\sin(\frac{\pi k}{n})) $ for $1\leq k \leq 2n$, and the involution is  $\sigma((x,y))=-(x,y)$. 
	\item For an even integer  $n\geq 2$, denote by $\mathbb{P}_n=(\mathbb{S}^1\times [-1,1], \{p_1, \ldots, p_n, p_1', \ldots, p_n'\}, \sigma)$, where the punctures are $p_k=(\cos(\frac{2\pi k}{n}),\sin(\frac{2\pi k}{n}), 1)$ and their images $p_k':=\sigma(p_k)$ for $1\leq k \leq n$, and the involution is defined by $\sigma((x,y),t)=((x,-y), -t)$.  
\end{enumerate}

   \begin{figure}[!h] 
\centerline{\includegraphics[width=8cm]{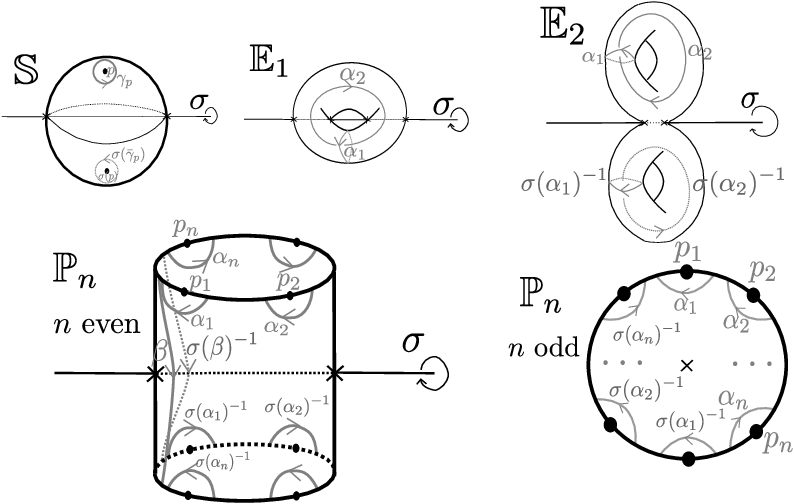} }
\caption{The basic involutive punctured surfaces. Involution is represented as a $180$-degree rotation around an axis. The dots represent punctures and the crosses are branched points.}
 \label{figelementarycob} 
\end{figure} 

\begin{lemma}\label{lemma_integers}
If $(\Sigma, \mathcal{P}, \sigma)$ is a connected  involutive punctured surface with combinatorial data $(g, n_{\partial}, \{s_i\}_{i\in I},n_b, \mathring{s})$, then  $n_1:= (n_b+n_{\partial}^{odd}-2)/2$  and $n_2:=(2g-n_b-n_{\partial}^{odd}+2)/4$ are non negative integers, where $n_{\partial}^{odd}$ denotes the number of boundary components preserved by $\sigma$.
\end{lemma}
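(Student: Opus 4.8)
The statement is purely an arithmetic/parity statement about the combinatorial data of a connected involutive punctured surface, so I expect the proof to be an exercise in counting with the Riemann–Hurwitz formula plus a careful bookkeeping of how the involution acts on boundary components and on punctures. The key observation is that $(\Sigma, \mathcal{P}, \sigma)$ is, by Remark \ref{ex1}, a $2$-fold branched covering $\pi\colon \Sigma \to S := \Sigma/\sigma$ with branch set $\pi(B)$ of cardinality $n_b$, so the Euler characteristics are related by $\chi(\Sigma) = 2\chi(S) - n_b$. Writing $\chi(\Sigma) = 2 - 2g - n_\partial$ (as $\Sigma$ has genus $g$ and $n_\partial$ boundary circles) and $\chi(S) = 2 - 2h - m$ where $h$ is the genus of $S$ and $m$ the number of its boundary circles, this gives one linear relation among $g$, $n_\partial$, $n_b$, $h$, $m$.

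**Boundary combinatorics.** The quotient $S$ has $m = n_\partial^{odd} + \tfrac{1}{2}(n_\partial - n_\partial^{odd})$ boundary circles: each of the $n_\partial^{odd}$ $\sigma$-stable boundary components descends to one boundary circle of $S$ (the involution acts on it as a reflection-free rotation by condition (3)(a), so the circle maps $2$-to-$1$ onto a circle), while the remaining $n_\partial - n_\partial^{odd}$ non-stable components (an even number, say $2a$ of them, paired up by $\sigma$) descend to $a$ boundary circles. Hence $n_\partial - n_\partial^{odd}$ is even, and $m = n_\partial^{odd} + (n_\partial - n_\partial^{odd})/2$. Substituting $\chi(\Sigma) = 2\chi(S) - n_b$:
$$
2 - 2g - n_\partial = 2\bigl(2 - 2h - n_\partial^{odd} - \tfrac{1}{2}(n_\partial - n_\partial^{odd})\bigr) - n_b,
$$
which simplifies to $2g + n_b + n_\partial^{odd} - 2 = 4h$. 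Therefore $n_2 = (2g - n_b - n_\partial^{odd} + 2)/4 = h - (n_b + n_\partial^{odd} - 2)/2 = h - n_1$; in particular it suffices to show $n_1 = (n_b + n_\partial^{odd} - 2)/2$ is a non-negative integer and that $n_1 \leq h$.

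**Integrality and non-negativity of $n_1$.** For integrality I must show $n_b + n_\partial^{odd}$ is even. This is where condition (3)(b) and the parity constraints on punctures come in, but in fact the cleanest route is the obstruction-class argument already used in Lemma \ref{lemmatrucmachin}: the double cover $\Sigma \to S$ restricted to $S \setminus \pi(B)$ is classified by a class $o \in \mathrm{H}^1(S \setminus \pi(B); \mathbb{Z}/2\mathbb{Z})$ which evaluates to $1$ on each small loop $[\gamma_b]$ around a branch point. The sum of all the branch-point loops, together with one loop around each $\sigma$-stable boundary component (on which the cover is nontrivial, again by the rotation description), is nullhomologous in $S\setminus\pi(B)$ as a whole — it bounds the complement — so pairing with $o$ forces $n_b + n_\partial^{odd} \equiv 0 \pmod 2$, giving integrality of $n_1$. (Alternatively: $n_b \equiv \chi(\Sigma) \equiv 2 - 2g - n_\partial \pmod 2$, so $n_b \equiv n_\partial \pmod 2$, and one needs $n_\partial \equiv n_\partial^{odd} \pmod 2$, which is exactly the statement that $n_\partial - n_\partial^{odd}$ is even, established above.) Non-negativity $n_1 \geq 0$, i.e. $n_b + n_\partial^{odd} \geq 2$, holds because the branch set of a nontrivial double branched cover together with the $\sigma$-stable boundary circles cannot be "too small": if $n_b + n_\partial^{odd} \leq 1$ the obstruction class would have to be trivial on all of $S\setminus\pi(B)$ forcing the cover to be trivial, contradicting connectedness of $\Sigma$ (a trivial cover of a connected $S$ is disconnected). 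Finally $n_2 \geq 0$, equivalently $n_1 \leq h$: a double cover of $S$ branched at $n_b$ points with genus of the total space $g$ satisfies, by Riemann–Hurwitz for the closed-up surfaces, $g = 2h' + n_1'$-type inequalities forcing $g \geq 2\cdot(\text{something}) $; more directly, from $2g + n_b + n_\partial^{odd} - 2 = 4h$ we get $n_1 = 2h - g$, wait — let me recompute: $4h = 2g + n_b + n_\partial^{odd} - 2$ and $2n_1 = n_b + n_\partial^{odd} - 2$, so $4h = 2g + 2n_1$, i.e. $n_1 = 2h - g$. Since $\Sigma \to S$ is a degree-two branched cover, $g \geq h$ (a branched cover does not decrease genus below that of the base when... ) — more carefully, $g = 2h - n_1$ with $n_1 \geq 0$ says $g \leq 2h$, and one needs $g \geq h$ i.e. $n_1 \leq h$; this follows because the unramified part gives a surjection $\pi_1 \to \mathbb{Z}/2$ and standard covering-space Euler-characteristic estimates bound $h$ from below by... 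I would pin this last inequality down by the explicit Riemann–Hurwitz count $\chi(\hat\Sigma) = 2\chi(\hat S) - n_b$ on the surfaces with boundary capped off, which after the substitution already done reads precisely $n_1 = 2h - g \geq 0$ is automatic and the reverse bound $h \geq n_1$ amounts to $g \geq h$, true since the quotient map in genus is non-increasing under passing to a cover only in the unramified direction — the safe statement is simply to note $n_2 = h - n_1 \geq 0$ because $n_2$ counts (half of) a genus contribution that is manifestly non-negative in the decomposition of Lemma-type results to come.

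**Main obstacle.** The routine part is the Euler-characteristic/Riemann–Hurwitz computation; the genuinely delicate point is the non-negativity of $n_2$, i.e. the inequality $n_1 \le h$ (equivalently $g \ge h$). I would handle it by the same obstruction-class bookkeeping: realize $S$ by cutting along curves so that the double cover is explicitly described, and observe that each of the $n_b + n_\partial^{odd} - 2 = 2n_1$ "extra" branch/twist points after pairing contributes a handle to $\Sigma$ lying *inside* the preimage of the genus-$h$ base, so those handles are accounted for within $2h$; this is morally the content of the decomposition into basic pieces ($\mathbb{S}$, $\mathbb{E}_1$, $\mathbb{E}_2$, $\mathbb{P}_n$) announced right before the lemma, and indeed I suspect the intended proof is: decompose $(\Sigma,\mathcal P,\sigma)$ into basic pieces, read off that each $\mathbb{E}_1$ or $\mathbb{S}$-type piece contributes to $n_1$ and each $\mathbb{E}_2$-type piece to $n_2$, whence both are non-negative integers by construction. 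That decomposition argument is the cleanest and is the approach I would ultimately write up.
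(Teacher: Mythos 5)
Your strategy---compute the genus $h$ of the quotient $S=\Sigma/\sigma$ via Riemann--Hurwitz and extract the parities from the obstruction class of Remark \ref{ex1}---is genuinely different from the paper's proof, which excises $\sigma$-invariant discs around the branch points and inner punctures so as to obtain a \emph{free} involution on a surface with boundary, and then quotes Asoh's classification theorem; that theorem directly supplies the evenness of $l_0=n_b+n_{\partial}^{odd}$ and the condition that $g+2-l_0/2$ be odd and $\geq 1$, from which both assertions follow at once. Your route could work, but as written it contains a sign error that derails it: substituting $2m=n_{\partial}+n_{\partial}^{odd}$ into $2-2g-n_{\partial}=4-4h-2m-n_b$ gives $4h=2g-n_b-n_{\partial}^{odd}+2$, that is, $n_2=h$ \emph{exactly} (check on $\mathbb{E}_1$: $g=1$, $n_b=4$, the quotient is a sphere so $h=0=n_2$, whereas your formula $n_2=h-n_1$ would give $-1$). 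With the sign corrected, integrality and non-negativity of $n_2$ are immediate, and the entire ``main obstacle'' paragraph---the inequality $n_1\leq h$, which you never actually establish---is a spurious problem created by the error. Note also that your final fallback, deducing non-negativity from the decomposition into basic pieces, would be circular: the sewing recipe preceding Lemma \ref{lemma_decomp_invol_surface} presupposes that $n_1$ and $n_2$ are non-negative integers.

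The remaining genuine gap is the non-negativity of $n_1$, i.e. $n_b+n_{\partial}^{odd}\geq 2$. Your parity argument (evaluating the obstruction class $o$ on the null-homologous sum of the branch loops and boundary circles of $S$) is sound and yields $n_b+n_{\partial}^{odd}\equiv 0 \pmod 2$, but your argument that this quantity is nonzero is not: the claim that for $n_b+n_{\partial}^{odd}\leq 1$ ``the obstruction class would have to be trivial on all of $S\setminus\pi(B)$, forcing the cover to be trivial'' is false, since a class in $\mathrm{H}^1(S\setminus\pi(B);\mathbb{Z}/2\mathbb{Z})$ can vanish on every boundary and branch loop yet be nonzero on a handle curve, producing a connected free double cover. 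This is precisely the case the paper disposes of by asserting that connectedness forces $n_b\geq 1$ or $n_{\partial}^{odd}\geq 1$ before invoking Asoh's theorem (in the paper's applications one always has $B\neq\emptyset$); some input of this kind is needed, and your proposal does not supply a valid one.
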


\begin{proof}
Consider the surface $\widetilde{\Sigma}$ obtained from $\Sigma$ by removing one disc preserved by $\sigma$ around each inner puncture and each branched point. The involution $\sigma$ restricts to an involution on $\widetilde{\Sigma}$ without fixed point. The surface $\widetilde{\Sigma}$ has $l:=n_b+n_{\partial}+2\mathring{s}$ boundary components and $l_0:=n_b+n_{\partial}^{odd}\leq l$ of them are preserved by the free involution. By Definition \ref{def_involutive_surface}, one has $n_b\geq 1$, so $l_0\geq 1$. By \cite[Theorem $1.3$]{Asoh}, a genus $g$ oriented surface has  an orientation preserving free involution with $l\geq 0$ boundary components such that $1\leq  l_0\leq l$ of them are preserved by the involution if and only if $l$ and $l_0$ are even and $g+2 l_0-\frac{l_0}{2} \geq 1$ is odd. These conditions imply that $n_1$ and $n_2$ are non negative integers.
\end{proof}

  We will show that every  involutive punctured surface  decomposes into basic ones via $\veebar$. There are various ways to sew the  basic surfaces that lead to the same result. 
We focus on one way, which is ``linear'' in the sense that it will not  produce additional genera (\ie other than those of $\mathbb{E}_1$ and $\mathbb{E}_2$) when sewing these surfaces. 

\vspace{2mm}
\paragraph{\textit{Recipe for sewing the basic surfaces.}} 
Let us consider  a connected involutive punctured surface $(\Sigma,\mathcal{P},\sigma)$ with combinatorial data $(g,n_{\partial}, \{s_i \}_{i\in I}, n_b, \mathring{s})$.  
Let us split the set $I$ of  boundary components in two types: the "odd" ones which are preserved by the involution (so  their image in ${\Sigma}/{\sigma}$ have an odd number of punctures) and the "even" ones which not preserved by $\sigma$ (whose image in ${\Sigma}/{\sigma}$ has an even number of punctures).
This gives a partition of both the indexing set $I=I^{odd}\sqcup I^{even}$ and of its cardinality $n_{\partial}=n_{\partial}^{odd}+n_{\partial}^{even}$. 

We denote by $\veebar_{Rec}\left( \mathbb{S}^{\sqcup \mathring{s}} \sqcup \mathbb{E}_1^{\sqcup n_1}\sqcup \mathbb{E}_2^{\sqcup n_2}\sqcup \bigsqcup_{i\in I} \mathbb{P}_{s_i}\right)$ the following surface. 
\begin{enumerate}
	\item We sew the $\mathring{s}$ copies of $\mathbb{S}$ two-by-two; one obtains a sphere $S_1$ with  $\mathring{s}$ inner punctures and $2$ fixed-points. 
	\item We sew the $n_1+n_2$ copies of $\mathbb{E}_1$ and $\mathbb{E}_2$ two-by-two (the order does not matter); one obtains a surface $S_2$ of genus $n_1+2n_2=g$ with $2n_1+2$ fixed-points. 
	\item We sew the $n_{\partial}^{even}$ surfaces $\mathbb{P}_{s_i}$, for $s_i\in I^{even}$,  two-by-two; one obtains a surface $S_3$ with $2$ fixed-points and $\sum_{I^{even}} s_i$ boundary puntures. 
	\item We sew the three surfaces $S_1$, $S_2$ and $S_3$ two-by-two; the resulting surface $S_4$ has $2n_1+2=n_b+n_{\partial}^{odd}$ fixed-points. 
	\item We sew each of the $n_{\partial}^{odd}$ copies of $\mathbb{P}_{s_i}$ for $s_i\in I^{odd}$ with  $S_4$ along $n_{\partial}^{odd}$ of its fixed-points. One obtains an involutive  surface with combinatorial data $(g,n_{\partial}, \{s_i \}_{i\in I}, n_b, \mathring{s})$. 
\end{enumerate}

In virtue of Lemma \ref{lemmatrucmachin} we have shown the following.    

\begin{lemma}\label{lemma_decomp_invol_surface}
	Any connected involutive punctured surface with combinatorial data $(g,n_{\partial}, \{s_i \}_i, n_b, \mathring{s})$ is isomorphic to the involutive punctured surface 
	$\veebar_{Rec}\left( \mathbb{S}^{\sqcup \mathring{s}} \sqcup \mathbb{E}_1^{\sqcup n_1}\sqcup \mathbb{E}_2^{\sqcup n_2}\sqcup\bigsqcup_{i\in I} \mathbb{P}_{s_i}\right)$. 
\end{lemma}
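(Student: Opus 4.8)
The plan is to deduce Lemma~\ref{lemma_decomp_invol_surface} by combining the combinatorial bookkeeping laid out in the ``Recipe for sewing the basic surfaces'' with Lemma~\ref{lemmatrucmachin}. Since Lemma~\ref{lemmatrucmachin} asserts that a connected involutive punctured surface is determined up to isomorphism by its combinatorial data $(g,n_\partial,\{s_i\}_i,n_b,\mathring s)$, it suffices to verify that the surface $\veebar_{Rec}\!\left(\mathbb{S}^{\sqcup\mathring s}\sqcup\mathbb{E}_1^{\sqcup n_1}\sqcup\mathbb{E}_2^{\sqcup n_2}\sqcup\bigsqcup_{i\in I}\mathbb{P}_{s_i}\right)$ is connected, is a genuine involutive punctured surface (i.e.\ the sewing operations $\veebar$ are legal at each stage), and has the prescribed combinatorial data. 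The integers $n_1=(n_b+n_\partial^{odd}-2)/2$ and $n_2=(2g-n_b-n_\partial^{odd}+2)/4$ are non-negative by Lemma~\ref{lemma_integers}, so the disjoint union that serves as input to the recipe makes sense.

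First I would check that each $\veebar$ operation invoked in steps (1)--(5) is admissible: by Definition~\ref{def: sewing VEE} one needs the two surfaces being sewn to be involutive punctured surfaces each carrying at least two fixed points (or one each, when sewing two distinct components). Each basic surface $\mathbb{S}$, $\mathbb{E}_1$, $\mathbb{E}_2$, $\mathbb{P}_n$ has exactly two branched points (for $\mathbb{S}$, $\mathbb{E}_2$, $\mathbb{P}_n$) or two fixed points of the elliptic involution minus none visible — here I must be slightly careful and recall from Figure~\ref{figelementarycob} the exact count of fixed points of each basic piece; the torus with elliptic involution has four fixed points, the genus-two swap involution has none, and so on. The recipe is designed precisely so that after each grouping stage the running total of fixed points stays $\ge 2$, allowing the next sewing; I would track this count explicitly through steps (1)--(4), arriving at $2n_1+2$ fixed points after step (4), and then consume exactly $n_\partial^{odd}$ of them in step (5), using the identity $2n_1+2=n_b+n_\partial^{odd}$, which is just the definition of $n_1$ rearranged. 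Connectedness is immediate because each sewing step joins pieces along a circle, and the recipe eventually sews all the intermediate surfaces $S_1,S_2,S_3$ together and then attaches the remaining $\mathbb{P}_{s_i}$.

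Next I would tabulate the five combinatorial invariants of the output. The genus: sewing along a separating-type circle coming from a branched disc does not change genus except that $\mathbb{E}_1$ contributes genus $1$ and $\mathbb{E}_2$ contributes genus $2$, so step (2) yields genus $n_1+2n_2$, which equals $g$ by the definition of $n_2$; the other steps add no genus (this is the ``linearity'' the text emphasizes). The inner punctures: only the $\mathbb{S}$ copies carry inner punctures, $\mathring s$ pairs of them, giving the invariant $\mathring s$; sewing does not create or destroy punctures. The boundary components and their puncture counts: these come entirely from the $\mathbb{P}_{s_i}$ pieces, one boundary component with $s_i$ punctures for each $i\in I$, unchanged by sewing along interior branched discs; so $n_\partial$ and $\{s_i\}_i$ are as prescribed. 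The number of fixed points: after step (5) the count is $2n_1+2-n_\partial^{odd}=n_b$, again by the definition of $n_1$. Finally one must verify the output actually satisfies the axioms of an involutive punctured surface (conditions on $\sigma$-stable boundary components having $2(2k+1)$ punctures, non-stable ones having an even nonzero number) — but these are inherited from the basic pieces $\mathbb{P}_n$, which were defined to satisfy them, and are preserved under $\veebar$.

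The main obstacle I anticipate is the bookkeeping in step (5) and the parity/non-negativity constraints: one must be sure there really are $n_\partial^{odd}$ fixed points available on $S_4$ to absorb the odd-puncture discs $\mathbb{P}_{s_i}$ ($s_i$ odd), which is exactly where Lemma~\ref{lemma_integers} is needed — it is what guarantees $n_1\ge 0$ so that the recipe has enough $\mathbb{E}_1$ pieces and the fixed-point count $2n_1+2\ge n_\partial^{odd}$ holds (indeed equals $n_b+n_\partial^{odd}\ge n_\partial^{odd}$). Once every $\veebar$ in the recipe is checked to be legal and the five invariants of $\veebar_{Rec}(\cdots)$ are confirmed to equal $(g,n_\partial,\{s_i\}_i,n_b,\mathring s)$, Lemma~\ref{lemmatrucmachin} immediately gives an isomorphism between the given surface and $\veebar_{Rec}(\cdots)$, completing the proof. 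In fact, as the text notes, the recipe description \emph{is} the verification, so the proof is essentially: ``by construction the recipe output has the stated combinatorial data, hence by Lemma~\ref{lemmatrucmachin} it is isomorphic to $(\Sigma,\mathcal P,\sigma)$.''
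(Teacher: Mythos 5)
Your proposal follows the paper's argument exactly: the paper's ``proof'' of Lemma~\ref{lemma_decomp_invol_surface} is precisely the observation that the recipe's output is a connected involutive punctured surface with combinatorial data $(g,n_{\partial},\{s_i\}_i,n_b,\mathring{s})$, followed by an appeal to Lemma~\ref{lemmatrucmachin}, with Lemma~\ref{lemma_integers} guaranteeing that $n_1$ and $n_2$ are non-negative integers. (One small correction to your fixed-point bookkeeping: $\mathbb{E}_2$ has two fixed points, not none --- otherwise it could not be sewn via $\veebar$ at all --- while $\mathbb{E}_1$ has four; with these counts the running total after step (2) is indeed $2n_1+2$.)
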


\subsection{Double branched covering associated to topological triangulations}\label{sec: branch cov}

A particular class of involutive punctured surfaces that is of importance for us is that of  $2$--fold branched coverings of punctured surfaces equipped with a \emph{topological triangulation}.

 \begin{definition}\label{def_triangulation} 
 \begin{enumerate}
\item 	A \emph{small} punctured surface is one of the following four connected punctured surfaces: the sphere with one or two punctures; the disc with one or two  punctures on its boundary.
\item 	A punctured surface is said to \emph{admit a triangulation} if each of its connected components has at least one puncture and is not small. 
\item 	Suppose that $\mathbf{\Sigma}=(\Sigma, \mathcal{P})$ admits a triangulation.  
 	A \textit{topological triangulation} $\Delta$ of $\mathbf{\Sigma}$ is a collection $\mathcal{E}(\Delta)$ of embedded arcs in $\Sigma$ (named edges) which satisfy the following conditions: 
 		the endpoints of the edges belong to $\mathcal{P}$;  
 		the interior of the edges are pairwise disjoint and do not intersect $\mathcal{P}$; 
 		the edges are not contractible and are pairwise non isotopic in $\Sigma_{\mathcal{P}}$ relatively to their endpoints; 
 		the boundary arcs of $\mathbf{\Sigma}$ belong to $\mathcal{E}(\Delta)$. 
 		Moreover, the collection $\mathcal{E}(\Delta)$ is required to be maximal for these properties. 
 \end{enumerate}
 \end{definition}
Said differently, a triangulable surface $\mathbf{\Sigma}$ is obtained from a disjoint union $\mathbf{\Sigma}_{\Delta}=\bigsqcup_i \mathbb{T}_i$ of triangles by gluing some pairs of boundary arcs. The connected component $\mathbb{T}_i$ in $\mathbf{\Sigma}_{\Delta}$ are called \textit{faces} of the triangulation and their set is denoted by $F(\Delta)$.

For a triangulated punctured surface $(\Sigma, \mathcal{P},\Delta)$, let  $\Gamma^{\dagger}\subset \Sigma_{\mathcal{P}}$ be the dual of the $1$--skeleton of the triangulation. The graph $\Gamma^{\dagger}$ has one trivalent vertex inside each triangle, one univalent vertex inside each boundary arc and one edge $e^*$ intersecting once transversally each edge $e$ of the triangulation. Denote by $B$ the set of its vertices. Let $[\Gamma^{\dagger}]$ denotes its Borel-Moore relative homology class (see \cite{BorelMoore}) in $\mathrm{H}_1^c(\Sigma\setminus B, \partial\Sigma; \mathbb{Z}/2\mathbb{Z})$ and $\phi\in \mathrm{H}^1(\Sigma\setminus B; \mathbb{Z}/2\mathbb{Z})$ the Poincar\'e-Lefschetz dual of $[\Gamma^{\dagger}]$ sending a class $[\alpha]$ to its algebraic intersection with $[\Gamma^{\dagger}]$ modulo $2$. 

\begin{definition}\label{de: cover} 
\begin{enumerate}
\item The covering $ \pi : \hat{\Sigma}(\Delta) \rightarrow \Sigma$ is the double covering of $\Sigma$ branched along $B$ defined by $\phi$. 
\item  Write $\hat{\mathcal{P}}$ the lift in $\hat{\Sigma}$ of $\mathcal{P}$, $\hat{B}$ the lift of the set of branched points $B$ and $\sigma: \hat{\Sigma}\rightarrow \hat{\Sigma}$ the covering involution. 
The involutive punctured surface associated to the triangulated punctured surface $(\mathbf{\Sigma}, \Delta)$ is $(\hat{\mathbf{\Sigma}}, \sigma):=(\hat{\Sigma}, \hat{\mathcal{P}}, \sigma)$. 
\item Let $\hat{\Gamma}^{\dagger}$ be the lift of $\Gamma^\dagger$.  A \emph{leaf} of $\mathbf{\hat{\Sigma}}$ is a connected component of $\hat{\Sigma}\setminus \hat{\Gamma}^{\dagger}$. 
	A \emph{leaf labeling} of $\mathbf{\hat{\Sigma}}$ is a labeling of its leaves by $1$ or $2$ such that any two leaves that are separated by an edge of $\hat{\Gamma}^{\dagger}$  have different labels.
\end{enumerate}
\end{definition}
In Figure \ref{figtrianglecov} are represented some double branched coverings. 

\begin{figure}[!h] 
	\centerline{\includegraphics[width=12cm]{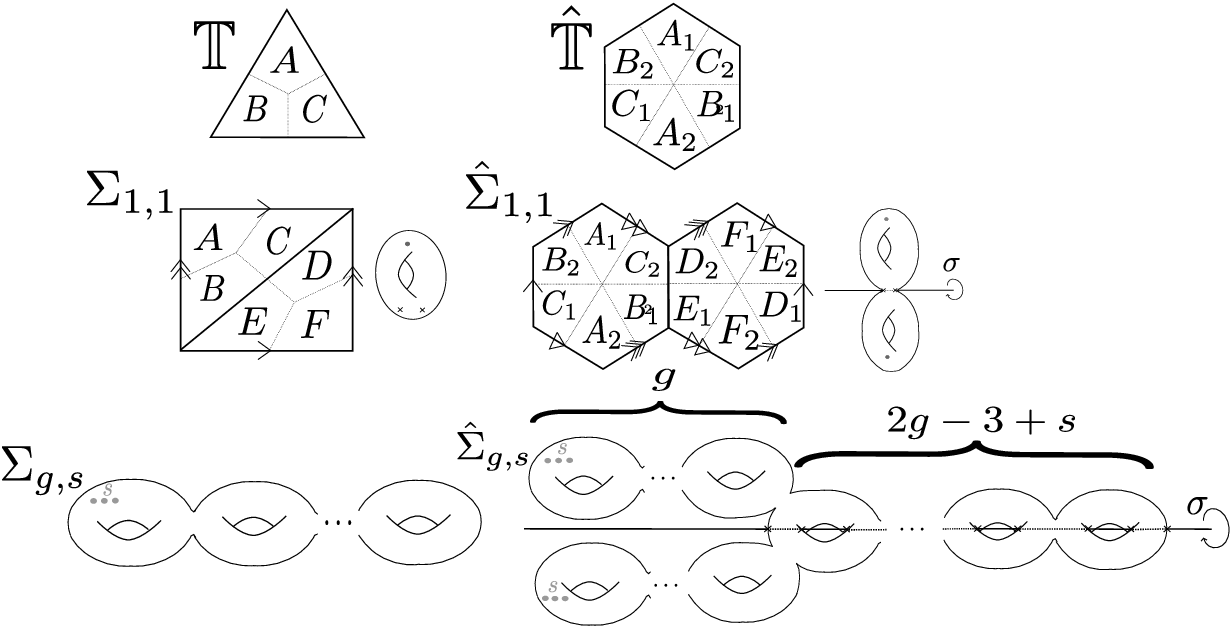} }
	\caption{On the top, the triangle and its double branched covering. The dashed lines represent the dual graph $\Gamma^{\dagger}$, its edges are the branched cuts and its vertices the branched point of the covering. The covering involution is the central symmetry along the branched point and we draw a leaf labelling.  In the middle, a triangulated once-punctured torus and the corresponding covering, a twice-punctured genus two surface. At the bottom, the double covering of a genus $g$ surface with $s$ punctures. } 
	\label{figtrianglecov} 
\end{figure}

\begin{remark} \begin{enumerate}
\item	 These double branched coverings were considered independently by Bonahon-Wong in \cite{BonahonWong2} and Gaiotto-Hollands-Moore-Neitzke in \cite{GMN13,HollandsNeitzke}. 
	 
\item If $\mathbf{\hat{\Sigma}}$ is connected, it has exactly two different leaf labelings. 
\item Punctured surfaces can be glued along their boundary arcs. In particular, a triangulated punctured surface is the result of such a gluing, starting from a disjoint union of  triangles (\emph{i.e.} discs with three punctures on their boundary). 
	The $2$--fold branched coverings of these triangulated surfaces are obtained by gluing hexagons.  
	Locally, for each face $\T$ of $\Delta$ with branching point $b$, one has the covering $\pi_{\T}\co\Hex\to \T$ as given in Figure \ref{figtrianglecov}: the covering involution $\sigma_{\T}$ is the central symmetry with fixed-point $\pi^{-1}(b)$. 
\end{enumerate}
\end{remark}

	For later use, let us express the genus $\hat{g}$ of  $\hat{\Sigma}$ in terms of the following data on $(\Sigma,\mathcal{P})$:  
	the genus $g$ of $\Sigma$; 
	the number $n^{ev}_{\partial}$ of boundary components with an even number of punctures; 
	the number $n^{odd}_{\partial}$ of boundary components with an odd number of punctures; 
	the number $\mathring{s}$ of inner punctures;  and, 
	the number  $s_{\partial}$ of boundary punctures. 

\begin{lemma}\label{lemmagenus}
	The genus of the surface $\hat{\Sigma}$ is 
	$$ \hat{g}= 4g-3+n_{\partial}^{ev} + \frac{3}{2}n_{\partial}^{odd}+ \mathring{s}+\frac{1}{2}s_{\partial}. $$
\end{lemma}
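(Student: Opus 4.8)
The plan is to compute the genus of $\hat{\Sigma}$ directly via its Euler characteristic, using the fact that $\pi\colon \hat{\Sigma}\to \Sigma$ is a double covering branched precisely over the set $B$ of one point per triangle of $\Delta$. First I would relate the Euler characteristics via the Riemann--Hurwitz formula for branched covers of surfaces with boundary: since each branch point has a single preimage of ramification index $2$, one gets $\chi(\hat{\Sigma}) = 2\chi(\Sigma) - |B|$. The cardinality of $B$ equals the number of triangles $|F(\Delta)|$ of the topological triangulation, so the whole computation reduces to expressing $\chi(\Sigma)$ and $|F(\Delta)|$ in terms of the data $g$, $n_\partial^{ev}$, $n_\partial^{odd}$, $\mathring{s}$, $s_\partial$.

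Next I would run the elementary combinatorics of the triangulation. Let $V$, $E$, $F$ denote the numbers of vertices, edges and faces of $\Delta$ regarded as a CW-structure on $\Sigma$ (vertices being the punctures $\mathcal{P}$, so $V = \mathring{s} + s_\partial$). We have the Euler relation $\chi(\Sigma) = V - E + F$, together with the standard triangle-counting identity obtained by counting incidences of edges with faces: each face has three sides, each interior edge borders two faces and each boundary edge borders one, giving $3F = 2(E - E_\partial) + E_\partial = 2E - E_\partial$ where $E_\partial = s_\partial$ is the number of boundary arcs. From $\chi(\Sigma) = 2 - 2g - n_\partial$ (with $n_\partial = n_\partial^{ev} + n_\partial^{odd}$) and these two linear relations one solves for $F = |B|$; substituting into Riemann--Hurwitz and then using $\chi(\hat{\Sigma}) = 2 - 2\hat{g} - \hat{n}_\partial$ — where $\hat{n}_\partial$, the number of boundary components of $\hat{\Sigma}$, must itself be expressed in terms of the data (a boundary component with an even number of punctures lifts to two copies, one with an odd number lifts to a single connected double cover) — yields a formula for $\hat{g}$ that I would simplify to match the claimed expression $\hat{g} = 4g - 3 + n_\partial^{ev} + \tfrac{3}{2}n_\partial^{odd} + \mathring{s} + \tfrac{1}{2}s_\partial$.

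The main obstacle I anticipate is bookkeeping rather than conceptual: getting the behaviour of the boundary right under the covering. One must carefully check how $\phi \in \mathrm{H}^1(\Sigma\setminus B;\mathbb{Z}/2\mathbb{Z})$ restricts to each boundary component — this is governed by the parity of the number of dual edges of $\Gamma^\dagger$ meeting that component, equivalently by the parity of the number of boundary punctures — so that even components split and odd components do not. A secondary subtlety is that $\hat{\Sigma}$ carries both the lifted punctures $\hat{\mathcal{P}}$ and the branch points $\hat{B}$ as its puncture set, but the genus computation only sees the underlying closed surface, so the $\hat{B}$ should be filled in before applying the closed-surface Euler characteristic formula; I would be careful not to double-count these. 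Once the boundary accounting is pinned down, the remaining steps are a routine linear solve, and I would present only the resulting chain of identities rather than every intermediate substitution. It is also worth noting as a consistency check that when $\Sigma$ is closed with $s = \mathring{s}$ punctures (so $n_\partial = 0$, $s_\partial = 0$) the formula reduces to $\hat{g} = 4g - 3 + s$, matching the picture at the bottom of Figure \ref{figtrianglecov}.
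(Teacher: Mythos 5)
Your proposal is correct and follows essentially the same route as the paper, which simply invokes the Riemann--Hurwitz formula to get $\chi(\hat{\Sigma})=2\chi(\Sigma)-|B|$ and leaves the triangle-counting and boundary bookkeeping implicit. Your expanded accounting (computing $|B|=|F(\Delta)|$ from $3F=2E-s_{\partial}$ and determining $\hat{n}_{\partial}=2n_{\partial}^{ev}+n_{\partial}^{odd}$ from the parity of intersections with $\Gamma^{\dagger}$) is exactly the computation the paper is compressing, and it does yield the stated formula.
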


\begin{proof}This is a mere application of the  Riemann-Hurwitz formula (see \textit{e.g.} \cite[Section IV.2]{Hart}) which implies that the Euler characteristic of $\hat{\Sigma}$ is $\chi(\hat{\Sigma})=2\chi(\Sigma)-|B|. \quad$ 
\end{proof}

\subsection{Equivariant $\mathbb{C}^*$ character varieties}

In this section we define  the $\mathbb{C}^*$ character varieties of  punctured surfaces and their equivariant versions, for involutive punctured surfaces. 
	These definitions essentially rely on a relative version of the intersection product, that we now describe.

	Let  $\mathbf{\Sigma}=(\Sigma, \mathcal{P})$ be a punctured surface. 
	Let $c_1, c_2 : [0,1] \rightarrow \Sigma_{\mathcal{P}}$ be two cycles in $\mathrm{Z}_1(\Sigma_{\mathcal{P}}, \partial \Sigma_{\mathcal{P}} ;  \mathbb{Z})$. 
	Suppose they intersect transversally in the interior of $\Sigma_{\mathcal{P}}$ along simple crossings; in particular, for each intersection point $v$, one has a basis $(e_1,e_2)$ of the tangent space of $\Sigma_{\mathcal{P}}$ at $v$ that is formed by the tangent vectors $e_1$ and $e_2$ of $c_1$ and $c_2$ respectively. If the orientation of this basis agrees with that of $\Sigma$, then we set $\varepsilon(v):= +1$, and we set $\varepsilon(v):=-1$ otherwise. 
	
	For  a boundary arc $b$ of $\Sigma_{\mathcal{P}}$, let $S(b)$ be the set of pairs $(v_1, v_2)$ such that $v_i \in c_i \cap b$. 
	Note that $c_1$ and $c_2$ do not intersect in $b$ by definition. For a pair $(v_1, v_2) \in S(b)$, we define  $\varepsilon(v_1, v_2) \in \{ \pm 1 \}$ as follows. 
	Isotope $c_1$ around $b$ to bring $v_1$ at $v_2$. 
	The isotopy must preserve the transversality condition and must not create any new inner intersection point. 
	The resulting tangent vectors of the curves at $v_2$ form a basis $(e_1,e_2)$ of the tangent space; as before, the orientation of this basis gives an element $\varepsilon(v_1, v_2) \in \{ \pm 1 \}$ according to the orientation of $\Sigma$:   $\varepsilon(v_1, v_2) = +1$ if the orientation of $(e_1, e_2)$ agrees with that of $\Sigma$, and $\varepsilon(v_1, v_2)=-1$ otherwise. 
	For every $c_1$ and $c_2$ as above, we set 
	\begin{equation}\label{eq: rel inters form}
	(c_1, c_2) := \sum_b \sum_{(v_1, v_2)\in S(b)} \frac{1}{2} \varepsilon(v_1, v_2)  + \sum_{v\in c_1\cap c_2} \varepsilon(v). 
	\end{equation}

	\begin{definition}\label{de: rel inters form} The skew-symmetric form  $(\cdot, \cdot) : \mathrm{H}_1(\Sigma_{\mathcal{P}}, \partial \Sigma_{\mathcal{P}} ;  \mathbb{Z})^{\otimes 2} \rightarrow \frac{1}{2} \mathbb{Z}$ is the map induced by the linear extension of \eqref{eq: rel inters form}. 
	\end{definition}
By  \cite[Lemma $3.15$]{KojuTriangularCharVar}, the pairing $(c_1, c_2)$ only depends on the relative homology classes $[c_1], [c_2] \in  \mathrm{H}_1(\Sigma_{\mathcal{P}}, \partial \Sigma_{\mathcal{P}} ;  \mathbb{Z})$, so that the above form is well-defined.

\begin{definition} The $\mathbb{C}^*$ \textit{character variety} is the Poisson affine variety $\mathcal{X}_{\mathbb{C}^*}(\mathbf{\Sigma})$ whose algebra of regular functions is the group algebra $\mathcal{O}[\mathcal{X}_{\mathbb{C}^*}(\mathbf{\Sigma})]:= \mathbb{C}[\mathrm{H}_1(\Sigma_{\mathcal{P}}, \partial \Sigma_{\mathcal{P}} ;  \mathbb{Z})]$ with the Poisson bracket defined by: 
$$ \{ [c_1], [c_2] \} := -\frac{1}{2}([c_1], [c_2] ) [c_1 + c_2] \mbox{, for any }[c_1], [c_2] \in \mathrm{H}_1(\Sigma_{\mathcal{P}}, \partial \Sigma_{\mathcal{P}} ;  \mathbb{Z}). $$
\end{definition}

The following lemma implies that  $\mathcal{X}_{\mathbb{C}^*}(\mathbf{\Sigma})$ is a torus.

\begin{lemma}\label{lemma_free} The $\mathbb{Z}$-module $\mathrm{H}_1(\Sigma_{\mathcal{P}}, \partial \Sigma_{\mathcal{P}} ;  \mathbb{Z})$ is free.
\end{lemma}

\begin{proof}
Without lost of generality, we can assume that $\Sigma$ is connected. If $\mathbf{\Sigma}$ does not contain any boundary arc, then the fact that  $\mathrm{H}_1(\Sigma_{\mathcal{P}}, \partial \Sigma_{\mathcal{P}} ;  \mathbb{Z})=\mathrm{H}_1(\Sigma_{\mathcal{P}} ;  \mathbb{Z})$ is free is well-known. Else, for each boundary arc $a$ choose one point $v_a\in a$ and set $V:= \{v_a\}_a$. Consider a spine $\Gamma \subset \Sigma_{\mathcal{P}}$, that is an embedded graph whose set of vertices is $V$ and such that $\Sigma_{\mathcal{P}}$ retracts on $\Gamma$ (see e.g. \cite{KojuPresentationSSkein} for a correspondence between ciliated graphs and open punctured surfaces). Then the embedding of pairs $\iota: (\Gamma, V) \hookrightarrow (\Sigma_{\mathcal{P}}, \partial \Sigma_{\mathcal{P}})$ is a homotopy retract, so induces an isomorphism 
$$  \mathrm{H}_1(\Gamma, V; \mathbb{Z}) \cong \mathbb{Z}^{\mathcal{E}(\Gamma)}\xrightarrow[\cong]{\iota_*}\mathrm{H}_1(\Sigma_{\mathcal{P}}, \partial \Sigma_{\mathcal{P}} ;  \mathbb{Z}), $$
where $\mathcal{E}(\Gamma)$ is the set of edges of $\Gamma$.

\end{proof}

 Consider an involutive punctured surface $(\mathbf{\Sigma}, \sigma)$ with set of fixed points $B$. Denote by $\mathrm{H}^{\sigma}_1(\Sigma_{\mathcal{P}}\cup B, \partial \Sigma_{\mathcal{P}}\cup B ;  \mathbb{Z})\subset \mathrm{H}_1(\Sigma_{\mathcal{P}}, \partial \Sigma_{\mathcal{P}}\cup B ;  \mathbb{Z})$ the sub-group of elements $[c]$ such that $\sigma_*([c])= - [c]$. 
\begin{definition}
The \textit{equivariant} $\mathbb{C}^*$ {character variety} is the Poisson affine variety $\mathcal{X}^{\sigma}_{\mathbb{C}^*}(\mathbf{\Sigma})$ whose algebra of regular functions is the group algebra $\mathcal{O}[\mathcal{X}^{\sigma}_{\mathbb{C}^*}(\mathbf{\Sigma})]:= \mathbb{C}[\mathrm{H}^{\sigma}_1(\Sigma_{\mathcal{P}\cup B}, \partial \Sigma_{\mathcal{P}\cup B} ;  \mathbb{Z})]$
 with Poisson bracket obtained by restricting the one of  $\mathbb{C}[\mathrm{H}_1(\Sigma_{\mathcal{P}\cup B}, \partial \Sigma_{\mathcal{P}\cup B} ;  \mathbb{Z})]$.
 \end{definition}


 \subsection{Equivariant $\mathbb{C}^*$ skein algebras}

 	In this section we define  a $\mathbb{C}^*$  skein algebra, as well as an equivariant version for involutive punctured surfaces. For a closed punctured surface, the  $\mathbb{C}^*$  skein algebra recovers Gelca-Uribe's skein algebra in \cite{GU}. 
 The extension to surfaces with boundary is inspired from L\^e's stated skein algebra, which is concerned with Kauffman-bracket relations, related to $\SL_2(\mathbb{C})$,  while here, we are concerned with the self-linking number  related to $\mathbb{C}^*$.   
 We show that the (equivariant)  $\mathbb{C}^*$   skein algebra is a deformation quantization of the (equivariant)  $\mathbb{C}^*$  character variety.

 \subsubsection{The definition}

  Let $\mathbf{\Sigma}$ be a punctured surface. A \textit{tangle} is an oriented, compact framed, properly embedded $1$-dimensional manifold $T\subset \Sigma_{\mathcal{P}}\times (0,1)$ such that for every point of $\partial T \subset \partial \Sigma_{\mathcal{P}} \times (0,1)$ the framing is parallel to the $(0,1)$ factor and points to the direction of $1$.   The \textit{height} of a point $(v,h)\in \Sigma_{\mathcal{P}} \times (0,1)$ is $h$. 
  For a boundary arc $b$ and a tangle $T$,  the points of $\partial_b T := \partial T \cap b\times(0,1)$ are ordered by their heights. 
  The tangle is said \textit{Weyl-ordered} if for every boundary arc $b$ the points of $\partial_b T$ have the same height. 
  A tangle has \textit{vertical framing} if for each of its points, the framing is parallel to the $(0,1)$ factor and points in the direction of $1$. Two tangles are isotopic if they are isotopic through the class of tangles that preserve the boundary height orders. The empty set is by definition a tangle only isotopic to itself.

\vspace{2mm}
\par 
Every tangle is isotopic to a tangle with vertical framing and such that it is in general position with the first factor projection  $\pi : \Sigma_{\mathcal{P}}\times (0,1)\rightarrow \Sigma_{\mathcal{P}}$, that is such that $\pi_{\big| T} : T \rightarrow \Sigma_{\mathcal{P}}$ is an immersion with at most transversal double points only in the interior of $\Sigma_{\mathcal{P}}$. We call \textit{diagram} of $T$ the image $D=\pi(T)$ together with  its orientation and the over/undercrossing information at each double point. An isotopy class of diagram $D$ together with an order of $\partial_b D=\partial D\cap b$ for each boundary arc $b$, determines uniquely an isotopy class of tangle. 
Diagrams are used to depict tangles. Locally, if two boundary points of a tangle are consecutive for the height order, then we represent this order by drawing an arrow on the boundary, such that the order increases with the direction of the arrow. If the two points have same order, then we do not draw any arrow. 

The data of an isotopy class of diagrams and  boundary arc orientations uniquely defines an isotopy class of tangles. Two such classes of diagrams represent the same class of tangles if and only if we can pass from one to the other by a succession of elementary moves of Figure \ref{figmoves}.

\begin{figure}[!h] 
\centerline{\includegraphics[width=10cm]{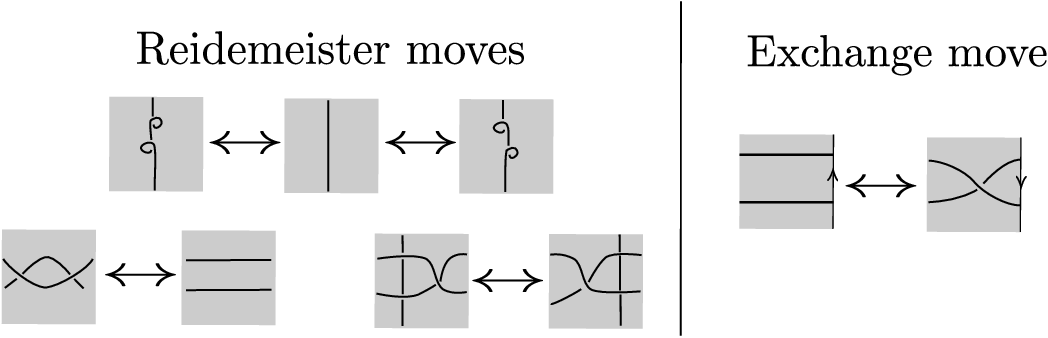} }
\caption{Elementary moves relating the diagrams of two isotopic tangles.} 
\label{figmoves} 
\end{figure}

 \par  Let $\mathcal{R}$ be a commutative unital ring and $\omega^{1/2}\in \mathcal{R}^{\times}$ an invertible element.
 \begin{definition}
 The $\mathbb{C}^*$ \textit{skein algebra}  $\mathcal{S}^{\mathbb{C}^*}_{\omega^{1/2}}(\mathbf{\Sigma})$ is  the free $\mathcal{R}$-module generated by isotopy classes of tangles, modulo the skein relations \eqref{eq: C skein 1}, \eqref{eq: C skein 2} and \eqref{eq: C skein 3}. 
 	 The product is given by stacking tangles:  $[T_1][T_2]$ is the class of a tangle $T_1\cup T_2$ where $T_2$ has been isotoped in $(0,\frac{1}{2})$ and $T_1$ in $(\frac{1}{2}, 1)$.
  \end{definition}
\begin{align}
\label{eq: C skein 1}
\omega
\begin{tikzpicture}[>=stealth,thick, arrow/.style={->},baseline=-0.4ex,scale=0.5]	
\draw [fill=gray!60,gray!45] (-.6,-.6)  rectangle (.6,.6)   ;
\draw[line width=1.1,-] (-0.4,-0.52) -- (-.1,-.12);
\draw[line width=1.1,->] (.1,0.12) -- (0.4,.52);
\draw[line width=1.1,->] (0.4,-0.52) -- (-.4,.53);
\end{tikzpicture} 
=\omega^{-1}
\begin{tikzpicture}[>=stealth,thick, arrow/.style={->},baseline=-0.4ex,scale=0.5]	
\draw [fill=gray!60,gray!45] (-.6,-.6)  rectangle (.6,.6)   ;
\draw[line width=1.1,->] (-0.4,-0.52) -- (.4,.53);
\draw[line width=1.1,-] (0.4,-0.52) -- (0.1,-0.12);
\draw[line width=1.1,->] (-0.1,0.12) -- (-.4,.53);
\end{tikzpicture}
=
\begin{tikzpicture}[>=stealth,thick, arrow/.style={->},baseline=-0.4ex,scale=0.5] 
\draw [fill=gray!60,gray!45] (-.6,-.6)  rectangle (.6,.6)   ;
\draw[line width=1.1,->] (-0.4,-0.52) ..controls +(.3,.5).. (-.4,.53);
\draw[line width=1.1,arrow] (0.4,-0.52) ..controls +(-.3,.5).. (.4,.53);
\end{tikzpicture}
\hspace{.5cm} &\text{ and }\hspace{.5cm}
\begin{tikzpicture}[baseline=-0.4ex,scale=0.5,Circ/.style={circle, fill, minimum width=12pt},
InLineArrow/.style={
	draw,thick, postaction={decorate,decoration={markings,mark=at position #1 with {\fill(0:2.5pt) -- (120:2.5pt) -- (240:2.5pt);}}}}
]
\node[](P) at (-30:0.4){};
\draw [fill=gray!60,gray!45] (-.6,-.6)  rectangle (.6,.6)   ;  
\draw[InLineArrow=0.41] (P) arc (150-180:150-180-360:0.4) node [pos=0.41, anchor=180-180,inner sep=7pt]{};
\end{tikzpicture}
= 
\begin{tikzpicture}[baseline=-0.4ex,scale=0.5,Circ/.style={circle, fill, minimum width=12pt},
InLineArrow/.style={
	draw,thick, postaction={decorate,decoration={markings,mark=at position #1 with {\fill(0:2.5pt) -- (120:2.5pt) -- (240:2.5pt);}}}}
]
   \node[](P) at (30:0.4){};
\draw [fill=gray!60,gray!45] (-.6,-.6)  rectangle (.6,.6)   ;  
\draw[InLineArrow=0.41] (P) arc (-150+180:-150+180+360:0.4) node [pos=0.41, anchor=-180+180,inner sep=7pt]{};
\end{tikzpicture}
=\emptyset. 
\\
\label{eq: C skein 2}
\begin{tikzpicture}[>=stealth,arrow/.style={->},baseline=-0.4ex,scale=0.5] 
\draw [fill=gray!60,gray!45] (-.75,-.75)  rectangle (.4,.75)   ;
\draw[line width=1.1,->] (-.75,0.3) to (-.25,.3);
\draw[-] (0.4,-0.75) to (.4,.75);
\draw[line width=1.1] (-.75,-0.3) to (-.3,-.3);
\draw[line width=1.05] (-.3,0) ++(-90:.3) arc (-90:90:.3);
\end{tikzpicture}
=
\heightexchor{-}{->}{<-}
= \omega^{-1/2}
\heightexchor{<-}{->}{<-}
= \omega^{1/2}
\heightexchor{->}{->}{<-} 
\hspace{.5cm}&\text{ and }\hspace{.5cm}
\begin{tikzpicture}[>=stealth,arrow/.style={->},baseline=-0.4ex,scale=0.5] 
\draw [fill=gray!60,gray!45] (-.75,-.75)  rectangle (.4,.75)   ;
\draw[line width=1.1,<-] (-.75,0.3) to (-.25,.3);
\draw[-] (0.4,-0.75) to (.4,.75);
\draw[line width=1.1] (-.75,-0.3) to (-.3,-.3);
\draw[line width=1.05] (-.3,0) ++(-90:.3) arc (-90:90:.3);
\end{tikzpicture}
=
\heightexchor{-}{<-}{->}
= \omega^{-1/2}
\heightexchor{<-}{<-}{->}
= \omega^{1/2}
\heightexchor{->}{<-}{->}
\\
\label{eq: C skein 3}
\heightexchor{-}{<-}{<-}
= \omega^{1/2}
\heightexchor{<-}{<-}{<-}
= \omega^{-1/2}
\heightexchor{->}{<-}{<-}	
\hspace{.5cm} &\text{ and } \hspace{.5cm}
\heightexchor{-}{->}{->}
= \omega^{1/2}
\heightexchor{<-}{->}{->}
= \omega^{-1/2}
\heightexchor{->}{->}{->}	
\end{align}

\begin{remark}
	As a straightforward consequence of the relations  \eqref{eq: C skein 1}, \eqref{eq: C skein 2} and \eqref{eq: C skein 3}, one has  the following relations. 
	\begin{equation}\label{eq:skeinconsequences}
	\begin{tikzpicture}[>=stealth,thick, arrow/.style={->},baseline=-0.4ex,scale=0.5] 
	\draw [fill=gray!60,gray!45] (-.6,-.6)  rectangle (.6,.6)   ;
	\draw[line width=1.1,->] (-0.4,-0.52) ..controls +(.3,.5).. (-.4,.53);
	\draw[line width=1.1,arrow] (.4,.53) ..controls +(-.3,-.5).. (0.4,-0.52);
	\end{tikzpicture}
	=
	\begin{tikzpicture}[>=stealth,thick, arrow/.style={->},baseline=-0.4ex,scale=0.5] 
	\begin{scope}[shift={(0,0)},rotate=90]
	\draw [fill=gray!60,gray!45] (-.6,-.6)  rectangle (.6,.6)   ;
	\draw[line width=1.1,->] (-.4,.53)  ..controls +(.3,-.5)..  (-0.4,-0.52);
	\draw[line width=1.1,arrow] (0.4,-0.52)..controls +(-.3,.5).. (.4,.53) ;
	\end{scope}
	\end{tikzpicture}
	~	 \emph{(homology relation) }  	 
	\text{ and }~ 
	\begin{tikzpicture}[baseline=-0.4ex,scale=0.5] 
	\draw [fill=gray!60,gray!45] (-.75,-.75)  rectangle (.4,.75)   ;
	\draw[-] (.4,-0.75) to (.4,.75);
	\end{tikzpicture}
	=
	\begin{tikzpicture}[>=stealth,arrow/.style={->},baseline=-0.4ex,scale=0.5] 
	\begin{scope}[shift={(0,0)},rotate=180]
	\draw [fill=gray!60,gray!45] (-.75,-.75)  rectangle (.4,.75)   ;
	\draw[line width=1.1,<-] (-.75,0.3) to (-.25,.3);
	\draw[-] (-.75,-0.75) to (-.75,.75);
	\draw[line width=1.1] (-.75,-0.3) to (-.3,-.3);
	\draw[line width=1.05] (-.3,0) ++(-90:.3) arc (-90:90:.3);
	\end{scope}
	\end{tikzpicture}
	=
	\begin{tikzpicture}[>=stealth,arrow/.style={->},baseline=-0.4ex,scale=0.5] 
	\begin{scope}[shift={(0,0)},rotate=180]
	\draw [fill=gray!60,gray!45] (-.75,-.75)  rectangle (.4,.75)   ;
	\draw[line width=1.1,->] (-.75,0.3) to (-.25,.3);
	\draw[-] (-.75,-0.75) to (-.75,.75);
	\draw[line width=1.1] (-.75,-0.3) to (-.3,-.3);
	\draw[line width=1.05] (-.3,0) ++(-90:.3) arc (-90:90:.3);
	\end{scope}
	\end{tikzpicture}
	~ \emph{(trivial relation)}.
	\end{equation}
\end{remark}

\subsubsection{Interpretation through relative homology}

	Let us show that the $\mathbb{C}^*$ skein algebra  deforms the $\mathbb{C}^*$  character variety. 
	This essentially extends Gelca-Uribe's result \cite[Theorem 4.5]{GU} to surfaces with boundary. 
   	Recall the relative intersection form of Definition \ref{de: rel inters form}. 
   	Let $\mathcal{R}_{\omega^{1/2}}[\mathrm{H}_1(\Sigma_{\mathcal{P}}, \partial\Sigma_{\mathcal{P}}; \mathbb{Z})]$ be the $\mathcal{R}$-module $\mathcal{R}[\mathrm{H}_1(\Sigma_{\mathcal{P}}, \partial\Sigma_{\mathcal{P}}; \mathbb{Z})]$ with the product 
   	\begin{equation}\label{eq: prod relat hom}
   	[\alpha]\cdot [\beta] = \omega^{([\alpha], [\beta])} [\alpha + \beta] \mbox{, for }[\alpha], [\beta]\in \mathrm{H}_1(\Sigma_{\mathcal{P}}, \partial\Sigma_{\mathcal{P}}; \mathbb{Z}).    
   	\end{equation}	
   	We now describe an isomorphism of algebras  
   	\begin{equation}\label{eq: h}
   	h: \mathcal{R}_{\omega^{1/2}}[\mathrm{H}_1(\Sigma_{\mathcal{P}}, \partial \Sigma_{\mathcal{P}}; \mathbb{Z})] \rightarrow  \mathcal{S}^{\mathbb{C}^*}_{\omega^{1/2}}(\mathbf{\Sigma}). 
   	\end{equation}
   	
   	First let us show that,  as a set, $\mathrm{H}_1(\Sigma_{\mathcal{P}}, \partial \Sigma_{\mathcal{P}}; \mathbb{Z})$ is a basis of the $\mathcal{R}$--module $\mathcal{S}^{\mathbb{C}^*}_{\omega^{1/2}}(\mathbf{\Sigma})$, which provides the linear map $h$.  
   	Let us identify relevant diagrams: %
   	a diagram is called \textit{reduced} if it has neither crossings nor contractible components. Contractibility is understood relatively to the boundary of $\Sigma_{\mathcal{P}}$.   
   	In particular, reduced diagrams give a basis for the group of cycles $\mathrm{Z}_1(\Sigma_{\mathcal{P}}, \partial \Sigma_{\mathcal{P}}; \mathbb{Z})$. 
   	To any reduced diagram $D$ corresponds a Weyl-ordered tangle;  we denote its class by $[D]\in \mathcal{S}^{\mathbb{C}^*}_{\omega^{1/2}}(\mathbf{\Sigma})$. 
   	Note that if $T$ is a tangle, there is a unique pair of the class $[D]$ of a reduced diagram $D$  and an integer $n$ such that $[T]=\omega^{n/2} [D]$ in $\mathcal{S}^{\mathbb{C}^*}_{\omega^{1/2}}(\mathbf{\Sigma})$. 
   	Therefore, the Weyl-ordered classes associated to the reduced diagrams generate the $\mathbb{C}^*$ skein algebra.  
   	In other words, we have a surjective $\mathcal{R}$--linear map 
   	\begin{equation*}
   	\wid{h}\co \mathcal{R}[Z_1(\Sigma_{\mathcal{P}}, \partial\Sigma_{\mathcal{P}}; \mathbb{Z})] \to \mathcal{S}^{\mathbb{C}^*}_{\omega^{1/2}}(\mathbf{\Sigma}),   
   	\end{equation*}
   	given by $\wid{h}(D)=[D]$ for every reduced diagram $D$. 
   	\begin{lemma}\label{lemmaskeinspan}
   	The linear map $\wid{h}$ induces an isomorphism of $\mathcal{R}$--modules on the homology; this is $h$.   
   	\end{lemma}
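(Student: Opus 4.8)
The map $\wid h$ is already known to be surjective, so the task is to show it descends to the quotient $\mathcal{R}[\mathrm{H}_1(\Sigma_{\mathcal{P}},\partial\Sigma_{\mathcal{P}};\mathbb{Z})]$ and that the resulting map $h$ is injective. The plan is to exhibit an explicit two-sided inverse on the level of diagrams. First I would check that $\wid h$ factors through homology, i.e. that if two reduced diagrams $D, D'$ represent the same class in $\mathrm{H}_1(\Sigma_{\mathcal{P}},\partial\Sigma_{\mathcal{P}};\mathbb{Z})$, then $[D]=[D']$ in $\mathcal{S}^{\mathbb{C}^*}_{\omega^{1/2}}(\mathbf{\Sigma})$. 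Two reduced diagrams are homologous iff they differ by adding/removing a boundary of a subsurface, which on the level of $1$-manifolds can be realized by a sequence of the following moves: an isotopy (which changes nothing by definition of the skein module), the creation/annihilation of a small contractible loop (killed by relation \eqref{eq: C skein 1}, third equality), and the creation/annihilation of a ``boundary-parallel'' arc with both endpoints on the same boundary arc (killed by the trivial relation in \eqref{eq:skeinconsequences}), together with reorderings of boundary heights (which only multiply by powers of $\omega^{1/2}$, but when the diagram is reduced and Weyl-ordered these cancel). I would package this into the statement that $\wid h$ is constant on homology classes, which gives a well-defined $\mathcal{R}$-linear surjection $h: \mathcal{R}[\mathrm{H}_1(\Sigma_{\mathcal{P}},\partial\Sigma_{\mathcal{P}};\mathbb{Z})]\to\mathcal{S}^{\mathbb{C}^*}_{\omega^{1/2}}(\mathbf{\Sigma})$.

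For injectivity I would construct a left inverse. The idea is the classical one from Gelca–Uribe \cite[Theorem 4.5]{GU}: resolve every crossing in a diagram using \eqref{eq: C skein 1} (each resolution multiplies by an invertible scalar $\omega^{\pm 1}$ and replaces the crossing by a pair of parallel strands consistent with the orientations — note this is forced since the two strands at a crossing carry orientations and the skein relation with the ``wrong'' orientation is not imposed, but one checks the homology class is unchanged), and then discard contractible components and boundary-parallel arcs at the cost of more invertible scalars. This produces, from any tangle $T$, a well-defined reduced diagram up to isotopy, hence a well-defined homology class $[c(T)]\in \mathrm{H}_1(\Sigma_{\mathcal{P}},\partial\Sigma_{\mathcal{P}};\mathbb{Z})$ and a scalar; the subtle point is that the scalar and the class do not depend on the order in which the crossings were resolved or on the choice of diagram representing $T$. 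The cleanest way is to define a candidate inverse $\psi: \mathcal{S}^{\mathbb{C}^*}_{\omega^{1/2}}(\mathbf{\Sigma})\to\mathcal{R}[\mathrm{H}_1]$ by $\psi([T]) = \omega^{n(T)/2}[c(T)]$ where $c(T)$ is the underlying $1$-cycle of any vertically-framed, generic representative and $n(T)$ records the writhe-type correction, and then verify directly that $\psi$ is well-defined by checking invariance under the moves of Figure \ref{figmoves} and under the defining skein relations \eqref{eq: C skein 1}–\eqref{eq: C skein 3}; this is a finite local check. Once $\psi$ is well-defined one has $\psi\circ h = \mathrm{id}$ on basis elements, so $h$ is injective, and combined with surjectivity $h$ is the desired isomorphism of $\mathcal{R}$-modules.

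The main obstacle I expect is the bookkeeping of the half-integer powers $\omega^{1/2}$ coming from the boundary height relations \eqref{eq: C skein 2}, \eqref{eq: C skein 3}: one must confirm that the correction exponent $n(T)$ is a well-defined element of $\mathbb{Z}$ (not just $\frac12\mathbb{Z}$) once the tangle is brought to Weyl-ordered form, and that the relative intersection form taking values in $\frac12\mathbb{Z}$ (Definition \ref{de: rel inters form}) matches these boundary contributions correctly so that \eqref{eq: prod relat hom} is compatible with stacking. In other words, the genuinely new content beyond \cite{GU} is precisely the boundary-arc analysis, and that is where I would spend most of the effort; the closed-surface part is essentially Gelca–Uribe verbatim. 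A secondary point to be careful about is that ``reduced'' requires contractibility to be taken relative to $\partial\Sigma_{\mathcal{P}}$, so a strand cutting off a disc with some punctures on its boundary is \emph{not} reduced and must be simplified — one should make sure the list of simplification moves is exhaustive, which follows from the classification of curves on surfaces up to relative homology. I would remark that $h$ is in fact an algebra isomorphism with respect to the product \eqref{eq: prod relat hom} — the product compatibility is immediate from the definition of stacking and resolving the resulting crossings by \eqref{eq: C skein 1} — although the lemma as stated only claims the module isomorphism.
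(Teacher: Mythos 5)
Your overall two-step strategy (descent of $\wid{h}$ to homology, then injectivity via a well-defined inverse $\psi$) matches the paper's, and your treatment of injectivity is essentially equivalent to the paper's second step, which argues that the moves generating skein-equality of reduced diagrams all preserve the relative homology class. The gap is in your first step. Your list of moves claimed to realize homological equivalence of reduced diagrams --- isotopy, creation/annihilation of contractible loops, creation/annihilation of boundary-parallel arcs, and height reorderings --- is not exhaustive, and the appeal to ``the classification of curves on surfaces up to relative homology'' does not repair this. Already on the torus, the reduced diagram consisting of a meridian union a longitude and the reduced diagram consisting of a single $(1,1)$-curve represent the same homology class but are related by none of your moves; likewise a separating curve bounding a genus-one subsurface is null-homologous yet non-contractible, so it cannot be removed by killing small trivial loops.

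The missing ingredient is the \emph{homological relation} of \eqref{eq:skeinconsequences} (the first identity there, a saddle-type move), which is precisely what allows components to be merged and split. The paper's argument, following \cite[Theorem 4.5]{GU}, runs as follows: given a reduced diagram $D$ with trivial relative homology, choose a compact oriented subsurface $S\subset\Sigma_{\mathcal{P}}$ with $\partial S=D$ (or $\partial S=D\cup D_{\partial}$ for a suitable collection $D_{\partial}$ of boundary arcs, which costs nothing by the trivial relation), decompose $S$ into cylinders and pairs of pants, and apply the homological relation to each elementary piece, inducting on the number of pieces in the decomposition. Your proposal never invokes this relation in step one, so the descent of $\wid{h}$ to $\mathcal{R}[\mathrm{H}_1(\Sigma_{\mathcal{P}},\partial\Sigma_{\mathcal{P}};\mathbb{Z})]$ is not established as written. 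Once this is repaired, the remainder of your plan (the explicit inverse $\psi$, the bookkeeping of half-integral powers of $\omega$ on boundary arcs, and the concluding remark on multiplicativity) goes through and agrees with the paper.
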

   		 	\begin{proof} 
   		First we show that if $D$ is a reduced diagram with trivial relative homology, that is  $[D]=1$ in $\mathrm{H}_1(\Sigma_{\mathcal{P}}, \partial \Sigma_{\mathcal{P}}; \mathbb{Z})$, then $\wid{h}(D)=1$.  
   		If  $\partial D =\emptyset$, then this is the proof of \cite[Theorem 4.5]{GU} which we briefly sketch here.
		    		Let $S\subset \Sigma_{\mathcal{P}}$ be a compact oriented sub-surface with oriented boundary $D$. When $S$ is homeomorphic to a cylinder $D^2\times [0,1]$ or a pair of pants, \textit{i.e.} a disc with three open sub-discs removed, it follows from the homological relation of \eqref{eq:skeinconsequences} that $\wid{h}(D)=1$ (see \cite[Figure $6$]{GU}). Since every surface $S$ is obtained by gluing along boundary components such elementary cobordisms, we obtain that $\wid{h}(D)=1$ in the general case by induction on the number of elementary cobordisms in a pants decomposition of $S$.
   		Next suppose that $\partial D \neq \emptyset$. 
   		Since $D$ has trivial relative homology, there exists a finite collection $D_{\partial}$ of oriented arcs in the boundary and an oriented surface $S\subset \Sigma_{\mathcal{P}}$ such that $\partial S=D\cup D_{\partial}$. 
   		Moreover, because of the relations of \eqref{eq:skeinconsequences}, one has $[D]=[D\cup D_{\partial}]\in  \mathcal{S}^{\mathbb{C}^*}_{\omega^{1/2}}(\mathbf{\Sigma})$. 
   		We conclude by applying the preceding case to $D\cup D_{\partial}$. 
   		
   		Now let us show that if $D$ and $D'$ are  two reduced diagrams such that $\wid{h}(D)=\wid{h}(D')$, then they have same relative homology class. Two such diagrams can be obtained one from each other by means of elementary moves (isotopy, Reidemeister moves followed by resolution of the crossings and the defining relations \eqref{eq: C skein 1}, \eqref{eq: C skein 2}). 
   		Since these moves preserve the relative homology class, the result follows. 
   	\end{proof}

     \begin{proposition}\label{skeinalgstructure}
  The $\mathcal{R}$--linear isomorphism	$h$ is a morphism of algebras. 
   \end{proposition}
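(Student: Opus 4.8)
The plan is to reduce the claim to a single monomial identity and establish it by a direct diagrammatic computation. Since $h$ is $\mathcal{R}$--linear and multiplicativity is $\mathcal{R}$--bilinear on both sides, and since the classes $[\alpha]$ with $\alpha\in \mathrm{H}_1(\Sigma_{\mathcal{P}},\partial\Sigma_{\mathcal{P}};\mathbb{Z})$ form an $\mathcal{R}$--basis of the source, it suffices to prove, using the product \eqref{eq: prod relat hom}, that
\[ h([\alpha])\cdot h([\beta]) \;=\; \omega^{([\alpha],[\beta])}\, h([\alpha+\beta]) \qquad\text{for all }\alpha,\beta, \]
where $([\alpha],[\beta])$ is the relative intersection pairing of Definition \ref{de: rel inters form}. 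I would fix reduced diagrams $D_1$ and $D_2$ representing $\alpha$ and $\beta$, isotoped so that $D_1\cup D_2$ is in general position: the two diagrams meet transversally at finitely many double points in the interior of $\Sigma_{\mathcal{P}}$, and on each boundary arc $b$ the finite set $\partial_b D_1$ is disjoint from $\partial_b D_2$. By definition of the stacking product, $h([\alpha])\cdot h([\beta]) = [T_1][T_2] = [T_1\cup T_2]$, where $T_i$ is the Weyl--ordered tangle of $D_i$, placed in $\Sigma_{\mathcal{P}}\times(\tfrac12,1)$ resp. $\Sigma_{\mathcal{P}}\times(0,\tfrac12)$. Its diagram is $D_1\cup D_2$, where at each interior double point the $D_1$--strand lies over the $D_2$--strand, and on each boundary arc all points of $\partial_b D_1$ have greater height than all points of $\partial_b D_2$.

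Now I would resolve everything with the defining relations, using the key feature that each of \eqref{eq: C skein 1}, \eqref{eq: C skein 2} and \eqref{eq: C skein 3} rewrites a crossing, or a height exchange, as a \emph{single} monomial times its orientation--preserving resolution — so, unlike in the Kauffman--bracket setting, the procedure is deterministic. Applying \eqref{eq: C skein 1} at an interior double point $v$ replaces the $D_1$--over--$D_2$ crossing by the oriented smoothing and multiplies the class by $\omega^{\varepsilon(v)}$; the exponent is exactly the sign $\varepsilon(v)$ of \eqref{eq: rel inters form} once one matches, in \eqref{eq: C skein 1}, the ordered tangent frame (over--strand, under--strand)$\,=(e_1,e_2)$ against the orientation of $\Sigma$. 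After doing this at all interior double points the collected factor is $\omega^{\sum_{v\in D_1\cap D_2}\varepsilon(v)}$ and the resulting diagram has no interior crossing.

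It remains to Weyl--order the boundary. On each boundary arc $b$ I would use \eqref{eq: C skein 2} and \eqref{eq: C skein 3} to merge the common height of the $\partial_b D_1$--points and the common (lower) height of the $\partial_b D_2$--points into a single common height; processing the adjacent mixed pairs one at a time, each elementary height merge of a point $v_1\in\partial_b D_1$ past a point $v_2\in\partial_b D_2$ contributes the factor $\omega^{\varepsilon(v_1,v_2)/2}$, with $\varepsilon(v_1,v_2)\in\{\pm1\}$ exactly the sign attached in Definition \ref{de: rel inters form} (obtained there by sliding $c_1$ along $b$ to move $v_1$ onto $v_2$). A short bookkeeping shows that each mixed pair $(v_1,v_2)\in S(b)$ is treated exactly once, so the total boundary contribution is $\omega^{\sum_{b}\sum_{(v_1,v_2)\in S(b)}\frac12\varepsilon(v_1,v_2)}$. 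The tangle $T'$ now obtained has no crossing and is Weyl--ordered; applying \eqref{eq: C skein 1} together with the trivial and homology relations of \eqref{eq:skeinconsequences} deletes its contractible circles and arcs at no cost, so $[T']=[D]$ for a reduced diagram $D$. Orientation--preserving smoothings and the removal of contractible components preserve the relative homology class, hence $[D]=[\alpha+\beta]$ in $\mathrm{H}_1(\Sigma_{\mathcal{P}},\partial\Sigma_{\mathcal{P}};\mathbb{Z})$ and $[T']=\wid{h}(D)=h([\alpha+\beta])$ by Lemma \ref{lemmaskeinspan}. Multiplying the interior and boundary monomial contributions and comparing with \eqref{eq: rel inters form} yields $h([\alpha])h([\beta])=\omega^{([\alpha],[\beta])}h([\alpha+\beta])$; unitality, $h([0])=[\emptyset]=1$, is clear. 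The one genuinely delicate point is the bookkeeping of signs: making sure the powers of $\omega$ produced by \eqref{eq: C skein 1}--\eqref{eq: C skein 3} match term by term the two sums defining the relative intersection form, and that the boundary--merging process counts each mixed pair with multiplicity exactly one.
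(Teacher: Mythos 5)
Your argument is correct and is essentially the paper's proof written out in full: the paper simply asserts that the proposition is a direct consequence of the skein relations \eqref{eq: C skein 1}--\eqref{eq: C skein 3} and of the definition of the relative intersection form, which is exactly the crossing-resolution and boundary height-merging computation you carry out, with the interior crossings accounting for the $\sum_v\varepsilon(v)$ term and the boundary merges for the $\sum_b\sum_{S(b)}\tfrac12\varepsilon(v_1,v_2)$ term. The reduction to reduced diagrams via Lemma \ref{lemmaskeinspan} and the sign bookkeeping you flag are precisely the details the paper leaves implicit.
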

 \begin{proof}
	This is a direct consequence of the defining skein relations \eqref{eq: C skein 1}, \eqref{eq: C skein 2} and of the definition of the relative  intersection form.
\end{proof}
\begin{remark}
If $\mathcal{R}=\mathbb{C}$ and $\omega^{1/2}=+1$, then the $\mathbb{C}^*$ skein algebra is isomorphic to the algebra of regular functions of the $\mathbb{C}^*$ character variety $\mathcal{X}_{\mathbb{C}^*}(\mathbf{\Sigma})$. 	
\end{remark}
     
    \subsubsection{The equivariant version}\label{sec: equiv skein}
    
    Consider an involutive punctured surface $(\mathbf{\Sigma}, \sigma)$, where $\sigma$ has the set of fixed points $B$,  and a commutative unital ring $\mathcal{R}$ with an invertible element $\omega \in \mathcal{R}^{\times}$. 
    In the definition of $\mathbb{C}^*$ skein algebra, we need a square root of $\omega$, however the equivariant skein algebra we are about to define does not depend on this square root. We thus introduce it artificially as follows.
    Denote by $\mathcal{R}' := \quotient{\mathcal{R}[\omega^{1/2}]}{\left( (\omega^{1/2})^2 - \omega\right)}$. We have a natural embedding $\mathcal{R}\hookrightarrow \mathcal{R}'$. 

    \begin{definition}
    The \textit{equivariant} $\mathbb{C}^*$  {skein algebra} is the $\mathcal{R}$-submodule $\mathcal{S}_{\omega}^{\mathbb{C}^*, \sigma}(\mathbf{\Sigma}) \subset \mathcal{S}_{\omega^{1/2}}^{\mathbb{C}^*}(\Sigma, \mathcal{P}\cup B) $ spanned by classes $[D]$ of Weyl-ordered diagrams such that $\sigma(D)= D^{-1}$, where $D^{-1}$ denotes the diagram $D$ with opposite orientation. Here $\mathcal{S}_{\omega^{1/2}}^{\mathbb{C}^*}(\Sigma, \mathcal{P}\cup B) $ is seen as an $\mathcal{R}'$-module.
        \end{definition}
      
\begin{remark}\label{rk: h equiv}
Using the isomorphism $h$,  the set of $\sigma_*$ anti-invariant classes $\mathrm{H}^{\sigma}_1(\Sigma_{\mathcal{P}\cup B}, \partial \Sigma_{\mathcal{P}\cup B}; \mathbb{Z})\subset \mathrm{H}_1(\Sigma_{\mathcal{P}\cup B}, \partial \Sigma_{\mathcal{P}\cup B}; \mathbb{Z})$ is an $\mathcal{R}$--basis of  $\mathcal{S}_{\omega}^{\mathbb{C}^*, \sigma}(\mathbf{\Sigma})$. Moreover, the isomorphism of algebras $h$ restricts to an isomorphism of algebras, for the restricted structures.  
\end{remark}

     By Proposition \ref{skeinalgstructure}, $\mathcal{S}_{\omega}^{\mathbb{C}^*, \sigma}(\mathbf{\Sigma})$  is an $\mathcal{R}$-algebra isomorphic to the algebra whose underlying $\mathcal{R}$-modules is  $\mathcal{R}[\mathrm{H}^{\sigma}_1(\Sigma_{\mathcal{P}}, \partial \Sigma_{\mathcal{P}}; \mathbb{Z})]$ and product $ [\alpha]\cdot [\beta] = \omega^{([\alpha], [\beta])} [\alpha + \beta]$. 
     
     In particular, one has the following. 
\begin{corollary}\label{cor: skein equ iso to reg. func. of char.}
	For $\mathcal{R}=\C$ and $\omega=+1$, the algebra $\mathcal{S}_{+1}^{\mathbb{C}^*, \sigma}(\mathbf{\Sigma})$  is isomorphic to the algebra of regular functions of $\mathcal{X}_{\mathbb{C}^*}^{\sigma}(\mathbf{\Sigma})$. 
\end{corollary}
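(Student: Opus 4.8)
The plan is to read off the statement as a direct specialization of the structural description of the equivariant skein algebra recorded in Remark \ref{rk: h equiv}. Recall that, by that remark (which rests on Lemma \ref{lemmaskeinspan} and Proposition \ref{skeinalgstructure}), the morphism $h$ restricts to an isomorphism of $\mathcal{R}$-algebras between $\mathcal{S}_{\omega}^{\mathbb{C}^*, \sigma}(\mathbf{\Sigma})$ and the $\mathcal{R}$-module $\mathcal{R}[\mathrm{H}^{\sigma}_1(\Sigma_{\mathcal{P}}, \partial \Sigma_{\mathcal{P}}; \mathbb{Z})]$ endowed with the twisted product $[\alpha]\cdot[\beta] = \omega^{([\alpha],[\beta])}[\alpha+\beta]$, where $(\cdot,\cdot)$ is the relative intersection form.

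First I would set $\mathcal{R} = \mathbb{C}$ and $\omega = +1$. Since the equivariant algebra does not depend on the chosen square root of $\omega$, I may take $\omega^{1/2} = 1$, so that the scalar $\omega^{([\alpha],[\beta])}$ equals $1$ for every pair of classes $[\alpha], [\beta]$, even though $([\alpha],[\beta])$ a priori lies in $\frac{1}{2}\mathbb{Z}$. Consequently the twisted product degenerates to the ordinary group-algebra multiplication $[\alpha]\cdot[\beta] = [\alpha+\beta]$ on $\mathbb{C}[\mathrm{H}^{\sigma}_1(\Sigma_{\mathcal{P}}, \partial \Sigma_{\mathcal{P}}; \mathbb{Z})]$. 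By the very definition of the equivariant $\mathbb{C}^*$ character variety, this is precisely the algebra of regular functions $\mathbb{C}[\mathcal{X}_{\mathbb{C}^*}^{\sigma}(\mathbf{\Sigma})]$, so the restriction of $h$ supplies the desired isomorphism.

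I do not expect any genuine obstacle here; the single point deserving a sentence is the half-integer exponent, dealt with by the independence from the square root noted above. If one wished to say slightly more, one could remark that $h$ also intertwines the Poisson structures once $\omega$ is treated as a formal deformation parameter, but the corollary only asserts an isomorphism of commutative algebras, so this is not needed for the proof.
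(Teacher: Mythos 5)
Your proposal is correct and follows essentially the same route as the paper: the paper also deduces the corollary directly from the fact (via $h$ and Proposition \ref{skeinalgstructure}) that $\mathcal{S}_{\omega}^{\mathbb{C}^*,\sigma}(\mathbf{\Sigma})$ is the group algebra $\mathcal{R}[\mathrm{H}^{\sigma}_1(\Sigma_{\mathcal{P}},\partial\Sigma_{\mathcal{P}};\mathbb{Z})]$ with product twisted by $\omega^{([\alpha],[\beta])}$, which degenerates to the ordinary group algebra at $\omega=+1$. Your extra remark on the half-integer exponent, disposed of by the independence from the choice of square root, is a reasonable precaution and consistent with the paper's setup.
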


\subsubsection{Triangular decompositions} \label{sec: decompo skein eq}

Let $(\mathbf{\Sigma},\Delta)$  be  a triangulated punctured surface. 
For a reduced diagram $D$ of $\Sigma_{\mathcal{P}}$ that is transversed to the edges of the triangulation, denote by $D^{\T}$ its intersection  with the triangle $\T$. 
The assignment $i([D]):= \sum_{\mathbb{T}\in F(\Delta)} [D^{\mathbb{T}}]$ extends to an injective morphism of groups 
\begin{equation}\label{eq: i gp}
i: \mathrm{H}_1(\Sigma_{\mathcal{P}}, \partial \Sigma_{\mathcal{P}}; \mathbb{Z}) \hookrightarrow \oplus_{\mathbb{T} \in F(\Delta)} \mathrm{H}_1(\mathbb{T}, \partial \mathbb{T}; \mathbb{Z}). 
\end{equation} 

 \begin{lemma}
 	The group morphism $i$ preserves the relative intersection form. 
 \end{lemma}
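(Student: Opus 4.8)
The statement to prove is that the injection $i\colon \mathrm{H}_1(\Sigma_{\mathcal{P}}, \partial \Sigma_{\mathcal{P}}; \mathbb{Z}) \hookrightarrow \bigoplus_{\mathbb{T}\in F(\Delta)} \mathrm{H}_1(\mathbb{T}, \partial\mathbb{T}; \mathbb{Z})$ built from restricting reduced diagrams to the triangles intertwines the two relative intersection forms, where the form on the right-hand side is the orthogonal sum of the forms $(\cdot,\cdot)_{\mathbb{T}}$ of the individual triangles. Since $i$ is defined on the level of (homology classes represented by) reduced diagrams, it suffices to verify the identity $([D],[D'])_{\Sigma_{\mathcal{P}}} = \sum_{\mathbb{T}\in F(\Delta)} (D^{\mathbb{T}}, D'^{\mathbb{T}})_{\mathbb{T}}$ for a pair of reduced diagrams $D, D'$ in general position with respect to each other and to the edges of $\Delta$; bilinearity and Lemma~\ref{lemmaskeinspan} then give the general case.

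\textbf{Key steps.} First I would fix $D$ and $D'$ to be transverse to each other, transverse to $\mathcal{E}(\Delta)$, and meeting the edges of $\Delta$ away from the punctures and away from $D\cap D'$; this is achieved by a small isotopy and does not change either side. Next I would unwind both sides of the desired equality using the defining formula \eqref{eq: rel inters form}. The right-hand side is a sum over triangles $\mathbb{T}$, and for each $\mathbb{T}$ the form $(D^{\mathbb{T}}, D'^{\mathbb{T}})_{\mathbb{T}}$ decomposes as a sum of interior crossing contributions $\sum_{v\in D^{\mathbb{T}}\cap D'^{\mathbb{T}}}\varepsilon(v)$ plus boundary contributions $\sum_{b}\sum_{(v_1,v_2)\in S_{\mathbb{T}}(b)}\tfrac12\varepsilon(v_1,v_2)$, where $b$ now ranges over the boundary arcs of $\mathbb{T}$, namely the three edges of $\Delta$ bounding $\mathbb{T}$ (and possibly boundary arcs of $\mathbf{\Sigma}$). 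The crucial bookkeeping is to match these triangle-local terms with the global terms for $(D,D')_{\Sigma_{\mathcal{P}}}$:
\begin{itemize}
\item Each interior crossing $v\in D\cap D'$ of $\Sigma_{\mathcal{P}}$ lies in the interior of exactly one triangle $\mathbb{T}$, so it is counted once on each side, with the same local sign $\varepsilon(v)$ (the sign depends only on the germ of the two curves at $v$ and the local orientation, which $\pi_{\mathbb{T}}$-restriction preserves).
\item Each transverse intersection point $w$ of $D$ (resp.\ $D'$) with an \emph{inner} edge $e$ of $\Delta$ contributes nothing to $(D,D')_{\Sigma_{\mathcal{P}}}$, but becomes a boundary endpoint of $D^{\mathbb{T}}$ (resp.\ $D'^{\mathbb{T}}$) for the two triangles $\mathbb{T}_+,\mathbb{T}_-$ adjacent to $e$. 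I must show that these spurious boundary contributions cancel in the sum over the two adjacent triangles. For a pair $w\in D\cap e$ and $w'\in D'\cap e$, the sign $\varepsilon(w,w')$ computed inside $\mathbb{T}_+$ is the negative of the one computed inside $\mathbb{T}_-$: both are obtained by sliding $w$ to $w'$ along $e$, but the ambient orientation of the two triangles induces opposite co-orientations of $e$, flipping the sign of the determinant $(e_1,e_2)$. Hence the $\tfrac12\varepsilon + \tfrac12\varepsilon$ contributions from $\mathbb{T}_+$ and $\mathbb{T}_-$ sum to zero, leaving only the genuine boundary-of-$\mathbf{\Sigma}$ terms, which match those of $(D,D')_{\Sigma_{\mathcal{P}}}$ verbatim.
\end{itemize}
Finally I would observe that the real boundary arcs of $\mathbf{\Sigma}$ are (by Definition~\ref{def_triangulation}) themselves edges of $\Delta$, so they each sit on a single triangle and their $S(b)$-contributions appear identically on both sides.

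\textbf{Main obstacle.} The only genuine subtlety is the cancellation of the fictitious boundary terms along inner edges, i.e.\ checking that the orientation convention of \eqref{eq: rel inters form} makes $\varepsilon(w,w')$ for an inner edge anti-symmetric under passing to the opposite adjacent triangle. This is a purely local sign computation: one picks oriented coordinates near a point of $e$, writes the two triangles as the two half-planes, notes that ``sliding along $b$'' uses the boundary orientation induced by each half-plane's orientation (which are opposite), and reads off that the two determinants differ by $-1$. Once this sign lemma is in hand, everything else is the tautological term-by-term matching described above, and the fact that $i$ preserves the intersection form follows. (It is worth recording that this is exactly the additivity one expects: the relative intersection form is ``local'' with respect to cutting along a codimension-one subcomplex, with the half-integer boundary weights designed precisely so that cut pieces glue back correctly.)
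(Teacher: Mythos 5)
Your argument is correct, and it is the argument the paper has in mind: the paper's own proof is just the one-line assertion that the lemma is ``a straightforward consequence of the definitions,'' and your term-by-term matching --- interior crossings land in a unique triangle, genuine boundary arcs of $\mathbf{\Sigma}$ are edges of a single triangle, and the spurious half-integer contributions along an interior edge cancel because the two adjacent triangles induce opposite boundary orientations on it --- is exactly the bookkeeping that assertion suppresses. The sign computation you flag as the main obstacle does check out (a direct local calculation in the two half-planes gives $\varepsilon_{\mathbb{T}_+}(w,w')=-\varepsilon_{\mathbb{T}_-}(w,w')$), so nothing is missing.
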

\begin{proof} This is a straightforward consequence of the definitions. \end{proof}

Via the isomorphism $h$ of \eqref{eq: h}, one deduces the following injective morphism of algebras: 
\begin{equation}
i^{\Delta}\co \mathcal{S}^{\mathbb{C}^*}_{\omega^{1/2}}(\mathbf{\Sigma}) \hookrightarrow \otimes_{\mathbb{T}\in F(\Delta)} \mathcal{S}^{\mathbb{C}^*}_{\omega^{1/2}}(\T). 
\end{equation}  

Let $\pi\co \hat{\Sigma}(\Delta) \to \Sigma $ be the $2$--fold branched covering of $(\mathbf{\Sigma},\Delta)$ as in Definition \ref{de: cover}.  As mentioned in  Section \ref{sec: branch cov}, $\hat{\Sigma}(\Delta)$ decomposes into hexagons $\Hex$,  which are indexed by the triangles of $\Delta$; one has an injection similar to \eqref{eq: i gp}, call it $\hat{i}$. The morphism $\hat{i}$ commutes with the involutions (of $ \hat{\Sigma}(\Delta)$ and of the hexagons $\Hex$) and preserves the relative intersection form. 
Therefore, one has  an injective morphism of algebras 
\begin{equation}\label{eq: i decomp skein equiv}
 i^{\Delta} \co \mathcal{S}_{\omega}^{\mathbb{C}^*, \sigma} (\mathbf{\hat{\Sigma}}) \hookrightarrow \otimes_{\mathbb{T}\in F(\Delta)} \mathcal{S}_{\omega}^{\mathbb{C}^*, \sigma}(\hat{\mathbb{T}}).
\end{equation}

    \subsection{Tensor decomposition of equivariant $\mathbb{C}^*$ skein algebras}

 In Section \ref{sec: decompo skein eq}  we described the behaviour of the equivariant $\mathbb{C}^*$ skein algebra under gluing of punctured surfaces along boundary arcs. Here we describe the behaviour of the equivariant $\mathbb{C}^*$ skein algebra under another type of gluing: the sewing $\veebar$ of involutive punctured surfaces introduced in Definition \ref{def: sewing VEE}. 	This will be particularly  useful in the study of the representations of the equivariant $\mathbb{C}^*$ skein algebra. 
\vspace{2mm}
\par 

	Throughout this section $\mathcal{R}=\mathbb{C}$ and $\omega$ is a root of unity of odd order $N>1$. 
	Let  $(\mathbf{\Sigma}_1, \sigma_1)$ and $(\mathbf{\Sigma}_2, \sigma_2)$ be two involutive punctured surfaces. Let $\veebar_{\textbf{b}}(\Sigma_1\sqcup \Sigma_2)$ be the involutive punctured surface obtained from $(\mathbf{\Sigma}_1, \sigma_1)$ and $(\mathbf{\Sigma}_2, \sigma_2)$ by sewing them at a $\textbf{b}=\{b_1,b_2\}$ for $b_i$ a fixed-point of $\sigma_i$; see Definition \ref{def: sewing VEE}.    
	
	 \begin{proposition}\label{propgluinginvolutive} 
	 The algebras $\mathcal{S}_{\omega}^{\mathbb{C}^*,\sigma}(\veebar_{\textbf{b}}(\Sigma_1\sqcup \Sigma_2))$ and $\mathcal{S}_{\omega}^{\mathbb{C}^*,\sigma}(\mathbf{\Sigma}_1) \ot  \mathcal{S}_{\omega}^{\mathbb{C}^*,\sigma}(\mathbf{\Sigma}_2)$ are Morita equivalent. 
	 Moreover, if $\Sigma_1\sqcup \Sigma_2$ is either  closed, or contains only one boundary component or has exactly two boundary components that are exchanged by its involution, then the algebras $\mathcal{S}_{\omega}^{\mathbb{C}^*,\sigma}(\veebar_{\textbf{b}}(\Sigma_1\sqcup \Sigma_2))$ and $\mathcal{S}_{\omega}^{\mathbb{C}^*,\sigma}(\mathbf{\Sigma}_1) \ot  \mathcal{S}_{\omega}^{\mathbb{C}^*,\sigma}(\mathbf{\Sigma}_2)$ are isomorphic.
	\end{proposition}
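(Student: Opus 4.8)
The plan is to reduce everything to the homological description of the equivariant $\mathbb{C}^*$ skein algebra provided by Remark \ref{rk: h equiv}, so that the statement becomes a purely algebraic statement about the group algebra of $\mathrm{H}_1^{\sigma}$ with its $\omega$-twisted product. Writing $\widehat{\mathbf{\Sigma}} := \veebar_{\mathbf{b}}(\Sigma_1 \sqcup \Sigma_2)$, the first step is to compare the anti-invariant relative homology of $\widehat{\mathbf{\Sigma}}$ with that of $\mathbf{\Sigma}_1$ and $\mathbf{\Sigma}_2$. Since $\widehat{\mathbf{\Sigma}}$ is obtained by removing from $\Sigma_1 \sqcup \Sigma_2$ two small $\sigma$-invariant discs (around $b_1$ and $b_2$) and gluing the resulting boundary circles, a Mayer--Vietoris argument for the decomposition $\widehat{\Sigma}_{\mathcal{P}} = (\Sigma_1 \setminus \mathring{D_1}) \cup (\Sigma_2 \setminus \mathring{D_2})$, glued along an annulus neighborhood of the new seam curve $\gamma$, computes $\mathrm{H}_1(\widehat{\Sigma}_{\mathcal{P}}, \partial \widehat{\Sigma}_{\mathcal{P}}; \mathbb{Z})$. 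Taking $\sigma_*$-anti-invariant parts (which is exact up to $2$-torsion, but $\mathbb{Z}$ is torsion-free here and the relevant sequences split), one gets
\begin{equation*}
\mathrm{H}_1^{\sigma}(\widehat{\Sigma}_{\mathcal{P}}, \partial \widehat{\Sigma}_{\mathcal{P}}; \mathbb{Z}) \cong \mathrm{H}_1^{\sigma_1}(\Sigma_{1,\mathcal{P}}, \partial; \mathbb{Z}) \oplus \mathrm{H}_1^{\sigma_2}(\Sigma_{2,\mathcal{P}}, \partial; \mathbb{Z}) \oplus (\text{extra rank at most } 1),
\end{equation*}
where the extra summand is generated by the anti-invariant seam class $[\gamma] - [\sigma(\gamma)]$; it is nonzero precisely when the seam curve is non-separating in a suitable sense, which is controlled by the global topology (closed / one boundary / two swapped boundaries versus the general case).

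\textbf{Key steps.} First I would set up the Mayer--Vietoris sequence and identify the boundary gluing explicitly, tracking the $\mathbb{Z}/2\mathbb{Z}$-action; the new generator coming from the seam is the class of the core curve of the glued annulus, and one checks $\sigma$ sends it to the oppositely-oriented core of its mirror annulus. Second, I would verify that the relative intersection form of Definition \ref{de: rel inters form} on $\mathrm{H}_1^{\sigma}(\widehat{\Sigma}_{\mathcal{P}}, \partial; \mathbb{Z})$ restricts to the direct sum of the forms on the two pieces, plus a pairing of the extra seam generator against the images of the two pieces. In general this cross-pairing is nonzero, which is exactly why one only gets Morita equivalence: the twisted group algebra $\mathcal{S}_{\omega}^{\mathbb{C}^*,\sigma}(\widehat{\mathbf{\Sigma}})$ is then a "central extension"-type crossed product of $\mathcal{S}_{\omega}^{\mathbb{C}^*,\sigma}(\mathbf{\Sigma}_1) \otimes \mathcal{S}_{\omega}^{\mathbb{C}^*,\sigma}(\mathbf{\Sigma}_2)$ by an extra cyclic variable $u$ (of order $N$, since $\omega^N = 1$) which $\omega$-commutes with the two tensor factors. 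Such an extension of a finite-dimensional-over-its-center algebra by an $N$-th root-of-unity Weyl pair is always Morita equivalent (indeed, after adjoining $u$ and its partner, one gets a matrix algebra tensor factor $M_N$ over the original, up to completing the center); this gives the first assertion. For the second assertion, I would show that under the three topological hypotheses the seam class $[\gamma]-[\sigma(\gamma)]$ either already lies in the span of the two pieces, or its cross-pairing with everything vanishes, so the extra factor disappears (or splits off as a central polynomial/Laurent generator that can be absorbed), yielding an honest algebra isomorphism. The cleanest way to organize the "three cases" is probably to observe that in each of them the quotient surface $\widehat{\Sigma}/\sigma$ has the property that the image of the seam bounds, forcing $[\gamma]$ to be $\sigma$-invariant (hence contributing $0$ to the anti-invariant part) rather than anti-invariant.

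\textbf{Main obstacle.} The delicate point is the Morita-equivalence half: making precise the claim that adjoining an order-$N$ central-up-to-$\omega$ variable to a $\mathbb{C}$-algebra $A$ (with $A$ a twisted group algebra of a lattice) produces something Morita equivalent to $A$. This is a standard fact about Azumaya-type algebras / twisted group algebras over their centers, but I want to state it carefully: the extra variable $u$ together with one of the existing generators $v$ with $uv = \omega^{\pm 1} vu$ generates a copy of the $N$-dimensional matrix algebra $M_N(\mathbb{C})$ sitting inside (over the appropriate central subalgebra), and $\mathcal{S}_{\omega}^{\mathbb{C}^*,\sigma}(\widehat{\mathbf{\Sigma}}) \cong M_N(\mathbb{C}) \otimes B$ where $B \cong \mathcal{S}_{\omega}^{\mathbb{C}^*,\sigma}(\mathbf{\Sigma}_1) \otimes \mathcal{S}_{\omega}^{\mathbb{C}^*,\sigma}(\mathbf{\Sigma}_2)$ after a change of generators that "decouples" $u$ from one of the original variables. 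Identifying which existing generator $v$ to pair $u$ with — and checking that after this decoupling the remaining generators still generate exactly the tensor product of the two piece-algebras, with the correct twisted product — is the real content; the homological bookkeeping of the intersection form under $\veebar$ is where any sign or factor-of-$\tfrac12$ error would creep in, so I would do that computation with care, leveraging the already-established fact (the Lemma preceding \eqref{eq: i decomp skein equiv}) that the triangular inclusion $\hat i$ preserves the relative intersection form.
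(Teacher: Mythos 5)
Your overall plan (reduce to the lattice-with-skew-form picture via Remark \ref{rk: h equiv} and run Mayer--Vietoris on the decomposition along the seam) is the right starting point and matches the paper, but the structural conclusion you draw from Mayer--Vietoris is wrong, and the Morita mechanism you build on top of it does not apply. The seam circle $\mathcal{C}=\phi_1(\partial\D)=\phi_2(\partial\D)$ is globally $\sigma$-invariant and $\sigma$ acts on it as an orientation-preserving free involution, so $\mathrm{H}_1^{\sigma}(\mathcal{C})=0$ and your proposed generator $[\gamma]-[\sigma(\gamma)]$ is zero: there is no extra rank-one summand. What actually happens (and what the paper proves) is that $\iota\colon \mathrm{H}_1^{\sigma}(\Sigma_1',\partial)\oplus \mathrm{H}_1^{\sigma}(\Sigma_2',\partial)\to \mathrm{H}_1^{\sigma}(\veebar_{\mathbf{b}},\partial)$ is injective with cokernel embedding in $\mathbb{Z}/2\mathbb{Z}$; the possibly-missing class is $\overline{\iota}(x)$ for some non-anti-invariant $x$ in the pieces with $x+\sigma_*(x)$ an \emph{odd} multiple of $a([\mathcal{C}])$. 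So the two lattices have the same rank and differ by an index-$2$ sublattice, and both algebras are quantum tori of the same rank. Your "adjoin a cyclic variable $u$ of order $N$ and split off an $M_N(\mathbb{C})$ factor" picture is therefore not available: a free lattice has no torsion generator, and splitting off $M_N(\mathbb{C})$ would change the rank over the center, which does not happen here. The correct Morita argument (the paper's Lemma \ref{lemma_preliminary}) uses that $2$ is invertible mod the odd order $N$: one embeds each quantum torus into the other by sending the distinguished generator to its square, respectively its $2'$-th power with $2\cdot 2'\equiv 1 \pmod N$, and checks the composites are (inner-trivializable) automorphisms, giving an equivalence of module categories. This is where the oddness of $N$ enters, not through the order of a putative extra generator.

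For the "moreover" part, the real content is showing that the $\mathbb{Z}/2\mathbb{Z}$ obstruction $o$ vanishes when one of the pieces has no boundary; your suggestion that "the image of the seam bounds, forcing $[\gamma]$ to be $\sigma$-invariant" does not engage with this, since $[\mathcal{C}]$ is always $\sigma$-invariant and the obstruction lives in whether some anti-invariant class of the glued surface fails to come from the pieces. The paper rules this out by contradiction: a nonzero $o$ would produce a closed subsurface of $\Sigma_1'$ carrying a free orientation-preserving involution with an odd number of invariant boundary components, which is impossible by the classification of free involutions on surfaces (Asoh). Some such global input is needed; a purely local argument at the seam will not suffice.
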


     To prove Proposition \ref{propgluinginvolutive}, we first state a technical lemma. Consider a pair $\mathbb{E}=(E,\left(\cdot, \cdot\right)_E)$ where $E$ is a free $\mathbb{Z}$-module of finite rank and $\left(\cdot, \cdot\right)_E : E\times E \rightarrow \mathbb{Z}$ is a skew-symmetric bilinear form. The quantum torus    $\mathcal{T}_{\mathbb{E}}$ is the complex algebra with underlying vector space $\mathbb{C}[E]$ and product given by $[x]\cdot [y] := \omega^{(x,y)_E}[x+y]$. Given $e=(e_1,\ldots, e_n)$ a basis of $E$, the quantum torus $\mathcal{T}_{\mathbb{E}}$ is isomorphic to the complex  algebra generated by invertible elements $Z_{e_i}^{\pm 1}$ with relations $Z_{e_i}Z_{e_j}=\omega^{2(e_i,e_j)_E}Z_{e_j}Z_{e_i}$.  In particular, the equivariant $\mathbb{C}^*$ skein algebra $\mathcal{S}_{\omega}^{\mathbb{C}^*,\sigma}(\mathbf{\Sigma})$ is isomorphic to the quantum torus associated to the pair $(\mathrm{H}_1^{\sigma}(\Sigma_{\mathcal{P}\cup B}, \partial \Sigma_{\mathcal{P}\cup B} ; \mathbb{Z}), \left(\cdot, \cdot\right))$.

\begin{lemma}\label{lemma_preliminary} Consider two pairs $\mathbb{E}_1=(E_1,\left(\cdot, \cdot\right)_1)$  and $\mathbb{E}_2=(E_2,\left(\cdot, \cdot\right)_2)$ and suppose that we have a short exact sequence of $\mathbb{Z}$ modules
$$ 0 \rightarrow E_1 \xrightarrow{\iota}  E_2  \xrightarrow{o} \mathbb{Z}/2\mathbb{Z} \rightarrow 0$$
such that $(\iota(x), \iota(y))_2=(x,y)_1$ for all $x,y\in E_1$. Then the quantum tori $\mathcal{T}_{\mathbb{E}_1}$ and   $\mathcal{T}_{\mathbb{E}_2}$ are Morita equivalent.
\end{lemma}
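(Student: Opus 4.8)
The strategy is to exhibit $\mathcal{T}_{\mathbb{E}_2}$ as a crossed-product-type extension of $\mathcal{T}_{\mathbb{E}_1}$ and use the classical fact that a quantum torus is Morita equivalent to any ``index-$2$ overtorus'' of this shape. Concretely, pick an element $x_0\in E_2$ with $o(x_0)=1$; then $E_2 = \iota(E_1) \sqcup (x_0 + \iota(E_1))$, so as a $\mathbb{C}$-vector space $\mathcal{T}_{\mathbb{E}_2} = \mathcal{T}_{\mathbb{E}_1} \oplus Z_{x_0}\mathcal{T}_{\mathbb{E}_1}$, and $\mathcal{T}_{\mathbb{E}_1}$ sits inside $\mathcal{T}_{\mathbb{E}_2}$ as a subalgebra by the hypothesis that $\iota$ is an isometry. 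Conjugation by $Z_{x_0}$ induces an automorphism $\theta$ of $\mathcal{T}_{\mathbb{E}_1}$ (given on generators by $Z_{\iota(e)} \mapsto \omega^{2(x_0,\iota(e))_2} Z_{\iota(e)}$), and $Z_{x_0}^2 = \omega^{(x_0,x_0)_2}Z_{2x_0} = Z_{2x_0}$ lies in $\mathcal{T}_{\mathbb{E}_1}$ since $2x_0 \in \iota(E_1)$. Thus $\mathcal{T}_{\mathbb{E}_2}$ is a $\mathbb{Z}/2\mathbb{Z}$-graded algebra with degree-zero part $\mathcal{T}_{\mathbb{E}_1}$, i.e. a ``generalized crossed product'' of $\mathcal{T}_{\mathbb{E}_1}$ by $\mathbb{Z}/2\mathbb{Z}$ with an invertible degree-one element $Z_{x_0}$.

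\medskip

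The next step is to produce the Morita bimodule. The natural candidate is $\mathcal{T}_{\mathbb{E}_2}$ itself, viewed as a $(\mathcal{T}_{\mathbb{E}_2}, \mathcal{T}_{\mathbb{E}_1})$-bimodule $M$; one shows $M$ is a progenerator on both sides. That $M$ is finitely generated projective over $\mathcal{T}_{\mathbb{E}_1}$ on the right is clear since $\mathcal{T}_{\mathbb{E}_2}$ is free of rank $2$ as a right $\mathcal{T}_{\mathbb{E}_1}$-module (basis $\{1, Z_{x_0}\}$); it is a generator over $\mathcal{T}_{\mathbb{E}_1}$ because $\mathcal{T}_{\mathbb{E}_1}$ is a direct summand of $M$ as a right module. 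The delicate point is the other side: one must identify $\End_{\mathcal{T}_{\mathbb{E}_1}}(M)$ with $\mathcal{T}_{\mathbb{E}_2}$ acting by left multiplication, and check $M$ is a progenerator over $\mathcal{T}_{\mathbb{E}_2}$. Here the key is that $Z_{x_0}$ is invertible in $\mathcal{T}_{\mathbb{E}_2}$: left multiplication by $Z_{x_0}$ is a bijection $M\to M$, which forces the right-$\mathcal{T}_{\mathbb{E}_1}$-linear endomorphism ring of $M$ to be exactly $\mathcal{T}_{\mathbb{E}_1}\langle Z_{x_0}, Z_{x_0}^{-1}\rangle = \mathcal{T}_{\mathbb{E}_2}$; a short computation with the free basis $\{1,Z_{x_0}\}$ pins this down. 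Since $M = \mathcal{T}_{\mathbb{E}_2}$ is obviously a progenerator over itself on the left, Morita's theorem gives the equivalence.

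\medskip

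Alternatively — and this is probably the cleaner writeup — one can cite the standard structural result (e.g. for strongly graded rings, or for the crossed product $A\rtimes \mathbb{Z}/2\mathbb{Z}$ along an automorphism when $A$ contains no obstruction): if $B$ is $\mathbb{Z}/2\mathbb{Z}$-graded with $B_0 = A$ and $B_1$ contains a unit of $B$, then $B$ and $A$ are Morita equivalent, the bimodule being $B_1$ itself as an $(A,A)$-bimodule when one wants the Morita context explicitly, or simply $B$ as a $(B,A)$-bimodule. The hypothesis $2x_0\in\iota(E_1)$ is exactly what guarantees $B_1$ (spanned by $Z_{x+x_0}$, $x\in\iota(E_1)$, up to scalars $\omega^{(\ldots)}$) is both a left and right $\mathcal{T}_{\mathbb{E}_1}$-module with $B_1\cdot B_1 = B_0$, so the grading is strong; strongly graded rings over a group are always Morita equivalent to their identity component when that component and the whole ring have the expected projectivity, which holds here because everything is free of finite rank over the base.

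\medskip

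\textbf{Main obstacle.} The one genuinely non-formal point is verifying that the invertible degree-one element $Z_{x_0}$ actually exists with $Z_{x_0}^2\in\mathcal{T}_{\mathbb{E}_1}$ and that conjugation by it lands inside $\mathcal{T}_{\mathbb{E}_1}$ — all of which rests on the isometry hypothesis $(\iota(x),\iota(y))_2 = (x,y)_1$ and on $2E_2\subset\iota(E_1)$ (from the short exact sequence onto $\mathbb{Z}/2\mathbb{Z}$). Once that algebraic picture is in place, the Morita equivalence is essentially bookkeeping; I would spend most of the written proof making the bimodule structure and the progenerator conditions explicit rather than invoking black-box graded-ring theory, so the argument stays self-contained and the odd-order hypothesis on $\omega$ (which is needed for $\mathcal{S}_\omega^{\mathbb{C}^*,\sigma}$ to be identified with an honest quantum torus, but is not otherwise used in this lemma) is not silently relied upon.
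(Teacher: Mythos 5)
There is a genuine gap, and it sits exactly where you declare the proof to be ``essentially bookkeeping.'' The ``standard structural result'' you lean on --- that a $\mathbb{Z}/2\mathbb{Z}$-graded ring $B$ with $B_0=A$ and an invertible degree-one element is Morita equivalent to $A$ --- is false: $B=\mathbb{C}[\mathbb{Z}/2\mathbb{Z}]\cong\mathbb{C}\times\mathbb{C}$ is strongly graded over $B_0=\mathbb{C}$ with an invertible degree-one element, yet it has two simple modules and $\mathbb{C}$ has one. (Dade's theorem identifies \emph{graded} $B$-modules with $B_0$-modules, not all $B$-modules.) Your explicit bimodule version fails at the same spot: for ${}_{B}M_{A}$ with $M=B=\mathcal{T}_{\mathbb{E}_2}$ and $A=\mathcal{T}_{\mathbb{E}_1}$ to give a Morita equivalence, the left-multiplication map $B\to\End_A(M_A)$ must be an isomorphism, and it is not --- $\End_A(M_A)\cong M_2(A)$ is free of rank $4$ over $A$ while the image of $B$ is free of rank $2$. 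Already for $E_1=2\mathbb{Z}\subset E_2=\mathbb{Z}$ with zero forms, the image of $\mathbb{C}[Z^{\pm1}]$ in $M_2(\mathbb{C}[Z^{\pm2}])$ is the proper subring of matrices $\begin{pmatrix} a & bZ^2\\ b & a\end{pmatrix}$. Invertibility of $Z_{x_0}$ does not force surjectivity onto $\End_A(M)$, and ``progenerator on both sides'' is not a sufficient criterion for a bimodule to implement a Morita equivalence.

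The deeper symptom is your closing claim that the odd-order hypothesis on $\omega$ is ``not otherwise used in this lemma'': the statement is false without it, so no argument that ignores it can be correct. Take $\omega=i$, $E_2=\mathbb{Z}^2$ with $(f_1,f_2)_2=1$, and $E_1=\mathbb{Z}f_1\oplus\mathbb{Z}(2f_2)$. Then $(e_1,e_2)_1=2$, so $\mathcal{T}_{\mathbb{E}_1}$ is the commutative Laurent ring $\mathbb{C}[\mathbb{Z}^2]$ (because $\omega^{4}=1$), whereas $\mathcal{T}_{\mathbb{E}_2}$ satisfies $Z_{f_1}Z_{f_2}=-Z_{f_2}Z_{f_1}$ and is a nonsplit symbol (quaternion) algebra over its center $\mathbb{C}[Z_{f_1}^{\pm2},Z_{f_2}^{\pm2}]$; since Morita equivalence preserves centers and would force this algebra to become a matrix algebra over the fraction field of its center, the two are not Morita equivalent. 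The paper's proof is entirely different and uses oddness of $N$ at its core: choosing bases with $\iota(e_a)=f_a$ for $a<n$ and $\iota(e_n)=2f_n$, it picks $2'$ with $2\cdot 2'+Nk=1$ and builds mutual embeddings $i:\mathcal{T}_{\mathbb{E}_1}\hookrightarrow\mathcal{T}_{\mathbb{E}_2}$ and $j:\mathcal{T}_{\mathbb{E}_2}\hookrightarrow\mathcal{T}_{\mathbb{E}_1}$ with $j(Z_{f_n})=Z_{e_n}^{2'}$, showing that the two restriction functors are mutually quasi-inverse. Any repair of your graded/bimodule picture would have to feed the invertibility of $2$ modulo $N$ (equivalently, the centrality of $Z_{f_n}^N$ and the relation $Z_{f_n}^{2N}=(Z_{f_n}^{2})^{N}\in i(\mathcal{T}_{\mathbb{E}_1})$) into the computation of $\End_{\mathcal{T}_{\mathbb{E}_1}}(\mathcal{T}_{\mathbb{E}_2})$; as written, nothing in your argument distinguishes the true case from the counterexample above.
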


\begin{proof}
Denote by $n$ the common rank of $E_1$ and $E_2$. Fix some bases $e=(e_1, \ldots, e_n)$ and $f=(f_1, \ldots, f_{n})$ of $E_1$ and $E_2$  such that $\iota(e_a) = \left\{ \begin{array}{ll} f_a & \mbox{, if } a<n; \\ 2f_n & \mbox{, if }a=n.\end{array}\right.$  and $o(\sum_a \alpha_a f_a) = \alpha_n \pmod{2}$.  Since $N$ is prime to $2$, we can find integers $2'$ and $k$ such that $2\cdot 2' + N\cdot k = 1$. Define two embeddings $i: \mathcal{T}_{\mathbb{E}_1}\hookrightarrow \mathcal{T}_{\mathbb{E}_2}$ and $j: \mathcal{T}_{\mathbb{E}_2}\hookrightarrow \mathcal{T}_{\mathbb{E}_1}$ by the formulas:
\begin{eqnarray*}
i(Z_{e_a}^{\pm 1}) := \left\{ 
\begin{array}{ll} Z_{f_a}^{\pm 1} & \mbox{, if }a< n; \\ Z_{f_n}^{\pm 2} & \mbox{, if }a=n. \end{array}\right.
& j(Z_{f_a}^{\pm 1}) := \left\{ 
\begin{array}{ll} Z_{e_a}^{\pm 1} & \mbox{, if }a< n; \\ Z_{e_n}^{\pm 2'} & \mbox{, if }a=n. \end{array}\right.
\end{eqnarray*}
 Consider the two morphisms of algebras $\phi_1 : \mathcal{T}_{\mathbb{E}_1}\xrightarrow{\cong}\mathcal{T}_{\mathbb{E}_1}$ and $\phi_2 : \mathcal{T}_{\mathbb{E}_2}\xrightarrow{\cong}\mathcal{T}_{\mathbb{E}_2}$ defined by $\phi_1(Z_{e_a}) = \left\{ \begin{array}{ll} Z_{e_a} & \mbox{, if }a< n; \\ Z_{e_n}^{1-kN} & \mbox{, if }a=n. \end{array}\right. $ and $\phi_2(Z_{f_a}) = \left\{ \begin{array}{ll} Z_{f_a} & \mbox{, if }a< n; \\ Z_{f_n}^{1-kN} & \mbox{, if }a=n. \end{array}\right. $. We have the identities $i\circ j = \phi_2$ and $j\circ i = \phi_1$. The induced functors $i^* : \mathcal{T}_{\mathbb{E}_2}- \Mod \rightarrow \mathcal{T}_{\mathbb{E}_1}-\Mod$ and $j^* : \mathcal{T}_{\mathbb{E}_1}- \Mod \rightarrow \mathcal{T}_{\mathbb{E}_2}-\Mod$ satisfy $i^*\circ j^* = \phi_1^*$ and $j^*\circ i^* = \phi_2^*$ and we need to prove that $\phi_1^*$ and $\phi_2^*$ are isomorphic to the identity functors. Let us define an inverse functor $\psi_1: \mathcal{T}_{\mathbb{E}_1}-\Mod \rightarrow  \mathcal{T}_{\mathbb{E}_1}-\Mod $ for $\phi_1^*$. As any quantum torus, the algebra $\mathcal{T}_{\mathbb{E}_1}$ is semi-simple and for an irreducible representation  $\rho : \mathcal{T}_{\mathbb{E}_1} \rightarrow \End(V)$, the central element $Z_{e_n}^{N}$ is sent to a scalar $\chi_{\rho}(Z_{e_n}^N)\in \mathbb{C}^*$. For an irreducible representation $(\rho,V)$, we define $\psi_1(\rho) :  \mathcal{T}_{\mathbb{E}_1} \rightarrow \End(V)$ by $\psi_1(\rho) (Z_{e_a}^{\pm 1}):=  \left\{ 
\begin{array}{ll} \rho(Z_{e_a})^{\pm 1} & \mbox{, if }a< n; \\ \chi_{\rho}(Z_{e_n}^N)^{\pm \frac{k}{2\cdot 2'}} \rho(Z_{e_n})^{\pm 1} & \mbox{, if }a=n. \end{array}\right.$
 Here $ \chi_{\rho}(Z_{e_n}^N)^{ \frac{k}{2\cdot 2'}}$ is the $2\cdot 2'$-th root of $\chi_{\rho}(Z_{e_n}^N)^k$ with argument in $[0,\frac{\pi}{2'})$. We define $\psi_1(\rho)$ for an arbitrary finite dimensional representation of $\mathcal{T}_{\mathbb{E}_1}$ by imposing $\psi_1(\rho_1\oplus \rho_2) = \psi_1(\rho_1)\oplus \psi_1(\rho_2)$ and obtain an endofunctor $\psi_1: \mathcal{T}_{\mathbb{E}_1}-\Mod \rightarrow  \mathcal{T}_{\mathbb{E}_1}-\Mod $. A straightforward computation shows that $\psi_1$ and $\phi_1^*$ are inverse to each other so $i^*\circ j^* = \phi_1^*$ is isomorphic to the identity functor.
 
 We prove that $j^* \circ i^*= \phi_2^*\cong \id$  similarly. 
   Hence $i^*$ and $j^*$ are equivalence of categories and the algebras $\mathcal{T}_{\mathbb{E}_1}$ and $\mathcal{T}_{\mathbb{E}_2}$ are Morita equivalent. 

\end{proof}

 \begin{proof}[Proof of Proposition \ref{propgluinginvolutive}]
 
 		Let   $\veebar_{\textbf{b}}$ denote the underlying surface of  $\veebar_{\textbf{b}}(\Sigma_1\sqcup \Sigma_2)$ with punctures and branched points removed.  
 		
		By definition of $\veebar_{\textbf{b}}(\Sigma_1\sqcup \Sigma_2)$, for $i=1,2$, there is an embedding $\iota_i\co \Sigma_i \setminus (\mathcal{P}_i \cup B_i \cup \mathring{\phi_i(\D)}) \hookrightarrow \veebar_{\textbf{b}}$. Their images intersect as a curve $\mathcal{C}\subset \veebar_{\textbf{b}}$.  Let us slightly enlarge  the image of $\iota_i$  into a surface $\Sigma_i'$ so that  $\Sigma_1'\cap \Sigma_2'$ is an open tubular neighborhood of $\mathcal{C}$. Consider the inclusion maps in $\mathrm{Top}^2$:
$$  (\mathcal{C}, \emptyset) \xrightarrow{(i,j)} (\Sigma_1', \partial \Sigma_1') \bigsqcup ( \Sigma_2',  \partial \Sigma_2') \xrightarrow{(k,l)} (\veebar_{\textbf{b}}, \partial \veebar_{\textbf{b}}).$$
		
The associated Mayer-Vietoris long exact sequence writes
\begin{equation*}
 \begin{tikzcd}
\mathrm{H}_1(\mathcal{C}) \arrow[r, "a"] & \mathrm{H}_1(\Sigma_1', \partial \Sigma_1')\oplus \mathrm{H}_1(\Sigma_2', \partial \Sigma_2') \arrow[r, "\overline{\iota}"] & \mathrm{H}_1(\veebar_{\textbf{b}}, \partial \veebar_{\textbf{b}}) \arrow[r, "b"] & \mathrm{H}_0(\mathcal{C}),
\end{tikzcd}
\end{equation*}
where the homology is taken with coefficients in $\mathbb{Z}$. More precisely:
\begin{enumerate}
\item By definition $a:= (i_*, j_*)$ and $\overline{\iota}:= k_* - l_*$, in particular they are both 
$\sigma$--equivariant. Since  $\sigma_*([\mathcal{C}])= [\mathcal{C}]$ we have $\mathrm{H}_1^{\sigma}(\mathcal{C})=0$, so the morphism $\overline{\iota}$ induces an injective morphism 
\begin{equation*}
\iota: \mathrm{H}_1^{\sigma}(\Sigma_1', \partial \Sigma_1')\oplus \mathrm{H}_1^{\sigma}(\Sigma_2', \partial \Sigma_2')\hookrightarrow \mathrm{H}_1^{\sigma}(\veebar_{\textbf{b}}, \partial \veebar_{\textbf{b}}). 
\end{equation*}
\item If $[x] \in  \mathrm{H}_1(\veebar_{\textbf{b}}, \partial \veebar_{\textbf{b}})$, we can decompose it as $x=x_1+x_2$ with $x_i \in \mathrm{C}_1(\Sigma_i')$ and we can decompose $\partial x = \partial_1 +\partial_2$ with $\partial_i \in \mathrm{C}_1(\partial \Sigma_i')$. By definition $b([x]):= [\partial x_1 -\partial_1]=[-\partial x_2 + \partial_2]$. So if $p\in \mathcal{C}$ is a point so that $\mathrm{H}_0(\mathcal{C})=\mathbb{Z}[p]$, then $b([x])=i([x], [\mathcal{C}])[p]$ where 
$$i: \mathrm{H}_1(\veebar_{\textbf{b}}, \partial \veebar_{\textbf{b}}) \times \mathrm{H}_1(\veebar_{\textbf{b}}) \to \mathbb{Z}$$ is the algebraic intersection pairing as defined for instance in \cite[Section IV.11]{Bredon_TopoGeom}. Note that the intersection pairing is preserved by the action of an oriented mapping class, in particular one has $i(\sigma_*([x]), \sigma_*([\mathcal{C}]))= i([x], [\mathcal{C}])$.  Therefore, if $\sigma_*([x])=-[x]$, then $b([x])=i([x], [\mathcal{C}])[p]= i(\sigma_*([x]), \sigma_*([\mathcal{C}]))[p]=-i([x], [\mathcal{C}])[p]=-b([x])[p]$ so $b([x])=0$ and  $\mathrm{H}_1^{\sigma}(\veebar_{\textbf{b}}, \partial \veebar_{\textbf{b}}) $ is included in $Ker(b)$.
\end{enumerate}

Let us investigate the lack for $\iota$ for being surjective; we show that there is an exact sequence 
\begin{equation*}
\begin{tikzcd}
0 \arrow[r, ""] &  \mathrm{H}_1^{\sigma}(\Sigma_1', \partial \Sigma_1')\oplus \mathrm{H}_1^{\sigma}(\Sigma_2', \partial \Sigma_2') \arrow[r, "\iota"] & \mathrm{H}_1^{\sigma}(\veebar_{\textbf{b}}, \partial \veebar_{\textbf{b}}) \arrow[r, "o"] & 
\mathbb{Z}/2\mathbb{Z}.  
\end{tikzcd}
\end{equation*}  
First since $\mathrm{H}_1^{\sigma}(\veebar_{\textbf{b}}, \partial \veebar_{\textbf{b}}) $ is included in $Ker(b)=Im(\overline{\iota})$, then  for each $y\in \mathrm{H}_1^{\sigma}(\veebar_{\textbf{b}}, \partial \veebar_{\textbf{b}})$ there is an $x\in  \mathrm{H}_1(\Sigma_1', \partial \Sigma_1')\oplus \mathrm{H}_1(\Sigma_2', \partial \Sigma_2')$ such that $\overline{\iota}(x)=y$. 
Since $y+\sigma_*(y)=0$, one has $x+\sigma_*(x)\in Ker(\overline{\iota})=Im(a)$. In other words, there is an integer $n$ such that $a(n[\mathcal{C}])=x+\sigma_*(x)$. 
We define $o(y)$ to be $n$ modulo $2$. 
\\
It is clear that $Im(\iota)$ is included in $Ker(o)$.  For the converse, let $y\in \mathrm{H}_1^{\sigma}(\veebar_{\textbf{b}}, \partial \veebar_{\textbf{b}})$ such that $o(y)=0 \pmod{2}$. 
There exist $x\in  \mathrm{H}_1^{\sigma}(\Sigma_1', \partial \Sigma_1')\oplus \mathrm{H}_1^{\sigma}(\Sigma_2', \partial \Sigma_2')$ and an integer $n'$ such that $\overline{\iota}(x)= y$ and $x+\sigma_*(x)=2n' a([\mathcal{C}])$. Define $z:=x-a(n'[\mathcal{C}])$. 
One has $\sigma_*(z)= \sigma_*(x)-a(n'[\mathcal{C}]) = -x+a(n'[\mathcal{C}])= -z$, so $z\in  \mathrm{H}_1^{\sigma}(\Sigma_1', \partial \Sigma_1')\oplus \mathrm{H}_1^{\sigma}(\Sigma_2', \partial \Sigma_2')$.  
On the other hand, $\iota(z)= \iota(x)=y$ which proves that $y\in \mathrm{Im}(\iota)$.
\\

The first part of the proposition follows from Lemma \ref{lemma_preliminary}. 
For the second part, it is enough to prove that, if one of the two surfaces $\Sigma_1$ or $\Sigma_2$ has no boundary, then $o$ is null. 
Suppose $\Sigma_1$ has no boundary. 
We proceed by contradiction: suppose that there exists $[\alpha] \in \mathrm{H}_1^{\sigma}(\veebar_{\textbf{b}}, \partial \veebar_{\textbf{b}})$ such that $o([\alpha])=1\pmod{2}$. 
In this case, there exist $[\gamma]$ and $[\beta]$ such that  $[\gamma]+[\beta] \in  \mathrm{H}_1(\Sigma_1')\oplus \mathrm{H}_1(\Sigma_2', \partial \Sigma_2')$ satisfies $\overline{\iota}([\gamma]+[\beta])=[\alpha]$ and $\sigma_*([\gamma]+[\beta])+ [\gamma]+[\beta]= a([\mathcal{C}])$. One can suppose that $[\gamma]$ is the homology class of a simple closed curve $\gamma$. 
Let $[\mathcal{C}]_i$ be the $i$--th component of $a([\mathcal{C}])=[\mathcal{C}]_1+[\mathcal{C}]_2 \in  \mathrm{H}_1(\Sigma_1')\oplus \mathrm{H}_1(\Sigma_2', \partial \Sigma_2')$. 
Note that $[\gamma]+\sigma_*([\gamma])-[\mathcal{C}]_1$ is zero in $ \mathrm{H}_1(\Sigma_1')$.  
Therefore $\gamma \cup \sigma(\gamma)\cup \mathcal{C}$ bounds a closed embedded surface, say $S\subset \Sigma_1'$, that can be supposed to be stable under the involution $\sigma$; the latter restricts to $S$ as an involution without fixed point (because $\sigma$ has no fixed point in $\Sigma_1'$). 
Moreover, since $S$ has three boundary components, two of them are exchanged by $\sigma_{|S}$ and one is fixed. 
By \cite[Theorem 1.3]{Asoh}, such a free involutive surface does not exist since it should have an even number of boundary components preserved by the involution. This contradicts the assumption and concludes the proof. 
 	 
 \end{proof}

	Recall the decomposition of involutive surfaces into basic ones given in Lemma \ref{lemma_decomp_invol_surface}. In virtue of Proposition \ref{propgluinginvolutive}, one has a Morita equivalence between the equivariant $\mathbb{C}^*$ skein algebra of an involutive surface and the tensor product of the equivariant  $\mathbb{C}^*$ skein algebras of its basic surfaces. 
 Let us compute the equivariant  $\mathbb{C}^*$ skein algebras for these surfaces: 
\begin{enumerate}
	\item $\eqSkpre(\mathbb{S})$ is isomorphic to  $\mathcal{R}[H_p^{\pm 1}]$, where $H_p$ is made of two simple closed curves encircling the punctures $p$ and $p'$ which are anti-invariant under the involution.  
	\item $\eqSkpre(\mathbb{E}_1)$ is isomorphic to  
	$\mathcal{W}_{q^2} = \mathcal{R}\left< Z_1^{\pm 1}, Z_2^{\pm 1} | Z_1 Z_2 = q^2 Z_2 Z_1 \right>$, 
	(recall $q=\omega^{-4}$) and  $Z_1$ and $Z_2$ are the classes of $\alpha_1\cup \sigma({\alpha_1})^{-1}$ and $\alpha_2\cup \sigma({\alpha_2})^{-1}$ respectively. Here $\alpha_1$ is the meridian $S^1\times \{0\}$ and $\alpha_2$ is the longitude $\{0\}\times S^1$; they are oriented such that the intersection $( \alpha_1, \alpha_2)$ is $-1$. 
	\item $\eqSkpre(\mathbb{E}_2)$  is isomorphic to $\mathcal{W}_{q} = \mathcal{R}\left< Z_1^{\pm 1}, Z_2^{\pm 1} | Z_1 Z_2 = q Z_2 Z_1 \right>$, where  $Z_1$ and $Z_2$ are the classes of $\alpha_1\cup \sigma({\alpha_1})^{-1}$ and $\alpha_2\cup \sigma({\alpha_2})^{-1}$ respectively drawn in Figure \ref{figelementarycob}.
	\item For an odd integer $n\geq 1$, $\eqSkpre(\mathbb{P}_n)$ is    isomorphic to 
	$$ \mathcal{Y}^{(n)}_q = \mathcal{R}\left< Z_i^{\pm 1}, i \in \mathbb{Z}/n\mathbb{Z} | Z_i Z_{i+1} =q Z_{i+1} Z_i, Z_iZ_j=Z_jZ_i, j\neq i-1, i, i+1 \right>,$$ where for $1\leq k \leq n$, we set $Z_k=[\alpha_k \cup \sigma({\alpha_k})^{-1}]$ in which  $\alpha_k$ denotes  the arc encircling $p_k$ oriented in the clockwise direction, as depicted in Figure \ref{figelementarycob}. 
	\item For an even integer $n\geq 2$, $\eqSkpre(\mathbb{P}_n)$ is isomorphic to  the $\mathcal{R}$-module generated by $Z_b^{\pm 1}, Z_i^{\pm 1}, i \in \mathbb{Z}/n\mathbb{Z}$ with relations:
	\begin{eqnarray*}
		Z_i Z_{i+1}=q Z_{i+1}Z_i & \mbox{ for }i \in \mathbb{Z}/n\mathbb{Z}
		\\ Z_b Z_1= q^2 Z_1 Z_b, ~~~ Z_b Z_n = q^{-2}Z_n Z_1 &
		\\ Z_b Z_i=Z_i Z_b &\mbox{ for }i\neq 1,n 
		\\ Z_i Z_j =Z_j Z_i & \mbox{ for } j \neq i-1, i, i+1.
	\end{eqnarray*}
	Here $Z_k=[\alpha_k \cup \sigma({\alpha_k})^{-1}]$ and $Z_b = [\beta \cup \sigma(\beta)^{-1}]$.
	We denote by $ \mathcal{Y}^{(n)}_q$ this algebra.
\end{enumerate}

    \begin{proposition}\label{propdecomposition}
  If $\mathbf{\Sigma}=(\Sigma, \mathcal{P}, \sigma)$ is a connected  involutive punctured surface with combinatorial data $(g, n_{\partial}, \{s_i\}_{i\in I},n_b, \mathring{s})$, then the algebra $\mathcal{S}_{\omega}^{\mathbb{C}^*, \sigma}(\mathbf{\Sigma})$ is Morita equivalent to the algebra 
  \begin{equation*}
  	\mathcal{A}:= \mathcal{R}[H_p^{\pm 1}]^{\otimes \mathring{s}} \otimes \mathcal{W}_{q^2}^{\ot n_1} 
  	\otimes \mathcal{W}_q^{\ot n_2} \otimes  \otimes_{i\in I} \mathcal{Y}^{(s_i)}_q, 
  	\end{equation*}
  	where $n_1=(n_b+n_{\partial}^{odd}-2)/2$ and $n_2=(2g-n_b-n_{\partial}^{odd}+2)/4$. 
  
  \par Moreover if $\Sigma$ is either closed, or has one boundary component, or has two boundary components which are exchanged by the involution $\sigma$, then the algebras $\mathcal{S}_{\omega}^{\mathbb{C}^*, \sigma}(\mathbf{\Sigma})$ and  $\mathcal{A}$ are isomorphic.
  \end{proposition}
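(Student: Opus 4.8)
The plan is to assemble three ingredients that are already at hand: the decomposition of a connected involutive punctured surface into basic pieces (Lemma~\ref{lemma_decomp_invol_surface}), the behaviour of $\mathcal{S}_\omega^{\mathbb{C}^*,\sigma}$ under the sewing operation $\veebar$ (Proposition~\ref{propgluinginvolutive}), and the explicit computation of $\eqSkpre$ on the five basic surfaces recorded just above.

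For the Morita statement I would proceed as follows. By Lemma~\ref{lemma_decomp_invol_surface}, $\mathbf{\Sigma}$ is isomorphic to $\veebar_{Rec}\left( \mathbb{S}^{\sqcup \mathring{s}} \sqcup \mathbb{E}_1^{\sqcup n_1}\sqcup \mathbb{E}_2^{\sqcup n_2}\sqcup\bigsqcup_{i\in I} \mathbb{P}_{s_i}\right)$, that is, it is obtained from a finite disjoint union of basic involutive punctured surfaces by a finite sequence of binary sewings $\veebar_{\mathbf{b}}$; Lemma~\ref{lemma_integers} ensures that the exponents $n_1,n_2$ are non-negative integers, so the statement makes sense. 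Since the equivariant $\mathbb{C}^*$ skein algebra of a disjoint union is the tensor product of the equivariant $\mathbb{C}^*$ skein algebras of its components (the anti-invariant relative homology and its intersection form split as a direct sum), I would argue by induction on the number of sewings in the recipe: at each step Proposition~\ref{propgluinginvolutive} shows that sewing two pieces changes the skein algebra only up to Morita equivalence, and Morita equivalence is transitive and stable under tensoring with a fixed algebra. Hence $\mathcal{S}_\omega^{\mathbb{C}^*,\sigma}(\mathbf{\Sigma})$ is Morita equivalent to $\bigotimes_X \eqSkpre(X)$, the tensor product running over the basic pieces $X$ of the recipe. Substituting the computations recorded just before the statement, namely $\eqSkpre(\mathbb{S})\cong \mathcal{R}[H_p^{\pm 1}]$, $\eqSkpre(\mathbb{E}_1)\cong \mathcal{W}_{q^2}$, $\eqSkpre(\mathbb{E}_2)\cong \mathcal{W}_q$ and $\eqSkpre(\mathbb{P}_{s_i})\cong \mathcal{Y}^{(s_i)}_q$, this tensor product is exactly $\mathcal{A}$, with $\mathring{s}$ factors of $\mathcal{R}[H_p^{\pm 1}]$, $n_1$ of $\mathcal{W}_{q^2}$, $n_2$ of $\mathcal{W}_q$ and one $\mathcal{Y}^{(s_i)}_q$ for each $i\in I$.

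For the isomorphism statement one has to upgrade each Morita equivalence above to an isomorphism, which by the second part of Proposition~\ref{propgluinginvolutive} holds provided that, at every step, the disjoint union $\Sigma_1\sqcup \Sigma_2$ being sewn is closed, has a single boundary component, or has exactly two boundary components exchanged by its involution. The point is that $\veebar_{Rec}$ can be carried out in an order in which all the sewings that create genus or inner punctures take place among closed pieces: first sew the copies of $\mathbb{S}$, then the copies of $\mathbb{E}_1$ and $\mathbb{E}_2$, then these two results together, obtaining a single closed involutive punctured surface $S$ (this is steps (1), (2) and the $S_1\#S_2$ part of (4), and every intermediate disjoint union here is closed, so each of these sewings is an isomorphism). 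If $\Sigma$ is closed, there is nothing left to sew. If $\Sigma$ has a single boundary component, that component is necessarily $\sigma$-stable (a non-stable boundary component sits in a $\sigma$-orbit of size two), so the recipe contributes a single $\mathbb{P}$-piece, which is a disc with one boundary component; the remaining sewing $\veebar_{\mathbf{b}}(S\sqcup \mathbb{P})$ then has $S\sqcup \mathbb{P}$ with a single boundary component and is an isomorphism. If $\Sigma$ has two $\sigma$-exchanged boundary components, these come from a single annular $\mathbb{P}$-piece whose two boundary components are exchanged by its involution; the remaining sewing $\veebar_{\mathbf{b}}(S\sqcup \mathbb{P})$ then has exactly two $\sigma$-exchanged boundary components and is again an isomorphism. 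Composing the isomorphisms obtained in each case gives $\mathcal{S}_\omega^{\mathbb{C}^*,\sigma}(\mathbf{\Sigma})\cong\mathcal{A}$.

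The one genuinely delicate point, which I would check with care, is this reordering: one must be certain that the recipe can be performed in an order keeping every intermediate disjoint union inside the class ``closed, one boundary component, or two $\sigma$-exchanged boundary components'', since a careless order — for instance sewing two annular pieces to each other first, which momentarily produces four boundary components — would yield only a Morita equivalence. This is precisely why the three extra hypotheses on $\Sigma$ are imposed in the second assertion, and the ``linear'' shape of $\veebar_{Rec}$, in which all genus- and puncture-creating sewings happen among closed pieces and the boundary pieces are attached last, is exactly what makes the reordering possible.
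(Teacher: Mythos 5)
Your proposal is correct and takes exactly the route of the paper, whose proof of this proposition is the one-line remark that it is an immediate consequence of Lemma \ref{lemma_decomp_invol_surface} and Proposition \ref{propgluinginvolutive} combined with the listed computations of $\eqSkpre$ on the basic pieces. Your additional care about ordering the sewings so that every intermediate step falls under the isomorphism clause of Proposition \ref{propgluinginvolutive} is a detail the paper leaves implicit, and you resolve it correctly.
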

  
  \begin{proof} This is an immediate consequence of Lemma \ref{lemma_decomp_invol_surface} and Proposition \ref{propgluinginvolutive}.
 \end{proof}
 
 \subsection{Irreducible representations of elementary algebras}
 
 In this subsection, we suppose that $\omega$ is a root of unity of odd order $N>1$ and classify the irreducible representations of the equivariant abelian skein algebras associated to elementary involutive surfaces.
 
 \begin{remark}
  De Concini and Procesi proved in \cite[Proposition $7.2$]{DeConciniProcesiBook} that any quantum torus at root of unity (not necessarily odd) is Azumaya of constant rank. Their theorem thus implies that quantum tori are semi-simple, that their simple modules are in $1$-to-$1$ correspondance, modulo isomorphism, with their induced character over their center and that they all have the same dimension $d$ such that $d^2$ is the rank of the quantum tori over its center. Let us emphasize why the De Concini-Procesi theorem is insufficient to prove Theorem \ref{theorem2}. Consider a quantum torus $\mathcal{T}_{\mathbb{E}}$ associated to a pair $\mathbb{E}=(E,(\cdot, \cdot)_E)$, where $E$ is a free finitely generated abelian group and $(\cdot,\cdot)_E : E\times E \rightarrow \mathbb{Z}$ a skew-symmetric form as in Section $2.5$. Let $E_0\subset E$ be the kernel of the bilinear form $E\times E \rightarrow \mathbb{Z}/N\mathbb{Z}$ obtained by reduction modulo $N$ of $(\cdot, \cdot)_E$. The center of $\mathcal{T}_{\mathbb{E}}$ is easily seen to be spanned by the elements $Z_{e_0}$ for $e_0\in E_0$, hence its simple modules have dimension $\sqrt{\lvert E/E_0 \rvert}$. By definition, the balanced Chekhov-Fock algebra is the quantum torus associated to the abelian group $K_{\Delta}$ of balanced monomials equipped with the Weil-Petersson form $\left< \cdot, \cdot\right>^{WP}$ (see below). Our strategy to compute the dimension  $\sqrt{\lvert E/E_0 \rvert}$ for this pair, which is similar to the approach of Bonahon, Liu and Wong in \cite{BonahonLiu,BonahonWong2} in the closed case, is to identify the pair $(K_{\Delta}, \left< \cdot, \cdot\right>^{WP})$ with the pair $(\mathrm{H}_1^{\sigma}(\widehat{\Sigma}_{\widehat{\mathcal{P}}\cup \widehat{B}}, \partial \widehat{\Sigma}_{\widehat{\mathcal{P}}\cup \widehat{B}}; \mathbb{Z}), \left(\cdot, \cdot\right))$ (Theorem \ref{theorem1}) and to use the sewing operation and Proposition \ref{propgluinginvolutive} to decompose this pair in direct summands, up to an extension by $(\mathbb{Z}/2\mathbb{Z})^n$ which does not change $\sqrt{\lvert E/E_0 \rvert}$ as long as $N$ is odd. This subsection is the only moment of the paper where the De Conicini-Procesi theorem could simplify our study by classifying the simple modules of the elementary quantum tori in the tensor decomposition. However, the proof of  Lemma \ref{lemmairrepelementary} below, is quite elementary (in comparison to the Artin-Procesi theorem on which \cite[Proposition $7.2$]{DeConciniProcesiBook} is based) and has the advantage of describing the simple modules explicitly. 
 \end{remark}

  It is well-known that the algebras $\mathcal{W}_q$ and $\mathcal{W}_{q^2}$ are semi-simple, that their simple modules have dimension $N$ and that the set of isomorphism classes of simple modules is in bijection with $(\mathbb{C}^*)^{2}$ by the map sending a simple module to the scalars associated  to the central elements  $Z_1^N$ and $Z_2^N$ (see \textit{e.g.} \cite[Lemma $17$]{BonahonLiu} for an explicit description of their simple modules). It remains to study the simple modules of the algebras $\mathcal{Y}^{(n)}_q$. Define $d(n):=
 \left\{ \begin{array}{ll} \frac{n-1}{2} & \mbox{, if } n \mbox{ is odd ;}
\\ \frac{n}{2} & \mbox{, if }n\mbox{ is even.}
\end{array} \right.$
  
  \begin{lemma}\label{lemmairrepelementary}
  Let $n\geq 1$ be an integer. The algebra $\mathcal{Y}^{(n)}_q$ is semi-simple. Its simple modules have dimension $N^{d(n)}$ and the set of isomorphism classes of simple modules is in bijection with the the set of  characters  on the center of the algebra. Moreover this center is generated by the elements $Z_i^N$ for $i\in \mathbb{Z}/n\mathbb{Z}$, by the element $H_{\partial}:= Z_1\ldots Z_n$ and their inverses.
  \end{lemma}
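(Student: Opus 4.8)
The plan is to recognise $\mathcal{Y}^{(n)}_q$ as a quantum torus and then run the standard structure theory of quantum tori at a root of unity, exactly as the excerpt already does for $\mathcal{W}_q$ and $\mathcal{W}_{q^2}$ via \cite[Lemma~17]{BonahonLiu}. Let $\Lambda_n$ be the free $\mathbb{Z}$--module on the generators of $\mathcal{Y}^{(n)}_q$ --- on symbols $e_1,\dots,e_n$ when $n$ is odd, and on $e_b,e_1,\dots,e_n$ when $n$ is even --- and let $C_n\colon\Lambda_n\times\Lambda_n\to\mathbb{Z}$ be the skew--symmetric form whose value on a pair of distinct basis vectors is the commutation exponent of the corresponding defining relation ($C_n(e_i,e_{i+1})=1$, together with $C_n(e_b,e_1)=2=-C_n(e_b,e_n)$ in the even case, all other pairs of distinct basis vectors commuting). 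Then $\mathcal{Y}^{(n)}_q=\mathcal{T}_{\mathbb{E}_n}$ with $\mathbb{E}_n=(\Lambda_n,C_n)$, in the notation of the paragraph preceding Lemma~\ref{lemma_preliminary}. The hypothesis that $N$ is odd will be used only through the observation that $q=\omega^{-4}$ and $q^2$ both have exact order $N$, so that the integers $\pm 1,\pm 2$ appearing are units modulo $N$.

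Next I would record the general statement that packages the representation theory. For a quantum torus $\mathcal{T}_{\mathbb{E}}$, $\mathbb{E}=(\Lambda,C)$, over $\mathbb{C}$ with parameter a primitive $N$--th root of unity, conjugation by a generator rescales each monomial, so a general element is central iff each of its monomials is, and $Z^x$ is central iff $C(x,\cdot)\equiv 0 \pmod{N}$. Hence the centre is the Laurent algebra $\mathbb{C}[\Lambda_0]$ of the finite--index sublattice $\Lambda_0:=\{x\in\Lambda:\ C(x,\Lambda)\subseteq N\mathbb{Z}\}$; the algebra $\mathcal{T}_{\mathbb{E}}$ is free of rank $[\Lambda:\Lambda_0]$ over $\mathbb{C}[\Lambda_0]$; and for each character $\chi$ of $\mathbb{C}[\Lambda_0]$ the specialisation $\mathcal{T}_{\mathbb{E}}\otimes_{\mathbb{C}[\Lambda_0]}\mathbb{C}_\chi$ is the twisted group algebra of the finite abelian group $\Lambda/\Lambda_0$ with the non--degenerate alternating pairing induced by $C$, hence --- being a finite--dimensional central simple $\mathbb{C}$--algebra --- a matrix algebra $M_d(\mathbb{C})$ with $d^2=[\Lambda:\Lambda_0]$. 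It follows formally that $\mathcal{T}_{\mathbb{E}}$ is semi--simple, that all its simple modules share the dimension $d=\sqrt{[\Lambda:\Lambda_0]}$, and that sending a simple module to its central character is a bijection from isomorphism classes of simple modules onto $\Specmax \mathbb{C}[\Lambda_0]$, i.e. onto the characters of the centre.

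It then remains to make $\Lambda_0$ and its index explicit for $\mathbb{E}_n$; this is the only real computation. One has $N\Lambda_n\subseteq\Lambda_0$ (since $q^N=1$) and $\ker_{\mathbb{Z}}C_n\subseteq\Lambda_0$, and a direct linear computation gives $\ker_{\mathbb{Z}}C_n=\mathbb{Z}(e_1+\dots+e_n)$ --- the relations $C_n(e_i,e_{i+1})=1$ force a kernel vector to be constant on the $e_i$ and, when $n$ is even, the two $Z_b$--relations force its $e_b$--component to vanish --- so $\ker_{\mathbb{Z}}C_n$ is the $\mathbb{Z}$--span of the exponent vector of $H_\partial=Z_1\cdots Z_n$. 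The key point is that the non--zero elementary divisors of the integral matrix $C_n$ are prime to $N$: for $n$ odd they all equal $1$ (deleting the last row and column of the cyclic skew matrix leaves the tridiagonal skew matrix of a path on an even number of vertices, which has determinant $1$, so the gcd of the maximal minors is $1$, whence every elementary divisor is $1$); for $n$ even they equal $1$ or $2$ (the $2$ coming, via a short Pfaffian computation, from the $Z_b$--relations), and $N$ is odd. From this one gets at once $[\Lambda_n:\Lambda_0]=N^{\rk_{\mathbb{Q}}C_n}$, which is $N^{\,n-1}$ for $n$ odd and $N^{\,n}$ for $n$ even --- hence $d=\sqrt{[\Lambda_n:\Lambda_0]}=N^{(n-1)/2}$, respectively $N^{\,n/2}$, that is $d=N^{d(n)}$ --- and the identity $\Lambda_0=N\Lambda_n+\ker_{\mathbb{Z}}C_n$ (the right--hand lattice lies in $\Lambda_0$, and a direct count shows it has the same index $N^{\rk_{\mathbb{Q}}C_n}$). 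Finally, $\mathbb{C}[N\Lambda_n]$ is generated by the $Z_i^{\pm N}$ (and $Z_b^{\pm N}$ when $n$ is even), while $\ker_{\mathbb{Z}}C_n$ contributes $H_\partial^{\pm 1}$, so the centre $\mathbb{C}[\Lambda_0]$ is generated by these monomials together with their inverses, as claimed. I expect the Smith--normal--form bookkeeping of this last paragraph --- checking that the elementary divisors of $C_n$ are prime to $N$, so that $\Lambda_0$ is exactly $N\Lambda_n+\ker_{\mathbb{Z}}C_n$ with no further central monomials hidden --- to be the only delicate point; everything else is a formal application of the quantum--torus machinery already invoked for $\mathcal{W}_q$.
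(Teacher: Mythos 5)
Your argument is correct, but it takes a genuinely different route from the paper's. The paper works by hand: for $n\ge 3$ it singles out a maximal commutative subalgebra (generated by the odd-indexed $Z_i$), picks a common eigenvector $v$, and explicitly writes the action of all generators on the spanning set $Z_0^{i_0}Z_2^{i_2}\cdots v$; simplicity comes from a cyclic-vector argument, semi-simplicity from a Zorn's-lemma iteration, and the classification by central characters from explicitly constructed intertwiners between any two simple modules with the same central character --- which is also how the paper sees that $Z_i^{\pm N}$ and $H_\partial^{\pm 1}$ generate the whole center. You instead identify $\mathcal{Y}^{(n)}_q$ with the quantum torus $\mathcal{T}_{\mathbb{E}_n}$ of a lattice with skew form and invoke the general structure theory at a root of unity (center $=\mathbb{C}[\Lambda_0]$ for the radical sublattice, fibers over central characters are nondegenerate twisted group algebras of $\Lambda_n/\Lambda_0$, hence matrix algebras), reducing the lemma to a Smith-normal-form computation. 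Your route is more uniform --- it treats odd and even $n$, and for that matter $\mathcal{W}_q$ and $\mathcal{W}_{q^2}$, by one computation --- and it makes both the exponent $d(n)=\tfrac12\rk C_n$ and the identity $\Lambda_0=N\Lambda_n+\ker_{\mathbb{Z}}C_n$ conceptually transparent; the paper's route is self-contained and yields explicit models of the simple modules, whose eigenspace bookkeeping is reused later (in the proof of the corollary on local representations). Your key computations check out: for $n$ odd the path minor indeed has determinant $1$, and for $n$ even one maximal minor is a power of $2$, so all nonzero elementary divisors are prime to the odd $N$ and the index is $N^{\rk C_n}$ as you claim.

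Two small caveats. First, the general quantum-torus facts in your second paragraph are standard but asserted rather than proved, and the phrase ``the algebra is semi-simple'' needs the same qualification as in the paper: what one actually gets is that every finite-dimensional module on which the center acts semisimply is a direct sum of simples (the fibers being matrix algebras), not semisimplicity of arbitrary modules over the infinite-dimensional algebra. Second, your description of $C_n$ breaks down at $n=2$, where the two cyclic relations $Z_1Z_2=qZ_2Z_1$ and $Z_2Z_1=qZ_1Z_2$ must be read as cancelling exponents, so that $C_2(e_1,e_2)=0$ and only the $e_b$-relations survive; the conclusion ($\ker_{\mathbb{Z}}C_2=\mathbb{Z}(e_1+e_2)$, index $N^2$, $d=N=N^{d(2)}$) is unaffected, but this degenerate case deserves a separate line, just as the paper disposes of $n=1,2$ separately before its general argument.
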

  \begin{proof} First note that $\mathcal{Y}^{(1)}_q\cong \mathbb{C}[H_{\partial}^{\pm 1}]$ so the result is immediate if $n=1$. Note also that  $\mathcal{W}_{q^2}\otimes \mathbb{C}[H_{\partial}^{\pm 1}] \cong \mathcal{Y}^{(2)}_q$  through the isomorphism sending the generators $Z_1$ and $Z_2$ of $\mathcal{W}_{q^2}$ to the elements $Z_b$ and $Z_1$ of $ \mathcal{Y}^{(2)}_q$ respectively; so the case $n=2$ follows from the preceding remark (so from \textit{e.g.} \cite[Lemma $17$]{BonahonLiu}). It follows from the definition of $\mathcal{Y}^{(n)}_q$ that the elements $Z_i^N$ and $H_{\partial}$ are central.
  \vspace{2mm}
  \par
  Next suppose $n\geq 3$ is odd. Consider the maximal torus $\mathcal{T}\subset \mathcal{Y}^{(n)}_q$ generated by the pairwise commuting elements $Z_1^{\pm1}, Z_3^{\pm 1}, \ldots , Z_{n-2}^{\pm 1}$ with odd indices. Let $V$ be a module of $ \mathcal{Y}_q^{(n)}$. Choose $v\in V$ a common eigenvector of the elements of $\mathcal{T}$ and of the central elements $Z_i^N$ and $H_{\partial}$ such that $Z_{2k+1}v=\lambda_{2k+1}v$ for $1\leq k \leq (n-1)/2$ and $Z_i^Nv={z}_i v$ and $H_{\partial}v=h_{\partial}v$. Define $w_{(i_0, i_2, \ldots, i_{n-3})}:= Z_0^{i_0}Z_2^{i_2}\ldots Z_{n-3}^{i_{n-3}}v$ and let $W\subset V$ be the subspace spanned by the $w_i$ for $i=(i_0, i_2, \ldots, i_{n-3})\in \{ 0, \ldots, N-1\}^{(n-1)/2}$. The defining relations of $\mathcal{Y}^{(n)}_q$ imply that: 
  \begin{align*}
  Z_{2k} w_i = w_{(i_0, i_2, \ldots, i_{k}+1, \ldots,  i_{n-3})} &\mbox{, if }0\leq k \leq (n-3)/2\mbox{ and }i_k<N-1, 
  \\  Z_{2k} w_i = z_{2k} w_{(i_0, i_2, \ldots, 0, \ldots,  i_{n-3})} &\mbox{, if }0\leq k \leq (n-3)/2\mbox{ and }i_k=N-1, 
  \\ Z_{2k+1}w_i = q^{i_{2k+2}-i_{2k}}\lambda_{2k+1} w_i &\mbox{, if }0\leq k \leq (n-5)/2,
  \\ Z_{n-2}w_i= q^{i_{n-3}}\lambda_{n-2} w_i.
  \end{align*}
  We deduce from these formulas that 
  \begin{enumerate}
  \item $W$ is $\mathcal{Y}^{(n)}_q$-stable, \item the vector $v$ is cyclic in $W$,  \item for each $i\in \{ 0, \ldots, N-1\}^{(n-1)/2}$, there is a character $\chi_i : \mathcal{T} \rightarrow \mathbb{C}^*$ such that $t\cdot w_i = \chi_i (t) w_i$ for all $t \in \mathcal{T}$ and \item the $\chi_i$ are pairwise distinct and their set $\{ \chi_i \}_i$ is exactly the set of characters $\chi: \mathcal{T}\to \mathbb{C}^*$ such that $\chi(Z_{2k+1})^N = z_{2k+1}$.
  
   \end{enumerate}
  
  So $\mathbb{C} \cdot w_i = \{ w \in W | t\cdot w = \chi_i(t) w, \mbox{ for all }t\in \mathcal{T} \}$.
  It follows that the set $\{w_i\}_i$ is free, so forms a basis of $W$ and $W$ has dimension $N^{d(n)}$. 
  Moreover, if $\Theta\in \End(W)$ is an operator commuting with the image of $\rho_{\big| W}$ then $\Theta$ preserves each axis $\mathbb{C}w_i$. In particular $\Theta v =\lambda v$, for some scalar $\lambda$,  thus $\Theta=\lambda \id$  by cyclicity. This proves that $W$ is simple.

   If $V\neq W$, since $\mathcal{T}$ is abelian so semi-simple, we can find a common eigenvector of the elements of $\mathcal{T}$ which is not in $W$ and repeat the construction to obtain a second simple module $W'$ such that $W\oplus W'\subset V$. By Zorn's lemma, there exists a maximal submodule $W^{max}\subset V$ which is direct sum of simple modules. By contradiction, if $W^{max}$ is proper, then we can find a common eigenvector of the elements of $\mathcal{T}$ in $V\setminus W^{max}$ and thus construct a simple module $W$ such that $W\oplus W^{max}\subset V$. This would contradict the maximality of $W^{max}$, hence $\mathcal{Y}^{(n)}_q$ is semi-simple.
\vspace{2mm}
 \par   If $W$ and $W'$ are two simple modules such that $\rho(Z_i^N)=\rho'(Z_i^N)=z_i \id$ and $\rho(H_{\partial})=\rho'(H_{\partial})= h_{\partial}\id$ then the eigenvalues $\lambda_{2k+1}$ and $\lambda'_{2k+1}$ of the elements of the maximal torus $\mathcal{T}$ differ by a $N$-{th} root of unity $\omega^{n_{2k+1}}$. Moreover for each index $i$ there exists an index $i'$ such that $w_i\in W$ and $w'_{i'}\in W'$ are associated to the same character $\chi_i$ of $\mathcal{T}$.
  
  Using the above relations we see that the vector space isomorphism between $W$ and $W'$ sending $w_i$ to $(\prod_k \omega^{n_{2k+1}i_k})w'_{i'}$ is equivariant for the action of $\mathcal{Y}^{(n)}_q$, thus the representation depends only, up to isomorphism, on its central character evaluated on the elements $Z_i^{\pm N}$ and $H_{\partial}^{\pm 1}$. 
 Since the above explicit construction of $W$ works with any parameters $z_i$ and $h_{\partial}$, every such set of  parameters induces a simple module.

 It remains to show that the center is generated by the elements $Z_i^{\pm N}$ and $H_{\partial}^{\pm 1}$. A Laurent monomial is an element of the form $M= Z_1^{k_1} \ldots Z_n^{k_n}$. Note that a linear combination $\sum_i \alpha_i M_i$, with $\alpha_i \in \mathbb{C}^*$, is central if and only if each Laurent monomial $M_i$ is central. Consider a central Laurent monomial $M=Z_1^{k_1} \ldots Z_n^{k_n}$ and let us prove that $M$ is a product of elements $Z_i^{\pm N}$ and $H_{\partial}^{\pm 1}$. Multiplying $M$ by some elements $Z_i^{N m_i}$, we can suppose that $0 \leq k_i \leq N-1$ for $0 \leq i \leq n-1$. For $i \in \mathbb{Z}/n \mathbb{Z}$, the fact that $M$ commutes with $Z_i$ implies that $k_{i-1} \equiv k_{i+1} \pmod{N}$, so $k_{i-1}=k_{i+1}$. Since $n$ is odd, this implies that all $k_i$ are equal, thus $M$ is a power of $H_{\partial}$.
 Thus we have proved the lemma when $n$ is odd.
  
  \vspace{2mm}
  \par The proof when $n$ is even is quite similar. Consider the maximal torus $\mathcal{T}$ generated by the pairwise commuting elements $Z_{2k+1}^{\pm 1}$ for $0\leq k \leq (n-2)/2$. Consider a  $\mathcal{Y}_q^{(n)}$-module $V$ and choose $v\in V$ a common eigenvector of the element of $\mathcal{T}$ and of the central elements $Z_i^N$ and $H_{\partial}$ such that $Z_{2k+1}v=\lambda_{2k+1}v$ for $1\leq k \leq (n-2)/2$ and $Z_i^Nv={z}_i v$ for $i\in \{0, \ldots, n\}$ and $H_{\partial}v=h_{\partial}v$. Define $w_{( i_b, i_2,, i_4,  \ldots, i_{n-2})}:= Z_b^{i_b}Z_2^{i_2}\ldots Z_{n-2}^{i_{n-2}}v$ and $W\subset V$ be the subspace spanned by the $w_i$ for $i=(i_b, i_2, \ldots, i_{n-2})\in \{ 0, \ldots, N-1\}^{n/2}$. The defining relations of $\mathcal{Y}^{(n)}_q$ imply that: 
    \begin{align*}
  Z_{2k} w_i = w_{(i_b, i_2, \ldots, i_{k}+1, \ldots,  i_{n-2})} &\mbox{, if }1\leq k \leq (n-2)/2\mbox{ and }i_k<N-1, 
  \\ Z_b w_i = w_{(i_b+1, i_2, \ldots, i_{k}, \ldots,  i_{n-2})} &\mbox{, if }i_b<N-1, 
  \\  Z_{2k} w_i = z_{2k} w_{(i_b, i_2, \ldots, 0, \ldots,  i_{n-2})} &\mbox{, if }1\leq k \leq (n-2)/2\mbox{ and }i_k=N-1, 
  \\ Z_b w_i = z_b w_{(0, i_2, \ldots, i_{n-2})}&\mbox{, if }i_b=N-1,  
  \\ Z_{2k+1}w_i = q^{i_{2k+2}-i_{2k}}\lambda_{2k+1} w_i &\mbox{, if }1\leq k \leq (n-4)/2,
  \\ Z_1w_i= q^{2(i_2-i_b)} \lambda_1 w_i,&
\\   Z_{n-1}w_i=q^{-i_{n-2}}\lambda_{n-1} w_i.&
  \end{align*}
  We conclude in the same manner than in the previous case.
  
  \end{proof}

 \subsection{The balanced Chekhov-Fock algebra}

 	Fix $\mathcal{R}$  a unital commutative ring and $\omega\in \mathcal{R}^{\times}$ an invertible element. 
 	Throughout this section,  $(\mathbf{\Sigma}, \Delta)$ is a triangulated punctured surface.

 	Let us recall the definition of the balanced Chekhov-Fock algebra introduced in \cite[Section 2.1 and 3.1]{BonahonWong2} to which we refer for more details.  
 	\\

 	Denote by  $\mathcal{E}(\Delta)=\{e_1,...,e_n\}$ the set of edges of $\Delta$ and $\mathring{\mathcal{E}}$ the subset of the inner edges.
 	Given two edges $e$ and $e'$, we denote by $a_{e,e'}$ the number of faces $\mathbb{T}$ of $\Delta$ for which $e$ and $e'$ are edges of $\mathbb{T}$ such that we pass from $e$ to $e'$ in the counter-clockwise direction in $\mathbb{T}$. The \textit{Weil-Petersson} skew-symmetric form $\left<\cdot, \cdot \right>: \mathcal{E}(\Delta)\times \mathcal{E}(\Delta)\rightarrow \mathbb{Z}$ is defined by $\left< e, e'\right>:= a_{e,e'}-a_{e',e}$.
 	The quantum torus $\mathcal{T}_{\omega}(\mathbf{\Sigma}, \Delta)$ is the 
 	non-commutative unital Laurent polynomial ring $\mathcal{R}\left\{Z_e^{\pm 1}, e\in \mathcal{E}(\Delta)\right\}$ modded out by the following relation: 
 	\begin{equation}
 	Z_e Z_{e'} = \omega^{2\left<e, e'\right>}Z_{e'}Z_e \mbox{ for } e,e'\in \mathcal{E}(\Delta).
 	\end{equation}   
 	A  convenient $\mathcal{R}$-basis is given by the 
 	\textit{Weyl ordered} monomials \ie the monomials 
 	\begin{equation*}
 	[Z_{e_1}^{k_1}\ldots Z_{e_n}^{k_n}]:= \omega^{-\sum_{i<j} \left<e_i, e_j\right>} Z_{e_1}^{k_1}\ldots Z_{e_n}^{k_n}, 
 	\end{equation*}
 	by letting the $k_i$'s running through $\Z$.

 	A monomial   $Z_{e_1}^{k_1}\ldots Z_{e_n}^{k_n}$ is called  \textit{balanced} if for each  	triangle of $\Delta$ with edges $e_a, e_b, e_c$ the sum $k_a+k_b+k_c$ is even. Here, when the triangle is self-folded, for instance if $e_b=e_c$ we put $k_b=k_c$ so the condition is just $k_a$ is even. 
 	
 \begin{remark}
 	Geometrically, this is means that there exists a collection of closed curves and arcs (with boundary in $\partial \Sigma_{\mathcal{P}}$)  in $ \Sigma_{\mathcal{P}}$ such that, for each edge $e$, the  intersection of all these curves and arcs with $e$ has the same parity as $k_e$. 
 \end{remark}	
 
 	\begin{definition}
 		The \textit{balanced Chekhov-Fock algebra} $\CF$ is the sub-algebra of $\mathcal{T}_{\omega}(\mathbf{\Sigma}, \Delta)$ generated by the balanced monomials. 
 	\end{definition}
	
 	Let us define a morphism 
 	\begin{equation}\label{eq: decompo morph i for CF}
 	i^{\Delta} : \mathcal{Z}_{\omega}(\mathbf{\Sigma},\Delta) \hookrightarrow \otimes_{\mathbb{T} \in F(\Delta)} \mathcal{Z}_{\omega}(\mathbb{T}). 
 	\end{equation}
 	For an inner edge $e$ of $\Delta$, let us denote by $e'$ and $e''$  its two lifts in $\sqcup_{\T\in F(\Delta)} \T$. 
 	For each balanced monomial $[Z_{e_1}^{k_1} \cdots Z_{e_n}^{k_n}]$, let 
 	$$i^{\Delta}([Z_{e_1}^{k_1} \cdots Z_{e_n}^{k_n}]) := \left[\prod_{e_i\in \mathring{\mathcal{E}}} Z_{e_i'}^{k_i}Z_{e''_i}^{k_i} \cdot \prod_{e_j\in \mathcal{E}\setminus \mathring{\mathcal{E}}} Z_{e_j}^{k_j}\right].$$
 	We extend it by linearity to $\mathcal{Z}_{\omega}(\mathbf{\Sigma},\Delta)$.
 	 \begin{lemma} $i^{\Delta}$ is an injective morphism of algebras. 
 	\end{lemma}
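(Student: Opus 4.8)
The plan is to recognize $i^{\Delta}$ as the algebra map induced by a form-preserving embedding of exponent lattices, so that the statement reduces to a purely combinatorial additivity property of the Weil-Petersson form. Indeed, by the basis of Weyl-ordered monomials, $\mathcal{Z}_{\omega}(\mathbf{\Sigma},\Delta)$ is the quantum torus attached to the sublattice $\Lambda(\Delta)\subset \mathbb{Z}^{\mathcal{E}(\Delta)}$ of \emph{balanced} exponent vectors, with product $[k]\cdot[l]=\omega^{2\langle k,l\rangle}[k+l]$, where $\langle\cdot,\cdot\rangle$ is the bilinear extension of the Weil-Petersson form; likewise each $\mathcal{Z}_{\omega}(\mathbb{T})$ is the quantum torus of the balanced sublattice $\Lambda(\mathbb{T})\subset\mathbb{Z}^{\mathcal{E}(\mathbb{T})}=\mathbb{Z}^{3}$, and $\otimes_{\mathbb{T}\in F(\Delta)}\mathcal{Z}_{\omega}(\mathbb{T})$ is the quantum torus of $\bigoplus_{\mathbb{T}}\Lambda(\mathbb{T})$ equipped with the orthogonal sum of the triangle forms. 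Under these identifications, $i^{\Delta}$ is exactly the map $[k]\mapsto[\iota(k)]$ induced by the $\mathbb{Z}$-linear map $\iota\colon\Lambda(\Delta)\to\bigoplus_{\mathbb{T}}\Lambda(\mathbb{T})$ which sends the generator of an inner edge $e$ to $e'+e''$ (the two lifts of $e$ in $\sqcup_{\mathbb{T}}\mathbb{T}$) and a boundary edge to its unique lift. The plan is thus to check that $\iota$ is well defined, preserves the forms, and is injective, and then to invoke the formal fact that such a lattice map induces an injective morphism of the associated quantum tori.

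First I would check that $\iota$ lands in the balanced part of each factor: for a triangle $\mathbb{T}$ with edges $e_a,e_b,e_c$, the $\mathbb{T}$-component of $\iota(k)$ has coordinates $k_a,k_b,k_c$, whose sum is even precisely because $k$ is balanced; so $i^{\Delta}$ is a well-defined $\mathcal{R}$-linear map (balanced monomials being a basis of the domain). The crux is then the identity $\langle\iota(k),\iota(l)\rangle=\sum_{\mathbb{T}}\langle\iota(k)_{\mathbb{T}},\iota(l)_{\mathbb{T}}\rangle_{\mathbb{T}}=\langle k,l\rangle$. This is immediate from the definition of the Weil-Petersson form: the integer $a_{e,e'}$ counting the faces in which one passes from $e$ to $e'$ counterclockwise decomposes as $a_{e,e'}=\sum_{\mathbb{T}}a^{\mathbb{T}}_{e,e'}$, and each local contribution $a^{\mathbb{T}}_{e,e'}$ depends only on the cyclic ordering of the edges of $\mathbb{T}$, hence agrees with the corresponding quantity computed for the lifts of $e$ and $e'$ inside $\mathbb{T}$; summing over $\mathbb{T}$ and antisymmetrizing gives the claim. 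Here one uses that the two lifts $e',e''$ of an inner edge lie in distinct triangle factors of the target, so they commute there; consequently the right-hand side of the defining formula for $i^{\Delta}$ is unambiguous and already Weyl-ordered.

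With these points the conclusion is formal, in the spirit of Lemma~\ref{lemma_preliminary}: a form-preserving lattice map yields an algebra morphism of quantum tori, since $i^{\Delta}([k])\,i^{\Delta}([l])=[\iota(k)][\iota(l)]=\omega^{2\langle\iota(k),\iota(l)\rangle}[\iota(k)+\iota(l)]=\omega^{2\langle k,l\rangle}[\iota(k+l)]=i^{\Delta}([k][l])$, and $i^{\Delta}$ preserves the unit; injectivity holds because $\iota$ is injective (the $e$-coordinate of $k$ can be read off from the exponent of either lift of $e$ in $\iota(k)$), so $i^{\Delta}$ carries the basis $\{[k]:k\in\Lambda(\Delta)\}$ to the linearly independent family $\{[\iota(k)]\}$ sitting inside the Weyl-ordered basis of the target. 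The only genuine obstacle is the bookkeeping in the identity $\langle\iota(k),\iota(l)\rangle=\langle k,l\rangle$ — i.e. matching the global Weil-Petersson pairing with the orthogonal sum of the triangle pairings along the correspondence (edge) $\leftrightarrow$ (its lifts in the faces containing it); everything else is the standard transport of a quantum torus along a symplectic lattice map already used for the $\mathbb{C}^{*}$ skein algebra in Section~\ref{sec: decompo skein eq}.
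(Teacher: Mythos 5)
Your proof is correct and is exactly the argument the paper has in mind: the paper's own proof is the one-line remark that the lemma ``is a straightforward consequence of the definitions,'' and your write-up simply makes explicit the intended verification, namely that $i^{\Delta}$ is the quantum-torus morphism induced by the injective, form-preserving lattice map $e\mapsto e'+e''$ together with the additivity of the Weil--Petersson form over faces. Nothing to add.
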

 	\begin{proof}
 		It is a straightforward consequence of the definitions and of the fact that $i^{\Delta}$ sends the basis of balanced monomials of $\mathcal{Z}_{\omega}(\mathbf{\Sigma},\Delta)$ to a subset of the basis of balanced monomials of $\mathcal{Z}_{\omega} (\bigsqcup \mathbb{T}) = \otimes_{\mathbb{T}}\mathcal{Z}_{\omega}(\mathbb{T})$.
 	\end{proof}
 	
 	Recall from Definition \ref{de: cover} the $2$--fold branched covering $\pi: \hat{\Sigma}(\Delta) \rightarrow \Sigma$ with involution $\sigma$ and branched points ${B}\subset \Sigma$. Let $\hat{\mathbf{\Sigma}}=(\hat{\Sigma}(\Delta), \hat{\mathcal{P}}\sqcup \hat{B})$ be the resulting punctured surface with involution $\sigma$.

For each leaf labelling $\ell$, let us define an isomorphism of $\mathcal{R}$--modules 	
\begin{equation}\label{eq:iso CF Sk}
\Phi_{\ell} \co \mathcal{Z}_{\omega}(\mathbf{\Sigma}, \Delta) \to \mathcal{S}_{\omega}^{\mathbb{C}^*, \sigma}(\mathbf{\hat{\Sigma}}). 
\end{equation}
  A very similar isomorphism was defined for closed punctured surfaces by Bonahon-Liu and Bonahon-Wong in \cite{BonahonLiu, BonahonWong2} and its extension to punctured surfaces with boundary is straightforward. 
  However, it will be useful to have an explicit description of this morphism, especially on triangles; we now spend some time in doing so.  
  
  The morphism  $\Phi_{\ell}$ is  the linear extension of  a sequence of three bijections 
  \begin{equation}\label{eq:bij Theta}
  \varphi: K_{\Delta}\cong \mathcal{W}(\tau_{\Delta}, \mathbb{Z})\cong \mathcal{W}^{\sigma}(\hat{\tau}_{\Delta}, \mathbb{Z})\cong \mathrm{H}_1^{\sigma}(\hat{\Sigma}_{\hat{\mathcal{P}}\cup\hat{B}}, \partial \hat{\Sigma}_{\hat{\mathcal{P}}\cup\hat{B}}; \mathbb{Z}),
  \end{equation}
  where the first set $K_{\Delta}$ is the basis of balanced monomials of $\mathcal{Z}_{\omega}(\mathbf{\Sigma}, \Delta)$ and the next two sets are defined as follows. 
  
  Let $\hat{\tau}_{\Delta} \subset \hat{\Sigma}_{\hat{\mathcal{P}}\cup \hat{B}}$ be the oriented train track such that, in each hexagon, 
  it looks as in Figure \ref{figtraintracks}; its orientation thus depends on the leaf labeling $\ell$. 
  The train track intersects each boundary edge at one point.  
  Note that $\hat{\tau}_{\Delta}$ is a deformation retract of $\hat{\Sigma}_{\hat{\mathcal{P}}\cup \hat{B}}$ relatively to its boundary.  
  The projection map $\pi: \hat{\Sigma}(\Delta) \rightarrow \Sigma$ projects $\hat{\tau}_{\Delta}$ on an (non-oriented) train track $\tau_{\Delta}\subset \Sigma_{\mathcal{P}}$ which, on each triangle, looks like in Figure \ref{figtraintracks}. 
 
  \begin{figure}[!h] 
\centerline{\includegraphics[width=10cm]{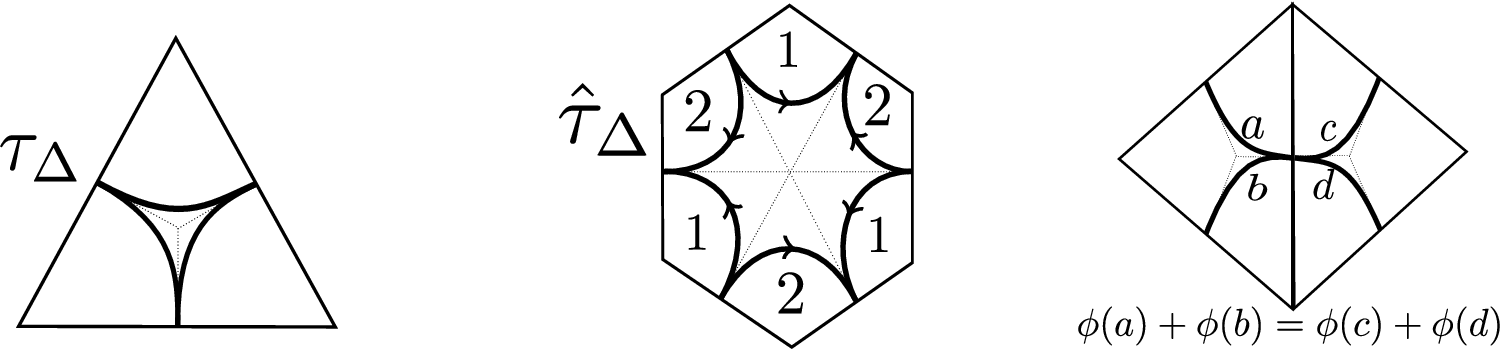} }
\caption{On the left: the train tracks associated to the triangle and its lift. We draw a leaf labeling and its corresponding orientation. On the right: the switch condition.} 
\label{figtraintracks} 
\end{figure}

 Let $\mathcal{W}(\tau_{\Delta}, \mathbb{Z})$ be the set of maps from the set of the edges of $\tau_{\Delta}$ to the set of integers that satisfy the \emph{switch-condition} illustrated in Figure \ref{figtraintracks}. 
 Likewise, let $\mathcal{W}^{\sigma}(\hat{\tau}_{\Delta}, \mathbb{Z})$ be the set of maps from the set of the edges of  $\hat{\tau}_{\Delta}$ to the set of integers that satisfy the switch condition and that are invariant under the covering involution.  
	
	\begin{enumerate}
		\item  The first bijection $K_{\Delta}\cong \mathcal{W}(\tau_{\Delta}, \mathbb{Z})$ 
   sends every monomial $[Z_{e_1}^{k_1}\ldots Z_{e_n}^{k_n}]$ to the following map $\phi$. 
   To an edge $\ec$ of $\tau_{\Delta}$ that connects two edges $e_a$ and $e_b$ of a triangle of $\Delta$ with third edge $e_c$, 
   one sets 
   \begin{equation*}
   \phi(\ec)=\frac{k_a+k_b-k_c}{2}. 
   \end{equation*}
   The balanced condition ensures that this is an integer. 
   \\
  The inverse map is defined by sending $\phi$ to the Weyl ordered balanced monomial with $k_e$ obtained by choosing an arbitrary face $\mathbb{T}$ containing $e$ and  setting $k_e=\phi(\ec)+\phi(\ec')$ where $\ec,\ec'$ are the two edges of the train track lying in $\mathbb{T}$ and intersecting $e$. The switch condition ensures that this integer does not depend on the choice of the triangle $\mathbb{T}$. 
   
   \item  The second bijection $\mathcal{W}(\tau_{\Delta}, \mathbb{Z})\cong \mathcal{W}^{\sigma}(\hat{\tau}_{\Delta}, \mathbb{Z})$ sends any map $\phi$ to the map $\hat{\phi}$  that sends the two lifts of an edge $\ec$ to $\phi(\ec)$. 
  	\item The third bijection   is as follows. 
    For $\hat{\phi} \in \mathcal{W}^{\sigma}(\hat{\tau}_{\Delta}, \mathbb{Z})$, one constructs a generator of $\mathrm{H}_1^{\sigma}(\hat{\Sigma}_{\hat{\mathcal{P}}\cup\hat{B}}, \partial \hat{\Sigma}_{\hat{\mathcal{P}}\cup\hat{B}}; \mathbb{Z})$ by taking, for each edge $\ec$ of $\hat{\tau}_{\Delta}$:
    \begin{itemize}
    	\item $\hat{\phi}(\ec)$ parallel copies of $\ec$ that are oriented as the train tracks, if $\hat{\phi}(\ec)\geq 0$; 
    	\item $-\hat{\phi}(\ec)$ parallel copies of $\ec$ with opposite orientation, if $\hat{\phi}(\ec)<0$.
    \end{itemize}
	Then one connects the resulting arcs following the train track in an arbitrary way. 
	The switch condition ensures that this is possible and the homological relation of Equation \eqref{eq:skeinconsequences} implies that the corresponding class in the equivariant $\C^*$ skein algebra does not depend on the way we connect them. 
	\\
	The inverse bijection is as follows. For each edge $\ec$ of $\hat{\tau}_{\Delta}$  that is inside a hexagon $\hat{\mathbb{T}}$, denote by $\ec^{\dagger}$ the only arc intersecting $\ec$ once transversally with end point the branched point of $\hat{\mathbb{T}}$ and a puncture of $\hat{\mathcal{P}}\cap \hat{\mathbb{T}}$ oriented from the puncture to the branched point if $c$ lies in a leaf labeled by $1$ and in the opposite direction if the leaf is labeled by $2$. 
	To a homology class $[\mathcal{C}]\in \mathrm{H}_1^{\sigma}(\hat{\Sigma}_{\hat{\mathcal{P}}\cup\hat{B}}, \partial \hat{\Sigma}_{\hat{\mathcal{P}}\cup\hat{B}}; \mathbb{Z})$ we associate the map $\hat{\phi}$ such that $\hat{\phi}(\ec)$ is the intersection number of the Borel-Moore homology class of $\ec^{\dagger}$ with $[\mathcal{C}]$. The facts that $\sigma_*([\mathcal{C}])=-[\mathcal{C}]$ and that $\ec$ and $\sigma(\ec)$ have different leaf labellings ensure that $\hat{\phi}(\ec)=\hat{\phi}(\sigma(\ec))$. To prove that $\hat{\phi}$ satisfies the switch condition, let $\hat{e}$ be an edge separating two (non necessarily distinct) hexagons $\hat{\mathbb{T}}_1, \hat{\mathbb{T}}_2$ as in Figure \ref{fig_square_switch}. Let $\ec_1, \ec_2$ (resp. $\ec_3, \ec_4$) the two edges of $\hat{\tau}_{\Delta}$ in $\hat{\mathbb{T}}_1$ (resp. $\hat{\mathbb{T}}_2$) adjacent to $\hat{e}$. Then $\ec_1^{\dagger} \cup \ec_2^{\dagger} \cup \ec_3^{\dagger} \cup \ec_4^{\dagger}$ bounds an embedded square in $\hat{\Sigma}$ with $\hat{e}$ a diagonal of this square illustrated in Figure \ref{fig_square_switch}. Denote by $Q \in \mathrm{C}^{BM}_2(\hat{\Sigma}_{\hat{\mathcal{P}}\cup \hat{B}}; \mathbb{Z})$ the class of this square oriented such that $\partial Q = \ec_1^{\dagger}+\ec_2^{\dagger}-\ec_3^{\dagger}-\ec_4^{\dagger}$. Then the switch condition follows from the following equivalences, where $i$ denotes the intersection:
		\end{enumerate}
	$$
	i(\partial Q, [\mathcal{C}])=0 \Leftrightarrow i(\ec_1^{\dagger}, [\mathcal{C}])+i(\ec_2^{\dagger}, [\mathcal{C}])=i(\ec_3^{\dagger}, [\mathcal{C}])+i(\ec_4^{\dagger}, [\mathcal{C}]) 
	\Leftrightarrow \hat{\phi}(\ec_1) +\hat{\phi}(\ec_2) =\hat{\phi}(\ec_3) +\hat{\phi}(\ec_4).$$
	
  \begin{figure}[!h] 
\centerline{\includegraphics[width=6cm]{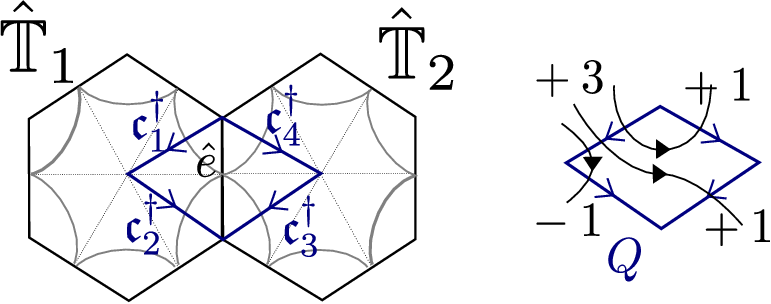} }
\caption{An illustration of the switch condition. On this exemple, it writes $\hat{\phi}(\ec_1) +\hat{\phi}(\ec_2) = 3-1 = 1 + 1 = \hat{\phi}(\ec_3) + \hat{\phi}(\ec_4)$.    } 
\label{fig_square_switch} 
\end{figure} 

The bijection $\varphi$ is illustrated in Figure \ref{fig_phi}.

	  \begin{figure}[!h] 
\centerline{\includegraphics[width=12cm]{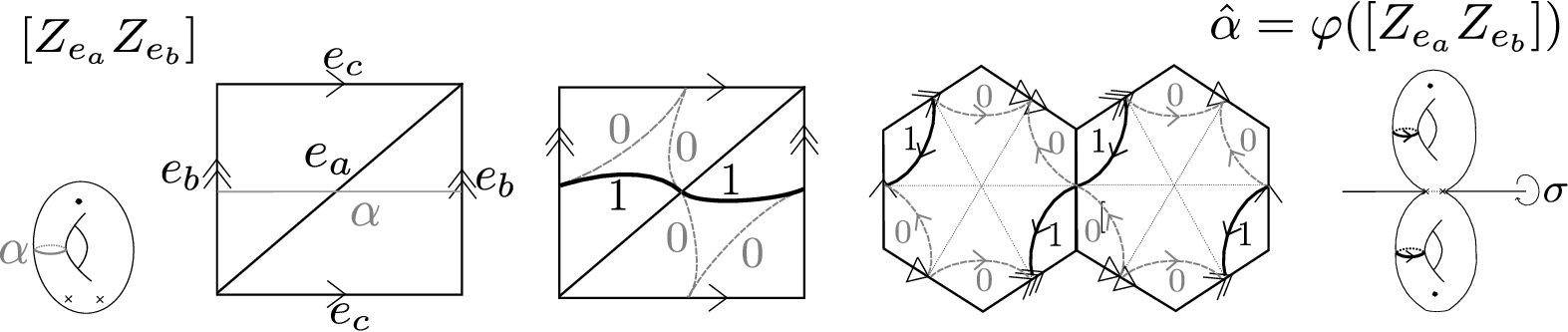} }
\caption{ The image $\varphi([Z_aZ_b])$ in the once punctured torus is the class of the union of two lifts of a simple curve $\alpha$. } 
\label{fig_phi} 
\end{figure}

  To simplify notation, let us denote by 
  $  \Theta_{\ell} \co \mathcal{S}_{\omega}^{\mathbb{C}^*, \sigma}(\mathbf{\hat{\Sigma}}) \to  \mathcal{Z}_{\omega}(\mathbf{\Sigma}, \Delta)$ 
  	the inverse of $\Phi_{\ell}$.  
    	
 	\begin{theorem}\label{th: iso Pso CF Sk}
 			The $\mathcal{R}$--linear isomorphism $\Theta_{\ell}$ is a morphism of algebras. 
 		\end{theorem}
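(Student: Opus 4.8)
The plan is to reduce the statement to a local computation on a single triangle, using the compatible triangular decompositions available on both sides. Recall the injective algebra morphisms $i^{\Delta}\co\mathcal{S}_{\omega}^{\mathbb{C}^*,\sigma}(\mathbf{\hat{\Sigma}})\hookrightarrow\bigotimes_{\mathbb{T}\in F(\Delta)}\mathcal{S}_{\omega}^{\mathbb{C}^*,\sigma}(\hat{\mathbb{T}})$ of \eqref{eq: i decomp skein equiv} and $i^{\Delta}\co\mathcal{Z}_{\omega}(\mathbf{\Sigma},\Delta)\hookrightarrow\bigotimes_{\mathbb{T}\in F(\Delta)}\mathcal{Z}_{\omega}(\mathbb{T})$ of \eqref{eq: decompo morph i for CF}. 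The explicit description of $\Phi_{\ell}$ through the chain of bijections $\varphi$ in \eqref{eq:bij Theta} is manifestly \emph{local}: the first bijection $K_{\Delta}\cong\mathcal{W}(\tau_{\Delta},\mathbb{Z})$ is read off triangle by triangle from $\phi(\ec)=\tfrac12(k_a+k_b-k_c)$, and both the ``lift'' bijection and the ``realize as a $\sigma$-invariant homology class'' bijection are defined hexagon by hexagon. Hence, denoting by $\Phi_{\mathbb{T}}\co\mathcal{Z}_{\omega}(\mathbb{T})\xrightarrow{\cong}\mathcal{S}_{\omega}^{\mathbb{C}^*,\sigma}(\hat{\mathbb{T}})$ the triangle instance of $\Phi_{\ell}$, the square with top arrow $\Phi_{\ell}$, bottom arrow $\bigotimes_{\mathbb{T}}\Phi_{\mathbb{T}}$, and both vertical arrows $i^{\Delta}$ commutes (this is just unwinding the definitions, since $\varphi$ only involves triangle-by-triangle and hexagon-by-hexagon data). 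Since the left-hand $i^{\Delta}$ is injective, it suffices to prove that each $\Phi_{\mathbb{T}}$ is a morphism of algebras: then $\bigotimes_{\mathbb{T}}\Phi_{\mathbb{T}}$ is one, and the commuting square forces $\Phi_{\ell}$, hence its inverse $\Theta_{\ell}$, to be one as well.

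Next I would phrase the triangle case in quantum-torus terms. Both algebras involved are quantum tori in the sense of Lemma \ref{lemma_preliminary}: $\mathcal{Z}_{\omega}(\mathbb{T})$ has the $\mathcal{R}$-basis of balanced Weyl-ordered monomials $\{[Z^{k}]\}$, indexed by $k$ in the index-two sublattice $\Lambda_{CF}\subset\mathbb{Z}^{\mathcal{E}(\mathbb{T})}$ of balanced tuples, with product $[Z^{k}][Z^{l}]=\omega^{\langle k,l\rangle}[Z^{k+l}]$, where $\langle\cdot,\cdot\rangle$ denotes the bilinear extension of the Weil--Petersson form; and $\mathcal{S}_{\omega}^{\mathbb{C}^*,\sigma}(\hat{\mathbb{T}})$ has, by Remark \ref{rk: h equiv} and \eqref{eq: prod relat hom}, the $\mathcal{R}$-basis $\{[\alpha]\}$ indexed by $\alpha\in\Lambda_{sk}:=\mathrm{H}_1^{\sigma}(\hat{\mathbb{T}}_{\hat{\mathcal{P}}\cup\hat{B}},\partial;\mathbb{Z})$, with product $[\alpha][\beta]=\omega^{(\alpha,\beta)}[\alpha+\beta]$ for the relative intersection form $(\cdot,\cdot)$. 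Each of the three bijections making up $\varphi$ is additive, so $\varphi\co\Lambda_{CF}\xrightarrow{\cong}\Lambda_{sk}$ is a group isomorphism and $\Phi_{\mathbb{T}}$ is its $\mathcal{R}$-linear extension. Therefore, to conclude it is enough to verify the single identity
\begin{equation*}
\big(\varphi(k),\varphi(l)\big)=\langle k,l\rangle\qquad\text{for all }k,l\in\Lambda_{CF},
\end{equation*}
since then $\Phi_{\mathbb{T}}([Z^{k}][Z^{l}])=\omega^{\langle k,l\rangle}[\varphi(k)+\varphi(l)]=\omega^{(\varphi(k),\varphi(l))}[\varphi(k)+\varphi(l)]=[\varphi(k)][\varphi(l)]$.

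The heart of the argument is this skew-form identity on a single triangle, which I would establish by a direct computation. Passing to the train-track coordinates via $k_e=\phi(\ec)+\phi(\ec')$ (the two edges of $\tau_{\mathbb{T}}$ meeting $e$), the Weil--Petersson form becomes an explicit $3\times3$ skew matrix in the $\phi$-basis of $\mathcal{W}(\tau_{\mathbb{T}},\mathbb{Z})$, dictated by the counter-clockwise incidence numbers $a_{e,e'}$ around the triangle. On the other side, the third bijection represents $\phi$ as a union of $\phi(\ec)$ oriented parallel copies of the train-track edges of $\hat{\tau}_{\mathbb{T}}$, with orientations prescribed by the leaf labeling $\ell$, and the inverse bijection identifies $\phi(\ec)$ with the algebraic intersection of the dual arc $\ec^{\dagger}$ with the class; computing the relative intersection form \eqref{eq: rel inters form} of two such configurations inside the hexagon $\hat{\mathbb{T}}$ --- keeping track of the interior double points and of the half-integral boundary contributions $\tfrac12\varepsilon(v_1,v_2)$ --- must yield the same $3\times3$ matrix. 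I expect this verification to be the main obstacle: it is where one has to be meticulous about the orientation conventions attached to $\ell$, about how the switch condition interacts with the choice of arcs connecting the parallel copies along $\hat{\tau}_{\mathbb{T}}$, and about the signs and halves appearing in the boundary terms of the relative intersection pairing. Once the matrix identity is checked on the hexagon (the two leaf labelings merely reverse all the orientations), the reduction of the first paragraph completes the proof.
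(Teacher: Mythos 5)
Your proposal follows essentially the same route as the paper: reduce to a single triangle via the commuting square formed by the two triangular-decomposition embeddings $i^{\Delta}$ (using their injectivity and the fact that $\Phi_{\ell}$ is defined triangle-by-triangle), then verify the algebra-morphism property locally on the hexagon. The paper's local verification (Lemma \ref{lemmaisomorphism}) is exactly your skew-form check in the train-track basis, carried out as a commutation relation $z_{\ec_i}z_{\ec_{i+1}}=\omega^{2}z_{\ec_{i+1}}z_{\ec_i}$ on the generators $\widetilde{Z}_{\ec_i}=[Z_{e_{i-1}}Z_{e_{i+1}}]$ via the skein computation of Figure \ref{figtriangle}.
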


 	\begin{proof}
  		First, remark that the following diagram commutes: 
  		$$\begin{tikzcd}
  		\mathcal{S}_{\omega}^{\mathbb{C}^*, \sigma}(\hat{\mathbf{\Sigma}})   \arrow[r,hook, "i^{\Delta}"] \arrow[d, ,"\Theta_{\ell}" ]& 
  		\otimes_{\mathbb{T}\in F(\Delta)} \mathcal{S}_{\omega}^{\mathbb{C}^*, \sigma}(\hat{\mathbb{T}})  \arrow[d, "\otimes_{\mathbb{T} \in F(\Delta)}\Theta_{\T,\ell}" ] \\
  		\mathcal{Z}_{\omega}(\mathbf{\Sigma}, \Delta)  \arrow[r,hook, "i^{\Delta}"]
  		&   \otimes_{\mathbb{T} \in F(\Delta)}\mathcal{Z}_{\omega}(\mathbb{T}).
  		\end{tikzcd}$$
  		Both the top and bottom horizontal morphisms, from \eqref{eq: i decomp skein equiv} and \eqref{eq: decompo morph i for CF},  are morphisms of algebras. We conclude the proof with the following Lemma \ref{lemmaisomorphism}. 
  	\end{proof}

 	\begin{lemma}\label{lemmaisomorphism} 
 		The map $\Theta_{\T,\ell}$ is a morphism of algebras.
 	\end{lemma}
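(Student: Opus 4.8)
The plan is to reduce the statement, which concerns a single triangle $\T$, to the equality of two skew-symmetric bilinear forms on a free abelian group of rank $3$. Both algebras involved are quantum tori, as defined before Lemma~\ref{lemma_preliminary}. On the one hand, by the definition of the balanced Chekhov--Fock algebra and the first bijection in \eqref{eq:bij Theta}, $\mathcal{Z}_\omega(\T)$ is identified, as an $\mathcal{R}$--module, with $\mathcal{R}[\mathcal{W}(\tau_\T,\mathbb{Z})]$, where $\mathcal{W}(\tau_\T,\mathbb{Z})$ is free of rank $3$ on the three corner branches $\ec_1,\ec_2,\ec_3$ of $\tau_\T$; computing inside $\mathcal{T}_\omega(\T)$, the induced product is that of the quantum torus attached to the pair $(\mathcal{W}(\tau_\T,\mathbb{Z}),c_{CF})$, where $c_{CF}$ is the skew-symmetric form deduced from the Weil--Petersson form $\langle\cdot,\cdot\rangle$ of $\T$ via the coordinate change $k_e=\phi(\ec)+\phi(\ec')$. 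On the other hand, Remark~\ref{rk: h equiv} applied to $\hat{\T}$ identifies $\eqSkpre(\hat{\T})$ with the quantum torus attached to $\bigl(\mathrm{H}^\sigma_1(\hat{\T}_{\hat{\mathcal{P}}\cup\hat{B}},\partial\hat{\T}_{\hat{\mathcal{P}}\cup\hat{B}};\mathbb{Z}),(\cdot,\cdot)\bigr)$, where $(\cdot,\cdot)$ is the relative intersection form of Definition~\ref{de: rel inters form}; this lattice is again free of rank $3$, with basis the classes $\varphi(\ec_1),\varphi(\ec_2),\varphi(\ec_3)$.

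Next I would note that each of the three steps of $\varphi$ is $\mathbb{Z}$--linear and sends Weyl-ordered basis vectors to Weyl-ordered basis vectors, so $\varphi$ is a group isomorphism between the two rank-$3$ lattices above, and $\Theta_{\T,\ell}$ is the induced linear map of group algebras. A linear isomorphism between two quantum tori that is induced by a group isomorphism of the underlying lattices and is written in the Weyl-ordered bases, $[x][y]=\omega^{c(x,y)}[x+y]$, is an algebra isomorphism if and only if it intertwines the skew-symmetric forms $c$. Hence the lemma is equivalent to the equalities $(\varphi(\ec_i),\varphi(\ec_j))=c_{CF}(\ec_i,\ec_j)$ for $i,j\in\{1,2,3\}$.

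It then remains to verify these equalities by a direct inspection of the hexagon $\hat{\T}$, and this is the one step requiring an actual computation. The class $\varphi(\ec_i)$ is represented by the oriented union of the two lifts of the corner branch $\ec_i$; for $i\ne j$ these representatives are disjoint in the interior of $\hat{\T}$ and meet only over the side of $\T$ common to $\ec_i$ and $\ec_j$, where on each of the two boundary arcs of $\hat{\T}$ above that side there is a single pair of endpoints contributing $\tfrac12\varepsilon$, the sign $\varepsilon$ being dictated by the train-track orientations, that is, by the leaf labeling $\ell$. Summing the two $\tfrac12$--contributions gives $\pm1$, and one checks that the resulting cyclic pattern of values coincides with that of $c_{CF}$ read off from $\langle\cdot,\cdot\rangle$, the overall sign being coherent because $\ell$ orients all lifted corners compatibly. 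Granting this, the two forms agree, so $\Theta_{\T,\ell}$ is an isomorphism of algebras and the lemma follows.

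I expect the main obstacle to be precisely this final bookkeeping: checking that no essential interior intersections occur among the lifted corner branches, that the arcs joining the parallel copies in the third step of \eqref{eq:bij Theta} can be chosen so that the resulting diagrams are genuinely Weyl-ordered, and that the $\tfrac12$--contributions at the boundary of $\hat{\T}$ assemble, with the correct $\ell$--dependent signs, into the Weil--Petersson form, consistently with the factor $2$ appearing both in $Z_eZ_{e'}=\omega^{2\langle e,e'\rangle}Z_{e'}Z_e$ and in the Weyl normalization of the Chekhov--Fock monomials. The rest of the argument is formal.
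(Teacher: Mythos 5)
Your argument is correct and is essentially the paper's own proof in slightly more abstract clothing: the paper likewise reduces the claim to checking that the generators $z_{\ec_i}=\Theta_{\T,\ell}^{-1}(\widetilde{Z}_{\ec_i})$ attached to the three corner branches satisfy $z_{\ec_i}z_{\ec_{i+1}}=\omega^2 z_{\ec_{i+1}}z_{\ec_i}$, which is exactly your identity $(\varphi(\ec_i),\varphi(\ec_j))=c_{CF}(\ec_i,\ec_j)$ in view of Proposition \ref{skeinalgstructure}, and verifies it by the same local computation in the hexagon (the two half-contributions at the lifts of the shared edge). The quantum-torus/skew-form packaging is a clean way of saying why checking the generators suffices, but the computational content is identical to the paper's Figure \ref{figtriangle} skein computation.
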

 	
 	\begin{proof}
 		Label the edges of $\T$  by $e_1, e_2, e_3$  in the clockwise order and denote by $\ec_i$ be the edges of the train track as in Figure \ref{figtriangle}. 
 		The balanced Chekhov-Fock algebra is generated by the balanced monomials $\widetilde{Z}_{\ec_i}:= [Z_{e_{i-1}}Z_{e_{i+1}}]$ and their inverses with relations $\widetilde{Z}_{\ec_i}\widetilde{Z}_{\ec_{i+1}}=\omega^{2}\widetilde{Z}_{\ec_{i+1}}\widetilde{Z}_{\ec_i}$, for  $i\in \mathbb{Z}/3\mathbb{Z}$. 
		Let $z_{\ec_i}$ be such that  $\Theta_{\T,\ell}(z_{\ec_i})=\widetilde{Z}_{\ec_i}$. A simple skein computation drawn in Figure \ref{figtriangle} shows that  $z_{\ec_i}z_{\ec_{i+1}}= \omega^2 z_{\ec_{i+1}}z_{\ec_i}$. Therefore,  $\Theta_{\T,\ell}$ is a morphism of algebras. 
 		\begin{figure}[h!] 
 			\centerline{\includegraphics[width=12cm]{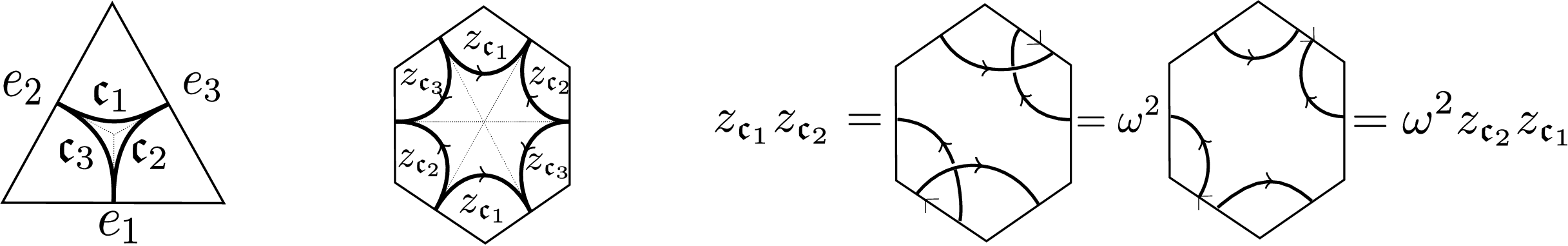} }
 			\caption{On the left, the triangle, its lift and the associated train tracks. On the right, a computation  in the equivariant skein algebra.}
 			\label{figtriangle} 
 		\end{figure} 
 	\end{proof}

\subsection{Irreducible and local representations of the balanced Chekhov-Fock algebras at odd roots of unity}

	Throughout this section, we suppose that $\omega$ is a root of unity of odd order $N>1$. 
	We classify the irreducible and the local representations of the balanced Chekhov-Fock algebra.  
	
	\subsubsection{Irreducible representations}

We first classify the irreducible representations of $\CF$.

For an inner puncture $p$ of $\mathbf{\Sigma}$ consider a peripheral curve $\gamma_p$ encircling $p$. For $e$ an edge of the triangulation, let $k_p(e)\in \{0,1,2\}$ be the number of endpoints of $e$ that are equal to the puncture $p$. Set   
\begin{equation}\label{eq: Hp}
H_p=[\prod_e Z_e^{k_p(e)}]\in \mathcal{Z}_{\omega}(\mathbf{\Sigma}, \Delta).
\end{equation}
The element $\hat{\gamma}_p:=\varphi(H_p)$ is the class of the union of two loops around the two lifts of $p$ that project to $\gamma_p$.

For a  boundary component $\partial$ of $\partial\Sigma$ and for each edge $e$, let $k_{\partial}(e)\in \{0, 1, 2\}$ be the number of endpoints of $e$ that lie in $\partial$.  
Let 
\begin{equation}\label{eq: Hpartial}
H_{\partial}=[\prod_e Z_e^{k_{\partial}(e)}]. 
\end{equation}
The element $\hat{\alpha}_{\partial}:= \varphi(H_{\partial})$ is the class of a union of arcs around the lifts of the punctures of $\partial \cap \mathcal{P}$. For instance, in $\mathbb{P}_n$, using the notations of Figure \ref{figelementarycob}, $\hat{\alpha}_{\partial}$ is the union $\alpha_1 \cup \sigma(\alpha_1)^{-1} \cup \ldots \cup \alpha_n \cup \sigma(\alpha_n)^{-1}$.

\begin{lemma}
	The elements $H_p$ and $H_{\partial}$ are central. 
\end{lemma}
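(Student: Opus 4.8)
The plan is to prove the stronger statement that $H_p$ and $H_\partial$ are central already in the ambient quantum torus $\mathcal{T}_\omega(\mathbf{\Sigma},\Delta)$; centrality in the subalgebra $\mathcal{Z}_\omega(\mathbf{\Sigma},\Delta)$ is then immediate. Both $H_p=[\prod_e Z_e^{k_p(e)}]$ and $H_\partial=[\prod_e Z_e^{k_\partial(e)}]$ are Weyl-ordered monomials, and in $\mathcal{T}_\omega(\mathbf{\Sigma},\Delta)$ one has $[\prod_e Z_e^{m_e}]\,Z_{e'}=\omega^{2\sum_e m_e\langle e,e'\rangle}\,Z_{e'}\,[\prod_e Z_e^{m_e}]$. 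Since $\omega$ is an arbitrary invertible element and the $Z_{e'}$ generate $\mathcal{T}_\omega(\mathbf{\Sigma},\Delta)$, it therefore suffices to show that for every edge $e'$ of $\Delta$ one has $\sum_e k_p(e)\langle e,e'\rangle=0$ and $\sum_e k_\partial(e)\langle e,e'\rangle=0$.

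To this end I would decompose the Weil--Petersson form face by face, writing $\langle e,e'\rangle=\sum_{\mathbb{T}\in F(\Delta)}\langle e,e'\rangle_{\mathbb{T}}$ where $\langle e,e'\rangle_{\mathbb{T}}$ records only the corners of the triangle $\mathbb{T}$; then $\sum_e k_p(e)\langle e,e'\rangle=\sum_{\mathbb{T}}\big(\sum_e k_p(e)\langle e,e'\rangle_{\mathbb{T}}\big)$, and a summand vanishes unless $e'$ is a side of $\mathbb{T}$. For a face $\mathbb{T}$ having $e'$ as a side, a direct inspection of the three corners of $\mathbb{T}$ gives $\sum_e k_p(e)\langle e,e'\rangle_{\mathbb{T}}=\varepsilon^-_{\mathbb{T}}-\varepsilon^+_{\mathbb{T}}$, where $\varepsilon^-_{\mathbb{T}}$ (resp. $\varepsilon^+_{\mathbb{T}}$) is $1$ if the tail (resp. head) of $e'$, for the orientation of $e'$ induced by the counter-clockwise orientation of $\mathbb{T}$, equals the puncture $p$, and $0$ otherwise; the analogous formula, with ``lies on $\partial$'' in place of ``equals $p$'', holds for $k_\partial$.

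It then remains to sum over the faces containing $e'$. If $e'$ is an inner edge it is a side of exactly two faces $\mathbb{T}_1,\mathbb{T}_2$, and since $\Sigma$ is oriented these induce opposite orientations on $e'$; hence the tail of $e'$ in $\mathbb{T}_1$ is its head in $\mathbb{T}_2$ and conversely, so the two contributions cancel. If $e'$ is a boundary arc it is a side of a unique face, but then both endpoints of $e'$ are punctures of one and the same boundary component, so $\varepsilon^-_{\mathbb{T}}-\varepsilon^+_{\mathbb{T}}=0$ (both terms vanish for $H_p$, since $p$ is an inner puncture; both equal $1$ for $H_\partial$ precisely when that component is $\partial$, and both vanish otherwise). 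In every case the total is $0$, which proves the lemma. I expect the only delicate point to be the bookkeeping in the presence of self-folded triangles --- a face two of whose sides are identified --- for which ``exactly two faces'' and ``a unique face'' must be read with multiplicity; this is handled by checking directly that such a face contributes $0$ to $\langle e,e'\rangle_{\mathbb{T}}$ for every pair of its sides, so that it is invisible to the computation above. Alternatively, one may transport the statement through the algebra isomorphism $\Phi_\ell$ of Theorem~\ref{th: iso Pso CF Sk}: realising $\mathcal{S}_\omega^{\mathbb{C}^*,\sigma}(\mathbf{\hat{\Sigma}})$ via Remark~\ref{rk: h equiv} as the quantum torus on $\mathrm{H}_1^\sigma(\hat{\Sigma}_{\hat{\mathcal{P}}\cup\hat{B}},\partial;\mathbb{Z})$ with the relative intersection form, a class is central as soon as its relative intersection number with every class vanishes, and $\Phi_\ell(H_p)=\hat{\gamma}_p$ and $\Phi_\ell(H_\partial)=\hat{\alpha}_{\partial}$ are of this kind, being isotopic into arbitrarily small neighbourhoods of the punctures and of the boundary respectively.
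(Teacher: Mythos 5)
Your proof is correct and is essentially the paper's argument: the paper simply asserts the lemma is ``a straightforward consequence of the definition of the Weil--Petersson form,'' and your face-by-face computation (with the telescoping of the two corner contributions at each endpoint of $e'$, the cancellation across the two faces adjacent to an inner edge, and the endpoint check for boundary arcs) is exactly the verification being left to the reader. The remarks on self-folded triangles and the alternative route through $\Phi_\ell$ are sensible extras but not needed.
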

\begin{proof} This is a straightforward consequence of the definition of the Weil-Petersson form. \end{proof}
Define an embedding $j_{(\mathbf{\Sigma}, \Delta)}$ of $\mathcal{Z}_{+1}(\mathbf{\Sigma}, \Delta)$ into the center of $\mathcal{Z}_{\omega}(\mathbf{\Sigma}, \Delta)$ by sending a balanced monomial $[\prod_e Z_e^{k_e}]$ to $j_{(\mathbf{\Sigma}, \Delta)}\left([\prod_e Z_e^{k_e}]\right) := [\prod_e Z_e^{Nk_e}]$. By Corollary \ref{cor: skein equ iso to reg. func. of char.} and Theorem \ref{th: iso Pso CF Sk}, one has an isomorphism between $\mathcal{Z}_{+1}(\mathbf{\Sigma}, \Delta)$ and the algebra of regular functions of  $\mathcal{X}_{\mathbb{C}^*}^{\sigma}(\hat{\mathbf{\Sigma}})$ which only depends on the choice of a leaf labeling.

An irreducible representation $\rho: \CF \to \mathrm{End}(V)$, induces a character on the center of $\CF$, hence a point in $\mathcal{X}_{\mathbb{C}^*}^{\sigma}(\hat{\mathbf{\Sigma}})$  that we denote by $[\rho^{ab}]$ and call \textit{the abelian classical shadow} of the representation. The images by this character of the central elements $\hat{\gamma}_p$ and $\hat{\alpha}_{\partial}$ are called, respectively, the \textit{puncture and boundary invariants} of the representation.

Before we state the classification result (Theorem \ref{theorem2}), let us define the following number. Suppose that $\mathbf{\Sigma}=(\Sigma, \mathcal{P})$ is a punctured surface with $\Sigma$ of genus $g$ with $n_{\partial}$ boundary components and $\mathcal{P}$ of cardinality $s$.
For each boundary component $\partial_i$ of $\Sigma$, let $s_{\partial_i}:=|\partial_i \cap \mathcal{P}|$. 
For $n\geq 1$, let $d(n)$ be  $(n-1)/2$ if $n$ is odd, and be $n/2$ if $n$ is even. 
We set $d_{\partial}:= \sum_{i} d(s_{\partial_i})$, where $i$ runs trough the number of boundary components of $\Sigma$. Recall that $\hat{g}$ denotes the genus of $\hat{\Sigma}$ computed in Lemma \ref{lemmagenus}.

	\begin{theorem}\label{prop_classif_irrep}
	The balanced Chekhov-Fock algebra $\CF$ is semi-simple. 
	Each simple module $\rho$ of $\CF$ has dimension $N^{\hat{g}-g+d_{\partial}}=N^{3g-3+s+n_{\partial}}$ and is determined, up to isomorphism, by:
		\begin{enumerate}
			\item an abelian classical shadow $[\rho^{ab}]\in \mathcal{X}_{\mathbb{C}^*}^{\sigma}(\hat{\mathbf{\Sigma}})$; 
			\item for each inner puncture $p$, an $N$--root of the holonomy of $[\rho^{ab}]$ around $\hat{\gamma}_p$ (puncture invariant); 
			\item for each boundary component $\partial$, an $N$--root of the holonomy of $[\rho^{ab}]$ along $\hat{\alpha}_{\partial}$ (boundary invariant). 
		\end{enumerate}
	\end{theorem}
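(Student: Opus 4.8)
The plan is to transport the statement along $\Phi_\ell$ to the equivariant $\C^*$ skein algebra $\eqSkpre(\mathbf{\hat\Sigma})$, which is a quantum torus, and to read off everything from its behaviour at an odd root of unity. By Theorem~\ref{th: iso Pso CF Sk}, $\Phi_\ell$ is an algebra isomorphism $\CF\xrightarrow{\ \cong\ }\eqSkpre(\mathbf{\hat\Sigma})$, and by the discussion preceding Lemma~\ref{lemma_preliminary} the latter is the quantum torus $\mathcal T_\mathbb E$ attached to $\mathbb E=\big(\mathrm H_1^\sigma(\hat\Sigma_{\hat{\mathcal P}\cup\hat B},\partial\hat\Sigma_{\hat{\mathcal P}\cup\hat B};\Z),(\cdot,\cdot)\big)$. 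It is standard that over $\C$, for $\omega$ a primitive root of unity of odd order $N$, a quantum torus is Azumaya over its centre $\C[\mathcal C]$, where $\mathcal C=\{x\in\mathbb E:(x,\mathbb E)\subseteq N\Z\}\supseteq N\mathbb E$; as $\C[\mathcal C]$ is a Laurent ring this gives that $\mathcal T_\mathbb E$ is semi-simple (in the sense used in Lemma~\ref{lemmairrepelementary}; semi-simplicity can also be deduced from Proposition~\ref{propdecomposition}), that all its simple modules have the same dimension $N^{\rho/2}$ with $\rho=\rk\big((\cdot,\cdot)\otimes\Z/N\big)$, and that the isomorphism classes of simple modules are in bijection with $\Specmax(\C[\mathcal C])$. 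Pulling this back along $\Phi_\ell$ already settles the semi-simplicity of $\CF$ and the fact that its simple modules share a common dimension, and reduces the classification to describing $\mathcal C$ and its spectrum.

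Next I would compute $\rho$ and $\mathcal C$ by means of the $\veebar$-decomposition of Lemma~\ref{lemma_decomp_invol_surface}. The injection $\hat i^\Delta$ of \eqref{eq: i decomp skein equiv} together with the Mayer--Vietoris computation in the proof of Proposition~\ref{propgluinginvolutive} exhibits $\mathbb E$, compatibly with $(\cdot,\cdot)$, as built from the orthogonal sum of the pairs $\mathbb E$ of the basic surfaces ($\mathbb S$, $\mathbb E_1$, $\mathbb E_2$, $\mathbb P_{s_i}$) by a finite sequence of index-$2$ isometric extensions exactly as in Lemma~\ref{lemma_preliminary}; since $N$ is odd, each such extension becomes an isometry after $\otimes\,\Z/N$, so $\rho$ and $\mathcal C/N\mathbb E$ are additive over the basic pieces. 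The pair attached to $\mathbb S$ is $(\Z,0)$, contributing $0$ to $\rho$ and the class of $H_p$ to $\mathcal C/N\mathbb E$; those attached to $\mathbb E_1$ and $\mathbb E_2$ have commutation parameters $q^2=\omega^{-8}$ and $q=\omega^{-4}$, still primitive $N$-th roots, contributing $2$ to $\rho$ and nothing to $\mathcal C/N\mathbb E$; and for $\mathbb P_{s_j}$, Lemma~\ref{lemmairrepelementary} says the simple $\mathcal Y_q^{(s_j)}$-modules have dimension $N^{d(s_j)}$ with centre generated by $N$-th powers and $H_\partial=Z_1\cdots Z_{s_j}$, contributing $2d(s_j)$ to $\rho$ and the class of $H_{\partial_j}$ to $\mathcal C/N\mathbb E$. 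Since there is one copy of $\mathbb S$ per inner puncture, one $\mathbb P_{s_j}$ per boundary component of $\mathbf\Sigma$, and $n_1$ copies of $\mathbb E_1$ and $n_2$ of $\mathbb E_2$, this yields $\rho/2=n_1+n_2+\sum_j d(s_{\partial_j})=n_1+n_2+d_\partial$, and shows that $\mathcal C/N\mathbb E$ is a free $\Z/N$-module with basis the classes of the central elements $H_p$ and $H_{\partial_j}$.

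Two routine computations then conclude. On the one hand, inserting $\hat n_b=|F(\Delta)|$ and $\hat n_\partial^{odd}=n_\partial^{odd}$ into $n_1,n_2$ (Lemma~\ref{lemma_integers}) and combining with the Euler-characteristic count $|F(\Delta)|=4g-4+2n_\partial+2\mathring s+s_\partial$ and the Riemann--Hurwitz formula of Lemma~\ref{lemmagenus} gives $n_1+n_2=\hat g-g$, so the common dimension of the simple modules is $N^{\hat g-g+d_\partial}$. On the other hand, $H_p^N$ and $H_{\partial_j}^N$ lie in $N\mathbb E$ and, under the bijection $\varphi$ of \eqref{eq:bij Theta}, are the holonomy of $\hat\gamma_p$ and the monodromy along $\hat\alpha_\partial$; hence a character $\chi$ of $\C[\mathcal C]$ is the same datum as its restriction to $\C[N\mathbb E]\cong\C[\eqabchar]$ — a point $[\rho^{ab}]\in\eqabchar$, the abelian classical shadow — together with the values of $\chi$ on the free generators of $\mathcal C/N\mathbb E$, i.e. an $N$-th root of $[\rho^{ab}](\hat\gamma_p)$ for each inner puncture $p$ and an $N$-th root of $[\rho^{ab}](\hat\alpha_{\partial_j})$ for each boundary component $\partial_j$. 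These are exactly the data (1), (2), (3), which proves the classification.

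The hard part, I expect, will be the bookkeeping in the second step: verifying that $\veebar$ is realised on $\mathrm H_1^\sigma$ by isometric index-$2$ extensions (so that $\rho$ and $\mathcal C/N\mathbb E$ are genuinely additive over the basic pieces, rather than merely Morita-comparable), that $\mathcal C/N\mathbb E$ has the $H_\bullet$'s as an honest free basis with no hidden relations, and that the Riemann--Hurwitz and Euler-characteristic counts collapse the exponent precisely to $\hat g-g+d_\partial$. Granting this, the semi-simplicity, the uniform dimension, and the classification by central characters are formal consequences of the Azumaya property of quantum tori at odd roots of unity.
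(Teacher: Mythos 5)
Your proposal is correct, but the engine driving it is genuinely different from the paper's. The paper also starts from $\Phi_{\ell}$ and the $\veebar$-decomposition, but then works at the level of algebras: Proposition \ref{propdecomposition} gives a Morita equivalence between $\CF$ and the tensor product $\mathcal{A}$ of elementary factors, Lemma \ref{lemmairrepelementary} (together with the known representation theory of $\mathcal{W}_q$ and $\mathcal{W}_{q^2}$) classifies the simple modules of each factor by explicit eigenvector constructions, and the global statement is assembled by noting that Morita equivalences preserve centres and by tracking the central elements $H_p$ and $H_{\partial}$ through $\Theta_{\ell}$. You instead invoke the Azumaya property of quantum tori at odd roots of unity to obtain semi-simplicity, the uniform dimension $N^{\rho/2}$ and the classification by central characters all at once for the whole of $\eqSkpre(\hat{\mathbf{\Sigma}})$, and you use the decomposition only to compute the lattice-theoretic invariants $\rho$ and $\mathcal{C}/N\mathbb{E}$ --- with the pleasant observation that the index-$2$ extensions of Lemma \ref{lemma_preliminary}, which at the algebra level only give Morita equivalence, become honest isometries after $\otimes\,\mathbb{Z}/N$ because $N$ is odd. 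Your bookkeeping checks out: $n_1+n_2=\hat{g}-g$ does follow from $|F(\Delta)|=4g-4+2n_{\partial}+2\mathring{s}+s_{\partial}$ together with Lemma \ref{lemmagenus}, and $\mathcal{C}/N\mathbb{E}$ is indeed freely generated by the classes of the $H_p$ and $H_{\partial_i}$ (zero contribution from $\mathbb{E}_1,\mathbb{E}_2$ since $q$ and $q^2$ still have order $N$; one generator from each copy of $\mathbb{S}$ and each $\mathbb{P}_{s_i}$, the latter by Lemma \ref{lemmairrepelementary}). What the paper's route buys is self-containedness and explicit models of the simple modules; what yours buys is a conceptual explanation of the answer (dimension equals $N$ to half the rank of the intersection form mod $N$, classification by $\Specmax$ of the centre) and a cleaner passage through the Morita step. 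One shared caveat: ``semi-simple'' must be read with care, since an Azumaya algebra over a Laurent polynomial ring still has non-semisimple finite-dimensional modules; your parenthetical deferral to the sense of Lemma \ref{lemmairrepelementary} is the right way to keep your statement aligned with the paper's.
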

\begin{proof}
	By Theorem \ref{th: iso Pso CF Sk} and Proposition \ref{propdecomposition}, up to Morita equivalence, $\CF$ decomposes into 
	 \begin{equation*}
	\mathcal{A}:= \mathcal{R}[H_p^{\pm 1}]^{\otimes \mathring{s}} \otimes \mathcal{W}_{q^2}^{\ot n_1} 
	\otimes \mathcal{W}_q^{\ot n_2} \otimes  \otimes_{i\in I} \mathcal{Y}^{(s_i)}_q, 
	\end{equation*}
	where $n_1=(n_b+n_{\partial}^{odd}-2)/2$ and $n_2=(2g-n_b-n_{\partial}^{odd}+2)/4$. 

	Using Lemma \ref{lemmairrepelementary} and the study of $\mathcal{W}_q$ and $\mathcal{W}_{q^2}$, all factors that appear in the tensor decomposition of $\mathcal{A}$ are semi-simple. Moreover, the sets of  isomorphism classes of their simple modules are in bijection with the sets of their induced characters on their centers. 
	Note that Morita equivalences preserve, up to isomorphism, centers; a direct look on $\Theta_{\ell}$ shows that: 
	the central elements $[\prod_i Z_i]^N$ that appear in the factors $\mathcal{Y}_{q}^{(s_i)}$, $\mathcal{W}_q$ and $\mathcal{W}_{q^2}$ are sent to  the central elements defining $[\rho^{ab}]$;  
	the central elements $H_p$ that appear in the factors $\mathbb{C}[H_p^{\pm 1}]$ are sent to $\hat{\gamma}_{p}$; and, 
	the central elements $H_{\partial}$ that appear in the factors $\mathcal{Y}_q^{(s_i)}$ are sent to $\hat{\alpha}_{\partial}$. 
	
	The dimension of the simple modules of $\mathcal{A}$ follows from a straightforward computation using Lemma \ref{lemmairrepelementary} and the fact that the simple modules of $\mathcal{W}_q$ and $\mathcal{W}_{q^2}$ have dimension $N$.
\end{proof}
 
 \subsubsection{Local representations}

 	\begin{definition}[\cite{BonahonBaiLiuLocalRep}]
 		A \emph{local representation} of $\CF$ is a representation of the form 
 		\begin{equation*}
 		r : \mathcal{Z}_{\omega}(\mathbf{\Sigma},\Delta) \xrightarrow{i^{\Delta}} \otimes_{\mathbb{T}\in F(\Delta)} \mathcal{Z}_{\omega}(\mathbb{T}) \xrightarrow{\otimes_{\mathbb{T}} r_{\mathbb{T}}} \End( \otimes_{\mathbb{T}\in F(\Delta)} W_{\mathbb{T}}),
 		\end{equation*}
 		where, for each triangle, $r_{\mathbb{T}}: \mathcal{Z}_{\omega}(\mathbb{T})\rightarrow \End(W_{\mathbb{T}})$ is an irreducible representation. 
 	\end{definition}

	Note that the elements of the form $i^{\Delta}(Z_{e_{i_1}}^N\cdots Z_{e_{i_k}}^N)$ are central in $\otimes_{\mathbb{T}\in F(\Delta)}\mathcal{Z}_{\omega}(\mathbb{T})$. Therefore, the local representations send them to scalar operators, which implies that any local representation $r$ has a well-defined abelian classical shadow $[\rho^{ab}]\in \mathcal{X}_{\mathbb{C}^*}^{\sigma}(\hat{\mathbf{\Sigma}})$. 
	Moreover, the image in $\otimes_{\mathbb{T}\in F(\Delta)}\mathcal{Z}_{\omega}(\mathbb{T})$ of the element 
	\begin{equation*}
	H_c:= [\prod_{e\in \mathcal{E}(\Delta)} Z_e^2]
	\end{equation*} 
	 is central and each local representation sends it to a scalar operator $h_c \id$. 
	The complex $h_c$ is called \textit{the central charge} of the local representation. 
	
	It is shown in  \cite[Proposition 4.6]{Toulisse16} that the isomorphism classes of local representations (for arbitrary punctured surfaces) are classified by their abelian classical shadow and their central charge. 
	
	The decomposition of local representations into simple modules is known for closed punctured surfaces; see \cite{Toulisse16}. 
	We now generalize this decomposition to open surfaces.
 \begin{corollary}\label{corollary2}
 Let $r:\CF \to \End(W)$ be a local representation with classical shadow $[\rho^{ab}]$ and central charge $h_C$. 
 The set of isomorphism classes of simple submodules of $W$ is the set of classes of those irreducible representations with classical shadow $[\rho^{ab}]$ such that the product of every boundary and inner puncture invariants is equal to $h_C$. 
 Moreover, each factor arises with multiplicity $N^g$, where $g$ is the genus of $\Sigma$.
 \end{corollary}

\par Consider $r:\mathcal{Z}_{\omega}(\mathbf{\Sigma}, \Delta )\rightarrow \End(W)$ a local representation and denote by $\overline{r}:= \otimes_{\mathbb{T}} r^{\mathbb{T}} : \otimes_{\mathbb{T}\in F(\Delta)} \mathcal{Z}_{\omega}(\mathbb{T})\rightarrow \End(W)$ the underlying representation such that $r=\overline{r}\circ i^{\Delta}$. A simple sub-module of $W$ has necessarily the same classical abelian shadow by definition. Moreover since $H_c$ is the product of every elements $H_p$ and $H_{\partial}$, the product of every puncture and boundary invariants of a simple sub-module must be equal to the central charge. Denote by $p_1, \ldots, p_s$ the inner punctures of $\mathbf{\Sigma}$ and $\partial_1, \ldots, \partial_m$ its boundary components and write $J= \{ p_1, \ldots, p_s, \partial_1, \ldots, \partial_m\}$. Let $\mathcal{D}$ denote the set of maps $h : J \rightarrow \mathbb{C}^*$ such that $h(k)^N = \rho^{ab}(H_k)$ for all $k\in J$ and such that $\prod_{k\in J} h(k)=h_C$. One has a decomposition
$$ W = \oplus_{h \in \mathcal{D}} W^{h} \quad \mbox{, where }W^h = \{ w \in W | r(H_k) w = h(k) w \mbox{, for all }k \in J \}.$$

\begin{lemma}\label{lemma_multiplicity}
The dimension of the subspace $W^h$ does not depend on $h\in \mathcal{D}$.
\end{lemma}

\begin{proof} When $J$ has cardinality one, the result is obvious so we suppose that $|J|\geq 2$. 
\\ Let $G$ denote the group of those maps $g : J \rightarrow \mathbb{Z}/N\mathbb{Z}$ such that $\sum_{k \in J} g(k) = 0$. The group $G$ acts freely and transitively on the set $\mathcal{D}$ by the action $g\cdot h (k) := \omega^{g(k)}h(k)$ for $k\in J, g\in G$ and $h\in \mathcal{D}$.
\\ Consider a tree $T\subset \Sigma_{\mathcal{P}}$ such that $(1)$ its set $V(T)$ of vertices is the union of the set of inner punctures of $\mathbf{\Sigma}$ together with a subset of $P$ which intersects each boundary component exactly once, so $V(T)$ is in natural bijection with $J$, and $(2)$ the set of edges $\mathcal{E}(T)$ is included in $\mathcal{E}(\Delta)$. For each edge $e\in \mathcal{E}(T)$, we associate an element $g_e \in G$ as follows. Let $p_1, p_2$ be the two endpoints of $e$. For $i=1,2$, let $k_i \in J$ be the element which is either $p_i$ if $p_i$ is an inner puncture or $\partial_i$ if $p_i$ is a puncture in the boundary component $\partial_i$. Note that by definition of $T$, one has $k_1 \neq k_2$. The element $g_e \in G$ is defined by $g_e(k_1) = 4, g_e(k_2)=-4$ and $g_e(k)=0$ for $k\neq k_1, k_2$. 

Let us prove that the elements $g_e$ generate $G$. Consider the CW chain complex $(\mathrm{C}_{\bullet}(T; \mathbb{Z}/N\mathbb{Z}), \partial_{\bullet})$  and
identify $G$ with the subset of chains $\sum_{v\in V(T)} n_v v \in \mathrm{C}_0(T; \mathbb{Z}/N\mathbb{Z})$ such that $\sum_v n_v=0$ by sending $g\in G$ to $\sum_v g(v) v \in \mathrm{C}_0(T; \mathbb{Z}/N\mathbb{Z})$. Clearly the image of $\partial_1 : \mathrm{C}_1(T; \mathbb{Z}/N\mathbb{Z}) \to  \mathrm{C}_0(T; \mathbb{Z}/N\mathbb{Z})$ is included in $G$ and 
since $T$ is connected, we have $\mathrm{H}_0(T; \mathbb{Z}/N\mathbb{Z})=\quotient{\mathrm{C}_0(T; \mathbb{Z}/N\mathbb{Z}) }{\mathrm{Im}(\partial_1)} \cong \mathbb{Z}/N\mathbb{Z}$, so $G=\mathrm{Im}(\partial_1)$.  Note that $g_e = 4 \partial_1 (\delta_e)$ where $\delta_e \in \mathrm{C}_1(T; \mathbb{Z}/N\mathbb{Z})\cong (\mathbb{Z}/N\mathbb{Z})^{\mathcal{E}(T)}$ is the generator corresponding to $e$, so the family $(\partial_1 (\delta_e))_{e\in \mathcal{E}(T)}$ generates $G$ and, since $4$ is prime to $N$, so does the family $(g_e)_{e\in \mathcal{E}(T)}$.

 Now fix $e\in \mathcal{E}(T)$ and orient it from $p_1$ to $p_2$ and let $\mathbb{T}_e$ be the face of $\Delta$ on the left of $e$ and $e^L$ be the corresponding lift of $e$ in $\mathcal{E}(\mathbb{T}_e)$ (note that when the triangle $\mathbb{T}_e$ is self-folded, $e$ admit two lifts in $\mathcal{E}(\mathbb{T}_E)$ and we look at the one on the left). Let $Z_{e_L}^2 \in \mathcal{Z}_{\omega}(\mathbb{T}_e) \subset \otimes_{\mathbb{T}} \mathcal{Z}_{\omega}(\mathbb{T})$ and write $X_e := \overline{r}(Z_{e_L}^2) \in \End(W)$. 
In the algebra $ \otimes_{\mathbb{T}} \mathcal{Z}_{\omega}(\mathbb{T})$, one has the relations
$$ Z_{e_L}^2 H_{k_1} = \omega^4 H_{k_1} Z_{e_L}^2  \quad,   Z_{e_L}^2 H_{k_2} = \omega^{-4} H_{k_2} Z_{e_L},  \quad Z_{e_L}^2 H_{k} =  H_{k} Z_{e_L}, \forall k\neq k_1, k_2$$
from which we deduce that $X_e$ induces an isomorphism between $W^{h}$ and $W^{g_e \cdot h}$, so they have the same dimension. We conclude using the facts that the $g_e$ generate $G$ and that $G$ acts transitively on $\mathcal{D}$.

\end{proof}

 \begin{proof}[Proof of Corollary \ref{corollary2}] 
 By Theorem \ref{prop_classif_irrep}, each summand $W^h$ is a direct sum of pairwise isomorphic simple $\mathcal{Z}_{\omega}(\mathbf{\Sigma}, \Delta)$-modules with classical shadow $\rho^{ab}$ and inner puncture and boundary invariants $h_k = h(k)$ for all $k\in J$. By Lemma \ref{lemma_multiplicity}, all such simple module arise with the same multiplicity $M$. It remains to prove that $M=N^g$ to conclude. On the one hand $\dim (W) = N^{|F(\Delta)|}$, where $|F(\Delta)|$ is the number of faces of the triangulation, so is equal to $|F(\Delta)|= 4g-4+2\mathring{s} +2n_{\partial} +s_{\partial}$. On the other hand, by Theorem \ref{prop_classif_irrep}, each summand $W^h$ has dimension $M\times N^{3g-3+s+n_{\partial}}$ and there are $|\mathcal{D}|= N^{|J| -1} = N^{\mathring{s}+n_{\partial}-1}$ such summand. Therefore, one has the equality
 $$ N^{4g-4+2\mathring{s}+ 2n_{\partial} +s_{\partial}} = M \times N^{3g-3+s+n_{\partial}} \times N^{\mathring{s}+n_{\partial} -1}, $$
 which implies $M=N^g$.

 \end{proof}

 \section{Non-abelianization maps}
 
In this section we define the non-abelianization map  
 	\begin{equation*}
 	\mathcal{NA}: \mathcal{X}_{\mathbb{C}^*}^{\sigma}(\mathbf{\hat{\Sigma}}) \rightarrow \mathcal{X}_{\SL_2}(\mathbf{\Sigma})
 	\end{equation*}
 	and give an explicit description of it. It is associated to a morphism of Poisson algebras  
 	\begin{equation*}
	\mathcal{NA}^*\co \mathcal{O}[\mathcal{X}_{\SL_2}(\mathbf{\Sigma})]\to \mathcal{O}[\mathcal{X}_{\mathbb{C}^*}^{\sigma}(\mathbf{\hat{\Sigma}})]; 
 	\end{equation*} 
 	the core of the latter is the quantum trace map  $\tr_{+1}\co \mathcal{S}_{+1}^{\SL_2}(\mathbf{\Sigma}) \to  \mathcal{Z}_{+1}(\mathbf{\Sigma}, \Delta)$, reinterpreted with values into $ \mathcal{S}_{+1}^{\mathbb{C}^*, \sigma}(\mathbf{\hat{\Sigma}})$ by means of the isomorphism of Theorem \ref{th: iso Pso CF Sk}.

  In the first section we give the main ingredients that compose the non-abelianization map. We define it in the second one. 
  We then restrict ourselves to closed punctured surfaces and prove Theorem \ref{theorem3}.  
  
   \subsection{Kauffman-bracket skein algebras, the quantum trace map and character varieties}
 \par In this subsection, we briefly recall the definitions of the Kauffman-bracket (stated) skein algebras,  of the quantum trace map and the character varieties and state some of their properties. Nothing original is claimed here except Lemma \ref{lemma_poisson_abelian}. 
 
 \subsubsection{The Kauffman-bracket stated skein algebra. }
 Consider a punctured surface $\mathbf{\Sigma}$, a commutative unital ring $\mathcal{R}$ and an invertible element $\omega\in \mathcal{R}^{\times}$. A \textit{stated tangle} $(T,s)$ is the data of a tangle $T$ and a map (a state) $s:\partial T \rightarrow \{-,+\}$. A stated diagram is defined similarly.
 \begin{definition}\cite{BonahonWongqTrace, LeStatedSkein}
  The Kauffman-bracket (stated) skein algebra $\mathcal{S}_{\omega}^{\SL_2}(\mathbf{\Sigma})$ is the quotient of the free $\mathcal{R}$-module generated by isotopy classes of stated unoriented tangles in $\mathbf{\Sigma}$ by the skein relations \eqref{eq: skein 1} and \eqref{eq: skein 2}, which are,    
\\ 
the Kauffman bracket relations:
	\begin{equation}\label{eq: skein 1} 
\begin{tikzpicture}[baseline=-0.4ex,scale=0.5,>=stealth]	
\draw [fill=gray!45,gray!45] (-.6,-.6)  rectangle (.6,.6)   ;
\draw[line width=1.2,-] (-0.4,-0.52) -- (.4,.53);
\draw[line width=1.2,-] (0.4,-0.52) -- (0.1,-0.12);
\draw[line width=1.2,-] (-0.1,0.12) -- (-.4,.53);
\end{tikzpicture}
=\omega^{-2}
\begin{tikzpicture}[baseline=-0.4ex,scale=0.5,>=stealth] 
\draw [fill=gray!45,gray!45] (-.6,-.6)  rectangle (.6,.6)   ;
\draw[line width=1.2] (-0.4,-0.52) ..controls +(.3,.5).. (-.4,.53);
\draw[line width=1.2] (0.4,-0.52) ..controls +(-.3,.5).. (.4,.53);
\end{tikzpicture}
+\omega^{2}
\begin{tikzpicture}[baseline=-0.4ex,scale=0.5,rotate=90]	
\draw [fill=gray!45,gray!45] (-.6,-.6)  rectangle (.6,.6)   ;
\draw[line width=1.2] (-0.4,-0.52) ..controls +(.3,.5).. (-.4,.53);
\draw[line width=1.2] (0.4,-0.52) ..controls +(-.3,.5).. (.4,.53);
\end{tikzpicture}
\hspace{.5cm}
\text{ and }\hspace{.5cm}
\begin{tikzpicture}[baseline=-0.4ex,scale=0.5,rotate=90] 
\draw [fill=gray!45,gray!45] (-.6,-.6)  rectangle (.6,.6)   ;
\draw[line width=1.2,black] (0,0)  circle (.4)   ;
\end{tikzpicture}
= -(\omega^{-4}+\omega^{4}) 
\begin{tikzpicture}[baseline=-0.4ex,scale=0.5,rotate=90] 
\draw [fill=gray!45,gray!45] (-.6,-.6)  rectangle (.6,.6)   ;
\end{tikzpicture}
;
\end{equation}
the boundary relations:
\begin{equation}\label{eq: skein 2} 
\begin{tikzpicture}[baseline=-0.4ex,scale=0.5,>=stealth]
\draw [fill=gray!45,gray!45] (-.7,-.75)  rectangle (.4,.75)   ;
\draw[->] (0.4,-0.75) to (.4,.75);
\draw[line width=1.2] (0.4,-0.3) to (0,-.3);
\draw[line width=1.2] (0.4,0.3) to (0,.3);
\draw[line width=1.1] (0,0) ++(90:.3) arc (90:270:.3);
\draw (0.65,0.3) node {\scriptsize{$+$}}; 
\draw (0.65,-0.3) node {\scriptsize{$+$}}; 
\end{tikzpicture}
=
\begin{tikzpicture}[baseline=-0.4ex,scale=0.5,>=stealth]
\draw [fill=gray!45,gray!45] (-.7,-.75)  rectangle (.4,.75)   ;
\draw[->] (0.4,-0.75) to (.4,.75);
\draw[line width=1.2] (0.4,-0.3) to (0,-.3);
\draw[line width=1.2] (0.4,0.3) to (0,.3);
\draw[line width=1.1] (0,0) ++(90:.3) arc (90:270:.3);
\draw (0.65,0.3) node {\scriptsize{$-$}}; 
\draw (0.65,-0.3) node {\scriptsize{$-$}}; 
\end{tikzpicture}
=0,
\hspace{.2cm}
\begin{tikzpicture}[baseline=-0.4ex,scale=0.5,>=stealth]
\draw [fill=gray!45,gray!45] (-.7,-.75)  rectangle (.4,.75)   ;
\draw[->] (0.4,-0.75) to (.4,.75);
\draw[line width=1.2] (0.4,-0.3) to (0,-.3);
\draw[line width=1.2] (0.4,0.3) to (0,.3);
\draw[line width=1.1] (0,0) ++(90:.3) arc (90:270:.3);
\draw (0.65,0.3) node {\scriptsize{$+$}}; 
\draw (0.65,-0.3) node {\scriptsize{$-$}}; 
\end{tikzpicture}
=\omega
\begin{tikzpicture}[baseline=-0.4ex,scale=0.5,>=stealth]
\draw [fill=gray!45,gray!45] (-.7,-.75)  rectangle (.4,.75)   ;
\draw[-] (0.4,-0.75) to (.4,.75);
\end{tikzpicture}
\hspace{.1cm} \text{ and }
\hspace{.1cm}
\omega^{-1}
\heightexch{->}{-}{+}
- \omega^{-5}
\heightexch{->}{+}{-}
=
\heightcurve.
\end{equation}
The product is given by stacking the stated tangles; $[T_1,s_1]\cdot [T_2,s_2]$ denotes  $[T_1, s_1]$ placed above $[T_2, s_2]$. 
\end{definition}
   In addition to the fact that we use different skein relations and states, we impose two main differences between the skein algebras $\mathcal{S}_{\omega}^{\SL_2}(\mathbf{\Sigma})$ and $\mathcal{S}_{\omega}^{\mathbb{C}^{*}}(\mathbf{\Sigma})$: 
 \begin{enumerate}
 \item In $\mathcal{S}_{\omega}^{\SL_2}(\mathbf{\Sigma})$, the tangles are unoriented.
 \item In $\mathcal{S}_{\omega}^{\SL_2}(\mathbf{\Sigma})$, given a boundary arc $b$ and a tangle $T$, we impose that the points of $\partial_b T:= T\cap b$ have pairwise distinct heights.
 \end{enumerate}

 \par A \textit{closed curve} is a closed connected reduced (unoriented) diagram and an \textit{arc} is an open connected reduced diagram. The algebra  $\mathcal{S}_{\omega}^{\SL_2}(\mathbf{\Sigma})$ is generated by classes of closed curves and stated arcs. 
 
 \subsubsection{The quantum trace map.}\label{sec: qtrace}

	For a  triangulated punctured surface $(\mathbf{\Sigma}, \Delta)$, we recall from \cite{BonahonWongqTrace, LeStatedSkein} the definition of the  \textit{quantum trace map} 
	\begin{equation}\label{eq: qtrace}
	\tr_{\omega}^{\Delta} : \mathcal{S}_{\omega}^{\SL_2}(\mathbf{\Sigma}) \rightarrow \mathcal{Z}_{\omega}(\mathbf{\Sigma}, \Delta), 
	\end{equation}
	in the particular case where $\omega=+1$. 
	The quantum trace is a morphism of algebras which is injective if and only if $\Sigma$ has no boundary (see \cite[Proposition $29$]{BonahonWongqTrace} for the "if" and \cite[Section $7$]{CostantinoLe19} for the "only if"). 
	\\ 
	 
	For each triangle of $\Delta$, consider the clockwise cyclic order on its edges. 
	Let $\alpha$ be either a closed curve or an arc between two boundary components; suppose it is placed in minimal and transversal position with the edges of $\Delta$. Denote by $Q_{\alpha}$ the set $\alpha\cap \mathcal{E}(\Delta)$.  
	For any two points $x$ and $y$ in $Q_{\alpha}$, we write $x\to y$ if: 
	\begin{itemize}
		\item 	$x$ and $y$ respectively belong to edges $e$ and $e'$ of a common triangle $\mathbb{T}$; 
		\item   $e$ is the immediate predecessor of $e'$ for the cyclic order; and, 
		\item $x$ and $y$ are on a same connected component of $\alpha\cap \mathbb{T}$.   
	\end{itemize}
	Let $\St^a(\alpha)$ be the set of maps $s: Q_{\alpha} \rightarrow \{-,+\}$ such that $(s(x),s(y))\neq (-,+)$ whenever $x\to y$. 
	For each state $s$, let $k_s: \mathcal{E}(\Delta)\to \mathbb{Z}$ be given by $k_s(e)=\sum_{x\in e} s(x)$ for $e\in  \mathcal{E}(\Delta)$, where we identify $-$ with $-1$ and $+$ with $+1$.     
	For a closed curve $\alpha$ as above, one sets	
	\begin{equation*}
	\tr_{+1}^{\Delta} (\alpha) = \sum_{s\in \St^a(\alpha)} \prod_{e\in \mathcal{E}(\Delta)}Z_{e}^{k_s(e)}.
	\end{equation*}
	For an arc  $\alpha$  between two boundary arcs with state  $s_0:\partial \alpha \to \{-,+\}$, the quantum trace is as follows.  
	If $\alpha$  bounds two arcs $e$ and $e'$ of a triangle, say at $x$ and $y$, and if $s_0$ is such that $(s_0(x),s_0(y))= (-,+)$, then $	\tr_{+1}^{\Delta} (\alpha,s_0) =0$. 
	Otherwise, let $\St^a(\alpha,s_0)$ be the subset of $\St^a(\alpha)$ of those states that agree with $s_0$ on $\partial \alpha$. 
	One sets
	\begin{equation}\label{eq_QTrace}
	\tr_{+1}^{\Delta} (\alpha,s_0) = \sum_{s\in \St^a(\alpha,s_0)} \prod_{e\in \mathcal{E}(\Delta)}Z_{e}^{k_s(e)}.
	\end{equation}
	The map $\tr_{+1}^{\Delta}$ is obtained by algebraic extension; we refer to \cite[Lemma $2.15$]{KojuQGroupsBraidings} for a proof that this definition coincides with the ones in \cite{BonahonWongqTrace, LeStatedSkein}.

 \subsubsection{Poisson structures on skein algebras.}

 	Let $G$ be either $\mathbb{C}^*$ or $\SL_2(\mathbb{C})$. 
 	Note that the skein algebra $\mathcal{S}_{\omega}^G(\mathbf{\Sigma})$ admits canonical linear basis. 
 	By canonical, we mean that it does not depend on the ring $\mathcal{R}$ nor on the parameter $\omega$. 
 	For $G=\mathbb{C}^*$, such a basis is given by $\mathrm{H}_1(\Sigma_{\mathcal{P}}, \partial \Sigma_{\mathcal{P}}; \mathbb{Z})$; for $G=\SL_2(\mathbb{C})$, such a basis was defined in \cite{LeStatedSkein}. 
 	
 	For $\mathcal{R}=\mathbb{C}$ and $\omega=+1$, let us equip the commutative algebra  $\mathcal{S}_{+1}^G(\mathbf{\Sigma})$ with the following Poisson bracket. 
 	Consider the ring $\mathbb{C}[[\hbar]]$ of formal power series and let  $\mathcal{S}_{+1}^G(\mathbf{\Sigma})[[\hbar]]:= \mathcal{S}_{+1}^G(\mathbf{\Sigma}) \otimes_{\mathbb{C}} \mathbb{C}[[\hbar]]$. 
 	We also consider the skein algebra $\mathcal{S}_{\omega_{\hbar}}^G(\mathbf{\Sigma})$  with parameter $\omega_{\hbar}:= \exp(-\hbar/4)\in \mathbb{C}[[\hbar]]=\mathcal{R}$. 
 	Each canonical basis $\mathcal{B}$ determines an isomorphism of  $\mathbb{C}[[\hbar]]$-modules 
 	$\phi^{\mathcal{B}}: \mathcal{S}_{+1}^G(\mathbf{\Sigma})[[\hbar]]\xrightarrow{\cong} \mathcal{S}_{\omega_{\hbar}}^G(\mathbf{\Sigma})$.  
 	Denote by $\star_{\mathcal{B}}$ the product on $\mathcal{S}_{+1}^G(\mathbf{\Sigma})[[\hbar]]$ obtained by pulling back along $\phi^{\mathcal{B}}$ the product of $ \mathcal{S}_{\omega_{\hbar}}^G(\mathbf{\Sigma})$. 
 	\begin{definition}
 		The Poisson bracket $\{\cdot, \cdot \}$ on $\mathcal{S}_{+1}^G(\mathbf{\Sigma})$ is defined by the formula
 		$$ f \star_{\mathcal{B}} g - g\star_{\mathcal{B}} f = \hbar \{f,g\} \pmod{\hbar^2} \mbox{, for }f,g \in \mathcal{S}_{+1}^G(\mathbf{\Sigma}).$$ 
 	\end{definition}
 The Poisson bracket is well-defined: if $\mathcal{B}$ and $\mathcal{B}'$ are two canonical basis, the algebra isomorphism $(\phi^{\mathcal{B}'})^{-1}\circ \phi^{\mathcal{B}} : (\mathcal{S}_{+1}^G(\mathbf{\Sigma})[[\hbar]], \star_{\mathcal{B}}) \xrightarrow{\cong} (\mathcal{S}_{+1}^G(\mathbf{\Sigma})[[\hbar]], \star_{\mathcal{B}'})$ is a \textit{gauge equivalence}, and it follows from classical properties of gauge equivalences (see \textit{e.g.}\cite{KontsevichQuantizationPoisson}, \cite[II.2]{GRS_QuantizationDeformation}) that $\{\cdot, \cdot\}$ does not depend on the choice of the canonical basis. 
 \begin{remark}
 The algebra $(\mathcal{S}_{+1}^G(\mathbf{\Sigma})[[\hbar]], \star_{\mathcal{B}})$ is a \textit{deformation quantization} of the commutative Poisson algebra $(\mathcal{S}_{+1}^G(\mathbf{\Sigma}), \{\cdot, \cdot \})$. 
 \end{remark}

 Given an involutive punctured surface $(\mathbf{\Sigma}, \sigma)$, we equip the algebra $\mathcal{S}_{+1}^{\mathbb{C}^*, \sigma}(\mathbf{\Sigma})$ with a Poisson bracket in the same manner.  
 
 Recall from  Corollary \ref{cor: skein equ iso to reg. func. of char.} that one has an isomorphism of algebras 
 $h^{\sigma}: \mathbb{C}[\mathcal{X}_{\mathbb{C}^*}^{\sigma}(\mathbf{\hat{\Sigma}})] \xrightarrow{\cong}  \mathcal{S}_{+1}^{\mathbb{C}^*, \sigma}(\mathbf{\hat{\Sigma}})$ which sends any element of $\mathrm{H}_1^{\sigma}(\Sigma_{\mathcal{P}}, \partial \Sigma_{\mathcal{P}}; \mathbb{Z})$ to the class of a Weyl-ordered reduced diagram that represents it. 
  \begin{lemma}\label{lemma_poisson_abelian}
 	The isomorphism $h^{\sigma}$ is Poisson.
 \end{lemma}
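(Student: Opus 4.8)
The plan is to show that the Poisson bracket on $\mathcal{S}_{+1}^{\mathbb{C}^*,\sigma}(\hat{\mathbf{\Sigma}})$ agrees, under $h^\sigma$, with the one on $\mathbb{C}[\mathcal{X}_{\mathbb{C}^*}^{\sigma}(\hat{\mathbf{\Sigma}})]$ by computing both sides on the distinguished linear basis $\mathrm{H}_1^\sigma(\hat{\Sigma}_{\hat{\mathcal{P}}\cup\hat B},\partial\hat{\Sigma}_{\hat{\mathcal{P}}\cup\hat B};\mathbb{Z})$ and matching coefficients. Since a Poisson bracket on a commutative algebra is determined by its values on a set of algebra generators (and is bilinear), it suffices to check the identity
$$
\{\, h^\sigma([c_1]),\, h^\sigma([c_2])\,\} \;=\; h^\sigma\!\left(\{[c_1],[c_2]\}\right)
$$
for classes $[c_1],[c_2]\in \mathrm{H}_1^\sigma(\hat{\Sigma}_{\hat{\mathcal{P}}\cup\hat B},\partial\hat{\Sigma}_{\hat{\mathcal{P}}\cup\hat B};\mathbb{Z})$.

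\textbf{First I would} unwind the definition of the Poisson bracket on the skein side. Pick the canonical basis $\mathcal{B} = \mathrm{H}_1^\sigma(\hat{\Sigma}_{\hat{\mathcal{P}}\cup\hat B},\partial\hat{\Sigma}_{\hat{\mathcal{P}}\cup\hat B};\mathbb{Z})$ of Weyl-ordered reduced diagrams, which by Remark \ref{rk: h equiv} is exactly the $h^\sigma$-image of the homology basis. The isomorphism $\phi^{\mathcal{B}}\colon \mathcal{S}_{+1}^{\mathbb{C}^*,\sigma}(\hat{\mathbf{\Sigma}})[[\hbar]] \xrightarrow{\cong} \mathcal{S}_{\omega_\hbar}^{\mathbb{C}^*,\sigma}(\hat{\mathbf{\Sigma}})$ identifies the basis element $[c]$ with the Weyl-ordered class $[c]$ at parameter $\omega_\hbar = \exp(-\hbar/4)$. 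By Proposition \ref{skeinalgstructure} (and its equivariant refinement in Remark \ref{rk: h equiv}), in $\mathcal{S}_{\omega_\hbar}^{\mathbb{C}^*,\sigma}(\hat{\mathbf{\Sigma}})$ one has the exact product formula
$$
[c_1]\cdot[c_2] \;=\; \omega_\hbar^{([c_1],[c_2])}\,[c_1+c_2],
\qquad
[c_2]\cdot[c_1] \;=\; \omega_\hbar^{([c_2],[c_1])}\,[c_1+c_2] \;=\; \omega_\hbar^{-([c_1],[c_2])}\,[c_1+c_2],
$$
using skew-symmetry of the relative intersection form. Hence the $\star_{\mathcal{B}}$-commutator pulls back to
$$
[c_1]\star_{\mathcal{B}}[c_2] - [c_2]\star_{\mathcal{B}}[c_1]
\;=\; \left(\omega_\hbar^{([c_1],[c_2])} - \omega_\hbar^{-([c_1],[c_2])}\right)[c_1+c_2].
$$
Since $\omega_\hbar = \exp(-\hbar/4)$, we have $\omega_\hbar^{t} = 1 - \tfrac{\hbar}{4}t + O(\hbar^2)$, so
$$
\omega_\hbar^{([c_1],[c_2])} - \omega_\hbar^{-([c_1],[c_2])}
\;=\; -\tfrac{\hbar}{2}\,([c_1],[c_2]) + O(\hbar^2),
$$
and reading off the coefficient of $\hbar$ gives
$$
\{[c_1],[c_2]\}_{\mathrm{skein}} \;=\; -\tfrac12\,([c_1],[c_2])\,[c_1+c_2].
$$
(Note the relative intersection form takes values in $\tfrac12\mathbb{Z}$, but the combination $([c_1],[c_2])[c_1+c_2]$ lands in $\mathcal{S}_{+1}^{\mathbb{C}^*,\sigma}$, consistent with the same phenomenon on the character-variety side.)

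\textbf{Then I would} observe that this is, term for term, the formula defining the Poisson bracket on $\mathbb{C}[\mathcal{X}_{\mathbb{C}^*}^{\sigma}(\hat{\mathbf{\Sigma}})] = \mathbb{C}[\mathrm{H}_1^\sigma(\hat{\Sigma}_{\hat{\mathcal{P}}\cup\hat B},\partial\hat{\Sigma}_{\hat{\mathcal{P}}\cup\hat B};\mathbb{Z})]$, namely $\{[c_1],[c_2]\} = -\tfrac12([c_1],[c_2])[c_1+c_2]$, restricted from $\mathbb{C}[\mathcal{X}_{\mathbb{C}^*}(\hat{\mathbf{\Sigma}})]$ to the $\sigma$-anti-invariant part. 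Since $h^\sigma$ sends $[c]\mapsto [c]$ (Weyl-ordered reduced diagram) and is multiplicative (Corollary \ref{cor: skein equ iso to reg. func. of char.}, Remark \ref{rk: h equiv}), it carries $\{\cdot,\cdot\}$ on the character-variety side to $\{\cdot,\cdot\}_{\mathrm{skein}}$. Because both brackets are $\mathbb{C}$-bilinear derivations in each argument and agree on the generating set $\mathrm{H}_1^\sigma$, the Leibniz rule propagates the agreement to all of the algebra, proving that $h^\sigma$ is Poisson.

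\textbf{The one point requiring care} — and the only place where anything could go wrong — is the bookkeeping of the half-integer intersection numbers and the factor of $-\tfrac12$ in the exponent of $\omega_\hbar$: one must be sure that the convention $\omega^{([\alpha],[\beta])}$ in \eqref{eq: prod relat hom} used to prove Proposition \ref{skeinalgstructure} is the same convention underlying the bracket $\{[c_1],[c_2]\} := -\tfrac12([c_1],[c_2])[c_1+c_2]$ in the definition of $\mathcal{X}_{\mathbb{C}^*}(\mathbf{\Sigma})$, including the choice $\omega_\hbar = \exp(-\hbar/4)$ rather than, say, $\exp(\hbar/4)$ or $\exp(-\hbar/2)$. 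I expect this to come out consistently by the design of the normalizations, but it is the step I would double-check explicitly; everything else is a routine first-order expansion and an appeal to bilinearity.
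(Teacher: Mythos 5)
Your proposal is correct and follows essentially the same route as the paper: the paper's proof likewise invokes Proposition \ref{skeinalgstructure} to get $[\alpha]\star[\beta]=\omega_{\hbar}^{([\alpha],[\beta])}[\alpha+\beta]$ on basis elements, expands the commutator to first order in $\hbar$ to obtain $-\tfrac12([\alpha],[\beta])[\alpha+\beta]$, and matches this with the defining bracket on $\mathbb{C}[\mathcal{X}_{\mathbb{C}^*}^{\sigma}(\hat{\mathbf{\Sigma}})]$. The sign and normalization check you flag does come out consistently with $\omega_{\hbar}=\exp(-\hbar/4)$, exactly as you computed.
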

 \begin{proof} By Proposition \ref{skeinalgstructure}, given $[\alpha], [\beta] \in \mathrm{H}_1^{\sigma}(\Sigma_{\mathcal{P}}, \partial \Sigma_{\mathcal{P}}; \mathbb{Z})$, we have the equality $[\alpha]\star [\beta]= \omega_{\hbar}^{([\alpha], [\beta])} [\alpha + \beta]$. Hence one has:
 	\begin{eqnarray*}
 		[\alpha] \star [\beta] - [\beta] \star [\alpha] &=& (\omega_{\hbar}^{([\alpha], [\beta])} - \omega_{\hbar}^{([\beta], [\alpha])}) [\alpha +\beta] \\
 		& \equiv & -\frac{1}{2}([\alpha], [\beta]) [\alpha + \beta] \hbar \pmod{\hbar^2} 
 	\end{eqnarray*}
 	\par We deduce that $\{ [\alpha], [\beta] \} = -\frac{1}{2} ([\alpha], [\beta]) [\alpha+\beta]$, hence $h^{\sigma}$ is a Poisson \\ morphism.

 \end{proof}

 	We are now ready to define the \emph{quantum non-abelianization map}. For a leaf labeling $\ell$, we set   
 	\begin{equation}
 	 \nab_{\omega,\ell} : \mathcal{S}_{\omega}^{\SL_2}(\mathbf{\Sigma}) \xrightarrow{\tr_{\omega}^{\Delta}} \mathcal{Z}_{\omega}(\mathbf{\Sigma}, \Delta) \xrightarrow[\cong]{\Phi_{\ell}} \mathcal{S}_{\omega}^{\mathbb{C}^*, \sigma}(\mathbf{\hat{\Sigma}}),
 	\end{equation}
	where $\Phi_{\ell}$ is the isomorphism of Theorem \ref{th: iso Pso CF Sk}. 
 	Note that at $\omega=+1$, the morphism $\nab_{+1,\ell}$ is obtained from the algebra morphism $\nab_{\omega_{\hbar},\ell}$ by reduction modulo $\hbar$. Therefore  $\nab_{+1,\ell} $ is a Poisson morphism.

 \subsubsection{Character varieties}
Given $\mathbf{\Sigma}$ a closed connected punctured surface, the $\SL_2(\mathbb{C})$- character variety is defined as the GIT quotient 
 $$\mathcal{X}_{\SL_2}(\mathbf{\Sigma}):= \Hom(\pi_1(\Sigma_{\mathcal{P}}, v), \SL_2(\mathbb{C}) ) \sslash \SL_2(\mathbb{C})$$
 where $v\in \Sigma_{\mathcal{P}}$ is an arbitrary point. Given $\gamma \subset \Sigma_{\mathcal{P}}$ a loop represented by an element $[\gamma] \in \pi_1(\Sigma_{\mathcal{P}}, v)$, we define a regular function $\tau_{\gamma} \in \mathbb{C}[\mathcal{X}_{\SL_2}(\mathbf{\Sigma})]$ by sending the class $[\rho]$ of a representation $\rho : \pi_1(\Sigma_{\mathcal{P}}, v) \rightarrow \SL_2(\mathbb{C})$ to the complex $\tau_{\gamma}([\rho]):= \tr (\rho([\gamma]))$. It is proved in \cite{PS00} that the  regular functions $\tau_{\gamma}$ generate the algebra $\mathbb{C}[\mathcal{X}_{\SL_2}(\mathbf{\Sigma})]$. In  \cite{Goldman86}, Goldman defined a Poisson bracket on $\mathbb{C}[\mathcal{X}_{\SL_2}(\mathbf{\Sigma})]$. There exists an isomorphism of  Poisson algebras $\Psi': \mathcal{S}_{-1}^{\SL_2}(\mathbf{\Sigma}) \xrightarrow{\cong} \mathbb{C}[\mathcal{X}_{\SL_2}(\mathbf{\Sigma})]$, from the skein algebra with $A=-1$ (recall $A=\omega^{-2}$), characterized by the formula $\Psi'([\gamma]):= -\tau_{\gamma}$. It was proved by Bullock that $\Psi'$ is an isomorphism of algebras if $\mathcal{S}_{-1}^{\SL_2}(\mathbf{\Sigma}) $ is reduced. The latter fact was proved independently in \cite{PS00} and \cite{ChaMa}. The fact that $\Psi'$ is Poisson was proved by Turaev in \cite{Turaev91}. Consider a spin structure $S$ on $\Sigma_{\mathcal{P}}$ represented by quadratic form $w_S: \mathrm{H}_1(\Sigma_{\mathcal{P}}, \mathbb{Z}/2\mathbb{Z})\rightarrow \mathbb{Z}/2\mathbb{Z}$ (see \cite{Johnson_SpinStructures} for details on quadratic forms and spin structures). It follows from the main theorem of  \cite{Barett} that there is an algebra isomorphism $\mathcal{S}_{+1}^{\SL_2}(\mathbf{\Sigma}) \xrightarrow{\cong} \mathcal{S}_{-1}^{\SL_2}(\mathbf{\Sigma})$ sending the class of a loop $[\gamma]$ to $(-1)^{w_S([\gamma])} [\gamma]$. By composition, we obtain an isomorphism of Poisson algebras $\Psi^S : \mathcal{S}_{+1}^{\SL_2}(\mathbf{\Sigma}) \xrightarrow{\cong} \mathbb{C}[\mathcal{X}_{\SL_2}(\mathbf{\Sigma})]$ characterized by $\Psi^S ([\gamma]) := (-1)^{1+w_S([\gamma])} \tau_{\gamma}$.
 
 \vspace{2mm}
 \par When $\mathbf{\Sigma}$ has non-trivial boundary, the first author defined in \cite{KojuTriangularCharVar} an affine Poisson variety $\mathcal{X}_{\SL_2}(\mathbf{\Sigma})$ which generalizes the previous character variety. The Poisson structure however is not canonical and depends on the choice $\mathfrak{o}$ of an orientation of the boundary arcs of $\mathbf{\Sigma}$. In \cite{KojuQuesneyClassicalShadows}, the authors defined a family of Poisson isomorphisms $\Psi^{(\mathfrak{o}, S)} : \mathcal{S}_{+1}^{\SL_2}(\mathbf{\Sigma}) \xrightarrow{\cong} \mathbb{C}[\mathcal{X}_{\SL_2}(\mathbf{\Sigma})]$ which depend on $\mathfrak{o}$ and on the choice of a relative spin structure $S$. Note that for open punctured surfaces, the stated skein algebras at $A= -1$ are no longer commutative, hence we need to work with $A=+1$.

 \subsection{Algebraic non-abelianization maps}

 	\begin{definition}
 		Let $\mathbf{\Sigma}$ be a punctured surface. For a topological triangulation $\Delta$ of $\mathbf{\Sigma}$, an orientation $\mathfrak{o}_{\Delta}$ of its edges, and a leaf labelling $\ell$,  
 		the \textit{non-abelianization map} is the morphism of Poisson varieties 
 		$\mathcal{NA}: \mathcal{X}_{\mathbb{C}^*}^{\sigma}(\mathbf{\hat{\Sigma}}) \rightarrow \mathcal{X}_{\SL_2}(\mathbf{\Sigma})$ that is associated to the Poisson morphism:
 		\begin{equation*}
 		\mathcal{NA}^* : \mathcal{O}[\mathcal{X}_{\SL_2}(\mathbf{\Sigma})] \xrightarrow[\cong]{\Psi^{(\mathfrak{o}, S)}}^{-1} \mathcal{S}_{+1}^{\SL_2}(\mathbf{\Sigma}) \xrightarrow{\nab_{+1,\ell}}  \mathcal{S}_{+1}^{\mathbb{C}^*, \sigma}(\mathbf{\hat{\Sigma}}) \xrightarrow[\cong]{(h^{\sigma})^{-1}} \mathcal{O}[\mathcal{X}_{\mathbb{C}^*}^{\sigma}(\mathbf{\hat{\Sigma}})]. 
 		\end{equation*}
 	\end{definition}
 In view of Theorem \ref{theorem3}, we restrict our study of $\mathcal{NA}$ for closed punctured surfaces. 
 Note that, in this case, $\mathcal{NA}$ depends on a choice of a leaf labelling and on a spin structure $S$. 
 For simplicity, we assume that $S$ is a spin structure on $\Sigma_{\mathcal{P}}$ whose associated quadratic form $w_S$ satisfies $w_S([\gamma_p])=1$ for each peripheral curve $\gamma_p$ encircling a puncture $p\in \mathcal{P}$ (such a $w_S$ exists since $[\gamma_p]$ is in the kernel of the intersection form).
Since the quantum trace map is injective when $\Sigma$ is closed, and since both $\mathcal{X}_{\mathbb{C}^*}^{\sigma}(\mathbf{\hat{\Sigma}})$ and $\mathcal{X}_{\SL_2}(\mathbf{\Sigma})$ are irreducible, the map $\mathcal{NA}$ is dominant and its image is 
 an open Zariski subset in this case.

We now describe $\nab_{+1,\ell}$. Recall that it is composed of the quantum trace map $\Tr_{+1}$ and the isomorphism $\Phi_{\ell}$. We use the same notations than Section \ref{sec: qtrace}; for a curve $\gamma$, 
recall that  
	\begin{equation*}
\tr_{+1}^{\Delta} (\gamma) = \sum_{s\in \St^a(\gamma)} \prod_{e\in \mathcal{E}(\Delta)}Z_{e}^{k_s(e)}.
\end{equation*} 

Let $K(\gamma) \subset K_{\Delta}$ be the set of those balanced monomials that appear in the above expression. 
Denote by $\mathrm{Lt}^a(\gamma)$ the image of $K(\gamma)$ by the bijection $K_{\Delta}\cong  \mathrm{H}_1^{\sigma}(\hat{\Sigma}_{\hat{\mathcal{P}}\cup \hat{B}}; \mathbb{Z})$. 
In each hexagon, the elements of $\mathrm{Lt}^a(\gamma)$ are as follows. 
  For $\T$ a triangle, let $b$ be its branch point. 
  Suppose $\alpha$ is an arc of $\gamma\cap \T$ and $\varepsilon, \varepsilon'\in \{-,+\}$ are two states at its endpoints. 
  Geometrically, the bijection  $K_{\Delta} \cong \mathcal{W}(\tau_{\Delta}, \mathbb{Z})$   determines how to write $(\alpha,\varepsilon,\varepsilon')$ in the train track $\tau_{\Delta}$:  the states $(\varepsilon,\varepsilon')$ set the position of $\alpha$ relatively to the branch point $b$ together with an orientation; see Figure \ref{figliftarc}. 
  Finally, the last bijections $\mathcal{W}(\tau_{\Delta}, \mathbb{Z})\cong \mathrm{H}_1^{\sigma}(\hat{\Sigma}_{\hat{\mathcal{P}}\cup \hat{B}}; \mathbb{Z})$  correspond to taking the lift in $\Hex$ of $\alpha$ (which is placed relatively to $b$ according to $(\varepsilon,\varepsilon')$), together with an orientation (that depends on the leaf labelling) that makes it $\sigma$ anti-invariant.    
 
  \begin{figure}[!h] 
\centerline{\includegraphics[width=12cm]{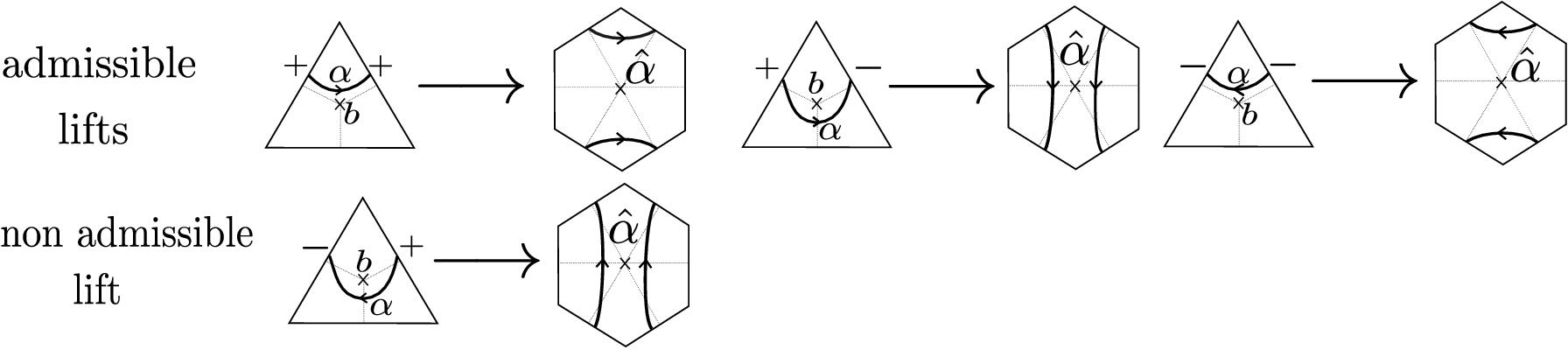} }
\caption{An arc in the triangle can be placed in two different manners relatively to the branched point and each has two different orientations. We draw the four possible arcs and their lifts. A lift is non admissible if it contains the pair of arcs on the bottom of the figure. } 
\label{figliftarc} 
\end{figure} 
 
 Given $\hat{\gamma} \in \mathrm{H}_1(\hat{\Sigma}_{\hat{\mathcal{P}} \cup \hat{B}} ; \mathbb{Z})$, denote by $f_{\hat{\gamma}}$ the associated regular function in $\mathcal{O}[\mathcal{X}_{\mathbb{C}^*}^{\sigma}(\hat{\mathbf{\Sigma}})]$. 
\begin{lemma}\label{lem:descr na}
	For each curve $\gamma$ in $\Sigma_{\mathcal{P}}$, one has 
	\begin{equation*}
 \mathcal{NA}^* (\tau_{\gamma}) = (-1)^{1+w_S([\gamma])} \sum_{[\hat{\gamma}] \in \mathrm{Lt}^a(\gamma)} f_{[\hat{\gamma}]}.	
	\end{equation*}  
For a representation $\rho$, if there exists  $\rho^{ab}: \mathrm{H}_1(\hat{\Sigma}_{\hat{\mathcal{P}} \cup \hat{B}} ; \mathbb{Z}) \rightarrow \mathbb{C}^*$ such that $[\rho]=\mathcal{NA}([\rho^{ab}])$,  then
\begin{equation*}
\tr (\rho( [\gamma])) = (-1)^{1+w_S([\gamma])} \sum _{[\hat{\gamma}] \in \mathrm{Lt}^a(\gamma)} \rho^{ab} ([\hat{\gamma}]).
\end{equation*} 
\end{lemma}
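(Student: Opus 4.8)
The plan is to trace the definition of $\mathcal{NA}^*$ step by step on the generator $\tau_\gamma$ and unwind each arrow in the composition. Recall that $\mathcal{NA}^*$ is, by definition, the composite
\[
\mathbb{C}[\mathcal{X}_{\SL_2}(\mathbf{\Sigma})] \xrightarrow[\cong]{\Psi^{(\Delta,\mathfrak{o}_{\Delta})}} \mathcal{S}_{+1}^{\SL_2}(\mathbf{\Sigma}) \xrightarrow{\nab_{+1,\ell}}  \mathcal{S}_{+1}^{\mathbb{C}^*, \sigma}(\mathbf{\hat{\Sigma}}) \xrightarrow[\cong]{(h^{\sigma})^{-1}} \mathbb{C}[\mathcal{X}_{\mathbb{C}^*}^{\sigma}(\mathbf{\hat{\Sigma}})],
\]
and that $\nab_{+1,\ell} = \Phi_\ell \circ \tr_{+1}^\Delta$. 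First I would apply $\Psi^{(\Delta,\mathfrak{o}_{\Delta})}$ (or, since $\Sigma$ is closed and $S$ extends to $\Sigma$, the isomorphism $\Psi^S$): it sends $\tau_\gamma$ to $(-1)^{w_S([\gamma])}[\gamma]$, where $[\gamma]$ is the class of the unoriented curve $\gamma$ in $\mathcal{S}_{+1}^{\SL_2}(\mathbf{\Sigma})$. The sign $(-1)^{w_S([\gamma])}$ is then just carried along untouched through the rest of the composition, since all remaining maps are $\mathbb{C}$-linear; this accounts for the prefactor in the claimed formula.

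Next I would apply $\tr_{+1}^\Delta$ to $[\gamma]$. By the formula recalled in Section \ref{sec: qtrace}, $\tr_{+1}^\Delta(\gamma) = \sum_{s\in \St^a(\gamma)} [\prod_{e} Z_e^{k_s(e)}]$, a sum of balanced monomials; by definition $K(\gamma)\subset K_\Delta$ is precisely the set of balanced monomials appearing here. Then I would apply $\Phi_\ell$, which is the linear extension of the bijection $\varphi : K_\Delta \xrightarrow{\cong} \mathrm{H}_1^\sigma(\hat{\Sigma}_{\hat{\mathcal{P}}\cup\hat{B}}, \partial\hat{\Sigma}_{\hat{\mathcal{P}}\cup\hat{B}}; \mathbb{Z})$, so that $\Phi_\ell(\tr_{+1}^\Delta(\gamma)) = \sum_{[\hat\gamma]\in \mathrm{Lt}^a(\gamma)} [\hat\gamma]$, where by definition $\mathrm{Lt}^a(\gamma) = \varphi(K(\gamma))$. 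Finally $(h^\sigma)^{-1}$ sends each class $[\hat\gamma]\in \mathrm{H}_1^\sigma(\dots)$ to the corresponding regular function $f_{[\hat\gamma]} \in \mathbb{C}[\mathcal{X}_{\mathbb{C}^*}^\sigma(\hat{\mathbf{\Sigma}})]$, by the defining property of $h^\sigma$ recalled just before Lemma \ref{lemma_poisson_abelian}. Composing these gives exactly $\mathcal{NA}^*(\tau_\gamma) = (-1)^{w_S([\gamma])}\sum_{[\hat\gamma]\in \mathrm{Lt}^a(\gamma)} f_{[\hat\gamma]}$, which is the first assertion. Along the way I would also insert the local description of $\mathrm{Lt}^a(\gamma)$ in each hexagon (via the identifications $K_\Delta \cong \mathcal{W}(\tau_\Delta,\mathbb{Z}) \cong \mathrm{H}_1^\sigma(\dots)$ spelled out before Theorem \ref{th: iso Pso CF Sk}, and illustrated in Figure \ref{figliftarc}), but this is only needed to make the statement concrete, not for the identity itself.

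For the second assertion, suppose $\rho^{ab}:\mathrm{H}_1(\hat{\Sigma}_{\hat{\mathcal{P}}\cup\hat{B}};\mathbb{Z})\to\mathbb{C}^*$ is such that $[\rho] = \mathcal{NA}([\rho^{ab}])$. Unwinding the correspondence between points of an affine variety and characters of its coordinate ring, the point $[\rho^{ab}]\in\mathcal{X}_{\mathbb{C}^*}^\sigma(\hat{\mathbf{\Sigma}})$ is the character of $\mathbb{C}[\mathcal{X}_{\mathbb{C}^*}^\sigma(\hat{\mathbf{\Sigma}})]$ that evaluates $f_{[\hat\gamma]}$ at $\rho^{ab}([\hat\gamma])$, and the condition $[\rho] = \mathcal{NA}([\rho^{ab}])$ means precisely that, for every regular function $g\in\mathbb{C}[\mathcal{X}_{\SL_2}(\mathbf{\Sigma})]$, one has $g([\rho]) = (\mathcal{NA}^*g)([\rho^{ab}])$. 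Applying this to $g = \tau_\gamma$ and using $\tau_\gamma([\rho]) = \tr(\rho([\gamma]))$ together with the first assertion and the fact that evaluation at $[\rho^{ab}]$ is an algebra homomorphism, I get
\[
\tr(\rho([\gamma])) = (-1)^{w_S([\gamma])}\sum_{[\hat\gamma]\in\mathrm{Lt}^a(\gamma)} \rho^{ab}([\hat\gamma]),
\]
as desired. The main (minor) obstacle is bookkeeping: I must be careful that the class $[\hat\gamma]$ lives a priori in $\mathrm{H}_1^\sigma$ of the \emph{relative} homology, so that one should say explicitly how $\rho^{ab}$, defined on $\mathrm{H}_1$ (or, in the closed case, on $\mathrm{H}_1(\hat{\Sigma}_{\hat{\mathcal{P}}\cup\hat{B}};\mathbb{Z})$ with no relative part), evaluates on it — but for closed $\mathbf{\Sigma}$ the boundary is empty and the two coincide, so no difficulty arises. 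The only genuinely content-bearing inputs are the explicit formula for $\tr^\Delta_\omega$ on curves, the definition of $\Phi_\ell$ via $\varphi$, and the characterization of $h^\sigma$ as sending homology classes to their evaluation functions; everything else is formal.
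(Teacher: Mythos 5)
Your proposal is correct and matches the paper's intent exactly: the paper's own proof of this lemma is literally ``This is an immediate consequence of the definitions,'' and your argument is just that unwinding carried out explicitly (the sign from $\Psi^S$, the quantum trace formula, $\Phi_\ell$ via $\varphi$, the characterization of $h^\sigma$, and the point--character duality for the second assertion). Nothing is missing.
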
 
\begin{proof} This is an immediate consequence of the definitions. \end{proof}

 \subsection{Colored non-abelianization} 

From now on, $\mathbf{\Sigma}=(\Sigma,\mathcal{P})$ is a closed connected punctured surface with at least one puncture (\ie $\mathcal{P}\neq \emptyset$). 

We define the relative version of character varieties of  $\mathbf{\Sigma}$ associated to a coloring $c\co\mathcal{P} \to \C$. 
We show that the non-abelianization map induces a birational map on these varieties whenever the coloring is \emph{generic}.  
\begin{definition}
	A map $c\co \mathcal{P}\rightarrow \mathbb{C}$ is called \textit{generic} if $c(p)\neq \pm 2$ for all $p\in \mathcal{P}$. 	
\end{definition} 
For each puncture $p\in \mathcal{P}$ denote by $\gamma_p \subset \Sigma_{\mathcal{P}}$ a peripheral curve around $p$, that is, a simple closed curve that bounds a disc in $\Sigma$ whose intersection with $\mathcal{P}$ is the singleton $\{p\}$. 
Fix a point $v\in \Sigma_{\mathcal{P}}$ and, for a map  $c: \mathcal{P}\rightarrow \mathbb{C}$, let 
\begin{align*}
\mathcal{R}_{\SL_2}(\mathbf{\Sigma}, c) := \{ \rho \in \Hom(\pi_1(\Sigma_{\mathcal{P}}, v), \SL_2(\mathbb{C})) | \tr (\rho(\gamma_p))=c(p)\mbox{ for all }p\in \mathcal{P}\} \\
\mathcal{R}_{\PSL_2}(\mathbf{\Sigma}, c) := \{ \rho \in \Hom(\pi_1(\Sigma_{\mathcal{P}}, v), \PSL_2(\mathbb{C})) | \left(\tr (\rho(\gamma_p))\right)^2=c(p)^2\mbox{ for all }p\in \mathcal{P}\}.
\end{align*}
For $G$ in $\{\SL_2(\mathbb{C}), \PSL_2(\mathbb{C})\}$, the variety $\mathcal{R}_{G}(\mathbf{\Sigma}, c)$ is acted on algebraically by $G$ by  conjugation.  Note that this action fails to be proper. 
\begin{definition}
	The spaces   
	\begin{equation*} 
	\mathcal{X}_{G}(\mathbf{\Sigma}, c):= {\mathcal{R}_{G}(\mathbf{\Sigma}, c)} \sslash G \quad
	\text{ and } \quad  \mathcal{M}_{G}(\mathbf{\Sigma}, c):= \mathcal{R}_{G}(\mathbf{\Sigma}, c)/ G
	\end{equation*}
	are called the \emph{relative $G$ character variety} and the \emph{relative $G$ moduli space} respectively. 
\end{definition} The former is a GIT quotient and is an affine (singular) sub-variety of $\mathcal{X}_G(\mathbf{\Sigma})$ while the latter is a classical quotient in canonical bijection with the set of isomorphic classes of flat $G$-structures on the surface through the holonomy map (Riemann-Hilbert correspondance).  
There is a surjective map: 
\begin{equation*}
 p : \mathcal{M}_G(\mathbf{\Sigma}, c) \twoheadrightarrow \mathcal{X}_G(\mathbf{\Sigma}, c).
\end{equation*}
The Zariski open subset $ \mathcal{X}^0_G(\mathbf{\Sigma}, c)\subset  \mathcal{X}_G(\mathbf{\Sigma}, c)$ of smooth points corresponds to the classes of irreducible representations in $\Hom(\pi_1(\Sigma_{\mathcal{P}}, v), G)$ (see \textit{e.g.} \cite[Section $3.2$]{MarcheCours09}). 
Writing $\mathcal{M}^0_G(\mathbf{\Sigma}, c):= p^{-1}( \mathcal{X}^0_G(\mathbf{\Sigma}, c))$, the restriction $p:\mathcal{M}^0_G(\mathbf{\Sigma}, c) \rightarrow   \mathcal{X}^0_G(\mathbf{\Sigma}, c)$ is a bijection.

Let $\Delta$ be a topological triangulation of $\mathbf{\Sigma}$ and fix a leaf labelling of $\hat{\mathbf{\Sigma}}$. 

\begin{notation}\label{not:lifts}
	For $p\in \mathcal{P}$, let $\hat{p}_1$ and $\hat{p}_2$ its two lifts, with the convention that $\hat{p}_i$ belongs to the leaf labeled by $i$. \\
	For $c: \mathcal{P}\rightarrow \mathbb{C}$, we let $\hat{c} : \hat{\mathcal{P}} \rightarrow \mathbb{C}^*$ be such that  $c(p)= \hat{c}(\hat{p}_1) + \hat{c}(\hat{p}_2)$ for any $p\in \mathcal{P}$; we call it a \emph{lift} of $c$.  \\
	For $p\in \mathcal{P}$, denote by $\hat{\gamma}_{p_1}, \hat{\gamma}_{p_2}\subset \hat{\Sigma}_{\hat{\mathcal{P}}\cup \hat{B}}$ the two peripheral curves around $\hat{p}_1$ and $\hat{p}_2$ such that $\hat{\gamma}_{p_1}$ is oriented in the counter-clockwise direction whereas $\hat{\gamma}_{p_2}$ is oriented in the clockwise direction. 
\end{notation}
\begin{definition}
The \emph{equivariant relative $\C^*$ character variety} is: 
\begin{equation*}
\mathcal{X}_{\mathbb{C}^*}^{\sigma}(\hat{\mathbf{\Sigma}}, \hat{c}) :=\{ \rho^{ab} \in \mathrm{H}^1(\hat{\Sigma}_{\hat{\mathcal{P}}\cup \hat{B}}, \mathbb{C}^*) | \rho^{ab}([\hat{\gamma}_{p_i}])= \hat{c}(\hat{p}_i)\mbox{, for all } \hat{p_i}\in \hat{\mathcal{P}} \}.
\end{equation*}	
\end{definition}
Note that $\mathcal{O}[\mathcal{X}_{\mathbb{C}^*}^{\sigma}(\hat{\mathbf{\Sigma}}, \hat{c})]$ is the quotient of the algebra $\mathcal{O}[\mathcal{X}_{\mathbb{C}^*}^{\sigma}(\hat{\mathbf{\Sigma}})]$ by the ideal $\mathcal{I}_2$ generated by the elements $[\hat{\gamma}_{p_1} + \hat{\gamma}_{p_2}] - (\hat{c}(\hat{p}_1)+\hat{c}(\hat{p}_2))$ 
for $p\in \mathcal{P}$. 
On the other hand, $\mathcal{O}[\mathcal{X}_{\SL_2}(\mathbf{\Sigma}, c)]$ is the quotient of the algebra $\mathcal{O}[\mathcal{X}_{\SL_2}(\mathbf{\Sigma})]$ by the ideal $\mathcal{I}_1$ generated by the elements $\tau_{\gamma_p} - c(p)$ for $p\in\mathcal{P}$.  
\begin{lemma}
	One has $\mathcal{NA}^*(\mathcal{I}_1)= \mathcal{I}_2$. 
\end{lemma}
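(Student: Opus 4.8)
The plan is to show the two ideals have the same image under $\mathcal{NA}^*$ by computing $\mathcal{NA}^*(\tau_{\gamma_p})$ explicitly for each peripheral curve $\gamma_p$ and checking that the generators of $\mathcal{I}_1$ map precisely to the generators of $\mathcal{I}_2$. The key input is Lemma \ref{lem:descr na}, which says $\mathcal{NA}^*(\tau_{\gamma_p}) = (-1)^{w_S([\gamma_p])} \sum_{[\hat{\gamma}] \in \mathrm{Lt}^a(\gamma_p)} f_{[\hat{\gamma}]}$, together with the fact that we assumed $S$ extends over $\Sigma$, so that $w_S([\gamma_p]) = 0$ for a peripheral curve (a contractible-in-$\Sigma$ loop), killing the sign.

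First I would analyze the combinatorics of a peripheral curve. Place $\gamma_p$ in minimal position: it crosses only the edges incident to $p$, and in each triangle touching $p$ it runs as a single arc cutting off the corner at $p$. For such a corner arc with endpoint states $(\varepsilon, \varepsilon')$, the admissibility condition $\St^a$ together with the corner structure forces exactly two states to survive in $\St^a(\gamma_p)$: the all-$+$ state and the all-$-$ state (any mixed state would create a forbidden $(-,+)$ transition going around $p$). I would verify this carefully — it is the heart of the matter — using the "$x \to y$" relation as one traverses the arcs around $p$ in cyclic order. Consequently $K(\gamma_p)$ has exactly two elements, hence $\mathrm{Lt}^a(\gamma_p)$ has exactly two elements.

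Next I would identify these two homology classes geometrically using the description of the bijections in Figure \ref{figliftarc}: the all-$+$ state lifts each corner arc on one side of the branch point with one orientation, yielding a loop around one lift $\hat{p}_i$ of $p$; the all-$-$ state yields the loop around the other lift, with the opposite orientation relative to the train-track orientation. By the orientation conventions in Notation \ref{not:lifts} these are exactly $[\hat{\gamma}_{p_1}]$ and $[\hat{\gamma}_{p_2}]$. Therefore $\mathcal{NA}^*(\tau_{\gamma_p}) = f_{[\hat{\gamma}_{p_1}]} + f_{[\hat{\gamma}_{p_2}]} = [\hat{\gamma}_{p_1} + \hat{\gamma}_{p_2}]$ in $\mathbb{C}[\mathcal{X}_{\mathbb{C}^*}^{\sigma}(\hat{\mathbf{\Sigma}})]$. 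Since $\mathcal{NA}^*$ is an algebra isomorphism onto its image — actually $\Psi^{(\Delta,\mathfrak{o}_\Delta)}$ is an isomorphism, $\Phi_\ell$ is an isomorphism, and $h^\sigma$ is an isomorphism, while $\tr_{+1}$ is injective for $\Sigma$ closed with image a subalgebra — the generator $\tau_{\gamma_p} - c(p)$ of $\mathcal{I}_1$ maps to $[\hat{\gamma}_{p_1} + \hat{\gamma}_{p_2}] - c(p) = [\hat{\gamma}_{p_1} + \hat{\gamma}_{p_2}] - (\hat{c}(\hat{p}_1) + \hat{c}(\hat{p}_2))$, which is precisely the corresponding generator of $\mathcal{I}_2$. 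Since these generators generate the respective ideals and $\mathcal{NA}^*$ is a ring homomorphism, $\mathcal{NA}^*(\mathcal{I}_1) = \mathcal{I}_2$ (using that $\mathcal{NA}^*$ is surjective onto $\mathbb{C}[\mathcal{X}_{\mathbb{C}^*}^{\sigma}(\hat{\mathbf{\Sigma}})]$, or at least that its image contains enough to see the equality of ideals — more precisely $\mathcal{NA}^*(\mathcal{I}_1)$ is the ideal generated by the images of the generators of $\mathcal{I}_1$ inside the image ring, and one identifies this with $\mathcal{I}_2$). The main obstacle I anticipate is the careful bookkeeping in the second step: matching the orientation of the lifted loops (which depends on the leaf labeling and on which side of the branch point the arc passes) with the clockwise/counter-clockwise conventions fixed in Notation \ref{not:lifts}, so that one genuinely gets $[\hat{\gamma}_{p_1} + \hat{\gamma}_{p_2}]$ and not, say, $[\hat{\gamma}_{p_1} - \hat{\gamma}_{p_2}]$ or a Weyl-ordering correction factor; checking that no power of $\omega$ (hence, at $\omega = +1$, nothing) appears requires noting the two lifted loops are disjoint so their Weyl product is just their sum in homology.
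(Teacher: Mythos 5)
Your overall strategy coincides with the paper's: kill the sign using the assumption that the spin structure $S$ extends over $\Sigma$, compute the image of $\tau_{\gamma_p}$ through the chain of maps defining $\mathcal{NA}^*$, and match the generators of $\mathcal{I}_1$ with those of $\mathcal{I}_2$. Your enumeration of admissible states is correct and is a useful elaboration: going cyclically around $p$, the prohibition of $(-,+)$ transitions forces either the all-$+$ or the all-$-$ state, so $K(\gamma_p)=\{H_p,H_p^{-1}\}$ and $\mathrm{Lt}^a(\gamma_p)$ has exactly two elements. The paper's own proof is terser: it records that $\tr_{+1}^{\Delta}$ sends $[\gamma_p]$ to the central element $H_p=[\prod_e Z_e^{k_p(e)}]$ and that $\varphi(H_p)$ is, by construction, the class $\hat{\gamma}_p$ of the union of the two lifted loops.

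There is, however, a genuine gap at the step you yourself flagged as the heart of the matter. You identify the two elements of $\mathrm{Lt}^a(\gamma_p)$ with the two individual peripheral loops $[\hat{\gamma}_{p_1}]$ and $[\hat{\gamma}_{p_2}]$, one around each lift of $p$. This cannot be correct: $\mathrm{Lt}^a(\gamma_p)$ is by definition a subset of $\mathrm{H}_1^{\sigma}(\hat{\Sigma}_{\hat{\mathcal{P}}\cup\hat{B}},\partial\hat{\Sigma}_{\hat{\mathcal{P}}\cup\hat{B}};\mathbb{Z})$, whose elements are $\sigma$-anti-invariant, whereas $\sigma_*$ sends a peripheral loop around $\hat{p}_1$ to (minus) a peripheral loop around $\hat{p}_2$, which is a different homology class in general; so $[\hat{\gamma}_{p_1}]$ alone is not even an element of $\mathrm{H}_1^\sigma$. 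The third bijection in the definition of $\varphi$ assigns to a state of $\gamma_p$ the \emph{full} preimage of $\gamma_p$, i.e.\ one loop in each sheet with opposite orientations so as to be anti-invariant: the all-$+$ state gives $\varphi(H_p)=\hat{\gamma}_p=[\hat{\gamma}_{p_1}+\hat{\gamma}_{p_2}]$, a single class represented by a two-component multicurve, and the all-$-$ state gives its inverse $\varphi(H_p^{-1})=-\hat{\gamma}_p$. Consequently your closing identity $f_{[\hat{\gamma}_{p_1}]}+f_{[\hat{\gamma}_{p_2}]}=[\hat{\gamma}_{p_1}+\hat{\gamma}_{p_2}]$ is also not available: it equates a sum of two group-algebra basis elements (neither of which lies in $\mathbb{C}[\mathrm{H}_1^\sigma]$) with a single basis element of $\mathbb{C}[\mathrm{H}_1^\sigma]$. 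The paper's route avoids this by never decomposing $\hat{\gamma}_p$ into its two components: it matches the generator $\tau_{\gamma_p}-c(p)$ directly with the generator of $\mathcal{I}_2$ via the single central element $H_p$ and the identification $\varphi(H_p)=\hat{\gamma}_p$. As written, your argument does not produce the generator of $\mathcal{I}_2$.
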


\begin{proof}
	Recall that  $\Psi^S ([\gamma_p]) = (-1)^{1+w_S([\gamma_p])} \tau_{\gamma_p}$ and that  we have chosen a spin structure $S$ on $\Sigma_{\mathcal{P}}$ such that $w_S([\gamma_p])=1$.
	It follows from Equation \eqref{eq_QTrace} that the class $[\gamma_p] \in \mathcal{S}_{+1}^{\SL_2}(\mathbf{\Sigma})$ is sent by $\tr_{+1}^{\Delta}$ to the element $H_p+H_p^{-1}\in \mathcal{Z}_{+1}(\mathbf{\Sigma}, \Delta)$ defined in \eqref{eq: Hp}. In turn, $\Phi_{\ell}(H_p+H_p^{-1})\in \mathcal{S}_{+1}^{\mathbb{C}^*, \sigma}(\hat{\mathbf{\Sigma}})$ is sent by $(h^{\sigma})^{-1}$ to the class $[\hat{\gamma}_{\hat{p}_1} + \hat{\gamma}_{\hat{p}_2}] \in \mathrm{H}_1^{\sigma}(\hat{\Sigma}_{\hat{\mathcal{P}}\cup \hat{B}} ; \mathbb{Z})$. 
\end{proof}

 Thus, by passing to the quotient, we obtain an injective map (still denoted by the same symbol):
$$ \mathcal{NA}^* :  \mathcal{O}[\mathcal{X}_{\SL_2}(\mathbf{\Sigma}, c)] \hookrightarrow \mathcal{O}[\mathcal{X}_{\mathbb{C}^*}^{\sigma}(\hat{\mathbf{\Sigma}}, \hat{c})].  $$
\begin{definition} The \textit{colored non-abelianization} is the regular map $\mathcal{NA}: \mathcal{X}_{\mathbb{C}^*}^{\sigma}(\hat{\mathbf{\Sigma}}, \hat{c}) \rightarrow \mathcal{X}_{\SL_2}(\mathbf{\Sigma}, c)$ induced by $\mathcal{NA}^*$. 
\end{definition}
Since $\mathcal{NA}^*$ is injective, and both $\mathcal{X}_{\SL_2}(\mathbf{\Sigma}, c)$ and $\mathcal{X}_{\mathbb{C}^*}^{\sigma}(\hat{\mathbf{\Sigma}}, \hat{c})$ are irreducible, the image of $\mathcal{NA}$ is a Zariski open subset. 
Let $\mathcal{U}_2\subset \mathcal{X}_{\SL_2}(\mathbf{\Sigma}, c)$ be the intersection of the image of $\mathcal{NA}$ with the smooth part $ \mathcal{X}^0_{\SL_2}(\mathbf{\Sigma}, c)$ of $ \mathcal{X}_{\SL_2}(\mathbf{\Sigma}, c)$. 
Let $\mathcal{U}_1\subset \mathcal{X}_{\mathbb{C}^*}^{\sigma}(\hat{\mathbf{\Sigma}}, \hat{c}) $ be the Zariski open subset $\mathcal{U}_1:= \mathcal{NA}^{-1}(\mathcal{U}_2)$. 

\begin{theorem}\label{theorem3_V2}
	If $c$ is generic, the restriction $\mathcal{NA}_{| \mathcal{U}_1} : \mathcal{U}_1 \rightarrow \mathcal{U}_2$ is an isomorphism.
\end{theorem}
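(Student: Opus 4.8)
The plan is to show that $\mathcal{NA}_{|\mathcal{U}_1}$ is an isomorphism of affine varieties by exhibiting an inverse on the level of coordinate rings; equivalently, since $\mathcal{NA}^*$ is already known to be injective, it suffices to show that its image is the whole coordinate ring of $\mathcal{U}_1$, i.e. that $\mathcal{NA}^*$ becomes surjective after the relevant localizations. First I would fix a triangulation $\Delta$, a leaf labeling $\ell$, and the spin structure $S$ as in the setup. The coordinate ring $\mathbb{C}[\mathcal{X}_{\mathbb{C}^*}^{\sigma}(\hat{\mathbf{\Sigma}},\hat{c})]$ is the group algebra of $\mathrm{H}_1^{\sigma}(\hat{\Sigma}_{\hat{\mathcal{P}}\cup\hat{B}};\mathbb{Z})$ modulo the ideal $\mathcal{I}_2$ fixing the peripheral classes $[\hat{\gamma}_{p_i}]$ to the values $\hat{c}(\hat{p}_i)$; so it is again (the group algebra of) a finitely generated free abelian group — a Laurent polynomial ring — call it $T_{ab}$. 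On the other side $\mathbb{C}[\mathcal{X}_{\SL_2}(\mathbf{\Sigma},c)]$ is generated by the trace functions $\tau_\gamma$, and by Lemma~\ref{lem:descr na} we have the explicit formula $\mathcal{NA}^*(\tau_\gamma)=(-1)^{w_S([\gamma])}\sum_{[\hat\gamma]\in\mathrm{Lt}^a(\gamma)} f_{[\hat\gamma]}$.

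The key step is the relation, recalled just before Theorem~\ref{theorem3} in the introduction (and to be made precise via Proposition~\ref{prop:sb1}), that for each inner edge $e$ of $\Delta$ the holonomy of $\rho^{ab}$ along the oriented curve $\gamma_e$ equals the shear-bend coordinate of $[\rho]=\mathcal{NA}([\rho^{ab}])$ associated to $e$. I would use this to identify $\mathcal{NA}$, up to the choice of coordinates, with the classical (Thurston/Fock–Goncharov) shear-bend parametrization of $\mathcal{X}_{\SL_2}^0(\mathbf{\Sigma},c)$: the classes $[\gamma_e]$, together with the peripheral classes, form a basis of $\mathrm{H}_1^{\sigma}$, so that $T_{ab}$ is the Laurent polynomial ring in the variables $X_e:=f_{[\gamma_e]}$ subject to the relation that, for each puncture, the product of the $X_e$ around it is pinned to $\hat{c}$-values. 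One then knows from the classical theory of shear coordinates that the shear-bend map is a birational isomorphism from this torus onto $\mathcal{X}_{\SL_2}(\mathbf{\Sigma},c)$ — it is injective wherever defined (the holonomy of a representation can be reconstructed from its shears by the standard edge-matrix/holonomy-word formula), and it is dominant since the shears generate the function field. Genericity of $c$, i.e. $c(p)\neq\pm 2$, is exactly the condition ensuring the peripheral holonomies are diagonalizable with distinct eigenvalues, so that the lift $\hat c$ is well-defined and the reconstruction from shears is unambiguous; this is where the hypothesis is used.

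Concretely I would proceed as follows. Step 1: record that $T_{ab}$ is a Laurent ring in the edge variables $X_e$ (mod the puncture relations), using the homological computation of $\mathrm{H}_1^{\sigma}$ and the explicit basis $\{\gamma_e\}$. Step 2: via Proposition~\ref{prop:sb1}, show $\mathcal{NA}^*$ sends the shear-bend function attached to $e$ (a regular function on the open set of $\mathcal{X}_{\SL_2}(\mathbf{\Sigma},c)$ where it is invertible) to $X_e^{\pm 1}$; deduce that $\mathcal{NA}^*$ is surjective onto $T_{ab}$ after localizing at the $X_e$. Step 3: show that the image of $\mathcal{NA}$ — already open since $\mathcal{NA}^*$ is injective — meets $\mathcal{X}^0_{\SL_2}(\mathbf{\Sigma},c)$ precisely in the locus where all edge shear coordinates are defined and invertible, which is $\mathcal{U}_2$; correspondingly $\mathcal{U}_1$ is the locus in the torus where the same $X_e$ are invertible, i.e. all of it minus nothing new, so that on $\mathcal{U}_1$ the localized $\mathcal{NA}^*$ is an isomorphism onto $\mathbb{C}[\mathcal{U}_1]=T_{ab}$. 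Step 4: conclude that $\mathcal{NA}_{|\mathcal{U}_1}\colon\mathcal{U}_1\to\mathcal{U}_2$ is an isomorphism of affine varieties; the symplectomorphism statement of Theorem~\ref{theorem3} then follows because $\mathcal{NA}^*$ is a Poisson morphism (it is $\nab_{+1,\ell}$ reduced mod $\hbar$) and the shear-bend coordinates are known to be log-canonical Darboux-type coordinates for the Goldman bracket, matching the Poisson bracket on $\mathcal{X}_{\mathbb{C}^*}^{\sigma}$ defined earlier.

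The main obstacle I expect is Step 3, the precise bookkeeping of which open subsets correspond under $\mathcal{NA}$: one must check that the complement of $\mathcal{U}_2$ inside the image of $\mathcal{NA}$ is exactly the vanishing locus of the edge-shear functions pulled back, and that no extra points of the torus are lost — in other words that $\mathcal{NA}^{-1}$ of a point with all shears defined and invertible automatically yields an irreducible $\SL_2$-representation and conversely. This amounts to reconciling the "framed/irreducible" loci on the two sides, and is the step where the genericity hypothesis $c(p)\neq\pm 2$ and the structure of the branched cover really have to be used carefully rather than invoked.
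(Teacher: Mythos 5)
Your overall strategy --- reduce to the shear-bend parametrization of Bonahon--Liu via the relation between the holonomy of $\gamma_e$ and the shear-bend coordinate of $e$ --- is the same as the paper's, and your use of Proposition \ref{prop:sb1} is exactly right. But there is a genuine gap in Step~1 that propagates through Steps~2 and~3: the classes $[\gamma_e]=\varphi(Z_e^2)$ together with the peripheral classes do \emph{not} form a basis of $\mathrm{H}_1^{\sigma}(\hat{\Sigma}_{\hat{\mathcal{P}}\cup\hat{B}};\mathbb{Z})$. They generate only a finite-index sublattice, with quotient $\mathrm{H}_1(\Sigma_{\mathcal{P}};\mathbb{Z}/2\mathbb{Z})$; this is precisely the content of the exact sequence $0\to\mathbb{C}[\gamma_e^{\pm 1}]\to\mathbb{C}[\mathcal{X}_{\mathbb{C}^*}^{\sigma}(\hat{\mathbf{\Sigma}})]\xrightarrow{f_2}\mathbb{C}[\mathrm{H}_1(\Sigma_{\mathcal{P}};\mathbb{Z}/2\mathbb{Z})]\to 0$ in Section~3.3.1. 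Consequently $T_{ab}$ is \emph{not} the Laurent ring in the $X_e$, the localized $\mathcal{NA}^*$ is not surjective onto it, and the shear-bend coordinates cannot by themselves separate points of $\mathcal{U}_1$: two points of $\mathcal{X}_{\mathbb{C}^*}^{\sigma}(\hat{\mathbf{\Sigma}},\hat c)$ differing by a character of $\mathrm{H}_1(\Sigma;\mathbb{Z}/2\mathbb{Z})$ have identical edge holonomies $f_{\gamma_e}$. The same index-two ambiguity occurs on the target: the shear-bend parametrization determines only the $\PSL_2$ class (plus enhancement), not its $\SL_2$ lift, so injectivity of $\SB$ does not directly give injectivity of $\mathcal{NA}$ into $\mathcal{X}_{\SL_2}(\mathbf{\Sigma},c)$.

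The missing ingredient is the descent argument of Section~3.4: one defines free actions of $\mathrm{H}^1(\Sigma;\mathbb{Z}/2\mathbb{Z})$ on $\mathcal{X}_{\mathbb{C}^*}^{\sigma}(\hat{\mathbf{\Sigma}},\hat c)$ and on $\mathcal{X}_{\SL_2}(\mathbf{\Sigma},c)$ (sign-twisting of holonomies), checks using Lemma \ref{lem:descr na} that $\mathcal{NA}$ is equivariant (because every admissible lift $\hat\gamma$ projects to $\gamma$), and observes that the two quotients are exactly the shear torus $(\mathbb{C}^*)^{\mathcal{E}(\Delta)}$ and $\mathcal{X}_{\PSL_2}(\mathbf{\Sigma},c)$. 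Injectivity of the quotient map $\widetilde{\mathcal{NA}}_{|\mathcal{V}_1}$ then follows from Bonahon--Liu as you intend, and injectivity upstairs follows from freeness of the actions plus equivariance. Your worry about Step~3 (matching the open loci), on the other hand, is largely moot given the definitions in the paper: $\mathcal{U}_2$ is \emph{defined} as the intersection of $\mathrm{Image}(\mathcal{NA})$ with the smooth locus and $\mathcal{U}_1:=\mathcal{NA}^{-1}(\mathcal{U}_2)$, so surjectivity of $\mathcal{NA}_{|\mathcal{U}_1}$ is automatic and no identification of $\mathcal{U}_2$ with a shear-nonvanishing locus is needed.
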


Theorem \ref{theorem3} follows from Theorem \ref{theorem3_V2} and \cite[Corollary $I.4.5$]{Hart}.

The strategy to prove Theorem \ref{theorem3_V2} is the following. The map $\mathcal{NA}_{| \mathcal{U}_1}$ is surjective by definition. 
In Section \ref{sec: action gp free}, we will show that the group $\mathrm{H}^1(\Sigma; \mathbb{Z}/2\mathbb{Z})$ acts freely on both $\mathcal{U}_1$ and $\mathcal{U}_2$ and that $\mathcal{NA}_{| \mathcal{U}_1}$ is equivariant for these actions. 
Therefore, $\mathcal{NA}_{| \mathcal{U}_1}$ induces a quotient map $\widetilde{\mathcal{NA}}_{\mathcal{V}_1} : \mathcal{V}_1 \rightarrow \mathcal{V}_2$, where $\mathcal{V}_i = \mathcal{U}_i/\mathrm{H}^1(\Sigma; \mathbb{Z}/2\mathbb{Z})$.   
The map $\mathcal{NA}_{| \mathcal{U}_1}$  is injective if and only if  $\widetilde{\mathcal{NA}}_{\mathcal{V}_1}$ so is. 
In Section \ref{sec: SB} we will identify $\widetilde{\mathcal{NA}}_{\mathcal{V}_1}$ with the \emph{shear-bend parametrization} (see Proposition \ref{prop_sb}) which is known to be injective (\cite{BonahonLiu}).

 \subsubsection{Actions of the group $\mathrm{H}^1(\Sigma_{\mathcal{P}}; \mathbb{Z}/2\mathbb{Z})$}\label{sec: action gp free}
 
  In this subsection, we define free actions of the group $\mathrm{H}^1(\Sigma_{\mathcal{P}}; \mathbb{Z}/2\mathbb{Z})$ on the varieties $ \mathcal{X}_{\mathbb{C}^*}^{\sigma}(\hat{\mathbf{\Sigma}})$ and $ \mathcal{X}_{\SL_2}(\mathbf{\Sigma})$ and show that the non-abelianization map intertwines  these actions.

 	Let 
  \begin{equation*} 
   \nabla_1 : \mathcal{X}_{\SL_2}(\mathbf{\Sigma})\times \mathrm{H}^1(\Sigma_{\mathcal{P}}; \mathbb{Z}/2\mathbb{Z})\rightarrow \mathcal{X}_{\SL_2}(\mathbf{\Sigma})
  \end{equation*}
 be defined by $\tau_{\gamma}([\rho]\cdot {\chi}) = (-1)^{\chi([\gamma])}\tau_{\gamma}([\rho])$, 
 for $[\rho]\in \mathcal{X}_{\SL_2}(\mathbf{\Sigma})$ and $\chi\in  \mathrm{H}^1(\Sigma_{\mathcal{P}}; \mathbb{Z}/2\mathbb{Z})$, and every curve $\gamma$. 
 Note that this action is free. It is proved in \cite{GoldmanTopoComponentsRepSpaces} that 
  \begin{equation*}
 \mathcal{X}_{\PSL_2}(\mathbf{\Sigma}) \cong \quotient{\mathcal{X}_{\SL_2}(\mathbf{\Sigma})}{\mathrm{H}^1(\Sigma_{\mathcal{P}}; \mathbb{Z}/2\mathbb{Z})}. 
 \end{equation*} 
  We denote by 
  $\pi_1: \mathcal{X}_{\SL_2}(\mathbf{\Sigma}) \rightarrow \mathcal{X}_{\PSL_2}(\mathbf{\Sigma})$ the quotient map. In algebraic terms, the group action $\nabla_1$ is induced by a co-action:

 \begin{equation*}
 \Delta_1 : \mathcal{O}[\mathcal{X}_{\SL_2}(\mathbf{\Sigma})] \rightarrow \mathcal{O}[\mathcal{X}_{\SL_2}(\mathbf{\Sigma})] \otimes \C[\mathrm{H}_1(\Sigma_{\mathcal{P}}; \mathbb{Z}/2\mathbb{Z})],
 \end{equation*} 
 given by the formula $\Delta_1(\tau_{\gamma}):= \tau_{\gamma} \otimes [\gamma]$ for every curve $\gamma$. So the algebra $\mathcal{O}[\mathcal{X}_{\PSL_2}(\mathbf{\Sigma})]$ is isomorphic to the subspace of co-invariant vectors for this co-action, that is to the kernel of
 
 \begin{equation*} 
 f_1:= \Delta_1 - \id \otimes \eta : \mathcal{O}[\mathcal{X}_{\SL_2}(\mathbf{\Sigma})] \to \mathcal{O}[\mathcal{X}_{\SL_2}(\mathbf{\Sigma})] \otimes \C[\mathrm{H}_1(\Sigma_{\mathcal{P}}; \mathbb{Z}/2\mathbb{Z})], 
 \end{equation*}
 where $\eta : \mathbb{C} \rightarrow \C[\mathrm{H}_1(\Sigma_{\mathcal{P}}; \mathbb{Z}/2\mathbb{Z})]$ is the unit map sending $1$ to the neutral element of $\mathrm{H}_1(\Sigma_{\mathcal{P}}; \mathbb{Z}/2\mathbb{Z})$.

\vspace{2mm}
\par Let 
 \begin{equation*}
  \nabla_2 : \mathcal{X}_{\mathbb{C}^*}^{\sigma}(\mathbf{\hat{\Sigma}})\times \mathrm{H}^1(\Sigma_{\mathcal{P}}; \mathbb{Z}/2\mathbb{Z}) \rightarrow \mathcal{X}_{\mathbb{C}^*}^{\sigma}(\mathbf{\hat{\Sigma}})
 \end{equation*}
 be the free action defined by $f_{[\hat{\gamma}]}(\rho^{ab}\cdot \chi) := (-1)^{\chi(\pi_*[\hat{\gamma}])} f_{\hat{\gamma}}(\rho^{ab})$, where $\pi: \hat{\Sigma}\setminus (\hat{\mathcal{P}}\cup \hat{B}) \to \Sigma_{\mathcal{P}\cup B} \hookrightarrow \Sigma_{\mathcal{P}}$. Algebraically, this group action is induced by the co-action 
 
  \begin{equation*}
 \Delta_2: \mathcal{O}[\mathcal{X}_{\mathbb{C}^*}^{\sigma}(\mathbf{\hat{\Sigma}})] \rightarrow \mathcal{O}[\mathcal{X}_{\mathbb{C}^*}^{\sigma}(\mathbf{\hat{\Sigma}})] 
  \otimes \C[\mathrm{H}_1(\Sigma_{\mathcal{P}}; \mathbb{Z}/2\mathbb{Z})],
 \end{equation*}
 defined by $\Delta_2(f_{[\hat{\gamma}]}) = f_{[\hat{\gamma}]} \otimes [\pi_*[\hat{\gamma}]]$. The algebra of regular functions of the (algebraic) quotient of $\mathcal{X}_{\mathbb{C}^*}^{\sigma}(\mathbf{\hat{\Sigma}})$ by $\mathrm{H}_1(\Sigma_{\mathcal{P}}; \mathbb{Z}/2\mathbb{Z})$ is given by the kernel of
 
 \begin{equation*}
  f_2:= \Delta_2 - \id \otimes \eta : \mathcal{O}[\mathcal{X}_{\mathbb{C}^*}^{\sigma}(\hat{\mathbf{\Sigma}})] \rightarrow 
  \mathcal{O}[\mathcal{X}_{\mathbb{C}^*}^{\sigma}(\hat{\mathbf{\Sigma}})]  \otimes  \C[\mathrm{H}_1(\Sigma_{\mathcal{P}}; \mathbb{Z}/2\mathbb{Z})],  
 \end{equation*}
 where again $\eta$ is the unit map. 
  Recall the isomorphism $\varphi : K_{\Delta} \xrightarrow{\cong} \mathrm{H}_1^{\sigma}(\hat{\Sigma}_{\hat{\mathcal{P}}\cup \hat{B}}; \mathbb{Z})$ of Equation \eqref{eq:iso CF Sk}.

  For each edge $e\in \mathcal{E}(\Delta)$, let 
  \begin{equation*}
  \gamma_e:= \varphi(Z_e^2). 
  \end{equation*}
 The element $\gamma_e$ in depicted in Figure \ref{fig_edgescurves}. 
 
  \begin{figure}[!h] 
\centerline{\includegraphics[width=5cm]{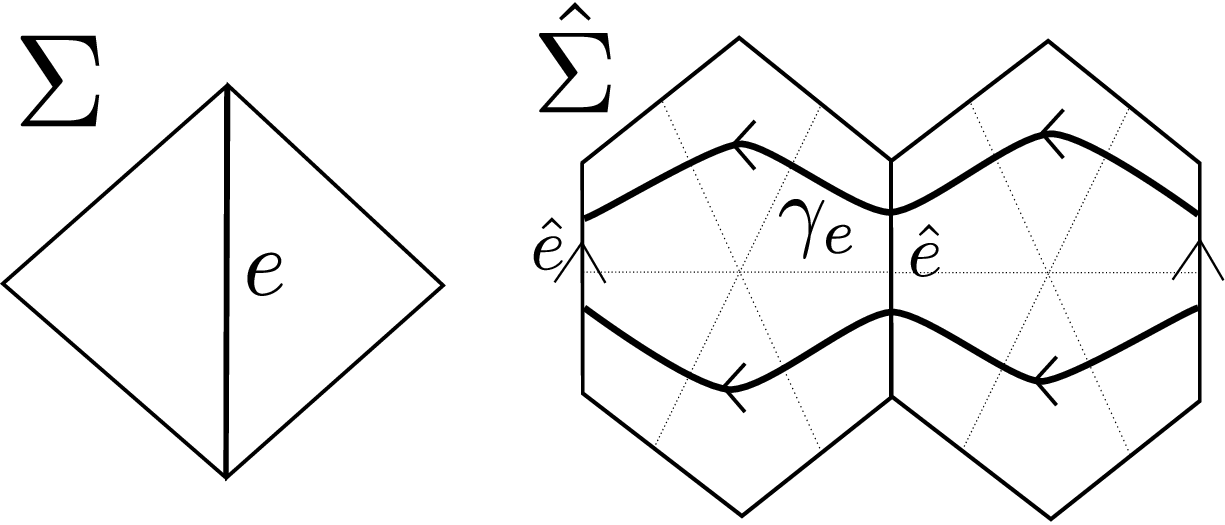} }
\caption{The multi-curve $\gamma_e$ in the double branched covering whose holonomy gives the shear-bend coordinate. } 
\label{fig_edgescurves} 
\end{figure} 
 
The kernel of $f_2$ is the subalgebra of $\mathbb{C}[\mathrm{H}_1^{\sigma}(\hat{\Sigma}_{\hat{\mathcal{P}}\cup \hat{B}}; \mathbb{Z})]$ generated by those $f_{[\hat{\gamma}]}$ such that the homology class $\pi_*([\hat{\gamma}])=0$ vanishes in $\mathrm{H}_1(\Sigma_{\mathcal{P}}; \mathbb{Z}/2\mathbb{Z})$. The set of those classes $[\hat{\gamma}]$ are sent by $\varphi^{-1}$ to the subgroup of balanced monomials $[Z_{e_1}^{k_1} \ldots Z_{e_n}^{k_n}]\in K_{\Delta}$ such that $k_e$ is even for every $e\in \mathcal{E}(\Delta)$. This group is obviously generated by the elements $Z_e^{\pm 2}$, therefore 
 the kernel of $f_2$, is the polynomial subalgebra $\mathbb{C}[f_{\gamma_e}^{\pm 1}, e\in \mathcal{E}(\Delta)]\subset \mathcal{O}[\mathcal{X}_{\mathbb{C}^*}^{\sigma}(\hat{\mathbf{\Sigma}})]  $ generated by the regular functions associated to the  elements $\gamma_e$ and their inverses. 
 So we have an isomorphism 
 \begin{equation*}
 (\mathbb{C}^*)^{\mathcal{E}(\Delta)}\cong \quotient{\mathcal{X}_{\mathbb{C}^*}^{\sigma}(\hat{\mathbf{\Sigma}})}{\mathrm{H}^1(\Sigma_{\mathcal{P}}; \mathbb{Z}/2\mathbb{Z})}. 
 \end{equation*}
 We denote by $\pi_2 : \mathcal{X}_{\mathbb{C}^*}^{\sigma}(\hat{\mathbf{\Sigma}}) \rightarrow (\mathbb{C}^*)^{\mathcal{E}(\Delta)}$ the quotient map.
  \\
  
 \begin{lemma}
 	One has 
 	$\mathcal{NA} \circ \nabla_2= \nabla_1\circ(\mathcal{NA} \otimes \id)$. 
 \end{lemma}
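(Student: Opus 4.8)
The plan is to pass to coordinate rings. Since $\nabla_1$ and $\nabla_2$ are by construction the actions dual to the coactions $\Delta_1$ and $\Delta_2$, while $\mathcal{NA}$ is dual to the algebra morphism $\mathcal{NA}^*$, the identity $\mathcal{NA}\circ\nabla_2=\nabla_1\circ(\mathcal{NA}\otimes\id)$ is equivalent to the commutativity of the square of algebra morphisms
$$\begin{tikzcd}
\mathbb{C}[\mathcal{X}_{\SL_2}(\mathbf{\Sigma})] \arrow[r,"\mathcal{NA}^*"] \arrow[d,"\Delta_1"] & \mathbb{C}[\mathcal{X}_{\mathbb{C}^*}^{\sigma}(\hat{\mathbf{\Sigma}})] \arrow[d,"\Delta_2"] \\
\mathbb{C}[\mathcal{X}_{\SL_2}(\mathbf{\Sigma})]\otimes\C[\mathrm{H}_1(\Sigma_{\mathcal{P}};\mathbb{Z}/2\mathbb{Z})] \arrow[r,"\mathcal{NA}^*\otimes\id"] & \mathbb{C}[\mathcal{X}_{\mathbb{C}^*}^{\sigma}(\hat{\mathbf{\Sigma}})]\otimes\C[\mathrm{H}_1(\Sigma_{\mathcal{P}};\mathbb{Z}/2\mathbb{Z})].
\end{tikzcd}$$
All four arrows, and hence the two composites $\Delta_2\circ\mathcal{NA}^*$ and $(\mathcal{NA}^*\otimes\id)\circ\Delta_1$, are morphisms of algebras; and by \cite{PS00} the functions $\tau_{\gamma}$ associated to loops $\gamma\subset\Sigma_{\mathcal{P}}$ generate $\mathbb{C}[\mathcal{X}_{\SL_2}(\mathbf{\Sigma})]$. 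So it suffices to check the equality of the two composites on each generator $\tau_{\gamma}$.

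First I would compute the lower path on $\tau_{\gamma}$: by definition of $\nabla_1$ one has $\Delta_1(\tau_{\gamma})=\tau_{\gamma}\otimes[\gamma]$ (the multiplicative sign from $\Psi^{S}$ is irrelevant here), so, using the formula of Lemma \ref{lem:descr na}, $(\mathcal{NA}^*\otimes\id)(\Delta_1(\tau_{\gamma}))=(-1)^{w_S([\gamma])}\sum_{[\hat{\gamma}]\in\mathrm{Lt}^a(\gamma)}f_{[\hat{\gamma}]}\otimes[\gamma]$. For the upper path, Lemma \ref{lem:descr na} gives $\mathcal{NA}^*(\tau_{\gamma})=(-1)^{w_S([\gamma])}\sum_{[\hat{\gamma}]\in\mathrm{Lt}^a(\gamma)}f_{[\hat{\gamma}]}$, and since $\Delta_2$ is linear and $\Delta_2(f_{[\hat{\gamma}]})=f_{[\hat{\gamma}]}\otimes[\pi(\hat{\gamma})]$, one gets $\Delta_2(\mathcal{NA}^*(\tau_{\gamma}))=(-1)^{w_S([\gamma])}\sum_{[\hat{\gamma}]\in\mathrm{Lt}^a(\gamma)}f_{[\hat{\gamma}]}\otimes[\pi(\hat{\gamma})]$. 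Comparing term by term, the two composites agree on $\tau_{\gamma}$ as soon as $[\pi(\hat{\gamma})]=[\gamma]$ in $\mathrm{H}_1(\Sigma_{\mathcal{P}};\mathbb{Z}/2\mathbb{Z})$ for every $[\hat{\gamma}]\in\mathrm{Lt}^a(\gamma)$, and this is the only point left.

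The hard part will therefore be this last homological identity, which I would settle by unwinding the description of $\mathrm{Lt}^a(\gamma)$ given right before Lemma \ref{lem:descr na} together with the definition of the map $\pi$ appearing in $\Delta_2$. By construction of the bijection $\varphi$, an element of $\mathrm{Lt}^a(\gamma)$ is obtained by lifting, in each triangle $\T$ of $\Delta$, every arc of $\gamma\cap\T$ to the hexagon $\Hex$ — with a position relative to the branch point $b$ and an orientation prescribed by an admissible state and the leaf labeling — and then joining the lifted arcs along $\hat{\tau}_{\Delta}$ so that the result is $\sigma$ anti-invariant. Hence the underlying multicurve of $\hat{\gamma}$ is exactly the full $\pi$-preimage of a multicurve $\bar{\gamma}\subset\Sigma_{\mathcal{P}\cup B}$ that, triangle by triangle, is $\gamma\cap\T$ with some arcs possibly pushed from one side of $b$ to the other; consequently $\bar{\gamma}$ is isotopic to $\gamma$ once the points of $B$ are filled in, i.e.\ in $\Sigma_{\mathcal{P}}$. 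Since $[\pi(\hat{\gamma})]$ is, by definition of $\Delta_2$, the class of $\bar{\gamma}$ in $\mathrm{H}_1(\Sigma_{\mathcal{P}};\mathbb{Z}/2\mathbb{Z})$, this gives $[\pi(\hat{\gamma})]=[\gamma]$. Beyond this geometric identification — whose only delicate point is to notice that the ``side of the branch point'' ambiguity in the lifts is precisely what disappears when passing from $\Sigma_{\mathcal{P}\cup B}$ to $\Sigma_{\mathcal{P}}$ — the argument is routine, and dualizing back then yields the claimed identity $\mathcal{NA}\circ\nabla_2=\nabla_1\circ(\mathcal{NA}\otimes\id)$.
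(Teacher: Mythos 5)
Your proposal is correct and follows essentially the same route as the paper's own (much terser) proof: dualize to the coactions $\Delta_1$ and $\Delta_2$, evaluate both composites on the generators $\tau_{\gamma}$ using Lemma \ref{lem:descr na}, and reduce everything to the identity $\pi(\hat{\gamma})=\gamma$ for $\hat{\gamma}\in\mathrm{Lt}^a(\gamma)$. The only difference is that you spell out the geometric justification of that last identity, which the paper simply asserts.
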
 
\begin{proof}
By Lemma \ref{lem:descr na}, we know that $\mathcal{NA}$ sends a curve function $\tau_{\gamma}$, for  $\gamma \subset \Sigma_{\mathcal{P}}$, to plus or minus a sum of functions of admissible lifts $[\hat{\gamma}]$  in $\hat{\Sigma}$. Since $\pi(\hat{\gamma})=\gamma$ for each $\hat{\gamma} \in \mathrm{Lt}^a(\gamma)$, the map  $\mathcal{NA}^*$ intertwines the co-actions of $\mathrm{H}_1(\Sigma_{\mathcal{P}}; \mathbb{Z}/2\mathbb{Z})$ in the sense that $\Delta_2\circ \mathcal{NA}^* = \left(\mathcal{\na}^* \otimes \id \right)\circ \Delta_1$.  
\end{proof}
  Therefore, the non-abelianization map induces a surjective regular map 
  \begin{equation*}
\widetilde{\na} \co (\mathbb{C}^*)^{\mathcal{E}(\Delta)} \rightarrow \mathcal{X}_{\PSL_2}(\mathbf{\Sigma})
  \end{equation*} 
 such that the following diagram commutes:
\begin{equation*}
\begin{tikzcd}
\mathcal{X}_{\mathbb{C}^*}^{\sigma}(\hat{\mathbf{\Sigma}})
\arrow[r, "\na"] \arrow[d, "\pi_2"]
&  \mathcal{X}_{\SL_2}(\mathbf{\Sigma})
\arrow[d, "\pi_1"]
\\ (\mathbb{C}^*)^{\mathcal{E}(\Delta)}
\arrow[r, "\widetilde{\na}"]
&  \mathcal{X}_{\PSL_2}(\mathbf{\Sigma})
\end{tikzcd}
\end{equation*} 
  More precisely, the map $\widetilde{\na}$ is defined by the unique injective algebra morphism $\widetilde{\na}^*$ that makes the following diagram commute:
  \begin{equation*}
  \begin{tikzcd}
  0 \arrow[r, ""] & \mathcal{O}[ \mathcal{X}_{\PSL_2}(\mathbf{\Sigma})] \arrow[r, ""] \arrow[d, hook, "\widetilde{\na}^*"] & 
  \mathcal{O}[\mathcal{X}_{\SL_2}(\mathbf{\Sigma})] \arrow[r, "f_1"] \arrow[d, hook, "\na^*"] &  \mathcal{O}[\mathcal{X}_{\SL_2}(\mathbf{\Sigma})]  \otimes \C[\mathrm{H}_1(\Sigma_{\mathcal{P}}; \mathbb{Z}/2\mathbb{Z})] \arrow[d, hook,  "\na^*\otimes \id"] 
  \\
  0 \arrow[r, ""] &\mathbb{C}[f_{\gamma_e}^{\pm 1}, e\in \mathcal{E}(\Delta)] \arrow[r, ""] &  \mathcal{O}[\mathcal{X}_{\mathbb{C}^*}^{\sigma}(\hat{\mathbf{\Sigma}})] \arrow[r, "f_2"] & 
  \mathcal{O}[\mathcal{X}_{\mathbb{C}^*}^{\sigma}(\hat{\mathbf{\Sigma}})] \otimes  \C[\mathrm{H}_1(\Sigma_{\mathcal{P}}; \mathbb{Z}/2\mathbb{Z})]  
  \end{tikzcd}
   \end{equation*}

  	Let   $\mathrm{H}^1(\Sigma; \mathbb{Z}/2\mathbb{Z}) \hookrightarrow \mathrm{H}^1(\Sigma_{\mathcal{P}}; \mathbb{Z}/2\mathbb{Z})$ be the inclusion whose image are those elements $\chi \in  \mathrm{H}^1(\Sigma_{\mathcal{P}}; \mathbb{Z}/2\mathbb{Z})$  such that $\chi([\gamma_p])= 0 \pmod{2}$ for all $p\in \mathcal{P}$. 
  	
  	The actions $\nabla_1$ and $\nabla_2$, when restricted to $\mathrm{H}^1(\Sigma; \mathbb{Z}/2\mathbb{Z})$, preserve the subvarieties $\mathcal{X}_{\SL_2}(\mathbf{\Sigma}, c)$ and $\mathcal{X}_{\mathbb{C}^*}^{\sigma}(\hat{\mathbf{\Sigma}}, \hat{c})$. Moreover, the colored non-abelianization map is equivariant for these actions. 
  	
  	On the other hand, since the subvariety $\mathcal{X}^0_{\SL_2}(\mathbf{\Sigma}, c) \subset \mathcal{X}_{\SL_2}(\mathbf{\Sigma}, c)$ of smooth points consists of the classes of irreducible representations, it is preserved by the action of $\mathrm{H}^1(\Sigma; \mathbb{Z}/2\mathbb{Z})$. Therefore, the map $\mathcal{NA}_{| \mathcal{U}_1}$  induces, by passing to the quotient through $\mathrm{H}^1(\Sigma; \mathbb{Z}/2\mathbb{Z})$, a surjective map 
  	\begin{equation*}
  	\widetilde{\mathcal{NA}}_{| \mathcal{V}_1} : \mathcal{V}_1 \rightarrow \mathcal{V}_2,
  	\end{equation*} 
  	where $\mathcal{V}_i = \quotient{ \mathcal{U}_i}{\mathrm{H}^1(\Sigma; \mathbb{Z}/2\mathbb{Z})}$.

\subsubsection{Shear-bend coordinates}\label{sec: SB}

 The goal of this subsection is to relate the map $\widetilde{\mathcal{NA}}_{| \mathcal{V}_1}$ to the shear-bend parametrization defined by Bonahon and Liu in \cite{BonahonLiu}.  

 From now on, the coloring $c\co \mathcal{P}\to \C$ is supposed to be generic. 
 \\
 
  Consider a representation $\rho\in \mathcal{R}_{\PSL_2}(\mathbf{\Sigma})$. 
  Recall the classical action of  $\SL_2(\mathbb{C})$ on $\C^2$; in particular, any lift of $\rho(\gamma_p) \in \PSL_2(\mathbb{C})$ in  $\SL_2(\mathbb{C})$ acts on $\C^2$. 
  \begin{definition}
 	  An \textit{enhancement} $E=(E_p)_{p\in \mathcal{P}}$ of $\rho$ is the choice, for each puncture $p\in \mathcal{P}$, of an axis $E_p\in \mathbb{CP}^1$ that is invariant under the action of an arbitrary lift of  $\rho(\gamma_p) \in \PSL_2(\mathbb{C})$. 
 	  The set of the enhanced representations is acted on by $\PSL_2(\mathbb{C})$  via $g\cdot \left(\rho, (E_p)_p\right) := (g\cdot \rho \cdot g^{-1}, (g\cdot E_p)_p)$ for each $g\in \PSL_2(\mathbb{C})$; the resulting quotient is denoted by $\mathcal{M}_{\PSL_2}^e(\mathbf{\Sigma})$. 
 	  Let $pr:\mathcal{M}_{\PSL_2}^e(\mathbf{\Sigma}) \rightarrow \mathcal{M}_{\PSL_2}(\mathbf{\Sigma})$ be the surjection that sends a class $[\rho, E]$ to the class $[\rho]$. 
 	  An \textit{enhancement} of $[\rho]$ is the choice of a lift $[\rho, E]$ through $pr$. 
  \end{definition}	  

Recall Notation \ref{not:lifts}. 
Note that if $\tr(\rho(\gamma_p))\neq \pm 2$ for all $p\in \mathcal{P}$, then $[\rho]$ admits exactly $2^{|\mathcal{P}|}$ enhancements. 
In this case, one has a bijection, which depends on the leaf labeling, between the set of lifts  of the generic coloring $c$ and  the set of enhancements of $[\rho]$.  It is as follows. 

For $p\in \mathcal{P}$, let $g_p\in \SL_2(\mathbb{C})$ be a lift of $\rho(\gamma_p)\in \PSL_2(\mathbb{C})$. 
The axes $E_p$ of an enhancement $E=(E_p)_{p\in \mathcal{P}}$ of $\rho$  are eigenspaces of $g_p$. 
 For each puncture $p$, there are two such spaces; they correspond to the two eigenvalues $x_p$ and $x_p^{-1}$ of $g_p$ such that $\tr (g_p)= x_p+x_p^{-1}$. 
To an enhancement $E=(E_p)_{p\in \mathcal{P}}$ such that $E_p$ corresponds to $x_p$, one associates the lift $\hat{c}$ such that  $\hat{c}(\hat{p}_1)=x_p$ (therefore one has $\hat{c}(\hat{p}_2)=x_p^{-1}$). 
Conversely, to each lift $\hat{c}$, one associate the enhancement $E_{\hat{c}}$ such that its axis at $p$ is the eigenspace corresponding to $\hat{c}(\hat{p}_1)$.

\vspace{2mm}
\par Following Thurston \cite{ThurstonShear, ThurstonLectNotes} and Bonahon \cite{BonahonShearBend,BonahonLivre}, Bonahon and Liu defined in \cite[Section $7$]{BonahonLiu} an injective map:
$$ \SB : (\mathbb{C}^*)^{E(\Delta)} \hookrightarrow \mathcal{M}_{\PSL_2}^e(\mathbf{\Sigma})$$
called the \textit{shear-bend parametrization}.  The inverse map $\SB^{-1}: \text{Im}(\SB) \xrightarrow{\cong} (\mathbb{C}^*)^{E(\Delta)}$ sends a class $[\rho, E]$ to complex numbers $x_e([\rho, E])$ called \textit{the shear-bend coordinates} of $[\rho, E]$. We briefly sketch the construction of $\SB$. 
 
\vspace{2mm}
\par A \textit{pleated surface} is a pair $(\widetilde{f}, \rho)$ where $\rho \in \mathcal{R}_{\PSL_2}(\mathbf{\Sigma})$ and $\widetilde{f}: \widetilde{\Sigma}_{\mathcal{P}}\rightarrow \mathbb{H}^3$ is a continuous map from a universal cover $\widetilde{\Sigma}_{\mathcal{P}}$ of $\Sigma_{\mathcal{P}}$ to the hyperbolic upper half-space $\mathbb{H}^3$ such that: 
\begin{enumerate}
\item If $\widetilde{\Delta}$ is a lift of $\Delta$ in $\widetilde{\Sigma}$ and $\widetilde{e}\in \mathcal{E}(\widetilde{\Delta})$ is the lift of an edge, then $\widetilde{f}(\widetilde{e})$ is a geodesic.
\item If $\widetilde{\mathbb{T}}\in F(\widetilde{\Delta})$ is a lift of a triangle of $\Delta$, then the closure of $\widetilde{\mathbb{T}}$ is an ideal triangle in $\overline{\mathbb{H}^3}=\mathbb{H}^3\cup \mathbb{CP}^1$.
\item The map $\widetilde{f}$ is $\rho$-equivariant.
\end{enumerate}
\par Two pleated surfaces $(\widetilde{f}, \rho)$ and $(\widetilde{f}', \rho')$ are \textit{isometric} if there exist an element $A\in \PSL_2(\mathbb{C})$ and a lift $\widetilde{\phi}:\widetilde{\Sigma}_{\mathcal{P}} \rightarrow \widetilde{\Sigma}_{\mathcal{P}}$ of an isometry of $\Sigma_{\mathcal{P}}$, such that $\widetilde{f}'=A\circ \widetilde{f}\circ \widetilde{\phi}$ and $\rho'=A\rho A^{-1}$. We denote by $\PS_{\Delta}$ the set of isometry classes of pleated surfaces. A pleated surface $(\widetilde{f}, \rho)$ naturally defines an enhanced representation $(\rho, E)$: given a puncture $p\in \mathcal{P}$, the closures of the images through $\widetilde{f}$ of the lifts of the triangles adjacent to $p$ intersect in a single point $E_p\in  \mathbb{CP}^1=\partial \overline{\mathbb{H}^3}$ invariant under $\rho$ which defines the enhancement of $\rho$ at $p$.
 This construction defines an injective map:
$$ j: \PS_{\Delta} \hookrightarrow \mathcal{M}_{\PSL_2}^e(\mathbf{\Sigma}).$$
 Moreover, in \cite[Proposition $31$]{BonahonLiu},  the authors defined a bijection:
 $$ \theta : (\mathbb{C}^*)^{\mathcal{E}(\Delta)} \cong \PS_{\Delta}.$$
 The reverse map $\theta^{-1}$ sends a pleated surface $(\widetilde{f}, \rho)$ to nonzero complex numbers $x_e(\widetilde{f}, \rho)$, for each edge, $e\in \mathcal{E}(\Delta)$ as follows. Orient the edge $e$ arbitrarily and denote by $\mathbb{T}_L$ and $\mathbb{T}_R$ the triangles on the left and right respectively of $e$. Choose an arbitrary lift $\widetilde{e}\in E(\widetilde{\Delta})$. The closures of the corresponding lifts of $\mathbb{T}_L$ and $\mathbb{T}_R$ are sent to two ideal triangles $\widetilde{\mathbb{T}_L}, \widetilde{\mathbb{T}_R}$. Denote by $z_-, z_+, z_L, z_R \in \mathbb{CP}^1$ the vertices of the square $\widetilde{\mathbb{T}_L}\cup\widetilde{\mathbb{T}_R}$ where $\widetilde{f}(\widetilde{e})$ is oriented from $z_-$ to $z_+$ and $z_L, z_R$ are the vertices on the left and right respectively of $\widetilde{f}(\widetilde{e})$. We define the (exponential) shear-bend parameter $x_e$ to be the cross ratio:
 $$ x_e([\widetilde{f}, \rho]) := - \frac{(z_L - z_+)(z_r-z_-)}{(z_L-z_-)(z_R-z_+)}\in \mathbb{C}^* $$
 This number is independent of the choices previously made, invariant under isometry and defines the map $\theta^{-1}$. We refer to \cite{BonahonShearBend, BonahonLiu} for the construction of $\theta$. 
 
 \begin{definition} The \textit{shear-bend parametrization} is the composition
 $$
 \begin{tikzcd}
 \SB : (\mathbb{C}^*)^{\mathcal{E}(\Delta)} 
 \arrow[r, "\theta", "\cong"'] &\PS_{\Delta} \arrow[r, hook, "j"] & \mathcal{M}_{\PSL_2}^e(\mathbf{\Sigma}).
 \end{tikzcd}
 $$
 \end{definition}

This map is closely related to the map $\widetilde{\na}$ defined in the previous subsection, as we now explain; also compare with Hollands-Neitzke's geometric non-abelianization map described in \cite[Section $8.3$]{HollandsNeitzke}. 

Let $\hat{c}$ be a lift of $c$. Recall the bijection that associates to each $[\rho]\in \mathcal{M}_{\PSL_2}(\mathbf{\Sigma}, c)$ an enhancement $E_{\hat{c}}$. 
It gives rise to an embedding $\mathcal{M}_{\PSL_2}(\mathbf{\Sigma}, c) \subset \mathcal{M}^e_{\PSL_2}(\mathbf{\Sigma})$ that depends on $\hat{c}$.  

 	Given $[\rho^{\mathbb{C}^*}]\in \mathcal{U}_1\subset \mathcal{X}_{\mathbb{C}^*}^{\sigma}(\hat{\mathbf{\Sigma}}, \hat{c})$ define $[\rho^{\SL_2}]:= \na ([\rho^{\mathbb{C}^*}])\in \mathcal{X}^0_{\SL_2}(\mathbf{\Sigma},c)$ and $[\rho^{\PSL_2}]:=pr([\rho^{\SL_2}])\in \mathcal{X}^0_{\SL_2}(\mathbf{\Sigma},c)$. 
 	
 	 \begin{proposition}\label{prop:sb1}
 	One has $ x_e\left([\rho^{\PSL_2}, E_{\hat{c}}]\right) = f_{\gamma_e}([\rho^{\mathbb{C}^*}]).$
 	 	\end{proposition}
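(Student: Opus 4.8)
The plan is to compute both sides of the equation directly and match them, using the explicit description of the quantum trace map and of the bijection $\varphi$ given in Section \ref{sec: branch cov}. By the very definition of $\gamma_e = \Phi_{\ell}(Z_e^2)$ and of $f_{\gamma_e}$ (the regular function on $\mathcal{X}^{\sigma}_{\mathbb{C}^*}(\hat{\mathbf{\Sigma}})$ attached to the class $\gamma_e \in \mathrm{H}_1^{\sigma}(\hat{\Sigma}_{\hat{\mathcal{P}}\cup\hat{B}};\mathbb{Z})$), the quantity $f_{\gamma_e}([\rho^{\mathbb{C}^*}])$ equals $\rho^{\mathbb{C}^*}$ evaluated on $\varphi(Z_e^2)$. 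So the content of the proposition is to identify $\rho^{\mathbb{C}^*}(\varphi(Z_e^2))$, for $[\rho^{\SL_2}] = \mathcal{NA}([\rho^{\mathbb{C}^*}])$, with the cross-ratio $x_e([\rho^{\PSL_2},E_{\hat c}])$ of the pleated surface produced by Bonahon--Liu's parametrization. I would do this by fixing a lift $\widetilde e$ of $e$, the two adjacent triangles $\mathbb{T}_L$, $\mathbb{T}_R$, and a pleated-surface realization $(\widetilde f, \rho^{\PSL_2})$ with the enhancement $E_{\hat c}$, and then tracing through how the abelian connection $\rho^{\mathbb{C}^*}$ ``reconstructs'' the $\SL_2$-holonomy along the curve $\gamma_e$ via the non-abelianization map.

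\textbf{Key steps.} First I would recall, from Lemma \ref{lem:descr na} together with the explicit form of $\tr_{+1}^{\Delta}$, that $\mathcal{NA}$ assigns to a loop $\gamma$ the sum of $\rho^{\mathbb{C}^*}$ over the admissible lifts $\mathrm{Lt}^a(\gamma)$, with a spin-correction sign. Choosing $\gamma$ to be the small loop crossing only $e$ near $\mathbb{T}_L\cup\mathbb{T}_R$ (a ``figure-eight'' style curve around $e$), one writes $\tr(\rho^{\SL_2}(\gamma))$ as an explicit two- or three-term sum of monomials in the shear-bend--type variables $f_{\gamma_{e'}}$. Second, I would set up the hyperbolic picture: put the four ideal vertices $z_-, z_+, z_L, z_R$ of the square $\widetilde{\mathbb{T}_L}\cup\widetilde{\mathbb{T}_R}$ at $0,\infty, -1$ and $x$ for a suitable $x\in\mathbb{C}^*$, so that by definition $x_e = x$; one then computes the $\PSL_2$-holonomy of a loop winding once through the edge $e$ against the pleating, getting a $2\times2$ matrix whose trace is a standard elementary function of the shear-bend coordinates of $e$ (and of the two puncture-eigenvalues $x_p$, $x_p^{-1}$ appearing from $E_{\hat c}$). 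Third, I would match this with the abelian computation: each edge of the lifted train track $\hat\tau_\Delta$ crossing $e^\dagger$ carries a holonomy of $\rho^{\mathbb{C}^*}$, and the curve $\gamma_e = \varphi(Z_e^2)$ is, by the picture in Figure \ref{fig_edgescurves}, exactly the $\sigma$-antiinvariant double loop whose $\rho^{\mathbb{C}^*}$-holonomy realizes the cross-ratio; comparing the two closed-form expressions term by term (paying attention to orientation conventions in Notation \ref{not:lifts} and to the leaf labeling $\ell$) gives the identity. A bookkeeping check that the spin-structure sign $(-1)^{w_S}$ and the conventions for orienting $\hat\gamma_{p_1}$ vs $\hat\gamma_{p_2}$ conspire correctly completes the argument.

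\textbf{Main obstacle.} The hard part will be reconciling conventions: the non-abelianization map is built algebraically out of $\tr_{+1}^{\Delta}$ and $\Phi_\ell$, while the shear-bend parametrization is built geometrically out of pleated surfaces, and each of the two constructions carries its own choices (orientation of $e$, which triangle is ``left''/``right'', which eigenvalue $x_p$ is attached to $\hat p_1$, the leaf labeling, the spin structure). One has to check that with the dictionary fixed in Notation \ref{not:lifts} these choices are \emph{compatibly} normalized, so that the elementary computation of $\tr(\rho^{\PSL_2}(\gamma_e\text{-type loop}))$ as a function of $x_e$ matches the sum $\sum_{[\hat\gamma]\in\mathrm{Lt}^a} \rho^{\mathbb{C}^*}([\hat\gamma])$ on the nose, not merely up to a scalar or an inversion. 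I expect that once one does the computation for a single triangle pair in coordinates (using that $\mathcal{NA}$ respects the triangular decomposition $i^\Delta$ of Theorem \ref{th: iso Pso CF Sk}), the general case follows formally, since both the quantum trace and the shear-bend cross-ratio are local in the triangulation; so the real work is the one-square model computation together with this convention audit. Finally, since $\mathcal{NA}$ was already shown to be a Poisson morphism and the identity is to be checked on the Zariski-dense open set $\mathcal{U}_1$, it suffices to verify it on the image, which is exactly where the pleated-surface description applies.
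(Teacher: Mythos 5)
Your strategy is a genuinely different route from the paper's. The paper's proof is essentially a citation: it observes that $[\rho^{\mathbb{C}^*}]$ is a character $\chi$ on $\mathcal{W}(\tau_{\Delta},\mathbb{Z})$ via the bijection \eqref{eq:bij Theta}, that $f_{\gamma_e}([\rho^{\mathbb{C}^*}])=\chi(\phi_e)$ by definition of $\gamma_e=\Phi_{\ell}(Z_e^2)$, and then invokes \cite[Proposition 15]{BonahonWong2} (equivalently \cite[Lemma 4]{BonahonWongqTrace}), which states precisely that the pleated surface obtained by composing $\chi$ with the classical trace map has shear-bend coordinates $\chi(\phi_e)$ --- the quantum trace was engineered so that this holds. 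What you propose is to re-derive that cited compatibility from scratch by an explicit one-square model computation. That is legitimate in principle and would make the argument self-contained, but it is a substantially larger undertaking than the paper's proof, and it is worth knowing that the heavy lifting is already done in the literature.

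There is also a soft spot in your outline that you should repair if you carry out the computation. Your matching step compares $\tr(\rho^{\SL_2}(\gamma))$, expanded via Lemma \ref{lem:descr na}, with the trace of a $2\times 2$ holonomy matrix expressed in the variables $x_e$. But traces of loops do not see the enhancement and, for a single loop, only determine $x_e$ up to the ambiguities (inversion, choice of eigenvalue at the punctures) that the enhancement $E_{\hat c}$ is there to resolve; an equality of two trace expressions does not by itself yield $x_e=f_{\gamma_e}$ \emph{on the nose}. The correct local computation is at the level of the elementary holonomy matrices themselves (one per triangle-crossing and one per edge-crossing of the pleated surface), whose matrix-product expansion is exactly the state sum defining $\tr^{\Delta}_{+1}$; one then checks that the pleated surface $\theta\bigl((f_{\gamma_e}([\rho^{\mathbb{C}^*}]))_e\bigr)$ has holonomy $[\rho^{\PSL_2}]$ \emph{and} enhancement $E_{\hat c}$. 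This matrix-level identification, not the trace-level one, is the content of the result the paper cites, and it is where the convention audit you describe (leaf labeling, orientation of $e$, assignment of $x_p$ to $\hat p_1$) actually has to take place.
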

  	\begin{proof}
  		It is a consequence of \cite[Proposition $15$]{BonahonWong2} which results from the definition of the quantum trace. A point $[\rho^{\mathbb{C}^*}]\in \eqabchar$ is a character of the corresponding algebra of regular functions. This algebra has basis $\mathrm{H}_1^{\sigma}(\hat{\Sigma}_{\hat{\mathcal{P}}\cup\hat{B}} ; \mathbb{Z})$ which is in bijection with  $\mathcal{W}(\tau_{\Delta}, \mathbb{Z})$ by \eqref{eq:bij Theta}.  So $[\rho^{\mathbb{C}^*}]$ is described by a group morphism $\chi : \mathcal{W}(\tau_{\Delta}, \mathbb{Z})\rightarrow \mathbb{C}^*$. By definition, if $\phi_e\in \mathcal{W}(\tau_{\Delta}, \mathbb{Z})$ is the element associated to $Z_e^2 \in \mathcal{Z}_{+1}(\mathbf{\Sigma}, \Delta)$, then $\chi(\phi_e)=f_{\gamma_e}([\rho^{\mathbb{C}^*}])$. The image $[\rho^{\SL_2}]=\na([\rho^{\mathbb{C}^*}]) $ is described by a character of $\mathcal{S}_{+1}(\mathbf{\Sigma})$ obtained from $\chi$ by composing with the quantum trace map.  By \cite[Proposition $15$]{BonahonWong2}  the shear-bend coordinates  $x_e([\rho^{\PSL_2}, E_{\hat{c}}])$ are equal to $\chi(\phi_e)$,  so it proves the assertion. 
  		We emphasize that the quantum trace map was designed so that this equality holds (see \cite[Lemma $4$]{BonahonWongqTrace}).
  	\end{proof}
  \begin{corollary}\label{prop_sb}
  	 The two maps $\widetilde{\na}_{|\mathcal{V}_1} : \mathcal{V}_1 \rightarrow \mathcal{X}^0_{\PSL_2}(\mathbf{\Sigma}, c) $ and $\SB_{|\mathcal{V}_1}: \mathcal{V}_1 \rightarrow \mathcal{X}^0_{\PSL_2}(\mathbf{\Sigma}, c) \subset \widetilde{\mathcal{X}}^e_{\PSL_2}(\mathbf{\Sigma})$ are equal. In particular $\widetilde{\na}_{|\mathcal{V}_1}$ is injective.
  \end{corollary}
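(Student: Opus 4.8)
Corollary \ref{prop_sb} is the combination of two facts: the identification of $\widetilde{\na}_{|\mathcal{V}_1}$ with the shear-bend parametrization, and the injectivity of the latter. The plan is to deduce the equality of maps directly from Proposition \ref{prop:sb1}, and then to import injectivity from \cite{BonahonLiu}.

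First I would recall the defining data on both sides. On the source, $\mathcal{V}_1 = \mathcal{U}_1 / \mathrm{H}^1(\Sigma; \mathbb{Z}/2\mathbb{Z})$ and $\mathcal{U}_1 \subset \mathcal{X}_{\mathbb{C}^*}^{\sigma}(\hat{\mathbf{\Sigma}}, \hat{c})$; via the quotient map $\pi_2$ of Section \ref{sec: action gp free}, a point of $\mathcal{V}_1$ is recorded by the tuple of values $\left(f_{\gamma_e}([\rho^{\mathbb{C}^*}])\right)_{e\in \mathcal{E}(\Delta)} \in (\mathbb{C}^*)^{\mathcal{E}(\Delta)}$, since the kernel of $f_2$ is precisely $\mathbb{C}[\gamma_e^{\pm 1}, e\in \mathcal{E}(\Delta)]$ and the $\gamma_e$ generate the ring of invariants for the $\mathrm{H}^1(\Sigma_{\mathcal{P}};\mathbb{Z}/2\mathbb{Z})$-action, hence a fortiori for the $\mathrm{H}^1(\Sigma;\mathbb{Z}/2\mathbb{Z})$-action. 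On the target, $\SB$ sends a tuple $(x_e)_e \in (\mathbb{C}^*)^{\mathcal{E}(\Delta)}$ to the class of the enhanced representation $[\rho,E]$ whose shear-bend coordinates are the $x_e$; and the lift $\hat{c}$ of the coloring $c$ picks out, for $[\rho]\in \mathcal{X}^0_{\PSL_2}(\mathbf{\Sigma},c)$, the particular enhancement $E_{\hat{c}}$. So both maps factor through $(\mathbb{C}^*)^{\mathcal{E}(\Delta)}$, and to prove they agree it suffices to check that for $[\rho^{\mathbb{C}^*}]\in \mathcal{U}_1$ with image $[\rho^{\PSL_2}]=\pi_1(\na([\rho^{\mathbb{C}^*}]))$ one has $x_e\left([\rho^{\PSL_2},E_{\hat{c}}]\right) = f_{\gamma_e}([\rho^{\mathbb{C}^*}])$ for every edge $e$.

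But that is exactly the content of Proposition \ref{prop:sb1}. Thus the identification is immediate once the bookkeeping of which invariants parametrize $\mathcal{V}_1$ is spelled out; there is nothing further to compute. I would then observe that the well-known injectivity of the shear-bend parametrization $\SB$ (\cite[Section 7, Proposition 31]{BonahonLiu}, which gives a bijection $\theta\colon (\mathbb{C}^*)^{\mathcal{E}(\Delta)} \xrightarrow{\cong} \PS_{\Delta}$ composed with the injection $j\colon \PS_{\Delta}\hookrightarrow \mathcal{M}^e_{\PSL_2}(\mathbf{\Sigma})$) restricts to an injection on $\mathcal{V}_1$, and since $\widetilde{\na}_{|\mathcal{V}_1}$ coincides with $\SB_{|\mathcal{V}_1}$ it too is injective. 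One subtlety worth a sentence: I should note that $\mathcal{V}_1$ does embed into $(\mathbb{C}^*)^{\mathcal{E}(\Delta)}$ via $\pi_2$ (so that talking about "the shear-bend coordinates of a point of $\mathcal{V}_1$" makes sense), which follows from the freeness of the $\mathrm{H}^1(\Sigma;\mathbb{Z}/2\mathbb{Z})$-action established in Section \ref{sec: action gp free} together with $\mathcal{V}_1\subset \mathcal{V}_1' := \mathcal{X}_{\mathbb{C}^*}^{\sigma}(\hat{\mathbf{\Sigma}})/\mathrm{H}^1(\Sigma_{\mathcal{P}};\mathbb{Z}/2\mathbb{Z}) \cong (\mathbb{C}^*)^{\mathcal{E}(\Delta)}$.

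The main obstacle is not a deep one: it is the careful matching of the enhancement convention. The subtlety is that $\na$ produces a point of $\mathcal{X}^0_{\SL_2}(\mathbf{\Sigma},c)$, one passes to $\mathcal{X}^0_{\PSL_2}(\mathbf{\Sigma},c)$ via $pr$, and only then does one attach the enhancement $E_{\hat{c}}$; one must check that the lift $\hat{c}$ used to cut out $\mathcal{X}_{\mathbb{C}^*}^{\sigma}(\hat{\mathbf{\Sigma}},\hat{c})$ is the \emph{same} $\hat{c}$ that determines $E_{\hat{c}}$, and that the leaf labeling enters consistently on both sides — but both of these are exactly the normalizations under which Proposition \ref{prop:sb1} was stated and proved (ultimately resting on \cite[Proposition 15]{BonahonWong2} and \cite[Lemma 4]{BonahonWongqTrace}). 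So the proof of the corollary reduces to invoking Proposition \ref{prop:sb1} edge by edge and then citing injectivity of $\SB$; I would write it in a few lines.
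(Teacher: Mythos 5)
Your proposal is correct and follows essentially the same route as the paper: both arguments reduce the equality of $\widetilde{\na}_{|\mathcal{V}_1}$ and $\SB_{|\mathcal{V}_1}$ to the edge-by-edge identity $x_e([\rho^{\PSL_2},E_{\hat{c}}])=f_{\gamma_e}([\rho^{\mathbb{C}^*}])$ of Proposition \ref{prop:sb1}, then conclude via the injectivity of $\SB$ that two enhanced representations with the same shear-bend coordinates coincide. The extra bookkeeping you include (that $\mathcal{V}_1$ embeds in $(\mathbb{C}^*)^{\mathcal{E}(\Delta)}$ via $\pi_2$ and that the same lift $\hat{c}$ governs both the equivariant character variety and the enhancement) is left implicit in the paper but is consistent with its conventions.
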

 	\begin{proof}
 	 By injectivity of $\SB$, an isometry class of enhanced representation $[\rho^{\PSL_2}, E_{\hat{c}}]\in \text{Im}(\SB)$ is completely determined by its shear-bend coordinates. 
 	 By Proposition \ref{prop:sb1}, if $(x_e)_{e\in \mathcal{E}(\Delta)}\in \mathcal{V}_1$ is a set of coordinates, then both $\widetilde{\na}\left( (x_e)_e \right)$ and $\SB\left( (x_e)_e \right)$ admit $(x_e)_e$ as shear-bend coordinates, so they are equal. 
 	\end{proof}
 
 \begin{proof}[Proof of Theorem \ref{theorem3_V2}]
 	The map $\mathcal{NA}_{| \mathcal{U}_1}: \mathcal{U}_1 \rightarrow \mathcal{U}_2$ is surjective by definition. Since the actions of  $\mathrm{H}^1(\Sigma; \mathbb{Z}/2\mathbb{Z})$, defined in subsection $3.4$, are free, the map $\mathcal{NA}_{| \mathcal{U}_1}$ is injective if and only if the quotient map $\mathcal{\widetilde{NA}}_{| \mathcal{V}_1}: \mathcal{V}_1 \rightarrow \mathcal{V}_2$ is injective. This latter fact is proved in Corollary \ref{prop_sb}.
 \end{proof}

\section{Towards a quantum trace for unpunctured surfaces}
We conclude the paper by formulating a conjecture which naturally derives from our study. The balanced Chekhov-Fock algebra, and thus the quantum trace, only make sense for a punctured surface $(\Sigma, \mathcal{P})$ where $\mathcal{P}\neq \emptyset$, since we need a topological triangulation to define it. However Hollands and Neiztke defined in \cite{HollandsNeitzke}  geometric non-abelianization maps for unpunctured surfaces, hence it is natural to expect the existence of a quantum non-abelianization for such surfaces as well. Let $\Sigma$ be a closed connected surface of genus $g\geq 2$. A \textit{pants decomposition} $P=\{\gamma_e\}_e$ is a maximal set of pairwise non isotopic simple closed curves in $\Sigma$. Once cutting $\Sigma$ along $P$, we obtain a disjoint union $\bigsqcup_v P_v$ of pants $P_v$, that is each $P_v$ is homeomorphic to a sphere with three disjoint discs removed. A pant $P_v$ retracts to an embedded trivalent graph $\Gamma_v\subset P_v$ with two vertices and three edges. Let $\Gamma \subset \Sigma$ be the disjoint union of the $\Gamma_v$ and denote by $V$ its set of vertices and $E$ its set of edges. The set $E$ induces a Borel-Moore class $[E]\in \mathrm{H}_1(\Sigma \setminus V; \mathbb{Z}/2\mathbb{Z})$ whose Poincar\'e-Lefschetz dual defines a regular double covering of $\Sigma\setminus V$. We denote by $\pi: \widehat{\Sigma}(P) \rightarrow \Sigma$ the induced double covering branched along $V$ and denote by $\sigma : \widehat{\Sigma}(P)\rightarrow \widehat{\Sigma}(P)$ the covering involution. By Proposition \ref{propdecomposition}, one has an isomorphism
\begin{equation}\label{eq_pants}
 \mathcal{S}_{\omega}^{\mathbb{C}^*, \sigma}(\mathbf{\widehat{\Sigma}}(P)) \cong \mathcal{W}_q^{\otimes g} \otimes \mathcal{W}_{q^2}^{\otimes 2g-3}, 
\end{equation}
when $\omega$ is a root of unity of odd order $N>1$. We claim that the algebras $\mathcal{S}_{\omega}^{\sigma}(\mathbf{\widehat{\Sigma}}(P))$ could play a similar role, for unpunctured surfaces, to the balanced Chekhov-Fock algebras. More precisely, we formulate the 

\begin{conjecture}\label{conj_pants}
There exists an injective morphism of algebras
$$ \mathrm{NA}^P_{\omega} : \mathcal{S}^{\SL_2}_{\omega}(\Sigma) \hookrightarrow \mathcal{S}_{\omega}^{\mathbb{C}^*, \sigma}(\mathbf{\widehat{\Sigma}}(P)).$$
\end{conjecture}

We now formulate some arguments in favour of our conjecture.
\begin{enumerate}
\item Conjecture \ref{conj_pants} is first motivated by the construction of Hollands and Neitzke of geometric non-abelianization map (called in Fenchel-Nielsen coordinates and associated to a pants decomposition) between $\mathcal{X}_{\mathbb{C}^*}^{\sigma}(\mathbf{\widehat{\Sigma}}(P))$ and a moduli space of framed $\SL_2(\mathbb{C})$ flat structures on $\Sigma$, similar to the non-abelianization of Gaiotto-Moore-Neitzke of Foch-Goncharov coordinates (associated to a triangulation). Since the quantum trace is a deformation of the latter, one might expect that the non-abelianization in Fenchel-Nielsen coordinates admits a quantum deformation as well.
\item Equation \eqref{eq_pants} shows that, when $\omega$ is a root of unity of odd order $N>1$, $\mathcal{S}_{\mathbb{C}^*, \omega}^{\sigma}(\mathbf{\widehat{\Sigma}}(P)) $ is semi-simple and its simple modules have dimension $N^{3g-3}$. This is precisely the PI-degree of $\mathcal{S}^{\SL_2}_{\omega}(\Sigma)$ as computed in \cite{FrohmanKaniaLe_UnicityRep} and is the dimension of the representations of $\mathcal{S}^{\SL_2}_{\omega}(\Sigma)$ defined by Bonahon and Wong in \cite{BonahonWong3}. Note that it is proved in  \cite{FrohmanKaniaLe_UnicityRep} that a generic simple module of $\mathcal{S}^{\SL_2}_{\omega}(\Sigma)$ is isomorphic to one of the representation in \cite{BonahonWong3}. The authors believe that the skein algebra representations in \cite{BonahonWong3} are obtained by composing $\mathrm{NA}^P_{\omega}$ with an irreducible representation of $ \mathcal{S}_{\omega}^{\sigma}(\mathbf{\widehat{\Sigma}}(P))$.
\item Eventually, when $\Sigma_1$ is a genus $1$ closed surface, Frohman and Gelca proved in \cite{FG00} an analogue of Conjecture \ref{conj_pants}. More precisely, they defined an injective morphism of algebras $\mathcal{S}_{\omega}(\Sigma_1) \hookrightarrow \mathcal{W}_q= \mathbb{C}\left< X^{\pm 1}, Y^{\pm 1} | XY=qYX\right>$ sending the class of a meridian to $X+X^{-1}$ and the class of longitude to $Y+Y^{-1}$. Note that $\mathcal{W}_q$ is isomorphic to the equivariant abelian skein algebra of a double covering of $\Sigma_1$ by a genus $2$ surface with two branched points. Note also that the corresponding classical Poisson morphism $(\mathbb{C}^*)^2 \rightarrow \mathcal{X}_{\SL_2}(\Sigma_1)$ is a double branched covering with four branched points. Hence we do not expect that the algebraic non-abelianization induced by $\mathrm{NA}^P_{+1}$  to be birational in the closed case. 
\end{enumerate}

\section*{Acknowledgments} 
The first author is thankful to F.Bonahon, F.Costantino, L.Funar, F.Ruffino, M.Spivakovsky, J.Toulisse,  D.Vendr\'uscolo and J.Viu Sos for useful discussions and to the University of South California and the Federal University of S\~ao Carlos for their kind hospitality during the beginning of this work.  He acknowledges  support from the grant ANR  ModGroup, the GDR Tresses, the GDR Platon, CAPES, the GEAR Network, the CNRS and the JSPS. The second author was supported by PNPD/CAPES-2013 during the first period of this project, and by "grant \#2018/19603-0, S\~ao Paulo Research Foundation (FAPESP)"  during the second period. The authors also thank the anonymous referee for interesting corrections and  D.Allegretti and K. Kim for pointing to them the references \cite{Gabella_QNonAb, KyuMiri_QNonAb}.

\bibliographystyle{amsalpha}
\bibliography{biblio}

\end{document}